\definecolor{webgreen}{rgb}{0,.5,0}
\definecolor{webbrown}{rgb}{.6,0,0}
\DeclareMathOperator{\Li}{Li}
\DeclareMathOperator{\Cl}{Cl}
\begin{document}

\theoremstyle{plain}
\newtheorem{theorem}{Theorem}
\newtheorem{corollary}[theorem]{Corollary}
\newtheorem{proposition}{Proposition}
\newtheorem{lemma}{Lemma}
\newtheorem{example}{Example}
\newtheorem*{remark}{Remark}

\begin{center}
\vskip 1cm
{\LARGE\bf Integration Formulas Involving\\ Fibonacci and Lucas Numbers }

\vskip 1cm

{\large
Kunle Adegoke \\
Department of Physics and Engineering Physics \\ Obafemi Awolowo University, 220005 Ile-Ife, Nigeria \\
\href{mailto:adegoke00@gmail.com}{\tt adegoke00@gmail.com}

\vskip 0.2 in

Robert Frontczak \\
Independent Researcher \\ Reutlingen,  Germany \\
\href{mailto:robert.frontczak@web.de}{\tt robert.frontczak@web.de}

}

\end{center}

\vskip .2 in

\begin{abstract}
We present a range of difficult integration formulas involving Fibonacci and Lucas numbers and trigonometric functions.
These formulas are often expressed in terms of special functions like the dilogarithm and Clausen's function.
We also prove complements of integral identities of Dilcher (2000) and Stewart (2022).
Many of our results are based on a fundamental lemma dealing with differentiation of complex-valued Fibonacci (Lucas) functions.
\end{abstract}

\noindent 2010 {\it Mathematics Subject Classification}: Primary 11B39; Secondary 11B37.

\noindent \emph{Keywords:} Fibonacci (Lucas) number, integration formula, trigonometric functions, dilogarithm, Clausen's function.

\bigskip

\section{Introduction}

In a recent paper from 2022, Stewart \cite{Stewart} derived some appealing integral representations for Fibonacci numbers 
$F_n$ and Lucas numbers $L_n$. For instance, he proved the representation \cite[Theorem 2.1]{Stewart}
\begin{equation}\label{eq.stewart}
\frac{F_{k n}}{F_k} = \frac{n}{2^n} \int_{-1}^1 \left (L_k + F_k x \sqrt{5}\right )^{n-1}\,dx \qquad (n,k\in\mathbb{N}).
\end{equation}
The special case of this identity for $k=1$ is also discussed in Stewart's paper \cite{Stewart2} from 2023.
Also, in 2015, Glasser and Zhou \cite{Glasser} worked out an explicit integral representation for $F_n$ involving trigonometric functions.
Indeed, the main result in their paper is the representation of the form
\begin{equation}
F_n = \frac{\alpha^n}{\sqrt{5}} - \frac{2}{\pi}\int_0^\infty \frac{\sin(x/2)}{x} \frac{\cos(nx)-2\sin(nx)\sin(x)}{5\sin^2(x)+\cos^2(x)} \,dx,
\end{equation}
where $\alpha=(1+\sqrt{5})/2$ is the golden ratio and $n\in\mathbb{N}_0$. 
Another representation is given by Andrica and Bagdasar in \cite{Andrica}. The last example for such representations comes from
the paper by Dilcher \cite{Dilcher} from 2000 where he showed (among others) that
\begin{equation}\label{eq.dilcher}
F_{2n} = \frac{n}{2} \left (\frac{3}{2}\right )^{n-1} \int_0^\pi \left ( 1+\frac{\sqrt{5}}{3} \cos(x)\right )^{n-1}\sin(x) \,dx.
\end{equation}

In this paper, we go in the same direction. However, we do not intend to prove explicit integral representations for Fibonacci and Lucas numbers, but instead we deal with integration formulas involving these sequences and combinations of trigonometric functions.
We begin by proving the following complements of Stewart's and Dilcher's integral identities
\begin{equation}\label{eq.bju5530}
\int_{- 1}^1 {\left( {L_k  + F_k x\sqrt 5 } \right)^{n - 2} \left( {F_k \sqrt 5  + L_k x} \right)\,dx}
= \frac{{2^n }}{{(n - 1)\sqrt 5 }}\left( {\frac{{L_{kn} }}{{F_k }} - \frac{{F_{kn} L_k }}{{nF_k^2 }}} \right),\quad n\not\in\{0,1\},
\end{equation}
\begin{equation}\label{eq.stewart_compl2}
\int_{-1}^1 \left( L_k + F_k x\sqrt 5 \right)^{n - 2} x\,dx 
= \frac{2^n}{(n-1)\sqrt{5}F_k}\left ( \frac{L_{(n-1)k}}{2}-\frac{F_{nk}}{nF_k}\right ).
\end{equation}
and
\begin{equation}\label{eq.djfhep4}
\begin{split}
&\int_0^\pi {\left( {1 + \frac{{\sqrt 5 }}{3}\cos x} \right)^{n - 1} \ln \left( {1 + \frac{{\sqrt 5 }}{3}\cos x} \right)\sin x \,dx} \\
&\qquad = \frac{6}{{n\sqrt 5 }}\left( {\frac{2}{3}} \right)^n L_{2n} \ln \alpha + \left( {-\frac{1}{n} + \ln \left( {\frac{2}{3}} \right)} \right)\left( {\frac{2}{3}} \right)^n \frac{3}{n}F_{2n},\quad n\in\mathbb Z^+.
\end{split}
\end{equation}

Then, we prove a range of difficult integral identities of which we chose the following ones as a showcase:
\begin{equation*}
\int_0^{\pi /2} {\frac{{\tan ^2 x}}{{1 + L_{2r} \tan ^2 x + \tan ^4 x}}\,dx} = \begin{cases}
 \dfrac{\pi }{2}\,\dfrac{1}{{F_r \sqrt 5 (F_r \sqrt 5  + 2)}}, & \text{if $r$ is odd}; \\
 \dfrac{\pi }{2}\,\dfrac{1}{{L_r (L_r  + 2)}}, & \text{if $r$ is even}; \\
 \end{cases}
\end{equation*}
\begin{equation*}
\int_0^\pi {\frac{{x\sin ^3 x}}{{\left( {4 + 5F_{2r}^2 \sin ^2 x} \right)^2 }}\,dx}
= - \frac{1}{10} \frac{\pi}{F_{4r}^2} + \frac{{2\pi \sqrt 5}}{25} \frac{L_{4r}}{F_{4r}^3} r \ln \alpha,
\end{equation*}
\begin{equation*}
\int_0^{\pi/2} \frac{x^2}{L_{r}^2 + 4 + 4L_{r} \cos(2x)}\,dx 
= \frac{1}{L_r^2-4}\left ( \frac{\pi^3}{24} + \frac{\pi}{2} \Li_2\left (\frac{2}{L_r}\right )\right ), \quad r\geq 2
\end{equation*}
and
\begin{equation*}
\begin{split}
&\int_0^\pi  {\frac{{x^2 \cos (3x)}}{{L_r^2  - 4\cos ^2 (2x)}}\,dx} \\
&\qquad = \left( {\frac{1}{L_r} - 1} \right)\frac{\pi }{2}\frac{{\sqrt { \beta ^r } }}{{1 - \beta ^r }}\left( {\Li_2 \left( {\sqrt { \beta ^r } } \right) - \Li_2 \left( { - \sqrt { \beta ^r } } \right)} \right)\\
&\qquad\quad+ \left( {\frac{1}{L_r} + 1} \right)\frac{\pi }{2}\frac{{\sqrt { \beta ^r } }}{{1 + \beta ^r }}\left( {\Cl_2 \left( {2\arctan \left( {\sqrt { \beta ^r } } \right)} \right) + \Cl_2 \left( {\pi  - 2\arctan \left( {\sqrt { \beta ^r } } \right)} \right)} \right)\\
&\qquad\qquad + \left( {\frac{1}{L_r} + 1} \right)\frac{{\pi \sqrt { \beta ^r } }}{{1 + \beta ^r }}\arctan \left( {\sqrt { \beta ^r } } \right)\ln \left( {\sqrt { \beta ^r } } \right). \qquad r\geq 2 \,\,\text{even}.
\end{split}
\end{equation*}

\medskip

Our paper is particularly inspired by the following identities of Lewin \cite{Lewin}:

\begin{align}
& \int_0^{\pi/2} \Li_2 ( - q^2 \tan ^2 x)\,dx = 2\pi \Li_2 (- q),\quad q\ge0, \label{eq.nx3w40i} \\
& \int_0^\infty \frac{{\arctan (qx)}}{{1 + x^2 }}\,dx = \frac{{\pi ^2 }}{8} - \frac{1}{2}\Li_2 \left( {\frac{{1 - q}}{{1 + q}}} \right)
+ \frac{1}{2}\Li_2 \left( { - \frac{{1 - q}}{{1 + q}}} \right), \label{eq.yjqd44q} \\
& \int_0^{\pi /2} \arctan (Q\csc x)\,dx = \frac{{\pi^2 }}{4} - \Li_2 \left( {\sqrt {1 + Q^2 } - Q} \right) +
\Li_2 \left( {- \sqrt {1 + Q^2 } + Q} \right), \label{eq.qk18ai6}\\
& \int_0^\pi x\arctan \left( {\frac{{2q}}{{1 - q^2 }}\sin x} \right)\,dx = \pi \Li_2 (q) - \pi \Li_2 (- q),\quad q^2 < 1 \label{eq.sjcljni}.\\
& \int_0^{\pi/2} \frac{{x^2 \,dx}}{{1 - Q\cos (2x)}} = \frac{{1 + q^2 }}{{1 - q^2 }}\left( {\frac{{\pi^3 }}{{24}}
+ \frac{\pi }{2}\Li_2 (- q)} \right), q^2  < 1,Q = \frac{{2q}}{{1 + q^2 }}, \label{eq_eleven} \\
& \int_0^\pi \frac{{x^2 }}{{1 - Q\cos ^2 x}}\,dx = \frac{{1 + q}}{{1 - q}}\left( {\frac{{\pi^3 }}{3} + \pi \Li_2 (q)} \right),
\quad q < 1,\quad Q = \frac{{4q}}{{(1 + q)^2 }},\label{eq.n71rwsq}
\end{align}
\begin{align}
& \int_0^\pi \frac{{x^2 \,dx}}{{1 - Q\cos (2x)}} = \frac{{1 + q^2 }}{{1 - q^2 }}\left( {\frac{{\pi ^3 }}{3} + \pi \Li_2 (q)} \right),
\quad q < 1,\quad Q = \frac{{2q}}{{1 + q^2 }},\label{eq.lh0gp48} \\
& \int_0^\pi \frac{{x^2 \cos x\,dx}}{{1 - Q\cos (2x)}} = - \pi \frac{{1 + q^2 }}{{1 - q}}\frac{{\Li_2 \left( {\sqrt q } \right)
- \Li_2 \left( { - \sqrt q } \right)}}{{\sqrt q }},q < 1, Q = \frac{{2q}}{{1 + q^2 }}\label{eq.b1e7nal}.
\end{align}

Obviously, the common feature in all these results is the appearance of the dilogarithm $\Li_2(z)$ on one or both sides of the equations.
This special function is defined by
\begin{equation*}
\Li_2(z) = \sum_{k=1}^\infty \frac{z^k}{k^2}, \qquad |z|<1.
\end{equation*}

We proceed with a definition of the Fibonacci numbers $F_n$ and the Lucas numbers $L_n$, and with some lemmas which we be used later. 
Both sequences are defined, for \text{$n\in\mathbb Z$}, through the recurrence relations $F_n = F_{n-1}+F_{n-2}, n\ge 2,$ 
with initial values $F_0=0, F_1=1$ and $L_n = L_{n-1}+L_{n-2}$ with $L_0=2, L_1=1$. For negative subscripts we have $F_{-n}=(-1)^{n-1}F_n$ 
and $L_{-n}=(-1)^n L_n$. They possess the explicit formulas (known as the Binet forms)
\begin{equation*}
F_n = \frac{\alpha^n - \beta^n }{\alpha - \beta },\quad L_n = \alpha^n + \beta^n,\quad n\in\mathbb Z,
\end{equation*}
with $\alpha=(1+\sqrt{5})/2$ and $\beta=(1-\sqrt{5})/2$.
For more information we refer to the books by Koshy \cite{Koshy} and Vajda \cite{Vajda}.

\begin{lemma}
If $z=2\arctan(\beta^r/i^r)$ where $r$ is an integer and $i$ is the imaginary unit, then
\begin{equation}\label{eq.j5m1ag9}
\cos z = \frac{F_r \sqrt{5}}{L_r},\qquad \sin z = \frac{2i^r}{L_r},\qquad \tan z = \frac{2i^r}{F_r \sqrt{5}}.
\end{equation}
\end{lemma}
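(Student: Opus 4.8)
The plan is to use the tangent half-angle (Weierstrass) substitution and then simply clean up the powers of $i$ and $\beta$ with the Binet forms. Put $w=\beta^r/i^r$. The defining relation $z=2\arctan w$ means exactly $\tan(z/2)=w$ — valid as an identity for any branch of the complex arctangent, since $\tan$ inverts $\arctan$ termwise. I would then invoke the classical identities
\[
\cos z=\frac{1-\tan^2(z/2)}{1+\tan^2(z/2)},\qquad \sin z=\frac{2\tan(z/2)}{1+\tan^2(z/2)},\qquad \tan z=\frac{2\tan(z/2)}{1-\tan^2(z/2)},
\]
which follow from the double-angle formulas together with $\cos^2+\sin^2=1$ and hold for all complex arguments at which the denominators do not vanish. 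This reduces everything to an algebraic simplification of $w$ and $w^2$.

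Next I would compute $w^2=\beta^{2r}/i^{2r}=\beta^{2r}/(-1)^r$ and exploit the single key observation $(-1)^r=(\alpha\beta)^r=\alpha^r\beta^r$, which gives
\[
1+w^2=\frac{\alpha^r\beta^r+\beta^{2r}}{(-1)^r}=\frac{\beta^r(\alpha^r+\beta^r)}{(-1)^r}=\frac{\beta^r L_r}{(-1)^r},\qquad 1-w^2=\frac{\beta^r(\alpha^r-\beta^r)}{(-1)^r}=\frac{\beta^r F_r\sqrt5}{(-1)^r},
\]
using $\alpha^r+\beta^r=L_r$ and $\alpha^r-\beta^r=(\alpha-\beta)F_r=\sqrt5\,F_r$. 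Substituting into the half-angle expressions, the common factor $\beta^r/(-1)^r$ cancels in the quotient for $\cos z$, yielding $\cos z=F_r\sqrt5/L_r$ at once; for $\sin z$ one is left with $2w(-1)^r/(\beta^r L_r)=2(-1)^r/(i^r L_r)$, and the elementary simplification $(-1)^r/i^r=i^{2r}/i^r=i^r$ converts this to $2i^r/L_r$. The value of $\tan z$ then follows either as $\sin z/\cos z$ or by the identical cancellation in the third displayed identity.

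This is essentially the entire argument; there is no genuine obstacle, only the bookkeeping with $i^r$ and the sign $(-1)^r$, which is precisely where a careless computation would lose a sign. The one point worth a remark is well-definedness of the manipulations: since $|\beta/\alpha|<1$, the equation $\beta^r=\pm\alpha^r$ forces $r=0$, so $1+w^2=\beta^r L_r/(-1)^r\neq0$ for every integer $r$ (because $L_r\neq0$ throughout $\mathbb{Z}$), and $1-w^2=\beta^r F_r\sqrt5/(-1)^r$ vanishes only at $r=0$; in that case the statement for $\tan z$ is, as usual, read as the assertion that both sides are undefined, which is consistent with $F_0=0$.
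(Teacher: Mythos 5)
Your proposal is correct and follows essentially the same route as the paper: both rest on the tangent half-angle formulas for $\cos(2\arctan w)$, $\sin(2\arctan w)$, $\tan(2\arctan w)$ applied to $w=\beta^r/i^r$, followed by the Binet forms and $\alpha\beta=-1$. Your added remarks on the non-vanishing of $1+w^2$ and the degenerate $r=0$ case are a harmless refinement of the paper's brief argument.
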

\begin{proof}
This is a consequence of the fact that if $z=2\arctan(p/q)$, then
\begin{equation*}
\cos z = \frac{{q^2  - p^2 }}{{q^2  + p^2 }},\qquad \sin z = \frac{{2pq}}{{q^2  + p^2 }},\qquad \tan z = \frac{{2pq}}{{q^2 - p^2 }}.
\end{equation*}
So, for instance,
\begin{equation*}
\cos z = \frac{(-1)^r - \beta^{2r}}{(-1)^r + \beta^{2r}} = \frac{\alpha^r - \beta^{r}}{\alpha^r + \beta^{r}} = \frac{\sqrt{5}F_r}{L_r},
\end{equation*}
as $\alpha\beta=-1$. The remaining relations also follow immediately.
\end{proof}

\begin{lemma}\label{lem.fibfunctions}
Let $f(x)$ and $l(x)$ be the infinite times differentiable, complex-valued Fibonacci and Lucas functions defined by
\begin{equation}\label{eq.r280nsg}
f(x)=\frac{\alpha^x - \beta^x}{\alpha - \beta},\quad l(x)=\alpha^x + \beta^x,\quad x\in\mathbb R.
\end{equation}
Then
\begin{equation}\label{eq.nfgepkq}
\left. f(x) \right|_{x = j\in\mathbb Z} = F_j ,\quad\left. l(x) \right|_{x = j\in\mathbb Z} = L_j;
\end{equation}
and
\begin{equation}\label{eq.wyg8jr6}
\Re\left( {\left. {\frac{d}{{\,dx}}f(x)} \right|_{x = j\in\mathbb Z} } \right) = \frac{{L_j }}{{\sqrt 5 }}\,\ln \alpha ,\quad\Re\left( {\left. {\frac{d}{{\,dx}}l(x)} \right|_{x = j\in\mathbb Z} } \right) = F_j \sqrt 5\, \ln \alpha,
\end{equation}
\begin{equation}\label{eq.pagcd4g}
\Im\left( {\left. {\frac{d}{\,dx}f(x)} \right|_{x = j\in\mathbb Z} } \right) =  - \frac{{\pi \beta ^j }}{{\sqrt 5 }},\quad\Im\left( {\left. {\frac{d}{{\,dx}}l(x)} \right|_{x = j\in\mathbb Z} } \right) = \pi \beta ^j.
\end{equation}
\end{lemma}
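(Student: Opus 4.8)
The plan is to fix, once and for all, the meaning of the power $\beta^x$ for non-integer $x$ and then to differentiate the Binet-type expressions directly. Since $\alpha>0$, the factor $\alpha^x=e^{x\ln\alpha}$ is the ordinary real exponential and causes no trouble; the only point needing care is $\beta^x$, because $\beta=(1-\sqrt5)/2<0$. I would adopt the principal branch of the complex logarithm, setting $\beta^x:=e^{x\operatorname{Log}\beta}$ with $\operatorname{Log}\beta=\ln|\beta|+i\pi$. Using $\alpha\beta=-1$ we have $|\beta|=1/\alpha$, so $\operatorname{Log}\beta=-\ln\alpha+i\pi$ and hence $\beta^x=\alpha^{-x}\bigl(\cos(\pi x)+i\sin(\pi x)\bigr)$. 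With this convention $f$ and $l$ are entire functions of the real variable $x$, in particular infinitely differentiable on $\mathbb R$, so the standing hypothesis of the lemma is met.

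For \eqref{eq.nfgepkq}, observe that when $x=j\in\mathbb Z$ one has $e^{i\pi j}=(-1)^j$, so $\beta^j=(-1)^j\alpha^{-j}$, which is exactly $\beta^j$ in the usual sense; therefore $f(j)$ and $l(j)$ collapse to the Binet forms of $F_j$ and $L_j$. For the derivative formulas I would differentiate termwise, using $\dfrac{d}{dx}\alpha^x=\alpha^x\ln\alpha$ and $\dfrac{d}{dx}\beta^x=(\operatorname{Log}\beta)\,\beta^x=(-\ln\alpha+i\pi)\beta^x$. Evaluating at $x=j$ and recalling that $\beta^j$ is real, this yields
\[
\left.\frac{d}{dx}f(x)\right|_{x=j}=\frac{(\alpha^j+\beta^j)\ln\alpha-i\pi\beta^j}{\sqrt5}=\frac{L_j\ln\alpha}{\sqrt5}-i\,\frac{\pi\beta^j}{\sqrt5},
\]
\[
\left.\frac{d}{dx}l(x)\right|_{x=j}=(\alpha^j-\beta^j)\ln\alpha+i\pi\beta^j=\sqrt5\,F_j\ln\alpha+i\pi\beta^j,
\]
and separating real and imaginary parts gives exactly \eqref{eq.wyg8jr6} and \eqref{eq.pagcd4g}, using only $\alpha-\beta=\sqrt5$, $\alpha^j+\beta^j=L_j$, and $\alpha^j-\beta^j=\sqrt5\,F_j$.

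The computation is short; the one genuinely delicate point — and the step I would be most careful to state explicitly — is the choice of branch. Replacing $\operatorname{Log}\beta$ by $-\ln\alpha+i(2k+1)\pi$ leaves the values $\beta^j$ at integers unchanged (hence \eqref{eq.nfgepkq} still holds), but multiplies the imaginary parts of the derivatives by $2k+1$, so it is precisely the principal branch that makes \eqref{eq.pagcd4g} come out with the stated coefficient. Once that convention is pinned down, nothing further is required.
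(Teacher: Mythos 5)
Your proposal is correct and follows essentially the same route as the paper: both interpret $\beta^x$ through the complex exponential with $\ln(-\beta)=-\ln\alpha$ (the paper keeps the general branch $i\pi(2m+1)$ and then fixes the principal value $m=0$, exactly the branch choice you single out) and then differentiate the Binet-type expressions termwise. Your explicit remark that only the principal branch yields the stated imaginary parts is precisely the point the paper makes when it says ``we used the principal value, $m=0$.''
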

\begin{proof}
First, since $\beta$ is negative, we write
\begin{equation*}
\beta ^x  = \left( { - \beta } \right)^x \exp \left( {i\pi \left( {2m + 1} \right)x} \right),\quad m\in\mathbb Z,
\end{equation*}
so that
\begin{equation*}
\frac{d}{{dx}}\beta ^x  = \beta ^x \left( {i\pi \left( {2m + 1} \right) + \ln \left( { - \beta } \right)} \right),\quad m\in\mathbb Z.
\end{equation*}
We have
\begin{equation*}
\begin{split}
\frac{d}{{dx}}f(x) &= \frac{1}{{\alpha  - \beta }}\left( {\frac{d}{{dx}}\alpha ^x  - \frac{d}{{dx}}\beta ^x } \right)\\
 &= \frac{1}{{\alpha  - \beta }}\left( {\alpha ^x \ln \alpha  - \beta ^x \ln \left( { - \beta } \right) - i\pi \left( {2m + 1} \right)\beta ^x } \right)\\
& = \frac{1}{{\alpha  - \beta }}\left( {\alpha ^x \ln \alpha  + \beta ^x \ln \alpha  - \beta ^x \ln \alpha  - \beta ^x \ln \left( { - \beta } \right) - i\pi \left( {2m + 1} \right)\beta ^x } \right)\\
 &= \frac{1}{{\alpha  - \beta }}\left( {\left( {\alpha ^x  + \beta ^x } \right)\ln \alpha  - \beta ^x \ln \left( { - \alpha \beta } \right) - i\pi \left( {2m + 1} \right)\beta ^x } \right)\\
&= \frac{1}{{\alpha  - \beta }}\left( {\left( {\alpha ^x  + \beta ^x } \right)\ln \alpha  - i\pi \left( {2m + 1} \right)\beta ^x } \right).
\end{split}
\end{equation*}
The first identity in~\eqref{eq.wyg8jr6} and the first identity in~\eqref{eq.pagcd4g} now follow upon taking real
and imaginary parts. For the imaginary part, we used the principal value, $m=0$.

The derivation of the second identity in~\eqref{eq.wyg8jr6} and the second identity in~\eqref{eq.pagcd4g} proceeds along the same line.
\end{proof}

\begin{lemma}
If $r$ is an integer, then
\begin{equation}\label{eq.s2rjcqb}
1 - \beta ^{2r}  =  \begin{cases}
 \beta ^r F_r \sqrt 5,&\text{$r$ even};  \\
  - \beta ^r L_r,&\text{$r$ odd};  \\
 \end{cases} ,\quad 1 + \beta ^{2r}  =  \begin{cases}
 \beta ^r L_r,&\text{$r$ even};  \\
  - \beta ^r F_r \sqrt 5,&\text{$r$ odd}.  \\
 \end{cases}
\end{equation}
\end{lemma}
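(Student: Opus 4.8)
The plan is to reduce both identities to the Binet forms by pulling out a factor of $\beta^{r}$ and exploiting $\alpha\beta=-1$. First I would write
\[
1-\beta^{2r}=\beta^{r}\bigl(\beta^{-r}-\beta^{r}\bigr),\qquad 1+\beta^{2r}=\beta^{r}\bigl(\beta^{-r}+\beta^{r}\bigr).
\]
Since $\alpha\beta=-1$ we have $\alpha^{r}\beta^{r}=(-1)^{r}$, hence $\beta^{-r}=(-1)^{r}\alpha^{r}$. Substituting this gives
\[
1-\beta^{2r}=\beta^{r}\bigl((-1)^{r}\alpha^{r}-\beta^{r}\bigr),\qquad 1+\beta^{2r}=\beta^{r}\bigl((-1)^{r}\alpha^{r}+\beta^{r}\bigr).
\]

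Next I would split according to the parity of $r$. If $r$ is even, then $(-1)^{r}=1$, so the first expression equals $\beta^{r}(\alpha^{r}-\beta^{r})=\beta^{r}F_{r}(\alpha-\beta)=\sqrt5\,\beta^{r}F_{r}$, using $\alpha-\beta=\sqrt5$ together with the Binet form $F_{r}=(\alpha^{r}-\beta^{r})/(\alpha-\beta)$, while the second equals $\beta^{r}(\alpha^{r}+\beta^{r})=\beta^{r}L_{r}$ by $L_{r}=\alpha^{r}+\beta^{r}$. If $r$ is odd, then $(-1)^{r}=-1$, so the first expression becomes $-\beta^{r}(\alpha^{r}+\beta^{r})=-\beta^{r}L_{r}$ and the second becomes $-\beta^{r}(\alpha^{r}-\beta^{r})=-\sqrt5\,\beta^{r}F_{r}$. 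This exhausts all four entries of the claimed table.

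There is essentially no obstacle here: the whole argument is a two-line manipulation once one thinks to factor out $\beta^{r}$ so that the surviving bracket is, up to sign, a Binet combination. The only point that needs a moment's attention is correctly tracking the sign $(-1)^{r}$ introduced by $\beta^{-r}=(-1)^{r}\alpha^{r}$, since it is exactly this sign that swaps the roles of $F_r\sqrt5$ and $L_r$ between the even and odd cases.
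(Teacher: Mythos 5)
Your proof is correct and is essentially the paper's argument: both rest on $\alpha\beta=-1$ (the paper writes $1=(-1)^r=(\alpha\beta)^r$ and factors out $\beta^r$, you factor $\beta^r$ first and use $\beta^{-r}=(-1)^r\alpha^r$, which is the same manipulation) followed by the Binet forms, with the only difference being that you write out all four parity cases explicitly while the paper does one and says the rest are analogous.
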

\begin{proof}
Let $r$ be even. Then,
\begin{equation*}
1 - \beta^{2r} = (-1)^r - \beta^{2r} = \beta^r (\alpha^r - \beta^r) = \beta^r \sqrt{5} F_r.
\end{equation*}
The other cases are proved in exactly the same manner.
\end{proof}

\begin{lemma}
If $r$ is an integer, then
\begin{equation}\label{eq.sp0oo7c}
\begin{split}
& F_{2r}  - 1 =  \begin{cases}
 F_{r - 1} L_{r + 1},&\text{$r$ odd};  \\
 L_{r - 1} F_{r + 1},&\text{$r$ even};  \\
 \end{cases} ,\quad F_{2r + 1}  - 1 =  \begin{cases}
 L_r F_{r + 1},&\text{$r$ odd};  \\
 F_r L_{r + 1},&\text{$r$ even} ; \\
 \end{cases} ,\\
& \qquad\qquad\qquad L_{2r + 1}  - 1 =  \begin{cases}
 L_r L_{r + 1},&\text{$r$ odd};  \\
 5F_r F_{r + 1},&\text{$r$ even} . \\
 \end{cases}
\end{split}
\end{equation}
\end{lemma}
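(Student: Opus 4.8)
The plan is to establish all six identities directly from the Binet forms, the only algebraic facts needed being $\alpha\beta=-1$, $\alpha+\beta=1$, and $\alpha-\beta=\sqrt5$. For each claimed product on the right, I would write both factors in Binet form and expand; the mixed terms always carry a factor $(\alpha\beta)^{r-1}=(-1)^{r-1}$, and this is precisely what produces the constant $\mp1$ after the dust settles, with the sign of that constant governed by the parity of $r$.

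Concretely, for the first identity I would compute
\[
F_{r-1}L_{r+1}=\frac{(\alpha^{r-1}-\beta^{r-1})(\alpha^{r+1}+\beta^{r+1})}{\alpha-\beta}
=\frac{\alpha^{2r}-\beta^{2r}+(-1)^{r-1}(\beta^2-\alpha^2)}{\alpha-\beta}
=F_{2r}-(-1)^{r-1}(\alpha+\beta)=F_{2r}+(-1)^{r},
\]
which equals $F_{2r}-1$ exactly when $r$ is odd; interchanging the roles of the two indices gives in the same way $L_{r-1}F_{r+1}=F_{2r}+(-1)^{r-1}$, which equals $F_{2r}-1$ when $r$ is even. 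The remaining four cases follow the identical template: expand $L_rF_{r+1}$, $F_rL_{r+1}$, $L_rL_{r+1}$, and $5F_rF_{r+1}$ via Binet, collect $\alpha^{2r+1}-\beta^{2r+1}$ (respectively $\alpha^{2r+1}+\beta^{2r+1}$) to recover $F_{2r+1}$ (respectively $L_{2r+1}$), and note that the mixed terms reduce to $\pm(-1)^{r-1}(\alpha\pm\beta)=\pm1$; matching that sign against the parity of $r$ yields the stated case split. Equivalently, one may shortcut the expansions by invoking the product-to-sum formulas $F_mL_n=F_{m+n}+(-1)^nF_{m-n}$, $L_mL_n=L_{m+n}+(-1)^nL_{m-n}$, and $5F_mF_n=L_{m+n}-(-1)^nL_{m-n}$ together with the small values $F_{\pm1}=1$, $F_{-2}=-1$, $L_{-1}=-1$.

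There is no genuine obstacle here: the only point requiring care is the bookkeeping of the sign $(-1)^{r-1}$ coming from the mixed power $(\alpha\beta)^{r-1}$ and making sure it lands on the correct side of the parity split in each of the six sub-cases. Accordingly, I would spell out one representative computation in full — say the $r$ odd case of $F_{2r}-1$ above — and then write ``the other cases are proved in exactly the same manner,'' in keeping with the style used for the preceding lemmas in the excerpt.
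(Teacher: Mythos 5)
Your proposal is correct and follows the same route as the paper, which proves the lemma simply by applying the Binet forms; your explicit expansion of $F_{r-1}L_{r+1}$ (with the mixed terms contributing $(\alpha\beta)^{r-1}=(-1)^{r-1}$) and the remark that the remaining cases are handled identically is exactly the intended argument, just written out in more detail.
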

\begin{proof}
Apply the Binet forms for $F_n$ and $L_n$, respectively.
\end{proof}

\begin{lemma}\label{lem.oszn90v}
If $x>0$, then
\begin{equation}\label{eq.dudu2ha}
\Re \Li_2 (ix) = \frac{1}{4}\Li_2 ( - x^2 ) = \Re \Li_2 ( - ix),\quad\text{\cite[p.293, Identity (7)]{Lewin}},
\end{equation}
and
\begin{equation}\label{eq.j0jplsn}
\begin{split}
\Im \Li_2 (ix) &= \arctan x\ln x + \frac12\,\Cl_2 (2\arctan x) + \frac12\,\Cl_2 (\pi  - 2\arctan x)\\
&\qquad =  - \Im \Li_2 ( - ix);
\end{split}
\end{equation}
where $\Cl_2$ is Clausen's function defined by~\cite[p.291]{Lewin} :
\begin{equation*}
\Cl_2 (y) = \sum_{n = 1}^\infty  {\frac{{\sin (ny)}}{{n^2 }}}  =  - \int_0^y  {\ln |2\sin (\theta /2)|d\theta },
\end{equation*}
and having the functional relations
\begin{gather}
\Cl_2 (\pi  + \theta ) =  - \Cl_2 (\pi  - \theta ),\\
\Cl_2 (\theta ) =  - \Cl_2 (2\pi  - \theta ),\\
\frac{1}{2}\Cl_2 (2\theta ) = \Cl_2 (\theta ) - \Cl_2 (\pi  - \theta )\label{eq.rqfmoa7};
\end{gather}
with the special values
\begin{equation}
\Cl_2(n\pi)=0,\quad n\in\mathbb Z^+,
\end{equation}
and
\begin{equation}\label{eq.v1bi0dh}
\Cl_2(\pi/2)=G=-\Cl_2(3\pi/2),
\end{equation}
where $G=\sum_{j=0}^\infty (-1)^j / (1+2j)^2$ is Catalan's constant. For more information on these special functions see \cite{tric}.

\end{lemma}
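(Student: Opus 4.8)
The chain \eqref{eq.dudu2ha} is quoted from Lewin, and the functional relations and special values of $\Cl_2$ listed afterwards are classical consequences of its Fourier series $\Cl_2(y)=\sum_{n\ge1}\sin(ny)/n^2$ (for instance $\Cl_2(n\pi)=\sum_k\sin(nk\pi)/k^2=0$, and $\Cl_2(\pi/2)=\sum_{j\ge0}(-1)^j/(2j+1)^2=G$); so the substantive content is \eqref{eq.j0jplsn}. The plan is to reduce $\Im\Li_2(ix)$ to the inverse-tangent integral $\mathrm{Ti}_2(x)=\int_0^x \frac{\arctan t}{t}\,dt$ and then to verify the claimed Clausen decomposition of $\mathrm{Ti}_2$ by differentiation.

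First I would split the defining series. For $0<x\le 1$, $\Li_2(ix)=\sum_{k\ge 1} i^k x^k/k^2$; collecting the even indices $k=2m$ gives $\tfrac14\sum_{m\ge1}(-x^2)^m/m^2=\tfrac14\Li_2(-x^2)\in\mathbb R$, which simultaneously reproves \eqref{eq.dudu2ha} and shows that the imaginary part comes entirely from the odd indices $k=2m+1$, namely $\Im\Li_2(ix)=\sum_{m\ge0}(-1)^m x^{2m+1}/(2m+1)^2=\mathrm{Ti}_2(x)$. This identity between two real-analytic functions on $(0,1]$ then extends to all $x>0$, since $\Li_2$ continues analytically along the positive imaginary axis (which does not meet the cut $[1,\infty)$) while $\mathrm{Ti}_2$ is entire.

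Next I would show that $R(x):=\arctan x\,\ln x+\tfrac12\Cl_2(2\arctan x)+\tfrac12\Cl_2(\pi-2\arctan x)$ equals $\mathrm{Ti}_2(x)$. Differentiating with $\Cl_2'(y)=-\ln|2\sin(y/2)|$ and using $\sin(\arctan x)=x/\sqrt{1+x^2}$, $\cos(\arctan x)=1/\sqrt{1+x^2}$, the three summands contribute $\ln x/(1+x^2)+\arctan x/x$, then $-\bigl(\ln 2+\ln x-\tfrac12\ln(1+x^2)\bigr)/(1+x^2)$, then $\bigl(\ln 2-\tfrac12\ln(1+x^2)\bigr)/(1+x^2)$; all the $1/(1+x^2)$ pieces cancel identically, leaving $R'(x)=\arctan x/x=\mathrm{Ti}_2'(x)$. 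Since $\arctan x\,\ln x\to0$ and $\Cl_2(0)=\Cl_2(\pi)=0$ as $x\to0^+$, we get $R(0^+)=0=\mathrm{Ti}_2(0)$, hence $R\equiv\mathrm{Ti}_2$ on $(0,\infty)$, which is the first line of \eqref{eq.j0jplsn}. Finally, for real $x$ the coefficients of the $\Li_2$ series are real, so $\Li_2(-ix)=\overline{\Li_2(ix)}$; equating real and imaginary parts gives $\Re\Li_2(-ix)=\Re\Li_2(ix)$ and $\Im\Li_2(-ix)=-\Im\Li_2(ix)$, which completes the lemma.

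There is no deep obstacle: the only points that need a little attention are the passage from $|x|\le 1$ (where the dilogarithm series converges) to arbitrary $x>0$, and the correct evaluation of the boundary term as $x\to0^+$ through the special values $\Cl_2(n\pi)=0$. If one prefers to avoid the differentiation argument, an alternative is the substitution $t=\tan\theta$ in $\mathrm{Ti}_2(x)$ followed by integration by parts; the derivative route is, however, shorter and self-contained.
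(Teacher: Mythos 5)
Your proposal is correct, but it takes a genuinely different route from the paper. The paper does not prove \eqref{eq.j0jplsn} from scratch: it quotes Lewin's general polar formula $\Im \Li_2(re^{iy}) = \omega\ln r + \frac12\Cl_2(2\omega) + \frac12\Cl_2(2y) - \frac12\Cl_2(2\omega+2y)$ with $\tan\omega = r\sin y/(1-r\cos y)$, and specializes it at $y=\pi/2$ (so $\omega=\arctan x$), using $\Cl_2(\pi)=0$ and the relation $\Cl_2(\pi+\theta)=-\Cl_2(\pi-\theta)$ to convert $-\frac12\Cl_2(\pi+2\arctan x)$ into $+\frac12\Cl_2(\pi-2\arctan x)$. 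You instead identify $\Im\Li_2(ix)$ with the inverse tangent integral $\mathrm{Ti}_2(x)$ by splitting the series (which also reproves \eqref{eq.dudu2ha}), and then verify the Clausen decomposition by differentiating $R(x)=\arctan x\ln x+\frac12\Cl_2(2\arctan x)+\frac12\Cl_2(\pi-2\arctan x)$ and matching the value at $x\to0^+$; your derivative computation and cancellation of the $1/(1+x^2)$ terms check out. What your route buys is a self-contained verification that does not lean on Lewin's two-variable formula (in effect you prove the needed special case of it); what the paper's route buys is brevity, since everything is delegated to the cited identity plus the listed functional equations. Two small wording corrections you should make: $\mathrm{Ti}_2$ is not entire (it has branch points at $\pm i$), but it is real-analytic on $(0,\infty)$, which is all the identity-theorem extension from $(0,1]$ requires; and the conjugate symmetry $\Li_2(-ix)=\overline{\Li_2(ix)}$ for $x>1$ follows from Schwarz reflection (as $\Li_2$ is real on $(-\infty,1)$) rather than directly from the series, or alternatively from repeating your continuation argument for $-ix$. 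Neither point affects the validity of the argument.
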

Identity~\eqref{eq.j0jplsn} follows from~(see \cite[p.292, Identity (1)]{Lewin}) the fact that
\begin{equation*}
\Im \Li_2 (re^{iy} ) = \omega \ln r + \frac{1}{2}\Cl_2 (2\omega ) + \frac{1}{2}\Cl_2 (2y) - \frac{1}{2}\Cl_2 (2\omega  + 2y),
\end{equation*}
where
\begin{equation*}
\tan \omega  = \frac{{r\sin y}}{{1 - r\cos y}}.
\end{equation*}

\begin{lemma}[\cite{adegoke14}]\label{lem.z4tjirr}
If $s$ is a positive integer, then
\begin{equation}\label{eq.v5lli77}
\arctan (\beta ^s ) = \frac{1}{2}\arctan \left( {\frac{2}{{F_s \sqrt 5 }}} \right),\quad\text{if $s$ is even},
\end{equation}
and
\begin{equation}\label{eq.ka7hwfm}
\arctan (-\beta ^s ) = \frac{1}{2}\arctan \left( {\frac{2}{{L_s}}} \right),\quad\text{if $s$ is odd}.
\end{equation}
\end{lemma}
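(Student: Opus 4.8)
The plan is to invoke the tangent double–angle identity $\tan(2\theta)=2\tan\theta/(1-\tan^2\theta)$ together with the factorizations of $1-\beta^{2r}$ recorded in~\eqref{eq.s2rjcqb}.

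First, suppose $s$ is even and set $\theta=\arctan(\beta^s)$, so that $\tan\theta=\beta^s$. Then
\[
\tan(2\theta)=\frac{2\beta^s}{1-\beta^{2s}}=\frac{2\beta^s}{\beta^s F_s\sqrt5}=\frac{2}{F_s\sqrt5},
\]
where the middle equality is the ``$r$ even'' case of~\eqref{eq.s2rjcqb}. To conclude that $2\theta=\arctan\!\bigl(2/(F_s\sqrt5)\bigr)$ — i.e.\ that no multiple of $\pi$ has to be added — I would note that $|\beta|=(\sqrt5-1)/2<1$, so for every positive integer $s$ we have $|\beta^s|<1$ and hence $|\theta|<\pi/4$, which places $2\theta$ in the principal branch $(-\pi/2,\pi/2)$. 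Dividing by $2$ gives~\eqref{eq.v5lli77}.

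For $s$ odd the argument is identical after replacing $\beta^s$ by $-\beta^s$: with $\theta=\arctan(-\beta^s)$ one gets $\tan(2\theta)=-2\beta^s/(1-\beta^{2s})=-2\beta^s/(-\beta^s L_s)=2/L_s$ by the ``$r$ odd'' case of~\eqref{eq.s2rjcqb}, and again $|\beta^s|<1$ keeps $2\theta$ in the principal branch, yielding~\eqref{eq.ka7hwfm}.

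The computation is short; the only point requiring a moment's care is the branch/range check for the arctangent, and that is settled immediately by the bound $|\beta^s|<1$ valid for all $s\ge1$, so no genuine obstacle is anticipated. One could alternatively derive the same two identities by expressing $\beta^s$ through an $\arctan$ via the Binet forms, as in the first lemma of the paper, but the double–angle route above is the most direct.
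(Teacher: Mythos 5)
Your argument is correct, and in fact the paper offers no proof of this lemma at all: it is imported verbatim from \cite{adegoke14} with only a citation, so your derivation fills in a proof rather than paralleling one. Your route is sound and self-contained: the double-angle identity gives $\tan(2\theta)=2\tan\theta/(1-\tan^2\theta)$, the factorizations in \eqref{eq.s2rjcqb} turn $2\beta^s/(1-\beta^{2s})$ into $2/(F_s\sqrt5)$ for even $s$ and $-2\beta^s/(1-\beta^{2s})$ into $2/L_s$ for odd $s$, and the branch issue is genuinely the only delicate point, which you settle correctly by observing $|\beta^s|\le|\beta|<1$, hence $|\theta|<\pi/4$ and $2\theta\in(-\pi/2,\pi/2)$, where the tangent is injective. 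It is worth noting that your computation is essentially the real specialization of the mechanism behind the paper's first lemma (the identity $\tan z = 2pq/(q^2-p^2)$ for $z=2\arctan(p/q)$, with $p=\beta^s$ or $-\beta^s$ and $q=1$), so your proof also makes transparent why that lemma and Lemma~\ref{lem.z4tjirr} are two faces of the same algebraic fact; the only added value of a more careful write-up would be to state explicitly that both sides of \eqref{eq.v5lli77} and \eqref{eq.ka7hwfm} are positive, which your principal-branch argument already covers.
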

\section{Complements of the integral identities of Stewart and Dilcher}

To illustrate the importance and broad applicability of Lemma~\ref{lem.fibfunctions} we now 
derive~\eqref{eq.bju5530},~\eqref{eq.stewart_compl2} and~\eqref{eq.djfhep4}.

\begin{theorem}
For all integers $n\geq 2$ and $k\geq 1$ we have
\begin{equation*}\tag{\ref{eq.bju5530}}
\int_{ - 1}^1 {\left( {L_k + F_k x\sqrt 5 } \right)^{n - 2} \left( {F_k \sqrt 5  + L_k x} \right)\,dx}
= \frac{{2^n }}{{(n - 1)\sqrt 5 }}\left( {\frac{{L_{kn} }}{{F_k }} - \frac{{F_{kn} L_k }}{{nF_k^2 }}} \right)
\end{equation*}
and
\begin{equation}\label{eq.xsn0tmc}
\int_{ - 1}^1 {\left( {L_k + F_k x\sqrt 5 } \right)^{n - 2} (1 - x)\,dx} 
= \frac{{2^n }}{{(n - 1)F_k \sqrt 5 }}\left( { - \beta ^{(n - 1)k} + \frac{{F_{nk} }}{{nF_k }}} \right).
\end{equation}
\end{theorem}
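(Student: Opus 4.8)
The plan is to prove both identities in one stroke, by promoting Stewart's identity~\eqref{eq.stewart} to an identity with a \emph{continuous} index and then differentiating it, reading off the two parts of the theorem as the real and imaginary parts. Writing $f$ and $l$ for the Fibonacci and Lucas functions of~\eqref{eq.r280nsg}, the first step is the master identity
\begin{equation*}
\int_{-1}^1\bigl(l(k)+f(k)x\sqrt5\bigr)^{n-1}\,dx=\frac{2^n}{n}\,\frac{f(kn)}{f(k)},\qquad k\in\mathbb R,\ n\in\mathbb Z,\ n\ge2,
\end{equation*}
which is proved exactly as Stewart's: the complex-linear substitution $u=l(k)+f(k)x\sqrt5$ has $u\big|_{x=\pm1}=l(k)\pm f(k)\sqrt5=2\alpha^k$ and $2\beta^k$; since $n$ is a positive integer, $u^{n-1}$ has the entire antiderivative $u^n/n$, so the resulting complex line integral equals $\bigl((2\alpha^k)^n-(2\beta^k)^n\bigr)/n=2^n\bigl(\alpha^{kn}-\beta^{kn}\bigr)/n$, and division by $f(k)\sqrt5$ finishes it. Here $\beta^x$ is kept on the fixed branch used in the proof of Lemma~\ref{lem.fibfunctions}; the only branch fact needed is $(\beta^k)^n=\beta^{kn}$, which holds because $n\in\mathbb Z^+$.

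The second step is to differentiate the master identity in $k$, which is legitimate since the integrand is a polynomial in $x$ depending smoothly on $k$. On the left, $\frac{d}{dk}\bigl(l(k)+f(k)x\sqrt5\bigr)^{n-1}=(n-1)\bigl(l(k)+f(k)x\sqrt5\bigr)^{n-2}\bigl(l'(k)+f'(k)x\sqrt5\bigr)$, and at $k\in\mathbb Z$ the base $L_k+F_kx\sqrt5$ is real, so real and imaginary parts fall only on $l'(k)+f'(k)x\sqrt5$. By~\eqref{eq.wyg8jr6} and~\eqref{eq.pagcd4g},
\begin{equation*}
\Re\bigl(l'(k)+f'(k)x\sqrt5\bigr)=(F_k\sqrt5+L_kx)\ln\alpha,\qquad\Im\bigl(l'(k)+f'(k)x\sqrt5\bigr)=\pi\beta^k(1-x);
\end{equation*}
this is exactly why the weight $F_k\sqrt5+L_kx$ enters~\eqref{eq.bju5530} and the weight $1-x$ enters~\eqref{eq.xsn0tmc}. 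On the right, $\frac{d}{dk}\frac{f(kn)}{f(k)}\big|_{k\in\mathbb Z}=\bigl(nf'(kn)F_k-F_{kn}f'(k)\bigr)/F_k^2$, whose real part is $\frac{\ln\alpha}{\sqrt5\,F_k^2}\bigl(nL_{kn}F_k-F_{kn}L_k\bigr)$ and whose imaginary part is $-\frac{\pi}{\sqrt5\,F_k^2}\bigl(n\beta^{kn}F_k-F_{kn}\beta^k\bigr)$, again by~\eqref{eq.wyg8jr6} and~\eqref{eq.pagcd4g}.

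Equating real parts, the factor $\ln\alpha$ cancels and solving for the integral gives $\frac{2^n}{n(n-1)\sqrt5\,F_k^2}\bigl(nL_{kn}F_k-F_{kn}L_k\bigr)$, which regroups to~\eqref{eq.bju5530}; equating imaginary parts, dividing through by $(n-1)\pi\beta^k$, and using $\alpha\beta=-1$ (so $\beta^{kn}/\beta^k=\beta^{(n-1)k}$) gives $\frac{2^n}{(n-1)\sqrt5\,F_k}\bigl(\frac{F_{kn}}{nF_k}-\beta^{(n-1)k}\bigr)$, which is~\eqref{eq.xsn0tmc}. Incidentally,~\eqref{eq.stewart_compl2} then follows at once by subtracting~\eqref{eq.xsn0tmc} from Stewart's identity with $n$ replaced by $n-1$.

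The one genuinely delicate point is the master identity: for non-integer $k$ the coefficients $l(k),f(k)$ are truly complex, so one must check that the substitution and the line integral make sense and that the branch of $\beta^x$ is held consistent with Lemma~\ref{lem.fibfunctions}; but since the exponent $n-1$ is a nonnegative integer the antiderivative $u^n/n$ is entire and there is no monodromy to worry about. Everything after that is routine: differentiation under the integral, the two derivative formulas of Lemma~\ref{lem.fibfunctions}, and Binet-form algebra. As a sanity check, at $n=2$ identity~\eqref{eq.bju5530} collapses to $\int_{-1}^1(F_k\sqrt5+L_kx)\,dx=2\sqrt5\,F_k$ (using $L_k^2-5F_k^2=4(-1)^k$) and~\eqref{eq.xsn0tmc} to $\int_{-1}^1(1-x)\,dx=2$ (using $L_k-2\beta^k=F_k\sqrt5$).
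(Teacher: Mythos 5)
Your proof is correct and follows essentially the same route as the paper: write Stewart's identity in the Fibonacci-function form, differentiate with respect to the index, and read off \eqref{eq.bju5530} and \eqref{eq.xsn0tmc} as the real and imaginary parts via Lemma~\ref{lem.fibfunctions}. The only difference is that you explicitly justify the continuous-index master identity (which the paper asserts without proof) by redoing Stewart's substitution with the branch of $\beta^x$ fixed as in Lemma~\ref{lem.fibfunctions} — a worthwhile, though small, addition.
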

\begin{proof}
The Fibonacci function form of~\eqref{eq.stewart} is
\begin{equation*}
\int_{- 1}^1 {\left( {l(t) + f(t)x\sqrt 5 } \right)^{n - 1}\,dx} = \frac{{f(tn)2^n }}{{nf(t)}},
\end{equation*}
which by differentiating with respect to $t$ gives
\begin{equation}\label{eq.tnsff40}
(n - 1)\int_{- 1}^1 {\left( {l(t) + f(t)x\sqrt 5 } \right)^{n - 2} \left( {\frac{d}{{dt}}l(t) + x\sqrt 5 \frac{d}{{dt}}f(t)} \right)}\,dx
= \frac{{2^n }}{{f(t)}}\frac{d}{{d t}}f(nt) - \frac{{2^n f(nt)}}{{nf(t)^2 }}\frac{d}{{dt}}f(t).
\end{equation}
Evaluating~\eqref{eq.tnsff40} at $t=k$ and taking real parts using~\eqref{eq.nfgepkq} and~\eqref{eq.wyg8jr6} and substituting
\begin{equation*}
\Re\left. {\frac{d}{{d t}}f(tn)} \right|_{t = k} = \frac{{L_{kn} }}{{\sqrt 5 }}\ln \alpha ,
\quad\Re\left. {\frac{d}{{dt}}l(t)} \right|_{t = k} = F_k \sqrt 5 \ln \alpha ,
\quad\Re\left. {\frac{d}{{dt}}f(t)} \right|_{t = k} = \frac{{L_k }}{{\sqrt 5 }}\ln \alpha,
\end{equation*}
\begin{equation*}
\left. {f(tn)} \right|_{t = k} = F_{kn} ,\quad\left. {f(t)} \right|_{t = k} = F_k ,\quad\left. {l(t)} \right|_{t = k} = L_k,
\end{equation*}
we obtain
\begin{equation*}
\begin{split}
&(n - 1)\int_{ - 1}^1 {\left( {L_k  + F_k x\sqrt 5 } \right)^{n - 2} \left( {F_k \sqrt 5 \ln \alpha  + x\sqrt 5 \frac{{L_k }}{{\sqrt 5 }}\ln \alpha } \right)\,dx} \\
&\qquad = \frac{{2^n }}{{F_k }}\frac{{L_{nk} }}{{\sqrt 5 }}\ln \alpha  - \frac{{2^n F_{nk} }}{{nF_k^2 }}\frac{{L_k }}{{\sqrt 5 }}\ln \alpha ,
\end{split}
\end{equation*}
from which~\eqref{eq.bju5530} follows.

Similarly, evaluating~\eqref{eq.tnsff40} at $t=k$ and taking imaginary parts using~\eqref{eq.nfgepkq} and~\eqref{eq.pagcd4g} and substituting
\begin{equation*}
\Im\left. {\frac{d}{{d t}}f(tn)} \right|_{t = k} = -\frac{\pi\beta^{nk}}{\sqrt 5} ,
\quad\Im\left. {\frac{d}{{dt}}l(t)} \right|_{t = k} = \pi\beta^k ,
\quad\Im\left. {\frac{d}{{dt}}f(t)} \right|_{t = k} = -\frac{\pi\beta^k}{\sqrt 5},
\end{equation*}
\begin{equation*}
\left. {f(tn)} \right|_{t = k} = F_{kn} ,\quad\left. {f(t)} \right|_{t = k} = F_k ,\quad\left. {l(t)} \right|_{t = k} = L_k,
\end{equation*}
we have
\begin{equation*}
\begin{split}
&(n - 1)\int_{ - 1}^1 {\left( {L_k  + F_k x\sqrt 5 } \right)^{n - 2} \left( {\pi \beta ^k  + x\sqrt 5 \left( { - \frac{{\pi \beta ^k }}{{\sqrt 5 }}} \right)} \right)\,dx} \\
&\qquad = \frac{{2^n }}{{F_k }}\left( { - \frac{{\pi \beta ^{nk} }}{{\sqrt 5 }}} \right) - \frac{{2^n F_{nk} }}{{nF_k^2 }}\left( { - \frac{{\pi \beta ^k }}{{\sqrt 5 }}} \right),
\end{split}
\end{equation*}
and hence~\eqref{eq.xsn0tmc} after dividing through by $\pi\beta^k$.
\end{proof}

\begin{corollary}
For all integers $n\geq 2$ and $k\geq 1$ we have
\begin{equation*}\tag{\ref{eq.stewart_compl2}}
\int_{-1}^1 \left( L_k + F_k x\sqrt 5 \right)^{n - 2} x\,dx 
= \frac{2^n}{(n-1)\sqrt{5}F_k}\left ( \frac{L_{(n-1)k}}{2}-\frac{F_{nk}}{nF_k}\right ).
\end{equation*}
\end{corollary}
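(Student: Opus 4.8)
The plan is to deduce \eqref{eq.stewart_compl2} from the companion identity \eqref{eq.xsn0tmc} just established, together with Stewart's original formula \eqref{eq.stewart}. The starting observation is the trivial splitting $x = 1 - (1-x)$ inside the integrand, which gives
\[
\int_{-1}^1 \left(L_k + F_k x\sqrt5\right)^{n-2} x\,dx = \int_{-1}^1 \left(L_k + F_k x\sqrt5\right)^{n-2}\,dx - \int_{-1}^1 \left(L_k + F_k x\sqrt5\right)^{n-2}(1-x)\,dx .
\]

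First I would evaluate the first integral on the right by applying \eqref{eq.stewart} with $n$ replaced by $n-1$; this is legitimate because $n \geq 2$ forces $n-1 \in \mathbb{N}$, and it yields $\dfrac{2^{n-1} F_{(n-1)k}}{(n-1)F_k}$. The second integral on the right is precisely \eqref{eq.xsn0tmc}, equal to $\dfrac{2^n}{(n-1)F_k\sqrt5}\left(-\beta^{(n-1)k} + \dfrac{F_{nk}}{nF_k}\right)$. Substituting both and factoring out $\dfrac{2^{n-1}}{(n-1)F_k\sqrt5}$, the bracketed expression that remains is $\sqrt5\, F_{(n-1)k} + 2\beta^{(n-1)k} - \dfrac{2F_{nk}}{nF_k}$.

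The final step is the elementary Binet simplification $\sqrt5\, F_m + 2\beta^m = (\alpha^m - \beta^m) + 2\beta^m = \alpha^m + \beta^m = L_m$, applied with $m = (n-1)k$; this collapses the bracket to $L_{(n-1)k} - \dfrac{2F_{nk}}{nF_k}$, and writing $\dfrac{2^{n-1}}{(n-1)F_k\sqrt5} = \dfrac{2^n}{2(n-1)F_k\sqrt5}$ produces exactly the right-hand side of \eqref{eq.stewart_compl2}. I expect no genuine obstacle: the whole argument is a two-line manipulation once \eqref{eq.xsn0tmc} is in hand, and the only point requiring a moment's care is checking that the index shift keeps Stewart's identity within its stated domain. (Alternatively, one could eliminate $\int_{-1}^1 (L_k + F_k x\sqrt5)^{n-2}\,dx$ between \eqref{eq.bju5530} and \eqref{eq.xsn0tmc} directly, using $L_m L_{m'} + 5F_m F_{m'} = 2L_{m+m'}$, but the route above is shorter.)
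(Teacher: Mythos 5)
Your proposal is correct and is essentially the paper's own argument: the paper's proof of this corollary is the one-line instruction ``combine \eqref{eq.stewart} with \eqref{eq.xsn0tmc}'', and your splitting $x = 1-(1-x)$, the index shift $n\mapsto n-1$ in Stewart's formula, and the Binet simplification $\sqrt5\,F_m + 2\beta^m = L_m$ are exactly the natural way to carry that combination out.
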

\begin{proof}
Combine \eqref{eq.stewart} with \eqref{eq.xsn0tmc}.
\end{proof}

The complement of Dilcher's identity is given in the next theorem.
\begin{theorem}
For all integers $n\geq 1$, we have
\begin{equation*}\tag{\ref{eq.djfhep4}}
\begin{split}
&\int_0^\pi {\left( {1 + \frac{{\sqrt 5 }}{3}\cos x} \right)^{n - 1} \ln \left( {1 + \frac{{\sqrt 5 }}{3}\cos x} \right)\sin x\,dx} \\
&\qquad = \frac{6}{{n\sqrt 5 }}\left( {\frac{2}{3}} \right)^n L_{2n} \ln \alpha + \left( -\frac{1}{n} + \ln \left( {\frac{2}{3}} \right) \right)\left( {\frac{2}{3}} \right)^n \frac{3}{n}F_{2n}.
\end{split}
\end{equation*}
\end{theorem}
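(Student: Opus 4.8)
The plan is to prove \eqref{eq.djfhep4} along the same lines as \eqref{eq.bju5530}: recast Dilcher's identity \eqref{eq.dilcher} as an identity in a continuous parameter $t$ using the Fibonacci function $f$ of Lemma~\ref{lem.fibfunctions}, differentiate with respect to $t$, and then set $t=n$ using \eqref{eq.nfgepkq} and \eqref{eq.wyg8jr6}.

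First I would record the Fibonacci function form of \eqref{eq.dilcher}, namely that for every real $t>0$
\[
\int_0^\pi\left(1+\frac{\sqrt5}{3}\cos x\right)^{t-1}\sin x\,dx=\frac{3}{t}\left(\frac23\right)^{t}f(2t).
\]
This follows from the substitution $u=\cos x$, which rewrites the left-hand side as $\int_{-1}^{1}\left(1+\tfrac{\sqrt5}{3}u\right)^{t-1}du=\frac{3}{\sqrt5\,t}\left(\left(1+\tfrac{\sqrt5}{3}\right)^{t}-\left(1-\tfrac{\sqrt5}{3}\right)^{t}\right)$, combined with the elementary relations $2\alpha^{2}=3+\sqrt5$ and $2\beta^{2}=3-\sqrt5$, so that $1\pm\tfrac{\sqrt5}{3}=\tfrac23\alpha^{2}$ (resp.\ $\tfrac23\beta^{2}$) and the bracket collapses to $\left(\tfrac23\right)^{t}\!\left(\alpha^{2t}-\beta^{2t}\right)=\left(\tfrac23\right)^{t}\sqrt5\,f(2t)$. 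Since $1+\tfrac{\sqrt5}{3}\cos x\in\left[\tfrac{3-\sqrt5}{3},\tfrac{3+\sqrt5}{3}\right]\subset(0,\infty)$ on $[0,\pi]$, the powers here are ordinary positive real powers; in particular $f$ appears only at the even argument $2t$, where $\beta^{2t}=(\beta^{2})^{t}$ is real, so the displayed identity is a genuine identity between real numbers and $t\mapsto f(2t)$ is real-analytic.

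Next I would differentiate both sides in $t$. On the left, Leibniz's rule applies because the integrand and its $t$-derivative are jointly continuous on $[0,\pi]\times(0,\infty)$, producing $\int_0^\pi\left(1+\tfrac{\sqrt5}{3}\cos x\right)^{t-1}\ln\!\left(1+\tfrac{\sqrt5}{3}\cos x\right)\sin x\,dx$. On the right, the product rule on the factors $\tfrac3t$, $\left(\tfrac23\right)^{t}$, $f(2t)$, together with $\tfrac{d}{dt}f(2t)=2f'(2t)$, gives
\[
\frac{3}{t}\left(\frac23\right)^{t}\left(\left(-\frac1t+\ln\frac23\right)f(2t)+2f'(2t)\right).
\]
Setting $t=n$ and using \eqref{eq.nfgepkq} and the real part recorded in \eqref{eq.wyg8jr6} — so $f(2n)=F_{2n}$ and $\tfrac{d}{dt}f(2t)\big|_{t=n}=2\cdot\tfrac{L_{2n}}{\sqrt5}\ln\alpha$ (twice the real part in \eqref{eq.wyg8jr6}; no imaginary term enters because, as noted above, $f$ is real on even arguments) — the right-hand side becomes $\left(-\tfrac1n+\ln\tfrac23\right)\tfrac3n\left(\tfrac23\right)^{n}F_{2n}+\tfrac{6}{n\sqrt5}\left(\tfrac23\right)^{n}L_{2n}\ln\alpha$, which is exactly the claimed right-hand side of \eqref{eq.djfhep4}.

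The argument is short and essentially mechanical once the parametrization is in place; the only point I expect to be worth spelling out — and the one place this argument genuinely differs from the proof of \eqref{eq.bju5530} — is that the Fibonacci function here must be used only on even arguments, where it is real-valued, so (unlike in that proof) there is no imaginary part to separate off and hence no companion identity. Equivalently one could dispense with $f$ altogether and simply differentiate the explicit right-hand side $\tfrac{3}{\sqrt5\,t}\left(\tfrac23\right)^{t}\!\left(\alpha^{2t}-\beta^{2t}\right)$ of the substitution identity, using $\beta^{2}=e^{-2\ln\alpha}$. The sole analytic ingredient is the justification of differentiation under the integral sign, and that is immediate from the boundedness of $1+\tfrac{\sqrt5}{3}\cos x$ away from $0$ and $\infty$ on $[0,\pi]$.
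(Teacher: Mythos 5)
Your proposal is correct and follows essentially the same route as the paper: write Dilcher's identity \eqref{eq.dilcher} in its $t$-parametrized (Fibonacci-function) form, differentiate with respect to $t$, and evaluate at $t=n$ via Lemma~\ref{lem.fibfunctions}, with the chain-rule factor $2$ producing exactly the stated coefficient $\frac{6}{n\sqrt5}\left(\frac23\right)^n L_{2n}\ln\alpha$. Your additional observation that here $\beta^{2t}$ may be read as the real power $(\beta^2)^t$, so the parametrized identity is genuinely real and the paper's ``take the real part'' step becomes unnecessary, is a welcome tightening of the same argument rather than a different method.
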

\begin{proof}
Differentiating the Fibonacci function form of~\eqref{eq.dilcher}, that is,
\begin{equation*}
\int_0^\pi \left( {1 + \frac{{\sqrt 5 }}{3}\cos x} \right)^{t - 1} \sin x\,dx = \frac{{2f(2t)}}{t}\left( {\frac{2}{3}} \right)^{t - 1}
\end{equation*}
with respect to $t$ gives
\begin{equation}\label{eq.q6rf1fk}
\begin{split}
&\int_0^\pi {\left( {1 + \frac{{\sqrt 5 }}{3}\cos x} \right)^{t - 1} \ln \left( {1 + \frac{{\sqrt 5 }}{3}\cos x} \right)\sin x\,dx} \\
&\qquad = 4\left( {\frac{2}{3}} \right)^{t - 1} \frac{d}{{dt}}f(2t) - 2\frac{{f(2t)}}{{t^2 }}\left( {\frac{2}{3}} \right)^{t - 1} + \frac{{2f(t)}}{t}\left( {\frac{2}{3}} \right)^{t - 1} \ln \left( {\frac{2}{3}} \right).
\end{split}
\end{equation}
Evaluating~\eqref{eq.q6rf1fk} at $t=n$ and taking the real part gives
\begin{equation*}
\begin{split}
&\int_0^\pi {\left( {1 + \frac{{\sqrt 5 }}{3}\cos x} \right)^{n - 1} \ln \left( {1 + \frac{{\sqrt 5 }}{3}\cos x} \right)\sin x\,dx} \\
&\qquad = \frac4n\left( {\frac{2}{3}} \right)^{n - 1} \Re\left. {\frac{d}{{dt}}f(2t)} \right|_{t = n} - 2\frac{{F_{2n} }}{{n^2 }}\left( {\frac{2}{3}} \right)^{n - 1} + \frac{{2F_{2n} }}{n}\left( {\frac{2}{3}} \right)^{n - 1} \ln \left( {\frac{2}{3}} \right)\\
&\qquad= \frac4n\left( {\frac{2}{3}} \right)^{n - 1} \frac{{L_{2n} }}{{\sqrt 5 }}\ln (\alpha ) - 2\frac{{F_{2n} }}{{n^2 }}\left( {\frac{2}{3}} \right)^{n - 1} + \frac{{2F_{2n} }}{n}\left( {\frac{2}{3}} \right)^{n - 1} \ln \left( {\frac{2}{3}} \right),
\end{split}
\end{equation*}
which simplifies to~\eqref{eq.djfhep4}.
\end{proof}

\section{Results associated with~\eqref{eq.nx3w40i}}

\begin{theorem}\label{thm_tan_id}
Let $r$ be an integer. Then
\begin{gather}
\int_0^{\pi /2} {\ln \left( {1 + L_{2r} \tan ^2 x + \tan ^4 x} \right)\,dx}  =  \begin{cases}
 \pi \ln (F_r \sqrt 5  + 2),&\text{if $r$ is odd}; \\
 \pi \ln (L_r  + 2), &\text{if $r$ is even};\label{eq.m6bi7ta}\\
 \end{cases} \\
 \int_0^{\pi /2} {\ln \left( {\frac{{\left( {1 + \alpha ^{2r}  + \tan ^2 x} \right)^2 }}{{1 + L_{2r} \tan ^2 x + \tan ^4 x}}} \right)\,dx}
	= \begin{cases}
 \pi r\ln \alpha,&\text{if $r$ is odd};  \\
 \pi \ln \left( {\frac{{(1 + \alpha ^r )^2 }}{{L_r  + 2}}} \right),&\text{if $r$ is odd}. \label{eq.k2xkue3}\\
 \end{cases}
\end{gather}
\end{theorem}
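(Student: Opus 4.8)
The plan is to reduce the whole theorem to one auxiliary evaluation,
\[
\int_0^{\pi /2}\ln\!\left(1+q^2\tan^2 x\right)dx=\pi\ln(1+q),\qquad q\ge 0,
\]
which I would obtain directly from~\eqref{eq.nx3w40i}: differentiating $\int_0^{\pi/2}\Li_2(-q^2\tan^2 x)\,dx=2\pi\Li_2(-q)$ with respect to $q$ under the integral sign and using $\tfrac{d}{dz}\Li_2(z)=-\ln(1-z)/z$ turns both sides into $-\tfrac{2}{q}$ times the quantities above; equivalently it is the classical $\int_0^\infty\frac{\ln(q^2+u^2)}{1+u^2}\,du=\pi\ln(1+q)$ after $u=\tan x$. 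The second ingredient is the factorization over the Binet bases: since $L_{2r}=\alpha^{2r}+\beta^{2r}$ and $\alpha^{2r}\beta^{2r}=(\alpha\beta)^{2r}=1$,
\[
1+L_{2r}\tan^2 x+\tan^4 x=\left(1+\alpha^{2r}\tan^2 x\right)\left(1+\beta^{2r}\tan^2 x\right).
\]

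For~\eqref{eq.m6bi7ta} I would take logarithms in this factorization and integrate termwise, applying the auxiliary formula once with $q=\alpha^r>0$ and once with $q=\sqrt{\beta^{2r}}=|\beta|^r$; because $\alpha|\beta|=1$ the latter parameter is $\alpha^{-r}$. This gives $\pi\ln(1+\alpha^r)+\pi\ln(1+\alpha^{-r})=\pi\ln\!\big((1+\alpha^r)(1+\alpha^{-r})\big)=\pi\ln\!\big(2+\alpha^r+\alpha^{-r}\big)$. Finally $\alpha^{-r}=(-\beta)^r=(-1)^r\beta^r$, so $\alpha^r+\alpha^{-r}$ equals $\alpha^r-\beta^r=\sqrt5\,F_r$ when $r$ is odd and $\alpha^r+\beta^r=L_r$ when $r$ is even, which is exactly the two-case right-hand side.

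For~\eqref{eq.k2xkue3} the numerator of the integrand is the square of the first factor above, $\left(1+\alpha^{2r}\tan^2 x\right)^2$, so the quotient collapses to $\dfrac{1+\alpha^{2r}\tan^2 x}{1+\beta^{2r}\tan^2 x}$. Integrating its logarithm via the auxiliary formula yields $\pi\ln(1+\alpha^r)-\pi\ln(1+\alpha^{-r})=\pi\ln\dfrac{1+\alpha^r}{1+\alpha^{-r}}=\pi\ln\alpha^r=\pi r\ln\alpha$, which is the odd-$r$ closed form; for even $r$ one has $\alpha^{-r}=\beta^r$, hence $(1+\alpha^r)(1+\alpha^{-r})=1+L_r+(\alpha\beta)^r=L_r+2$, and writing $\alpha^r=(1+\alpha^r)^2/\big((1+\alpha^r)(1+\alpha^{-r})\big)$ produces the stated $\pi\ln\frac{(1+\alpha^r)^2}{L_r+2}$ (so the two cases are the same number, expressed through $L_r$ or through $\ln\alpha$).

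No step is genuinely difficult; the only things needing care are recognizing the quartic in $\tan x$ as a product over the Binet bases, keeping the identity $|\beta|^r=\alpha^{-r}$ straight, performing the parity split of $\alpha^r+\alpha^{-r}$ into $F_r\sqrt5$ or $L_r$, and checking that the auxiliary formula is applied only with non-negative parameters (here $\alpha^{\pm r}>0$). A quick consistency check at $r=0$ (where $L_0=2$) gives $2\pi\ln 2=\pi\ln 4$ and $0=0$, as it must.
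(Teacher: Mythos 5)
Your proposal is correct and takes essentially the same route as the paper: differentiate Lewin's identity~\eqref{eq.nx3w40i} to obtain $\int_0^{\pi/2}\ln\left(1+q^2\tan^2 x\right)dx=\pi\ln(1+q)$, substitute $q=\alpha^{r}$ and $q=|\beta|^{r}=\alpha^{-r}$ (the paper writes $q=\pm\beta^{r}$ according to the parity of $r$), and then add or subtract, simplifying with the Binet forms. The only discrepancy is notational: in~\eqref{eq.k2xkue3} you read the numerator as $\left(1+\alpha^{2r}\tan^{2}x\right)^{2}$, which is exactly what the subtraction step yields and what the stated right-hand sides require, the printed $\left(1+\alpha^{2r}+\tan^{2}x\right)^{2}$ (and the repeated ``$r$ odd'' case label) being typos in the theorem statement rather than gaps in your argument.
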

\begin{proof}
Differentiate~\eqref{eq.nx3w40i} with respect to $q$ to get
\begin{equation}\label{ln_tan_int_fromLew}
\int_0^{\pi/2} \ln \left (1+q^2 \tan^2 x \right ) \,dx = \pi \ln (1+q).
\end{equation}
Set $q=\alpha^r$ and $q=-\beta^r$, in turn, for the case when $r$ is an odd integer.
Use $q=\alpha^r$ and $q=\beta^r$, in turn, for the case when $r$ is an even integer.
Combine according to the Binet formulas; addition gives~\eqref{eq.m6bi7ta} while subtraction gives~\eqref{eq.k2xkue3}.
\end{proof}

\begin{corollary}\label{cor_tan_id}
If $r$ is an integer, then
\begin{equation}
\int_0^{\pi /2} {\frac{{\tan ^2 x}}{{1 + L_{2r} \tan ^2 x + \tan ^4 x}}\,dx}  =  \begin{cases}
 \dfrac{\pi }{2}\,\dfrac{1}{{F_r \sqrt 5 (F_r \sqrt 5  + 2)}},&\text{if $r$ is odd}; \\
 \dfrac{\pi }{2}\,\dfrac{1}{{L_r (L_r  + 2)}},&\text{if $r$ is even}. \\
 \end{cases}
\end{equation}
\end{corollary}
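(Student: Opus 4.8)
The plan is to exploit the factorization
\[
1 + L_{2r}\tan^2 x + \tan^4 x = \bigl(1 + \alpha^{2r}\tan^2 x\bigr)\bigl(1 + \beta^{2r}\tan^2 x\bigr),
\]
valid because $\alpha^{2r}\beta^{2r}=(\alpha\beta)^{2r}=1$ and $\alpha^{2r}+\beta^{2r}=L_{2r}$. Writing $t=\tan^2 x$ and doing partial fractions gives
\[
\frac{t}{(1+\alpha^{2r}t)(1+\beta^{2r}t)}=\frac{1}{\alpha^{2r}-\beta^{2r}}\left(\frac{1}{1+\beta^{2r}t}-\frac{1}{1+\alpha^{2r}t}\right)=\frac{1}{F_{2r}\sqrt5}\left(\frac{1}{1+\beta^{2r}t}-\frac{1}{1+\alpha^{2r}t}\right),
\]
so the integrand in the corollary is a combination of two integrands of the type $1/(1+q^2\tan^2 x)$.

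Next I would record the elementary evaluation $\displaystyle\int_0^{\pi/2}\frac{dx}{1+q^2\tan^2 x}=\frac{\pi}{2(1+q)}$ for $q>0$, which follows from the formula $\int_0^{\pi/2}\ln(1+q^2\tan^2x)\,dx=\pi\ln(1+q)$ already used in the proof of Theorem~\ref{thm_tan_id}: differentiating \eqref{ln_tan_int_fromLew} in $q$ yields $\int_0^{\pi/2}\frac{q^2\tan^2x}{1+q^2\tan^2x}\,dx=\frac{\pi q}{2(1+q)}$, and subtracting this from $\int_0^{\pi/2}1\,dx=\pi/2$ gives the claim. Applying it with $q=\alpha^r$ (legitimate since $\alpha^r>0$) and with $q=\sqrt{\beta^{2r}}=|\beta^r|$ produces
\[
\int_0^{\pi/2}\frac{\tan^2 x}{1+L_{2r}\tan^2x+\tan^4x}\,dx=\frac{\pi}{2F_{2r}\sqrt5}\left(\frac{1}{1+|\beta^r|}-\frac{1}{1+\alpha^r}\right).
\]

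Finally I would split on the parity of $r$. For $r$ odd, $|\beta^r|=-\beta^r$, the numerator collapses via $\frac{1}{1-\beta^r}-\frac{1}{1+\alpha^r}=\frac{\alpha^r+\beta^r}{(1-\beta^r)(1+\alpha^r)}=\frac{L_r}{(1-\beta^r)(1+\alpha^r)}$, and $(1-\beta^r)(1+\alpha^r)=1+(\alpha^r-\beta^r)-(\alpha\beta)^r=F_r\sqrt5+2$; using $F_{2r}=F_rL_r$ this gives $\frac{\pi}{2}\cdot\frac{1}{F_r\sqrt5(F_r\sqrt5+2)}$. For $r$ even, $|\beta^r|=\beta^r$, the numerator collapses to $\frac{\alpha^r-\beta^r}{(1+\beta^r)(1+\alpha^r)}=\frac{F_r\sqrt5}{(1+\beta^r)(1+\alpha^r)}$, and $(1+\beta^r)(1+\alpha^r)=1+L_r+(\alpha\beta)^r=L_r+2$; again with $F_{2r}=F_rL_r$ this gives $\frac{\pi}{2}\cdot\frac{1}{L_r(L_r+2)}$. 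The only point requiring care — hardly an obstacle — is keeping the sign of $\sqrt{\beta^{2r}}$ correct in the two cases, since $\beta^r<0$ precisely when $r$ is odd; everything else is routine Binet-form algebra.
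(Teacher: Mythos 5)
Your proposal is correct, but it proves the corollary by a genuinely different route than the paper. The paper differentiates the Fibonacci--Lucas \emph{function} form of the logarithmic identity \eqref{eq.m6bi7ta} with respect to the index $r$, using Lemma~\ref{lem.fibfunctions} (specifically \eqref{eq.wyg8jr6}) and taking real parts: the factor $2\sqrt5 F_{2r}\ln\alpha$ appears on the left from $\frac{d}{dr}l(2r)$, the factors $\sqrt5 F_r\ln\alpha$ or $\frac{L_r}{\sqrt5}\ln\alpha$ appear on the right, and $F_{2r}=F_rL_r$ finishes the computation. You instead factor $1+L_{2r}\tan^2x+\tan^4x=(1+\alpha^{2r}\tan^2x)(1+\beta^{2r}\tan^2x)$, do partial fractions with $\alpha^{2r}-\beta^{2r}=F_{2r}\sqrt5$, and evaluate each piece with the elementary formula $\int_0^{\pi/2}\frac{dx}{1+q^2\tan^2x}=\frac{\pi}{2(1+q)}$ (itself extracted correctly from \eqref{ln_tan_int_fromLew}, or equivalently from the paper's \eqref{eq.t2k7wzu} after $q\mapsto1/q$); your parity bookkeeping for $\sqrt{\beta^{2r}}=|\beta^r|$ and the closed forms $(1-\beta^r)(1+\alpha^r)=F_r\sqrt5+2$, $(1+\beta^r)(1+\alpha^r)=L_r+2$ are all right, and the argument even covers negative $r$ without change. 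What each approach buys: yours is more elementary and self-contained --- purely real-variable calculus plus Binet algebra, no complex-valued Fibonacci functions and no differentiation with respect to the index --- while the paper's method showcases the unifying differentiation lemma that drives most of its results and, as a bonus, its imaginary part would yield further companion identities that your decomposition does not produce.
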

\begin{proof}
Differentiate the Fibonacci and Lucas function form of~\eqref{eq.m6bi7ta} with respect to $r$, making use of~\eqref{eq.wyg8jr6}.
\end{proof}

\begin{corollary}
If $r$ is an integer, then
\begin{equation}
\int_0^{\pi /2} \frac{1}{1 + L_{2r} \tan^2 x + \tan ^4 x}\,dx =  \begin{cases}
 \frac{\pi }{2}\,\frac{1}{L_{2r}(\sqrt{5} F_r + 2)}(L_{2r} + \sqrt{5} F_r - \frac{2}{\sqrt{5}F_r}),&\text{if $r$ is odd}; \\
 \frac{\pi }{2}\,\frac{1}{L_{2r}(L_r + 2)}(L_{2r} + L_r - \frac{2}{L_r}),&\text{if $r$ is even}. \\
 \end{cases}
\end{equation}
\end{corollary}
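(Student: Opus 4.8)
The plan is to deduce this from Corollary~\ref{cor_tan_id} together with an elementary symmetry of the integrand. Abbreviating $t=\tan x$, set for $m\in\{0,2,4\}$
\begin{equation*}
I_m=\int_0^{\pi/2}\frac{\tan^m x}{1+L_{2r}\tan^2 x+\tan^4 x}\,dx .
\end{equation*}
Each of these converges: $1+L_{2r}t^2+t^4\ge(1+t^2)^2>0$ for every real $t$ (since $L_{2r}\ge L_0=2$), and as $x\to\pi/2$ the integrands of $I_0$ and $I_2$ tend to $0$ while that of $I_4$ tends to $1$. The denominator is palindromic in $t$, so under $x\mapsto\pi/2-x$, which replaces $\tan x$ by $\cot x$, the integrand of $I_0$ becomes $1/(1+L_{2r}\cot^2 x+\cot^4 x)$; multiplying numerator and denominator by $\tan^4 x$ turns this into the integrand of $I_4$, whence $I_0=I_4$. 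Since $1+L_{2r}\tan^2 x+\tan^4 x$ is exactly the numerator of $I_0+L_{2r}I_2+I_4$, that combination equals $\int_0^{\pi/2}dx=\pi/2$, and therefore
\begin{equation*}
I_0=\frac{\pi}{4}-\frac{L_{2r}}{2}\,I_2 .
\end{equation*}

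It remains to insert the value of $I_2$ from Corollary~\ref{cor_tan_id} and simplify. Writing $a=F_r\sqrt5$ when $r$ is odd and $a=L_r$ when $r$ is even, Corollary~\ref{cor_tan_id} gives $I_2=\tfrac{\pi}{2}\cdot\tfrac{1}{a(a+2)}$ in both cases, so
\begin{equation*}
I_0=\frac{\pi}{4}\left(1-\frac{L_{2r}}{a(a+2)}\right)=\frac{\pi}{4}\cdot\frac{a^2+2a-L_{2r}}{a(a+2)} .
\end{equation*}
The remaining work is algebra with the Binet forms. From $L_{2r}=5F_r^2+2(-1)^r=L_r^2-2(-1)^r$ one gets $a^2=L_{2r}+2$ in either parity case, hence $a^2+2a-L_{2r}=2(a+1)$, so $I_0=\tfrac{\pi}{2}(a+1)/\bigl(a(a+2)\bigr)$; multiplying numerator and denominator by $L_{2r}$ and using $a^2-2=L_{2r}$ to rewrite $aL_{2r}+a^2-2=L_{2r}(a+1)$ puts $I_0$ into the form $\tfrac{\pi}{2}\bigl(L_{2r}+a-2/a\bigr)/\bigl(L_{2r}(a+2)\bigr)$, which is precisely the claimed expression for the two parities.

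I do not expect a real obstacle. The one point demanding care is the palindromic-symmetry step, where the powers of $\tan x$ must be tracked correctly when passing from $x$ to $\pi/2-x$; a convenient sanity check is $r=0$, for which the right-hand side reduces to $3\pi/16=\int_0^{\pi/2}\cos^4 x\,dx$. Alternatively, one could factor the denominator as $\bigl(\tan^2 x+\alpha^{2r}\bigr)\bigl(\tan^2 x+\beta^{2r}\bigr)$ and integrate by partial fractions after the substitution $u=\tan x$, but the argument above is shorter.
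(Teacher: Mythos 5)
Your proof is correct, but it reaches the key linear relation among the three integrals by a different and more elementary route than the paper. Writing $I_m=\int_0^{\pi/2}\tan^m x\,\bigl(1+L_{2r}\tan^2x+\tan^4x\bigr)^{-1}dx$, the paper does not use the symmetry $I_0=I_4$ at all: it derives from Lewin's identity the evaluation $\int_0^{\pi/2}\frac{dx}{q^2+\tan^2x}=\frac{\pi}{2q(1+q)}$ (equation~\eqref{eq.t2k7wzu}), substitutes $q=\alpha^r$ and $q=\pm\beta^r$, and adds via the Binet forms to get $L_{2r}I_0+2I_2=\frac{\pi}{2}\,\frac{a+L_{2r}}{a+2}$ (with $a=\sqrt5F_r$ or $L_r$ according to parity), after which the stated formula follows by inserting the value of $I_2$ from Corollary~\ref{cor_tan_id}. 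You instead obtain the relation $2I_0+L_{2r}I_2=\pi/2$ from the substitution $x\mapsto\pi/2-x$ (giving $I_0=I_4$) together with the trivial identity $I_0+L_{2r}I_2+I_4=\pi/2$, and then likewise finish with Corollary~\ref{cor_tan_id}; your Binet-form algebra ($a^2=L_{2r}+2$, hence $a^2+2a-L_{2r}=2(a+1)$) correctly reconciles your closed form $\frac{\pi}{2}\,\frac{a+1}{a(a+2)}$ with the shape stated in the corollary, and the $r=0$ check $3\pi/16$ is right. What your route buys is self-containedness and brevity: no second application of the Lewin-derived integral is needed, only symmetry of the palindromic denominator. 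What the paper's route buys is consistency with its overall method (everything flows from~\eqref{eq.nx3w40i}) and, as a by-product, the intermediate identities for $\int_0^{\pi/2}\frac{L_{2r}+2\tan^2x}{1+L_{2r}\tan^2x+\tan^4x}\,dx$, which are recorded in the proof and have independent interest. Both arguments ultimately rest on Corollary~\ref{cor_tan_id}, so neither is logically weaker on that score.
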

\begin{proof}
Replacing $q$ by $1/q$ in \eqref{ln_tan_int_fromLew} shows that
\begin{equation}
\int_0^{\pi/2} \ln \left (q^2 + \tan^2 x \right ) \,dx = \pi \ln (1+q).
\end{equation}
This yields
\begin{equation}\label{eq.t2k7wzu}
\int_0^{\pi/2} \frac{1}{q^2 + \tan^2 x} \,dx = \frac{\pi}{2 q (1+q)}.
\end{equation}
From here, we can proceed like in the proof of Theorem \ref{thm_tan_id} getting
\begin{equation*}
\int_0^{\pi/2} \frac{L_{2r}+2\tan^2 x}{1 + L_{2r} \tan^2 x + \tan ^4 x}\,dx = \frac{\pi}{2} \frac{\sqrt{5}F_r + L_{2r}}{\sqrt{5}F_r + 2}, \qquad r\,\mbox{odd}
\end{equation*}
and
\begin{equation*}
\int_0^{\pi/2} \frac{L_{2r}+2\tan^2 x}{1 + L_{2r} \tan^2 x + \tan ^4 x}\,dx = \frac{\pi}{2} \frac{L_r + L_{2r}}{L_r + 2}, \qquad r\,\mbox{even}.
\end{equation*}
This completes the proof.
\end{proof}

\begin{lemma}
If $n$ is a non-negative integer and $q$ is a positive number, then
\begin{equation}\label{eq.rokbvu0}
\int_0^{\pi /2} {\frac{{\,dx}}{{(q^2  + \tan ^2 x)^{n + 1}}}\sum_{k = 0}^{\left\lfloor {n/2} \right\rfloor } {\frac1{q^{2k}} \binom{{n}}{{2k}}\frac{{( - 1)^k }}{{2k + 1}}\tan ^{2k} x} }  = \frac{\pi }{2}\frac{1}{{n + 1}}\left( {\frac{1}{{q^{2n + 1} }} - \frac q{{(q(q + 1))^{n + 1} }}} \right).
\end{equation}
\end{lemma}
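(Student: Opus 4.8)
The plan is to reduce the integral to one over $(0,\infty)$ by the substitution $t=\tan x$, to rewrite the finite sum in closed (complex) form, and then to evaluate the resulting single integral by a Laplace‑transform representation.

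\textbf{Step 1 (substitution).} Putting $t=\tan x$, so $\,dx=\,dt/(1+t^2)$, the left‑hand side of~\eqref{eq.rokbvu0} becomes
\begin{equation*}
\int_0^\infty\frac{1}{(1+t^2)(q^2+t^2)^{n+1}}\sum_{k=0}^{\lfloor n/2\rfloor}\binom{n}{2k}\frac{(-1)^k}{(2k+1)q^{2k}}\,t^{2k}\,dt .
\end{equation*}

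\textbf{Step 2 (the sum).} With $y=t/q$ the sum equals $\sum_{k=0}^{\lfloor n/2\rfloor}\binom{n}{2k}\tfrac{(-1)^k}{2k+1}y^{2k}$. Since $\sum_{k=0}^{\lfloor n/2\rfloor}\binom{n}{2k}(-1)^k u^{2k}=\Re\bigl[(1+iu)^n\bigr]$, integrating from $0$ to $y$ and dividing by $y$ gives
\begin{equation*}
\sum_{k=0}^{\lfloor n/2\rfloor}\binom{n}{2k}\frac{(-1)^k}{2k+1}y^{2k}=\frac1y\int_0^y\Re\bigl[(1+iu)^n\bigr]\,du=\frac1y\,\Re\!\left[\frac{(1+iy)^{n+1}-1}{i(n+1)}\right]=\frac{\Im\bigl[(1+iy)^{n+1}\bigr]}{(n+1)\,y}.
\end{equation*}
Because $1+iy=(q+it)/q$ and $y=t/q$, this is $\Im[(q+it)^{n+1}]\big/\bigl((n+1)q^n t\bigr)$; and since $q^2+t^2=(q+it)(q-it)$, we obtain
\begin{equation*}
\frac{1}{(q^2+t^2)^{n+1}}\sum_{k=0}^{\lfloor n/2\rfloor}\binom{n}{2k}\frac{(-1)^k}{(2k+1)q^{2k}}\,t^{2k}=\frac{1}{(n+1)q^n t}\,\Im\!\left[\frac{1}{(q-it)^{n+1}}\right].
\end{equation*}
Hence the left‑hand side of~\eqref{eq.rokbvu0} equals $\dfrac{1}{(n+1)q^n}\displaystyle\int_0^\infty\frac{1}{t(1+t^2)}\,\Im\!\left[\frac{1}{(q-it)^{n+1}}\right]dt$ (the integrand is regular at $t=0$, the bracket vanishing there).

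\textbf{Step 3 (Laplace trick and conclusion).} For $q>0$ we have $1/(q-it)^{n+1}=\tfrac1{n!}\int_0^\infty u^n e^{-(q-it)u}\,du$, whose imaginary part is $\tfrac1{n!}\int_0^\infty u^n e^{-qu}\sin(tu)\,du$. Interchanging the order of integration (permissible by absolute convergence) shows that the left‑hand side of~\eqref{eq.rokbvu0} equals
\begin{equation*}
\frac{1}{(n+1)\,q^n\,n!}\int_0^\infty u^n e^{-qu}\left(\int_0^\infty\frac{\sin(tu)}{t(1+t^2)}\,dt\right)du .
\end{equation*}
The inner integral has the classical value $\tfrac{\pi}{2}\bigl(1-e^{-u}\bigr)$ for $u>0$ (differentiate under the integral sign and use $\int_0^\infty\tfrac{\cos(tu)}{1+t^2}\,dt=\tfrac{\pi}{2}e^{-u}$). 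Therefore the left‑hand side of~\eqref{eq.rokbvu0} equals
\begin{equation*}
\frac{\pi}{2(n+1)\,q^n\,n!}\int_0^\infty u^n e^{-qu}\bigl(1-e^{-u}\bigr)\,du=\frac{\pi}{2(n+1)\,q^n}\left(\frac{1}{q^{n+1}}-\frac{1}{(q+1)^{n+1}}\right),
\end{equation*}
and rewriting $\dfrac{1}{q^n(q+1)^{n+1}}=\dfrac{q}{\bigl(q(q+1)\bigr)^{n+1}}$ gives exactly the right‑hand side of~\eqref{eq.rokbvu0}.

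\textbf{Main obstacle.} The only non‑mechanical point is Step 2: one has to recognise the weights $\binom{n}{2k}(-1)^k/(2k+1)$ as those produced by integrating $\Re[(1+iu)^n]$ and dividing by $u$, and then collapse the resulting complex expression using $q^2+t^2=(q+it)(q-it)$; everything after that is a routine Fubini argument together with two standard definite integrals. (Alternatively, the single integral $\int_0^\infty \Im[(q-it)^{-n-1}]\big/(t(1+t^2))\,dt$ may be computed directly by residues: the symmetrised integrand has a simple pole at $t=i$ with residue $-\tfrac12(q+1)^{-n-1}$ and a simple pole at $t=0$ on the contour, which again yields $\tfrac{\pi}{2}\bigl(q^{-n-1}-(q+1)^{-n-1}\bigr)$; this route, however, requires treating $q=1$ as a limit.)
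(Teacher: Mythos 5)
Your proof is correct, but it follows a different route from the paper. The paper obtains \eqref{eq.rokbvu0} in one line by differentiating the base evaluation \eqref{eq.t2k7wzu}, $\int_0^{\pi/2}(q^2+\tan^2x)^{-1}\,dx=\tfrac{\pi}{2q(1+q)}$, $n$ times with respect to $q$: the right-hand side gives $(-1)^n n!\,\tfrac{\pi}{2}\bigl(q^{-(n+1)}-(1+q)^{-(n+1)}\bigr)$, while on the left the $n$-th derivative of $(q^2+\tan^2x)^{-1}$, written via $\tfrac1t\,\Im\tfrac{1}{q-it}$ with $t=\tan x$, produces (after normalizing by $(-1)^n n!(n+1)q^n$) exactly the weighted sum in the statement. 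You instead prove the identity from scratch: after the substitution $t=\tan x$ you collapse the sum to $\Im\bigl[(q-it)^{-(n+1)}\bigr]\big/\bigl((n+1)q^n t\bigr)$ — which is precisely the combinatorial bookkeeping the paper leaves implicit when it says ``differentiate $n$ times'' — and then evaluate the resulting integral by the Laplace representation, Fubini, and the classical values $\int_0^\infty\frac{\sin(tu)}{t(1+t^2)}dt=\tfrac{\pi}{2}(1-e^{-u})$, $\int_0^\infty u^ne^{-qu}(1-e^{-u})du=n!\bigl(q^{-(n+1)}-(q+1)^{-(n+1)}\bigr)$. All steps check out (the Fubini interchange is justified as you say, and the final algebra reproduces $\tfrac{1}{q^n(q+1)^{n+1}}=\tfrac{q}{(q(q+1))^{n+1}}$). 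The trade-off: the paper's argument is shorter because it leans on \eqref{eq.t2k7wzu} and hides the derivative-to-sum identification, whereas yours is self-contained — it does not presuppose \eqref{eq.t2k7wzu} at all and makes the key algebraic identity explicit — at the cost of heavier analytic machinery; your Step 2 could equally well be used to justify the paper's one-line proof rigorously.
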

\begin{proof}
Differentiate~\eqref{eq.t2k7wzu} $n$ times.
\end{proof}

\begin{theorem}
If $n$ is a non-negative integer and $r$ is a positive integer, then
\begin{equation}
\begin{split}
&\int_0^{\pi /2} {\frac{{\,dx}}{{(L_r^2  + 5F_r^2 \tan ^2 x)^{n + 1} }}\sum_{k = 0}^{\left\lfloor {n/2} \right\rfloor } {\binom{{n}}{{2k}}\frac{{( - 1)^k }}{{2k + 1}}\left( {\frac{{5F_r^2 }}{{L_r^2 }}} \right)^k \tan ^{2k} x} }\\
&\qquad\qquad\qquad  = \frac{\pi }{2}\,\frac{1}{{n + 1}}\,\frac{1}{{L_r^n F_r \sqrt 5 }}\,\left( {\frac{1}{{L_r^{n + 1} }} - \frac{1}{{(2\alpha ^r )^{n + 1} }}} \right)
\end{split}
\end{equation}
and
\begin{equation}
\begin{split}
&\int_0^{\pi /2} {\frac{{\,dx}}{{(5F_r^2  + L_r^2 \tan ^2 x)^{n + 1} }}\sum_{k = 0}^{\left\lfloor {n/2} \right\rfloor } {\binom{{n}}{{2k}}\frac{{( - 1)^k }}{{2k + 1}}\left( {\frac{{L_r^2 }}{{5F_r^2 }}} \right)^k \tan ^{2k} x} }\\
&\qquad\qquad\qquad  = \frac{\pi }{2}\,\frac{1}{{n + 1}}\,\frac{1}{{(F_r\sqrt 5)^n L_r }}\,\left( {\frac{1}{{(F_r\sqrt 5)^{n + 1} }} - \frac{1}{{(2\alpha ^r )^{n + 1} }}} \right).
\end{split}
\end{equation}
\end{theorem}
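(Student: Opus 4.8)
The plan is to obtain both identities as direct specializations of the Lemma leading to~\eqref{eq.rokbvu0}, choosing the parameter $q$ so that the quadratic $q^2+\tan^2x$ collapses, up to a constant factor, onto $L_r^2+5F_r^2\tan^2x$ (respectively onto $5F_r^2+L_r^2\tan^2x$). The only algebraic input needed throughout is the Binet relation in the form $L_r+F_r\sqrt5=2\alpha^r$ (equivalently $L_r-F_r\sqrt5=2\beta^r$).

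For the first identity, set $q=L_r/(F_r\sqrt5)$ in~\eqref{eq.rokbvu0}; this is legitimate since $q>0$ for every positive integer $r$. Then $q^2+\tan^2x=(L_r^2+5F_r^2\tan^2x)/(5F_r^2)$, so raising to the power $n+1$ pulls out a factor $(5F_r^2)^{n+1}$, while $q^{-2k}=(5F_r^2/L_r^2)^k$ rearranges the summand into exactly the form appearing in the theorem; hence the left-hand side of~\eqref{eq.rokbvu0} equals $(5F_r^2)^{n+1}$ times the left-hand side we want. On the right-hand side I would first rewrite $q/(q(q+1))^{n+1}=1/\bigl(q^n(q+1)^{n+1}\bigr)$, then use $q+1=(L_r+F_r\sqrt5)/(F_r\sqrt5)=2\alpha^r/(F_r\sqrt5)$; a short computation with the powers of $F_r\sqrt5$ shows that the right-hand side of~\eqref{eq.rokbvu0} equals $(5F_r^2)^{n+1}$ times $\tfrac{\pi}{2}\tfrac1{n+1}\tfrac1{L_r^nF_r\sqrt5}\bigl(L_r^{-(n+1)}-(2\alpha^r)^{-(n+1)}\bigr)$. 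Dividing the whole identity through by $(5F_r^2)^{n+1}$ yields the first claim.

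The second identity follows by the same steps with the reciprocal choice $q=F_r\sqrt5/L_r$: now $q^2+\tan^2x=(5F_r^2+L_r^2\tan^2x)/L_r^2$, $q^{-2k}=(L_r^2/5F_r^2)^k$, and $q+1=2\alpha^r/L_r$, so dividing~\eqref{eq.rokbvu0} through by $(L_r^2)^{n+1}$ gives the stated result. The only place any care is needed is the bookkeeping of the powers of $5F_r^2$ (resp.\ $L_r^2$) and checking that the surviving scalar is precisely $1/(F_r\sqrt5\,L_r^n)$ (resp.\ $1/((F_r\sqrt5)^nL_r)$); there is no analytic obstacle, since everything reduces to the already-established~\eqref{eq.rokbvu0}.
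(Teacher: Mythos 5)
Your proposal is correct and is essentially the paper's own proof: the paper also obtains both identities by substituting $q=L_r/(F_r\sqrt5)$ and $q=F_r\sqrt5/L_r$ into~\eqref{eq.rokbvu0} and simplifying with $L_r+F_r\sqrt5=2\alpha^r$. Your write-up merely makes the bookkeeping of the factors $(5F_r^2)^{n+1}$ and $L_r^{2(n+1)}$ explicit, and that bookkeeping checks out.
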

\begin{proof}
Use $q=L_r/F_r \sqrt 5$ and $q=F_r \sqrt 5/L_r$ in~\eqref{eq.rokbvu0}.
\end{proof}
In particular, we mention the special cases
\begin{align}
\int_0^{\pi /2} \frac{\,dx}{(L_r^2 + 5F_r^2 \tan^2 x)^2} 
&= \frac{\pi}{4}\,\frac{1}{F_{2r}\sqrt{5}}\,\left(\frac{1}{L_r^{2}} - \frac{1}{4\alpha^{2r}} \right) \\
\int_0^{\pi /2} \frac{\,dx}{(5F_r^2 + L_r^2\tan^2 x)^2} 
&= \frac{\pi}{4}\,\frac{1}{F_{2r}\sqrt{5}}\,\left(\frac{1}{5 F_r^{2}} - \frac{1}{4\alpha^{2r}} \right)
\end{align}
with the special values
\begin{equation*}
\int_0^{\pi /2} \frac{\,dx}{(1 + 5 \tan^2 x)^2} = \frac{\pi\alpha}{16}
\end{equation*}
and
\begin{equation*}
\int_0^{\pi /2} \frac{\,dx}{(5 + \tan^2 x)^2} = \frac{\pi}{400}\left (2+\frac{7}{\alpha^2}\right ).
\end{equation*}

\begin{theorem}
If $n$ is a non-negative integer and $r$ is any integer, then
\begin{equation}
\begin{split}
&\int_0^{\pi /2} {\frac{{\,dx}}{{(1 + 3\tan ^2 x + \tan ^4 x)^{n + 1} }} }\\
&\qquad\qquad \times \left( {\sum_{k = 0}^{\left\lfloor {n/2} \right\rfloor } {\binom{{n}}{{2k}}\frac{{( - 1)^k \tan ^{2k} x}}{{2k + 1}}\sum_{j = 0}^{n + 1} {\binom{{n + 1}}{j}\tan ^{2j} xL_{2n + 2k - 2j + r + 2} } } } \right)
\\
&\qquad\qquad\qquad = \frac{\pi }{2}\frac{1}{{n + 1}}\left( {F_{2n + r + 1} \sqrt 5  - \alpha ^{r - 1}  + ( - 1)^{n + 1} \beta ^{3n + r + 2} } \right)
\end{split}
\end{equation}
and
\begin{equation}
\begin{split}
&\int_0^{\pi /2} {\frac{{\,dx}}{{(1 + 3\tan ^2 x + \tan ^4 x)^{n + 1} }} }\\
&\qquad\qquad \times \left( {\sum_{k = 0}^{\left\lfloor {n/2} \right\rfloor } {\binom{{n}}{{2k}}\frac{{( - 1)^k \tan ^{2k} x}}{{2k + 1}}\sum_{j = 0}^{n + 1} {\binom{{n + 1}}{j}\tan ^{2j} xF_{2n + 2k - 2j + r + 2} } } } \right)
\\
&\qquad\qquad\qquad = \frac{\pi }{2\sqrt 5}\frac{1}{{n + 1}}\left( {L_{2n + r + 1}  - \alpha ^{r - 1}  + ( - 1)^n \beta ^{3n + r + 2} } \right).
\end{split}
\end{equation}
\end{theorem}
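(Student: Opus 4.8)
The plan is to specialise~\eqref{eq.rokbvu0} at the two \emph{positive} values $q=\alpha$ and $q=-\beta$ and then take suitable Binet combinations. The crucial observation is the factorisation
\[
1+3\tan^2 x+\tan^4 x=(\alpha^2+\tan^2 x)(\beta^2+\tan^2 x),
\]
valid because $\alpha^2+\beta^2=L_2=3$ and $\alpha^2\beta^2=(\alpha\beta)^2=1$; since also $(-\beta)^2=\beta^2$, the denominator $(q^2+\tan^2 x)^{n+1}$ of~\eqref{eq.rokbvu0} equals $(\alpha^2+\tan^2 x)^{n+1}$ when $q=\alpha$ and $(\beta^2+\tan^2 x)^{n+1}$ when $q=-\beta$.

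First I would take $q=\alpha$ in~\eqref{eq.rokbvu0}, rewrite $1/(\alpha^2+\tan^2 x)^{n+1}$ as $(\beta^2+\tan^2 x)^{n+1}/(1+3\tan^2 x+\tan^4 x)^{n+1}$, and expand $(\beta^2+\tan^2 x)^{n+1}=\sum_{j=0}^{n+1}\binom{n+1}{j}\beta^{2n+2-2j}\tan^{2j}x$ by the binomial theorem; together with $1/\alpha^{2k}=\beta^{2k}$ (since $\alpha^{-1}=-\beta$) the left-hand side becomes
\[
\int_0^{\pi/2}\frac{dx}{(1+3\tan^2 x+\tan^4 x)^{n+1}}\sum_{k=0}^{\lfloor n/2\rfloor}\binom{n}{2k}\frac{(-1)^k\tan^{2k}x}{2k+1}\sum_{j=0}^{n+1}\binom{n+1}{j}\tan^{2j}x\,\beta^{2n+2k-2j+2}.
\]
On the right-hand side, $\alpha(\alpha+1)=\alpha^3$ (since $\alpha+1=\alpha^2$) gives $\frac{1}{\alpha^{2n+1}}-\frac{\alpha}{(\alpha(\alpha+1))^{n+1}}=\frac{1}{\alpha^{2n+1}}-\frac{1}{\alpha^{3n+2}}=-\beta^{2n+1}-(-1)^n\beta^{3n+2}$, using $\alpha^{-m}=(-\beta)^m$. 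A parallel computation with $q=-\beta$, using $(-\beta)\bigl((-\beta)+1\bigr)=(-\beta)\alpha=-\alpha\beta=1$ (as $1-\beta=\alpha$) and $1/\beta^{2k}=\alpha^{2k}$, yields the same integral with every $\beta^{2n+2k-2j+2}$ replaced by $\alpha^{2n+2k-2j+2}$ and right-hand side $\frac{\pi}{2(n+1)}(\alpha^{2n+1}+\beta)$.

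It then remains to combine. Forming $\beta^r\times(\text{the }q=\alpha\text{ identity})+\alpha^r\times(\text{the }q=-\beta\text{ identity})$ turns the $\tan$-polynomial coefficient into $\alpha^{2n+2k-2j+r+2}+\beta^{2n+2k-2j+r+2}=L_{2n+2k-2j+r+2}$, giving the first claimed integrand, while the right-hand side collapses, via $\alpha^r\beta=-\alpha^{r-1}$ and $\alpha^{m}-\beta^{m}=\sqrt5\,F_m$, to $\frac{\pi}{2(n+1)}\bigl(\sqrt5\,F_{2n+r+1}-\alpha^{r-1}+(-1)^{n+1}\beta^{3n+r+2}\bigr)$. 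Forming instead $\frac{1}{\sqrt5}\bigl(\alpha^r\times(\text{the }q=-\beta\text{ identity})-\beta^r\times(\text{the }q=\alpha\text{ identity})\bigr)$ replaces the coefficient by $F_{2n+2k-2j+r+2}$ and, using $\alpha^{m}+\beta^{m}=L_m$, the right-hand side by $\frac{\pi}{2\sqrt5(n+1)}\bigl(L_{2n+r+1}-\alpha^{r-1}+(-1)^n\beta^{3n+r+2}\bigr)$, which is the second identity. The analytic content lies entirely in~\eqref{eq.rokbvu0}, whose hypothesis $q>0$ holds for both $q=\alpha$ and $q=-\beta$ no matter the integer $r$ (which enters only as the scalar multiplier $\alpha^r$ or $\beta^r$); the only real work is the careful bookkeeping of the $\alpha$- and $\beta$-exponents through the two binomial expansions and the concluding Binet reduction, which I expect to be the main---though routine---obstacle.
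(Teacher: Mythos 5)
Your proposal is correct and follows essentially the same route as the paper, which simply says to set $q=\alpha$ and $q=-\beta$ in~\eqref{eq.rokbvu0} and combine according to the Binet formulas; your exponent bookkeeping (the factorization $1+3\tan^2x+\tan^4x=(\alpha^2+\tan^2x)(\beta^2+\tan^2x)$, the binomial expansion, and the evaluations $\alpha(\alpha+1)=\alpha^3$, $(-\beta)(1-\beta)=1$) checks out and reproduces both stated right-hand sides exactly.
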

\begin{proof}
Use $q=\alpha$ and $q=-\beta$ in~\eqref{eq.rokbvu0} and combine the resulting identities in accordance with the Binet formulas.
\end{proof}
In particular,
\begin{gather}
\int_0^{\pi /2} {\frac{{L_r \tan ^2 x + L_{r + 2} }}{{(1 + 3\tan ^2 x + \tan ^4 x)}}\,dx}  = \frac{\pi }{2}\left( {F_{r + 1} \sqrt 5  - \alpha ^{r - 1}  - \beta ^{r + 2} } \right),\\
\int_0^{\pi /2} {\frac{{F_r \tan ^2 x + F_{r + 2} }}{{(1 + 3\tan ^2 x + \tan ^4 x)}}\,dx}  = \frac{\pi }{2\sqrt 5}\left( {L_{r + 1}  - \alpha ^{r - 1}  + \beta ^{r + 2} } \right),
\end{gather}
with the special values
\begin{gather*}
\int_0^{\pi /2} {\frac{{1 - \tan ^2 x}}{{1 + 3\tan ^2 x + \tan ^4 x}}\,dx}  =  - \frac{\pi\beta ^3 }{2},\\
\int_0^{\pi /2} {\frac{\,dx}{{1 + 3\tan ^2 x + \tan ^4 x}}}  =   \frac{\pi\beta ^2 }{{\sqrt 5 }},\\
\int_0^{\pi /2} {\frac{{\tan ^2 x}}{{1 + 3\tan ^2 x + \tan ^4 x}}\,dx}  =  - \frac{\pi\beta ^3 }{{2\sqrt 5 }}.
\end{gather*}

\begin{remark}
Setting $q=\alpha^r$, $q=\pm\beta^r$ in~\eqref{eq.rokbvu0} will deliver more identities of this nature.
\end{remark}

\section{Results associated with~\eqref{eq.yjqd44q}}

\begin{theorem}
If $m$ is a non-negative integer and $r$ is a positive integer, then
\begin{equation}\label{eq.kj2w249}
\begin{split}
&\int_0^\infty  {\frac{{x^{2m + 1} \,dx}}{{(1 + x^2 )(1 + F_{2r} x^2 )^{m + 1}}}}=\int_0^\infty {\frac{{x \,dx}}{{(1 + x^2 )(F_{2r} + x^2 )^{m + 1}}}}\\
&\qquad=  \begin{cases}
  - \frac{1}{2}\sum_{j = 1}^m {\frac{1}{j}\frac{1}{{F_{2r}^j (F_{r - 1} L_{r + 1} )^{m - j + 1} }} + \frac{1}{2}\frac{{\ln F_{2r} }}{{(F_{r - 1} L_{r + 1} )^{m + 1} }}},&\text{if $r$ is odd, $r\ne 1$};  \\
  - \frac{1}{2}\sum_{j = 1}^m {\frac{1}{j}\frac{1}{{F_{2r}^j (L_{r - 1} F_{r + 1} )^{m - j + 1} }} + \frac{1}{2}\frac{{\ln F_{2r} }}{{(L_{r - 1} F_{r + 1} )^{m + 1} }}},&\text{if $r$ is even};  \\
 \end{cases}
\end{split}
\end{equation}

\begin{equation}\label{eq.dpbn6cy}
\begin{split}
&\int_0^\infty  {\frac{{x^{2m + 1} \,dx}}{{(1 + x^2 )(1 + F_{2r + 1} x^2 )^{m + 1}}}}=\int_0^\infty  {\frac{{x \,dx}}{{(1 + x^2 )(F_{2r + 1} + x^2 )^{m + 1}}}}\\
&\qquad=  \begin{cases}
  - \frac{1}{2}\sum_{j = 1}^m {\frac{1}{j}\frac{1}{{F_{2r + 1}^j (L_r F_{r + 1} )^{m - j + 1} }} + \frac{1}{2}\frac{{\ln F_{2r + 1} }}{{(L_r F_{r + 1} )^{m + 1} }}},&\text{if $r$ is odd};  \\
  - \frac{1}{2}\sum_{j = 1}^m {\frac{1}{j}\frac{1}{{F_{2r + 1}^j (F_r L_{r + 1} )^{m - j + 1} }} + \frac{1}{2}\frac{{\ln F_{2r + 1} }}{{(F_r L_{r + 1} )^{m + 1} }}},&\text{if $r$ is even};  \\
 \end{cases}
\end{split}
\end{equation}

\begin{equation}\label{eq.q2nviqw}
\begin{split}
&\int_0^\infty  {\frac{{x^{2m + 1} \,dx}}{{(1 + x^2 )(1 + L_{2r + 1} x^2 )^{m + 1}}}}=\int_0^\infty  {\frac{{x \,dx}}{{(1 + x^2 )(L_{2r + 1} + x^2 )^{m + 1}}}}\\
&\qquad=  \begin{cases}
  - \frac{1}{2}\sum_{j = 1}^m {\frac{1}{j}\frac{1}{{L_{2r + 1}^j (L_r L_{r + 1} )^{m - j + 1} }} + \frac{1}{2}\frac{{\ln L_{2r + 1} }}{{(L_r L_{r + 1} )^{m + 1} }}},&\text{if $r$ is odd};  \\
  - \frac{1}{2}\sum_{j = 1}^m {\frac{1}{j}\frac{1}{{L_{2r + 1}^j (5F_r F_{r + 1} )^{m - j + 1} }} + \frac{1}{2}\frac{{\ln L_{2r + 1} }}{{(5F_r F_{r + 1} )^{m + 1} }}},&\text{if $r$ is even};  \\
 \end{cases}
\end{split}
\end{equation}
\end{theorem}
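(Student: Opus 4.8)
The plan is to reduce all three identities to a single closed form in one real parameter and then differentiate that $m$ times.

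First I would differentiate Lewin's identity~\eqref{eq.yjqd44q} once with respect to $q$. Since $\tfrac{d}{dq}\arctan(qx)=x/(1+q^2x^2)$, the left side becomes $\int_0^\infty x\,dx/\bigl((1+x^2)(1+q^2x^2)\bigr)$, while on the right the two dilogarithm terms differentiate to multiples of $1/(1-q^2)$ whose weighted combination collapses to $-\ln q/(1-q^2)$. Setting $p=q^2$ gives, for every real $p>0$ with $p\neq 1$,
\[
\int_0^\infty \frac{x\,dx}{(1+x^2)(1+px^2)}=\frac{\ln p}{2(p-1)}
\]
(equivalently, this is elementary by partial fractions in $x^2$). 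At the same time the substitution $x\mapsto 1/x$ carries $\int_0^\infty x^{2m+1}\,dx/\bigl((1+x^2)(1+px^2)^{m+1}\bigr)$ to $\int_0^\infty x\,dx/\bigl((1+x^2)(p+x^2)^{m+1}\bigr)$, so the first equality in each of~\eqref{eq.kj2w249},~\eqref{eq.dpbn6cy},~\eqref{eq.q2nviqw} is automatic and it remains only to evaluate the left-hand integrals.

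Next I would differentiate the displayed closed form $m$ times with respect to $p$ (differentiation under the integral sign being justified by dominated convergence, the numerator having degree $2m+1$ against a denominator of degree $\ge 2m+4$ near any admissible $p$). On the left, $\tfrac{d^m}{dp^m}(1+px^2)^{-1}=(-1)^m m!\,x^{2m}(1+px^2)^{-m-1}$, so dividing by $(-1)^m m!$ turns the left side into $\int_0^\infty x^{2m+1}\,dx/\bigl((1+x^2)(1+px^2)^{m+1}\bigr)$. On the right I would apply the Leibniz rule to $\ln p\cdot\tfrac1{2(p-1)}$, using $\tfrac{d^j}{dp^j}\ln p=(-1)^{j-1}(j-1)!\,p^{-j}$ for $j\ge 1$, $\tfrac{d^k}{dp^k}(p-1)^{-1}=(-1)^k k!\,(p-1)^{-k-1}$, and the simplification $\binom{m}{j}(j-1)!(m-j)!=m!/j$; the signs collapse through $(-1)^{j-1}(-1)^{m-j}=-(-1)^m$. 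This yields, for all $p>0$ with $p\neq 1$,
\[
\int_0^\infty \frac{x^{2m+1}\,dx}{(1+x^2)(1+px^2)^{m+1}}=-\frac12\sum_{j=1}^m\frac{1}{j\,p^j(p-1)^{m-j+1}}+\frac12\,\frac{\ln p}{(p-1)^{m+1}}.
\]

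Finally I would specialize $p$ to $F_{2r}$ (legitimate since $F_{2r}>1$ for $r\ge 2$; the case $r=1$ is excluded in the statement), to $F_{2r+1}$, and to $L_{2r+1}$ (both $>1$ for $r\ge 1$), and in each case rewrite $p-1$ via~\eqref{eq.sp0oo7c}: $F_{2r}-1$ is $F_{r-1}L_{r+1}$ or $L_{r-1}F_{r+1}$, $F_{2r+1}-1$ is $L_rF_{r+1}$ or $F_rL_{r+1}$, and $L_{2r+1}-1$ is $L_rL_{r+1}$ or $5F_rF_{r+1}$, according to the parity of $r$. Substituting these into the last display produces~\eqref{eq.kj2w249},~\eqref{eq.dpbn6cy} and~\eqref{eq.q2nviqw}. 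The only delicate step is the Leibniz bookkeeping in the middle — tracking the signs and the coefficient $m!/j$ — together with verifying the closed form of the base integral; the rest is routine.
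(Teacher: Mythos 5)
Your proposal is correct and follows essentially the same route as the paper: differentiate Lewin's identity \eqref{eq.yjqd44q} to obtain the closed form of the base integral, differentiate it $m$ times (your Leibniz bookkeeping, including the $m!/j$ coefficient and the sign collapse, reproduces the paper's \eqref{eq.pdjjqgd} written with $p-1$ instead of $1-q$), and then specialize $p$ to $F_{2r}$, $F_{2r+1}$, $L_{2r+1}$ using \eqref{eq.sp0oo7c}. The only cosmetic difference is that you establish the equality of the two integrals by the substitution $x\mapsto 1/x$, whereas the paper gets it by replacing $q$ with $1/q$ in the closed form; both are fine.
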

\begin{proof}
Differentiating~\eqref{eq.yjqd44q} with respect to $q$ gives
\begin{equation*}
\int_0^\infty \frac{{x\,dx}}{{(1 + x^2 )(1 + qx^2 )}} = - \frac{1}{2}\frac{{\ln q}}{{1 - q}}
\end{equation*}
which writing $1/q$ for $q$ also means
\begin{equation*}
\int_0^\infty \frac{{x\,dx}}{{(1 + x^2 )(q + x^2 )}} = - \frac{1}{2}\frac{{\ln q}}{{1 - q}};
\end{equation*}
so that
\begin{equation}\label{eq.hqmqa9h}
\int_0^\infty \frac{{x\,dx}}{{(1 + x^2 )(1 + qx^2 )}} = - \frac{1}{2}\frac{{\ln q}}{{1 - q}} = \int_0^\infty \frac{{x\,dx}}{{(1 + x^2 )(q + x^2 )}}.
\end{equation}
Differentiating~\eqref{eq.hqmqa9h} $m$ times with respect to $q$ gives
\begin{equation}\label{eq.pdjjqgd}
\begin{split}
&\int_0^\infty  {\frac{{x^{2m + 1} \,dx}}{{(1 + x^2 )(1 + qx^2 )^{m + 1} }}}  = \int_0^\infty  {\frac{{x\,dx}}{{(1 + x^2 )(q + x^2 )^{m + 1} }}} \\
&\qquad = \frac{1}{2}\sum\limits_{j = 1}^m {\frac{{( - 1)^{m - j} }}{j}\frac{1}{{q^j (1 - q)^{m - j + 1} }} + \frac{{( - 1)^{m - 1} }}{2}\frac{{\ln q}}{{(1 - q)^{m + 1} }}}.
\end{split}
\end{equation}
Using $q=F_{2r}$, $q=F_{2r + 1}$ and $q=L_{2r + 1}$ in turn in~\eqref{eq.pdjjqgd} while making use of~\eqref{eq.sp0oo7c} produces~\eqref{eq.kj2w249},~\eqref{eq.dpbn6cy} and~\eqref{eq.q2nviqw}.
\end{proof}

\begin{theorem}
If $r$ is a non-zero integer, then
\begin{align}
&\int_0^\infty \frac{x^{2m + 1} \,dx}{(1 + x^2 )(L_r^2 + 5F_r^2 x^2 )^{m + 1}} = \int_0^\infty \frac{x\,dx}{(1 + x^2 )(5F_r^2 + L_r^2 x^2 )^{m + 1} } \nonumber \\
&\qquad = \frac{1}{2^{2m+3}}\left ( \sum_{j=1}^m \frac{(-1)^{r+(r+1)(m-j)}}{j} \left (\frac{4}{5 F_r}\right )^{j}
+ (- 1)^{(r+1)(m+1)} \ln \left( \frac{5F_r^2}{L_r^2} \right) \right ).
\end{align}

\end{theorem}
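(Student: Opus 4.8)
The plan is to reduce both integrals to the master formula~\eqref{eq.pdjjqgd}. Put $q = 5F_r^2/L_r^2$; since $r\neq 0$ forces $F_r\neq 0$, this is a well‑defined positive number, and since the Binet forms give the standard identity $L_r^2 - 5F_r^2 = 4(-1)^r\neq 0$, we have $q\neq 1$, so~\eqref{eq.pdjjqgd} applies at this $q$. Factoring $L_r^2$ out of each quadratic,
\[
L_r^2 + 5F_r^2 x^2 = L_r^2\bigl(1 + qx^2\bigr),\qquad 5F_r^2 + L_r^2 x^2 = L_r^2\bigl(q + x^2\bigr),
\]
so that
\[
\int_0^\infty \frac{x^{2m+1}\,dx}{(1+x^2)(L_r^2+5F_r^2x^2)^{m+1}} = \frac{1}{L_r^{2(m+1)}}\int_0^\infty \frac{x^{2m+1}\,dx}{(1+x^2)(1+qx^2)^{m+1}}
\]
and likewise the second integral equals $L_r^{-2(m+1)}\int_0^\infty x\,dx\big/\bigl((1+x^2)(q+x^2)^{m+1}\bigr)$. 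The equality of the two integrals in the statement is then immediate from the first line of~\eqref{eq.pdjjqgd}.

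It remains to evaluate the common value by inserting $q = 5F_r^2/L_r^2$ into the closed form on the second line of~\eqref{eq.pdjjqgd}. The only substantive input is $1-q = 4(-1)^r/L_r^2$, coming from $L_r^2 - 5F_r^2 = 4(-1)^r$. With it,
\[
q^j(1-q)^{m-j+1} = \frac{(5F_r^2)^j\,\bigl(4(-1)^r\bigr)^{m-j+1}}{L_r^{2(m+1)}},\qquad (1-q)^{m+1} = \frac{\bigl(4(-1)^r\bigr)^{m+1}}{L_r^{2(m+1)}},
\]
so multiplying~\eqref{eq.pdjjqgd} through by $L_r^{-2(m+1)}$ cancels every occurrence of $L_r$ except inside $\ln q = \ln\!\bigl(5F_r^2/L_r^2\bigr)$. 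Collecting the powers of $4$ into the prefactor $2^{-(2m+3)}$ and reducing the exponents of $-1$ modulo $2$ — using $(-1)^{m-j}(-1)^{r(m-j+1)} = (-1)^{\,r+(r+1)(m-j)}$ for the sum, and the fact that $m-1-r(m+1)$ and $(r+1)(m+1)$ differ by $2(rm+r+1)$ for the logarithmic term — then yields the right‑hand side of the statement.

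I do not anticipate a genuine obstacle: the argument is a single substitution into~\eqref{eq.pdjjqgd} followed by elementary simplification, the only delicate points being the correct propagation of the sign $(-1)^r$ produced by $1-q$ and the two parity reductions above. It should be recorded explicitly that the hypothesis $r\neq 0$ is needed precisely so that $q$ (equivalently $F_r$) is nonzero, and that $q=1$ never occurs, so~\eqref{eq.pdjjqgd} applies verbatim. One point to double-check when carrying out the constants: the reduction above gives the generic summand as $\dfrac{(-1)^{\,r+(r+1)(m-j)}}{j}\bigl(\tfrac{4}{5F_r^{2}}\bigr)^{j}$, which agrees with the displayed form for $F_r^2=1$ but in general carries $5F_r^{2}$, not $5F_r$, in the denominator.
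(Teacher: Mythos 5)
Your proof is correct and is exactly the paper's argument: substitute $q=5F_r^2/L_r^2$ into \eqref{eq.pdjjqgd} and simplify using $L_r^2-5F_r^2=4(-1)^r$, the hypotheses $r\neq 0$ guaranteeing $q>0$ and $q\neq 1$. Your closing remark is also right: the summand must carry $\bigl(4/(5F_r^{2})\bigr)^{j}$ rather than $\bigl(4/(5F_r)\bigr)^{j}$ — for instance, with $m=1$, $r=3$ the integral evaluates to $\tfrac{1}{32}\bigl(\ln\tfrac{5}{4}-\tfrac{1}{5}\bigr)$, which matches your version and not the displayed one, so the missing square is a typo in the paper's statement (the two forms coincide only when $F_r^2=F_r$).
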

\begin{proof}
Set $q=5F_r^2/L_r^2$ in~\eqref{eq.pdjjqgd} and use the identity $L_n^2=5F_n^2+(-1)^n 4$.
\end{proof}

\begin{theorem}
If $r$ is a non-zero even integer, then
\begin{align}
&\int_0^\infty \frac{x^{2m + 1} \,dx}{(1 + x^2)(L_r^2 + 4 x^2 )^{m + 1}}
= \int_0^\infty \frac{x\,dx}{(1 + x^2 )(4 + L_r^2 x^2 )^{m + 1}} \nonumber \\
&\qquad = \frac{1}{2(5F_r^2)^{m+1}} \left ( \sum_{j=1}^m \frac{(-1)^{m-j}}{j} \left (\frac{5 F_r^{2}}{4}\right )^j
+ (-1)^{m - 1} \ln \left( \frac{4}{L_r^2} \right) \right ).
\end{align}
\end{theorem}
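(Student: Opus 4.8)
The plan is to mimic exactly the proof of the immediately preceding theorem, which evaluates the same integral with $q = 5F_r^2/L_r^2$; here we instead specialize to $q = 4/L_r^2$ and clear denominators differently. Concretely, I would start from the $m$-fold differentiated identity \eqref{eq.pdjjqgd},
\begin{equation*}
\int_0^\infty \frac{x^{2m+1}\,dx}{(1+x^2)(1+qx^2)^{m+1}} = \int_0^\infty \frac{x\,dx}{(1+x^2)(q+x^2)^{m+1}}
= \frac12\sum_{j=1}^m \frac{(-1)^{m-j}}{j}\frac{1}{q^j(1-q)^{m-j+1}} + \frac{(-1)^{m-1}}{2}\frac{\ln q}{(1-q)^{m+1}},
\end{equation*}
and substitute $q = 4/L_r^2$. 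Since $r$ is even, the identity $L_r^2 = 5F_r^2 + 4(-1)^r = 5F_r^2 + 4$ gives $1 - q = 1 - 4/L_r^2 = (L_r^2 - 4)/L_r^2 = 5F_r^2/L_r^2$, which is positive, so all the conditions for the Lewin identity and its derivatives are met.

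Next I would rescale the integrals so that the integrands match the stated form. Writing $1 + qx^2 = 1 + (4/L_r^2)x^2 = (L_r^2 + 4x^2)/L_r^2$, we get $(1+qx^2)^{m+1} = (L_r^2+4x^2)^{m+1}/L_r^{2(m+1)}$, so the left integral becomes $L_r^{2(m+1)}\int_0^\infty x^{2m+1}\,dx/\big((1+x^2)(L_r^2+4x^2)^{m+1}\big)$; similarly $q + x^2 = (4 + L_r^2 x^2)/L_r^2$ handles the second integral. So I divide the whole displayed identity by $L_r^{2(m+1)}$.

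For the right-hand side, substitute $q^j = 4^j/L_r^{2j}$ and $(1-q)^{m-j+1} = (5F_r^2)^{m-j+1}/L_r^{2(m-j+1)}$, so that $1/\big(q^j(1-q)^{m-j+1}\big) = L_r^{2j}\cdot L_r^{2(m-j+1)}/\big(4^j(5F_r^2)^{m-j+1}\big) = L_r^{2(m+1)}/\big(4^j(5F_r^2)^{m-j+1}\big)$; likewise $1/(1-q)^{m+1} = L_r^{2(m+1)}/(5F_r^2)^{m+1}$, and $\ln q = \ln(4/L_r^2)$. After dividing by $L_r^{2(m+1)}$ the factor $L_r^{2(m+1)}$ cancels everywhere, leaving
\begin{equation*}
\frac12\sum_{j=1}^m \frac{(-1)^{m-j}}{j}\frac{1}{4^j(5F_r^2)^{m-j+1}} + \frac{(-1)^{m-1}}{2}\frac{\ln(4/L_r^2)}{(5F_r^2)^{m+1}},
\end{equation*}
and pulling out $1/(2(5F_r^2)^{m+1})$, using $(5F_r^2)^{m+1}/(5F_r^2)^{m-j+1} = (5F_r^2)^j$, gives precisely $\frac{1}{2(5F_r^2)^{m+1}}\big(\sum_{j=1}^m \frac{(-1)^{m-j}}{j}(5F_r^2/4)^j + (-1)^{m-1}\ln(4/L_r^2)\big)$, which is the claimed right-hand side.

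There is no real obstacle here: the proof is a one-line substitution $q = 4/L_r^2$ in \eqref{eq.pdjjqgd} followed by the algebraic bookkeeping of powers of $L_r^2$ just sketched, with the single arithmetic input being the Lucas identity $L_r^2 - 4 = 5F_r^2$ for even $r$ (which guarantees $0 < q < 1$ so the integrals converge and Lewin's formula applies). The only point requiring a word of care is the convergence/validity of differentiating \eqref{eq.yjqd44q} under the integral sign $m$ times, but that has already been dispatched in establishing \eqref{eq.pdjjqgd}, which we are entitled to assume.
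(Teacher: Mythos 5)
Your proposal is correct and is exactly the paper's argument: the paper's proof is the one-line instruction ``Set $q=4/L_r^2$ in~\eqref{eq.pdjjqgd} and use the identity $L_n^2=5F_n^2+(-1)^n 4$,'' and your substitution, the use of $1-q=5F_r^2/L_r^2$ for even $r$, and the clearing of powers of $L_r^{2(m+1)}$ are just the explicit bookkeeping behind that instruction. No discrepancies to report.
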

\begin{proof}
Set $q=4/L_r^2$ in~\eqref{eq.pdjjqgd} and use the identity $L_n^2=5F_n^2+(-1)^n 4$.
\end{proof}

\begin{theorem}
If $r$ is a positive odd integer, then
\begin{align}
&\int_0^\infty \frac{x^{2m + 1} \,dx}{(1 + x^2)(5 F_r^2 + 4 x^2 )^{m + 1}}
= \int_0^\infty \frac{x\,dx}{(1 + x^2 )(4 + 5 F_r^2 x^2 )^{m + 1}} \nonumber \\
&\qquad = \frac{1}{2 L_r^{2(m+1)}} \left ( \sum_{j=1}^m \frac{(-1)^{m-j}}{j} \left (\frac{L_r^{2}}{4}\right )^j
+ (-1)^{m - 1} \ln \left( \frac{4}{5 F_r^2} \right) \right ).
\end{align}
\end{theorem}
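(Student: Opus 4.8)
The plan is to specialize the $m$-fold differentiated identity~\eqref{eq.pdjjqgd} at $q = 4/(5F_r^2)$, exactly as in the proofs of the two immediately preceding theorems. First I would record that, since $r$ is odd, the identity $L_n^2 = 5F_n^2 + (-1)^n 4$ gives $5F_r^2 - 4 = L_r^2$, so that
\begin{equation*}
1 - q = 1 - \frac{4}{5F_r^2} = \frac{5F_r^2 - 4}{5F_r^2} = \frac{L_r^2}{5F_r^2};
\end{equation*}
and that $0 < q < 1$ for every $r \geq 1$ (since $5F_r^2 \geq 5$), so~\eqref{eq.pdjjqgd} is indeed applicable at this value of $q$.

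Next I would rescale the integrals. From $1 + qx^2 = (5F_r^2 + 4x^2)/(5F_r^2)$ and $q + x^2 = (4 + 5F_r^2 x^2)/(5F_r^2)$ we get
\begin{equation*}
\int_0^\infty \frac{x^{2m+1}\,dx}{(1+x^2)(5F_r^2 + 4x^2)^{m+1}} = \frac{1}{(5F_r^2)^{m+1}}\int_0^\infty \frac{x^{2m+1}\,dx}{(1+x^2)(1+qx^2)^{m+1}},
\end{equation*}
and likewise for the second integral with $(q + x^2)^{m+1}$ in place of $(1 + qx^2)^{m+1}$; in particular the equality of the two Fibonacci--Lucas integrals in the statement is inherited from the equality of the two integrals on the left of~\eqref{eq.pdjjqgd}.

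Then I would substitute $q = 4/(5F_r^2)$ and $1 - q = L_r^2/(5F_r^2)$ into the right-hand side of~\eqref{eq.pdjjqgd}. The powers collapse: $q^j(1-q)^{m-j+1} = 4^j L_r^{2(m-j+1)}/(5F_r^2)^{m+1}$ and $(1-q)^{m+1} = L_r^{2(m+1)}/(5F_r^2)^{m+1}$, while $\ln q = \ln(4/(5F_r^2))$. Multiplying by the overall factor $1/(5F_r^2)^{m+1}$ from the rescaling step, every power of $5F_r^2$ cancels and leaves
\begin{equation*}
\frac{1}{2}\sum_{j=1}^m \frac{(-1)^{m-j}}{j}\,\frac{1}{4^j L_r^{2(m-j+1)}} + \frac{(-1)^{m-1}}{2}\,\frac{\ln(4/(5F_r^2))}{L_r^{2(m+1)}},
\end{equation*}
which is precisely the asserted right-hand side after factoring out $1/(2L_r^{2(m+1)})$ and writing $L_r^{2j}/4^j = (L_r^2/4)^j$.

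There is no genuine obstacle here; the only point requiring care is the bookkeeping of the powers of $5F_r^2$ and $L_r^2$ together with the signs $(-1)^{m-j}$ and $(-1)^{m-1}$, and these all match as written. The companion theorems for $q = 5F_r^2/L_r^2$ and $q = 4/L_r^2$ are handled the same way, the choice $q = 4/(5F_r^2)$ being what produces the denominator $5F_r^2 + 4x^2$.
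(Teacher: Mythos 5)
Your proposal is correct and follows exactly the paper's route: the paper's proof is precisely to set $q=4/(5F_r^2)$ in~\eqref{eq.pdjjqgd} and invoke $L_n^2=5F_n^2+(-1)^n4$ (so $1-q=L_r^2/(5F_r^2)$ for odd $r$), and your bookkeeping of the powers of $5F_r^2$ and $L_r^2$ and of the signs matches the stated right-hand side.
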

\begin{proof}
Set $q=4/(5F_r^2)$ in~\eqref{eq.pdjjqgd} and use the identity $L_n^2=5F_n^2+(-1)^n 4$.
\end{proof}

\begin{theorem}
If $r$ is a positive integer, then
\begin{align}
&\int_0^\infty \frac{x^{2m + 1} \,dx}{(1 + x^2)(1 + F_{4r+1} x^2 )^{m + 1}}
= \int_0^\infty \frac{x\,dx}{(1 + x^2 )(F_{4r+1} + x^2 )^{m + 1}} \nonumber \\
&\qquad = \frac{1}{2(F_{2r}L_{2r+1})^{m+1}} \left ( \ln F_{4r+1} - \sum_{j=1}^m \frac{1}{j} \left (\frac{F_{2r}L_{2r+1}}{F_{4r+1}}\right )^j\right ).
\end{align}
\end{theorem}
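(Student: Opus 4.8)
The plan is to specialize the master identity~\eqref{eq.pdjjqgd} to the parameter value $q=F_{4r+1}$ and then to simplify the resulting closed form using the Fibonacci--Lucas factorization of $F_{4r+1}-1$.

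First I would invoke~\eqref{eq.pdjjqgd}, which already supplies both integrals in closed form for an arbitrary $q>0$:
\begin{equation*}
\int_0^\infty \frac{x\,dx}{(1+x^2)(q+x^2)^{m+1}} = \frac12\sum_{j=1}^m \frac{(-1)^{m-j}}{j}\,\frac{1}{q^j(1-q)^{m-j+1}} + \frac{(-1)^{m-1}}{2}\,\frac{\ln q}{(1-q)^{m+1}}.
\end{equation*}
Since $F_{4r+1}>0$ for every positive integer $r$, this applies verbatim with $q=F_{4r+1}$. The arithmetic input I need is the identity $F_{4r+1}-1=F_{2r}L_{2r+1}$; this is precisely the ``$F_{2s+1}-1$'' row of~\eqref{eq.sp0oo7c} evaluated at the even index $s=2r$, and it is equally immediate from the Binet forms together with $\alpha\beta=-1$. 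Hence $1-q=-F_{2r}L_{2r+1}$, so that $(1-q)^{m-j+1}=(-1)^{m-j+1}(F_{2r}L_{2r+1})^{m-j+1}$ and $(1-q)^{m+1}=(-1)^{m+1}(F_{2r}L_{2r+1})^{m+1}$.

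Substituting these, the sign attached to the $j$-th summand collapses to $(-1)^{m-j}/(-1)^{m-j+1}=-1$, and the sign attached to the logarithmic term collapses to $(-1)^{m-1}/(-1)^{m+1}=1$; thus the right-hand side becomes
\begin{equation*}
\frac12\,\frac{\ln F_{4r+1}}{(F_{2r}L_{2r+1})^{m+1}} - \frac12\sum_{j=1}^m \frac1j\,\frac{1}{F_{4r+1}^{\,j}\,(F_{2r}L_{2r+1})^{m-j+1}}.
\end{equation*}
Pulling out the factor $\tfrac{1}{2(F_{2r}L_{2r+1})^{m+1}}$ and using $(F_{2r}L_{2r+1})^{m+1}/(F_{2r}L_{2r+1})^{m-j+1}=(F_{2r}L_{2r+1})^{j}$ turns this into the asserted formula. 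The only point that needs care is the bookkeeping of the powers of $-1$ produced by $1-q<0$ (note $F_{4r+1}>1$ for $r\ge 1$) and of the exponents of $F_{2r}L_{2r+1}$ when the common factor is extracted; there is no deeper obstacle, since the convergence and the interchange of differentiation and integration were already handled when~\eqref{eq.pdjjqgd} was derived from~\eqref{eq.yjqd44q}.
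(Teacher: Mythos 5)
Your proposal is correct and is essentially the paper's own proof: set $q=F_{4r+1}$ in~\eqref{eq.pdjjqgd} and simplify via $F_{4r+1}-1=F_{2r}L_{2r+1}$ (the even-index case of~\eqref{eq.sp0oo7c}). The sign bookkeeping you carry out explicitly is exactly what the paper leaves implicit, and it checks out.
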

\begin{proof}
Set $q=F_{4r+1}$ in~\eqref{eq.pdjjqgd} and use the identity $F_{4n+1}-1=F_{2n}L_{2n+1}$.
\end{proof}

\section{Results associated with~\eqref{eq.qk18ai6}}

\begin{lemma}
If $0<q\le 1$ then
\begin{equation}\label{eq.cl66rls}
\begin{split}
\int_0^{\pi/2}  {\ln \left( {\frac{{\dfrac{{1 + q^2 }}{{2q}} + \sin x}}{{\dfrac{{1 + q^2 }}{{2q}} - \sin x}}} \right)\,dx}  
&= 2\Cl_2 \left( {2\arctan q} \right) + 2\Cl_2 \left( {\pi  - 2\arctan q} \right)\\
&\qquad\qquad + 4\arctan q\ln q.
\end{split}
\end{equation}
\end{lemma}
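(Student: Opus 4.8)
The plan is to prove~\eqref{eq.cl66rls} by a parameter–differentiation (Feynman) argument that reduces the integral to the classical antiderivative $\int\theta\csc\theta\,d\theta$, and then to convert that into Clausen values by one integration by parts. Write $a=\dfrac{1+q^2}{2q}$; since $0<q\le1$, the AM--GM inequality gives $a\ge1$, so $a\pm\sin x>0$ on $[0,\pi/2]$ and the integrand has (at worst) a mild logarithmic singularity at $x=\pi/2$, present only when $q=1$. Introduce
\[
J(t)=\int_0^{\pi/2}\bigl(\ln(a+t\sin x)-\ln(a-t\sin x)\bigr)\,dx,\qquad t\in[0,1],
\]
so that $J(0)=0$ and the target integral is $J(1)$. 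Differentiating under the integral sign, combining the two fractions, and substituting $u=\cos x$,
\[
J'(t)=\int_0^{\pi/2}\frac{2a\sin x}{a^2-t^2\sin^2 x}\,dx=\int_0^1\frac{2a\,du}{(a^2-t^2)+t^2u^2}=\frac{2a}{t\sqrt{a^2-t^2}}\arctan\frac{t}{\sqrt{a^2-t^2}}.
\]
(The right-hand side tends to $2/a$ as $t\to0^+$ and stays integrable as $t\to1^-$, so there is no convergence obstruction and the differentiation is justified.)

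Next I integrate in $t$ from $0$ to $1$ and substitute $t=a\sin\theta$, which is legitimate because $a\ge1\ge t$; then $\sqrt{a^2-t^2}=a\cos\theta$ and $t/\sqrt{a^2-t^2}=\tan\theta$, so all the awkward factors cancel and
\[
J(1)=\int_0^1 J'(t)\,dt=2\int_0^{\phi}\frac{\theta}{\sin\theta}\,d\theta,\qquad\phi:=\arcsin\!\Bigl(\tfrac1a\Bigr)\in\Bigl(0,\tfrac\pi2\Bigr].
\]
Now integrate by parts using $\int\csc\theta\,d\theta=\ln\tan(\theta/2)$; the boundary term at $\theta=0$ vanishes, leaving $J(1)=2\phi\ln\tan(\phi/2)-2\int_0^\phi\ln\tan(\theta/2)\,d\theta$. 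Split $\ln\tan(\theta/2)=\ln\bigl|2\sin(\theta/2)\bigr|-\ln\bigl|2\sin\tfrac{\pi-\theta}{2}\bigr|$ (using $\cos(\theta/2)=\sin\tfrac{\pi-\theta}{2}$): the first piece integrates to $-\Cl_2(\phi)$ straight from the definition of $\Cl_2$, and the second, after the reflection $w=\pi-\theta$ together with $\Cl_2(\pi)=0$, integrates to $\Cl_2(\pi-\phi)$, whence
\[
\int_0^\phi\ln\tan(\theta/2)\,d\theta=-\Cl_2(\phi)-\Cl_2(\pi-\phi).
\]
Finally, $\sin\phi=\dfrac1a=\dfrac{2q}{1+q^2}=\sin(2\arctan q)$ with $2\arctan q\in(0,\pi/2]$, so $\phi=2\arctan q$ and $\tan(\phi/2)=q$; inserting these values gives exactly~\eqref{eq.cl66rls}. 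The limiting case $q=1$ follows by continuity of both sides (or by a direct dominated-convergence check). As a consistency check, the right-hand side equals $4\,\Im\Li_2(iq)$ by~\eqref{eq.j0jplsn}.

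The main obstacle is the integration-by-parts step that expresses $\int_0^\phi\theta\csc\theta\,d\theta$ through Clausen's function: one must split $\ln\tan(\theta/2)$ into the two half-angle sine logarithms and perform the $\theta\mapsto\pi-\theta$ reflection so that both $\Cl_2(2\arctan q)$ and $\Cl_2(\pi-2\arctan q)$ appear with the correct signs. Everything else — the differentiation under the integral sign, the elementary arctangent integral, and the substitution $t=a\sin\theta$ — is routine.
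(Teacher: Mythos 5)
Your proof is correct, but it follows a genuinely different route from the paper. The paper obtains \eqref{eq.cl66rls} in one stroke by continuing Lewin's identity \eqref{eq.qk18ai6} to a purely imaginary argument, replacing $Q$ by $i(1+q^2)/(2q)$: the arctangent of the resulting argument splits into $\tfrac{\pi}{2}$ plus $\tfrac{i}{2}\ln\bigl(\tfrac{a+\sin x}{a-\sin x}\bigr)$ with $a=(1+q^2)/(2q)$, the dilogarithm arguments collapse to $\pm iq$, and the Clausen terms then come from the formula for $\Im\Li_2(iq)$ in Lemma~\ref{lem.oszn90v} (so it is really the imaginary parts that carry the identity, despite the paper's wording); the whole proof thus leans on the complex-analytic machinery \eqref{eq.j0jplsn}. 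You instead give a self-contained real-variable derivation: differentiation under the integral sign in the auxiliary parameter $t$, an elementary arctangent integral, the substitution $t=a\sin\theta$ reducing everything to $2\int_0^{\phi}\theta\csc\theta\,d\theta$ with $\phi=\arcsin(1/a)$, and one integration by parts in which $\Cl_2(\phi)$ and $\Cl_2(\pi-\phi)$ emerge directly from the integral definition of Clausen's function, followed by the identification $\phi=2\arctan q$, $\tan(\phi/2)=q$. All steps check out (including the boundary term at $\theta=0$, the sign bookkeeping in $\ln\tan(\theta/2)=\ln|2\sin(\theta/2)|-\ln|2\sin\tfrac{\pi-\theta}{2}|$, and the treatment of the endpoint case $q=1$ by continuity). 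What each approach buys: the paper's is a two-line deduction fitting its general strategy of feeding complex arguments into Lewin's integrals, but it requires care with branches and rests on \eqref{eq.j0jplsn}; yours is longer but elementary, avoids the complex continuation entirely, and, as your closing remark observes, it independently re-derives the content of \eqref{eq.j0jplsn} in this configuration, serving as a useful cross-check.
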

\begin{proof}
Replace $q$ by $i(1+q^2)/(2q)$ in~\eqref{eq.qk18ai6} and take the real part.
\end{proof}

\begin{theorem}\label{thm.j7hzxma}
If $r$ is an even integer, then
\begin{equation}
\begin{split}
\int_0^{\pi/2}  {\ln \left( {\frac{{L_r  + 2\sin x}}{{L_r  - 2\sin x}}} \right)\,dx}  &= 2\Cl_2 \left( {\arctan \left( {\frac{2}{{F_r \sqrt 5 }}} \right)} \right) + 2\Cl_2 \left( {\pi  - \arctan \left( {\frac{2}{{F_r \sqrt 5 }}} \right)} \right)\\
&\qquad - 2r\arctan \left( {\frac{2}{{F_r \sqrt 5 }}} \right)\ln \alpha 
\end{split}
\end{equation}
while if $r$ is an odd integer, then
\begin{equation}
\begin{split}
\int_0^{\pi/2}  {\ln \left( {\frac{{F_r \sqrt 5  + 2\sin x}}{{F_r \sqrt 5  - 2\sin x}}} \right)\,dx}  &= 2\Cl_2 \left( {\arctan \left( {\frac{2}{{L_r }}} \right)} \right) + 2\Cl_2 \left( {\pi  - \arctan \left( {\frac{2}{{L_r }}} \right)} \right)\\
&\qquad - 2r\arctan \left( {\frac{2}{{L_r }}} \right)\ln \alpha .
\end{split}
\end{equation}
\end{theorem}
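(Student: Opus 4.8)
The strategy is to specialize the free parameter $q$ in~\eqref{eq.cl66rls} to an appropriate power of $\beta$, chosen so that the quotient $\tfrac{1+q^2}{2q}$ collapses, via the factorizations in~\eqref{eq.s2rjcqb}, to $L_r/2$ when $r$ is even and to $F_r\sqrt5/2$ when $r$ is odd. Throughout I take $r$ to be a positive integer (the hypothesis $0<q\le1$ in~\eqref{eq.cl66rls} forces this choice of sign for the exponent), and I use repeatedly that $\alpha\beta=-1$, i.e.\ $-\beta=1/\alpha$ and hence $\ln|\beta|=-\ln\alpha$.

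Assume first that $r$ is a positive even integer. Then $\beta^r=(\beta^2)^{r/2}>0$ and, since $\beta^2=(3-\sqrt5)/2<1$, we have $0<\beta^r\le1$, so $q=\beta^r$ is admissible in~\eqref{eq.cl66rls}. The even case of~\eqref{eq.s2rjcqb} gives $1+\beta^{2r}=\beta^rL_r$, whence $\tfrac{1+q^2}{2q}=\tfrac{1+\beta^{2r}}{2\beta^r}=\tfrac{L_r}{2}$; the left-hand side of~\eqref{eq.cl66rls} thus becomes $\int_0^{\pi/2}\ln\!\big((L_r+2\sin x)/(L_r-2\sin x)\big)\,dx$. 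On the right-hand side I invoke~\eqref{eq.v5lli77} in the form $2\arctan(\beta^r)=\arctan(2/(F_r\sqrt5))$ to rewrite the two Clausen terms, and I evaluate the logarithm using that, $r$ being even, $\ln(\beta^r)=\tfrac r2\ln(\beta^2)=r\ln|\beta|=-r\ln\alpha$, so that $4\arctan(\beta^r)\ln(\beta^r)=-2r\arctan(2/(F_r\sqrt5))\ln\alpha$. Substituting these into~\eqref{eq.cl66rls} yields the first asserted identity.

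For $r$ a positive odd integer the computation runs in parallel with the choice $q=-\beta^r$: here $-\beta^r=(-\beta)^r=(1/\alpha)^r\in(0,1]$, so~\eqref{eq.cl66rls} applies, and the odd case of~\eqref{eq.s2rjcqb}, namely $1+\beta^{2r}=-\beta^rF_r\sqrt5$, gives $\tfrac{1+q^2}{2q}=\tfrac{-\beta^rF_r\sqrt5}{-2\beta^r}=\tfrac{F_r\sqrt5}{2}$, so the left side becomes $\int_0^{\pi/2}\ln\!\big((F_r\sqrt5+2\sin x)/(F_r\sqrt5-2\sin x)\big)\,dx$. Now~\eqref{eq.ka7hwfm} gives $2\arctan(-\beta^r)=\arctan(2/L_r)$ for the Clausen terms, and $\ln(-\beta^r)=r\ln(1/\alpha)=-r\ln\alpha$ gives $4\arctan(-\beta^r)\ln(-\beta^r)=-2r\arctan(2/L_r)\ln\alpha$; these turn~\eqref{eq.cl66rls} into the second asserted identity. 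The only point demanding any care is the verification that the value of $q$ used lies in $(0,1]$ so that~\eqref{eq.cl66rls} is legitimately applicable, together with the correct reading of $\ln(\beta^r)$ (respectively $\ln(-\beta^r)$) as $-r\ln\alpha$ — which is precisely why one takes $q=-\beta^r$, rather than $q=\beta^r$, in the odd case.
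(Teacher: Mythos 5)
Your proposal is correct and follows the same route as the paper: substitute $q=\beta^r$ (for even $r$) and $q=-\beta^r$ (for odd $r$) into~\eqref{eq.cl66rls}, collapse $(1+q^2)/(2q)$ via~\eqref{eq.s2rjcqb}, and simplify the arctangent and logarithm terms with Lemma~\ref{lem.z4tjirr} and $\ln(\mp\beta^r)=-r\ln\alpha$. Your explicit check that $q\in(0,1]$ and the careful evaluation of the logarithm are exactly the (unstated) details behind the paper's one-line proof.
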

\begin{proof}
Consider $r$ an even integer. Set $q=\beta^r$ in~\eqref{eq.cl66rls} and use~\eqref{eq.s2rjcqb},~\eqref{eq.j0jplsn} and~\eqref{eq.v5lli77}. Consider $r$ an odd integer. Set $q=-\beta^r$ in~\eqref{eq.cl66rls} and use~\eqref{eq.s2rjcqb},~\eqref{eq.j0jplsn} and~\eqref{eq.ka7hwfm}.
\end{proof}
In particular,
\begin{equation}
\int_0^{\pi/2} \ln \left( {\frac{{1 + \sin x}}{{1 - \sin x}}} \right)\,dx = 4G,
\end{equation}
which can be compared to other integral representations of $G$ like
\begin{equation*}
G = - \int_0^1 \frac{\ln x}{1+x^2}\,dx = \int_0^1 \frac{\tan^{-1} x}{x}\,dx.
\end{equation*}

Differentiating~\eqref{eq.qk18ai6} gives
\begin{equation}\label{eq.tkzy7wr}
\int_0^{\pi /2} \frac{{\sin x}}{{\sin ^2 x + Q^2 }}\,dx 
= \frac{1}{{\sqrt {1 + Q^2} }}\ln \left( {\frac{{1 - Q + \sqrt {1 + Q^2 } }}{{1 + Q - \sqrt {1 + Q^2 } }}} \right).
\end{equation}

\begin{theorem}
If $r$ is a non-zero integer, then
\begin{equation}\label{Fib_sin_id1}
\int_0^{\pi/2} \frac{\sin x}{5F_r^2 + L_r^2 \sin^2 x}\,dx = \frac{\sqrt{2}}{2 L_r \sqrt{L_{2r}}}
\ln \left( \frac{\sqrt{2}\beta^r + \sqrt{L_{2r}}}{\sqrt{2}\alpha^r - \sqrt{L_{2r}}} \right)
\end{equation}
and
\begin{equation}\label{Fib_sin_id2}
\int_0^{\pi/2} \frac{\sin x}{L_r^2 + 5F_r^2 \sin^2 x}\,dx = \frac{\sqrt{10}}{10 F_r \sqrt{L_{2r}}}
\ln \left( \frac{-\sqrt{2}\beta^r + \sqrt{L_{2r}}}{\sqrt{2}\alpha^r - \sqrt{L_{2r}}} \right).
\end{equation}
\end{theorem}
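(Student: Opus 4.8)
The plan is to reduce both integrals to the single ``master'' identity~\eqref{eq.tkzy7wr}, which was just obtained by differentiating Lewin's identity~\eqref{eq.qk18ai6} in $Q$. For the first integral I would factor the leading constant out of the denominator,
\[
\int_0^{\pi/2}\frac{\sin x}{5F_r^2 + L_r^2\sin^2 x}\,dx = \frac{1}{L_r^2}\int_0^{\pi/2}\frac{\sin x}{\bigl(F_r\sqrt5/L_r\bigr)^2 + \sin^2 x}\,dx ,
\]
so that~\eqref{eq.tkzy7wr} applies with $Q=F_r\sqrt5/L_r$; likewise the second integral reduces, after dividing by $5F_r^2$, to the case $Q=L_r/(F_r\sqrt5)$.

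The next step is to simplify $\sqrt{1+Q^2}$ by means of the Binet-form identity $L_r^2+5F_r^2=2L_{2r}$, which is immediate from $L_r^2=L_{2r}+2(-1)^r$ and $5F_r^2=L_{2r}-2(-1)^r$. For $Q=F_r\sqrt5/L_r$ this gives $1+Q^2=2L_{2r}/L_r^2$, hence $\sqrt{1+Q^2}=\sqrt{2L_{2r}}/L_r$ (taking $r>0$, so $L_r>0$). Writing numerator and denominator of the fraction inside the logarithm in~\eqref{eq.tkzy7wr} over the common denominator $L_r$ and using the Binet relations $L_r-F_r\sqrt5=2\beta^r$ and $L_r+F_r\sqrt5=2\alpha^r$, the argument of the logarithm collapses to
\[
\frac{1-Q+\sqrt{1+Q^2}}{1+Q-\sqrt{1+Q^2}}=\frac{2\beta^r+\sqrt{2L_{2r}}}{2\alpha^r-\sqrt{2L_{2r}}}=\frac{\sqrt2\,\beta^r+\sqrt{L_{2r}}}{\sqrt2\,\alpha^r-\sqrt{L_{2r}}} .
\]
Substituting back into~\eqref{eq.tkzy7wr} gives the normalized integral as $\bigl(L_r/\sqrt{2L_{2r}}\bigr)\ln(\cdots)$; dividing by $L_r^2$ and using $1/\sqrt{2L_{2r}}=\sqrt2/(2\sqrt{L_{2r}})$ yields~\eqref{Fib_sin_id1} exactly. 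The second identity is entirely parallel: with $Q=L_r/(F_r\sqrt5)$ one gets $\sqrt{1+Q^2}=\sqrt{2L_{2r}}/(F_r\sqrt5)$, the numerator now involves $F_r\sqrt5-L_r=-2\beta^r$ and the denominator $F_r\sqrt5+L_r=2\alpha^r$, and dividing by $5F_r^2$ together with $\sqrt5/(5\sqrt{2L_{2r}})=\sqrt{10}/(10\sqrt{L_{2r}})$ produces~\eqref{Fib_sin_id2}.

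The one delicate point, and the part I would watch most carefully, is the bookkeeping of signs when $r<0$: there $F_r$ and (for odd $r$) $L_r$ can be negative, so $\sqrt{1+Q^2}$ equals $\sqrt{2L_{2r}}/|L_r|$ rather than $\sqrt{2L_{2r}}/L_r$, and since~\eqref{eq.tkzy7wr} was derived for positive $Q$ it should be applied to $|Q|$; one then uses $F_{-r}=(-1)^{r-1}F_r$, $L_{-r}=(-1)^r L_r$ and $L_{2r}=L_{-2r}$ to put the result back in the stated shape (a quick check with $r=-1$ against the elementary substitution $u=\cos x$ confirms the signs do reconcile). For $r>0$ the whole computation is routine.
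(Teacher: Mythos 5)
Your proposal is correct and follows essentially the same route as the paper: substituting $Q=F_r\sqrt5/L_r$ and $Q=L_r/(F_r\sqrt5)$ into \eqref{eq.tkzy7wr} and simplifying via $L_r^2+5F_r^2=2L_{2r}$ together with $L_r\pm F_r\sqrt5=2\alpha^r,\,2\beta^r$. Your extra remark about the sign bookkeeping for negative $r$ is a reasonable refinement, but the argument itself matches the paper's proof.
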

\begin{proof}
Set $Q=F_r\sqrt 5/L_r$ and $Q=L_r/(F_r\sqrt 5)$ in~\eqref{eq.tkzy7wr} and simplify making use of $L_n^2+5F_n^2=2L_{2n}$.
\end{proof}

\begin{theorem}
If $r\geq 2 \,\,(r\geq 1)$ is an integer, then
\begin{equation}\label{Fib_sin_id3}
\int_0^{\pi/2} \frac{\sin x}{F_{r-1}^2 + F_r^2 \sin^2 x}\,dx = \frac{1}{F_r \sqrt{F_{2r-1}}}
\ln \left( \frac{F_{r-2} + \sqrt{F_{2r-1}}}{F_{r+1} - \sqrt{F_{2r-1}}} \right)
\end{equation}
and
\begin{equation}\label{Fib_sin_id4}
\int_0^{\pi/2} \frac{\sin x}{L_{r-1}^2 + L_r^2 \sin^2 x}\,dx = \frac{1}{L_r \sqrt{5 F_{2r-1}}}
\ln \left( \frac{L_{r-2} + \sqrt{5 F_{2r-1}}}{L_{r+1} - \sqrt{5 F_{2r-1}}} \right).
\end{equation}
\end{theorem}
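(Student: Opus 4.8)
The plan is to specialize the differentiated Lewin identity~\eqref{eq.tkzy7wr} at $Q=F_{r-1}/F_r$ for~\eqref{Fib_sin_id3} and at $Q=L_{r-1}/L_r$ for~\eqref{Fib_sin_id4}, and then to simplify the resulting closed forms using the quadratic identities $F_r^2+F_{r-1}^2=F_{2r-1}$ and $L_r^2+L_{r-1}^2=5F_{2r-1}$ together with the recurrences $F_{r+1}=F_r+F_{r-1}$, $F_{r-2}=F_r-F_{r-1}$ and their Lucas counterparts. Both quadratic identities fall out of the Binet forms in one line: from $F_n^2=(L_{2n}-2(-1)^n)/5$ and $L_n^2=L_{2n}+2(-1)^n$ one gets $F_r^2+F_{r-1}^2=(L_{2r}+L_{2r-2})/5$ and $L_r^2+L_{r-1}^2=L_{2r}+L_{2r-2}$, and then $L_{2r}+L_{2r-2}=L_{(2r-1)+1}+L_{(2r-1)-1}=5F_{2r-1}$.

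For~\eqref{Fib_sin_id3}, I would pull the factor $F_r^2$ out of the denominator on the left, writing the integral as $F_r^{-2}\int_0^{\pi/2}\sin x\,(Q^2+\sin^2 x)^{-1}\,dx$ with $Q=F_{r-1}/F_r$, and apply~\eqref{eq.tkzy7wr}; note $Q>0$ precisely because $r\ge 2$ (since $F_0=0$). Here $1+Q^2=(F_r^2+F_{r-1}^2)/F_r^2=F_{2r-1}/F_r^2$ and $F_r>0$, so $\sqrt{1+Q^2}=\sqrt{F_{2r-1}}/F_r$, and the prefactor $F_r^{-2}(1+Q^2)^{-1/2}$ collapses to $1/(F_r\sqrt{F_{2r-1}})$. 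Multiplying numerator and denominator inside the logarithm by $F_r$ turns the argument $(1-Q+\sqrt{1+Q^2})/(1+Q-\sqrt{1+Q^2})$ into $(F_r-F_{r-1}+\sqrt{F_{2r-1}})/(F_r+F_{r-1}-\sqrt{F_{2r-1}})$, and then $F_r-F_{r-1}=F_{r-2}$ and $F_r+F_{r-1}=F_{r+1}$ give exactly the claimed right-hand side.

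For~\eqref{Fib_sin_id4}, the identical manipulation with $Q=L_{r-1}/L_r$ (positive for every $r\ge 1$, including $r=1$, where $L_0=2$) produces $1+Q^2=(L_r^2+L_{r-1}^2)/L_r^2=5F_{2r-1}/L_r^2$, hence $\sqrt{1+Q^2}=\sqrt{5F_{2r-1}}/L_r$; the prefactor becomes $1/(L_r\sqrt{5F_{2r-1}})$, and clearing the common factor $L_r$ inside the logarithm together with $L_r-L_{r-1}=L_{r-2}$, $L_r+L_{r-1}=L_{r+1}$ yields the stated formula. I do not expect any genuine obstacle in this proof; the only points that require care are invoking the correct quadratic identities $F_r^2+F_{r-1}^2=F_{2r-1}$ and $L_r^2+L_{r-1}^2=5F_{2r-1}$, and verifying that the chosen $Q$ is positive so that~\eqref{eq.tkzy7wr} applies and the logarithm is real-valued (this positivity is exactly what forces the restriction $r\ge 2$ in the Fibonacci case).
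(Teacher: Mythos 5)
Your proposal is correct and takes essentially the same route as the paper: the paper also specializes \eqref{eq.tkzy7wr} at $Q=F_{r-1}/F_r$ and $Q=L_{r-1}/L_r$ and then simplifies, invoking the sum-of-squares facts $F_{r-1}^2+F_r^2=F_{2r-1}$ and $L_{r-1}^2+L_r^2=5F_{2r-1}$ (which the paper refers to as ``the Catalan identity'') together with the recurrences, exactly as you do. The only difference is cosmetic: you spell out the Binet-form derivation of these identities and the positivity of $Q$ (hence $r\ge 2$ in the Fibonacci case), which the paper leaves implicit.
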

\begin{proof}
Set $Q=F_{r-1}/F_r$ and $Q=L_{r-1}/L_r$ in~\eqref{eq.tkzy7wr} and simplify making use of the Catalan identity.
\end{proof}

\begin{remark}
We mention that identities \eqref{Fib_sin_id3} and \eqref{Fib_sin_id4} can be generalized further.
For instance, we record that for each $k\geq 1$ and each odd $r\geq 1$ we have
\begin{equation}\label{Fib_sin_id5_gen}
\int_0^{\pi/2} \frac{\sin x}{F_{k}^2 + F_{k+r}^2 \sin^2 x}\,dx = \frac{1}{F_{k+r} \sqrt{F_r F_{2k+r}}}
\ln \left( \frac{F_{k+r} - F_k + \sqrt{F_r F_{2k+r}}}{F_{k+r} + F_k - \sqrt{F_r F_{2k+r}}} \right),
\end{equation}
which contains \eqref{Fib_sin_id3} and
\begin{equation}
\int_0^{\pi/2} \frac{\sin x}{F_{r}^2 + F_{2r}^2 \sin^2 x}\,dx = \frac{1}{F_{2r} \sqrt{F_r F_{3r}}}
\ln \left( \frac{F_{2r} - F_r + \sqrt{F_r F_{3r}}}{F_{2r} + F_r - \sqrt{F_r F_{3r}}} \right)
\end{equation}
as special cases.
\end{remark}

\begin{theorem}
If $r$ is a positive odd integer, then
\begin{equation}\label{Fib_sin_id6}
\int_0^{\pi/2} \frac{\sin x}{4 + L_r^2 \sin^2 x}\,dx = \frac{r}{\sqrt{5} F_{2r}} \ln \alpha.
\end{equation}
\end{theorem}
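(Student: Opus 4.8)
The plan is to specialize the differentiated Lewin identity~\eqref{eq.tkzy7wr}. First I would pull the coefficient of $\sin^2 x$ out of the denominator, writing
\begin{equation*}
\int_0^{\pi/2}\frac{\sin x}{4+L_r^2\sin^2 x}\,dx=\frac{1}{L_r^2}\int_0^{\pi/2}\frac{\sin x}{\sin^2 x+(2/L_r)^2}\,dx,
\end{equation*}
so that~\eqref{eq.tkzy7wr} applies with $Q=2/L_r$ (positive, since $r\ge1$). Because $r$ is odd, the Catalan-type identity $L_r^2-5F_r^2=4(-1)^r=-4$ gives $1+Q^2=(L_r^2+4)/L_r^2=5F_r^2/L_r^2$, hence $\sqrt{1+Q^2}=F_r\sqrt5/L_r$. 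Substituting into~\eqref{eq.tkzy7wr} and clearing the fractions inside the logarithm yields
\begin{equation*}
\int_0^{\pi/2}\frac{\sin x}{4+L_r^2\sin^2 x}\,dx=\frac{1}{L_rF_r\sqrt5}\ln\!\left(\frac{L_r+F_r\sqrt5-2}{L_r-F_r\sqrt5+2}\right).
\end{equation*}

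Next I would invoke the Binet forms $L_r+F_r\sqrt5=2\alpha^r$ and $L_r-F_r\sqrt5=2\beta^r$, which turn the argument of the logarithm into $(\alpha^r-1)/(\beta^r+1)$. Since $\alpha\beta=-1$ and $r$ is odd, one has $\alpha^r(\beta^r+1)=(\alpha\beta)^r+\alpha^r=-1+\alpha^r$, so the argument simplifies to $\alpha^r$ and the logarithm is $r\ln\alpha$. Using $F_{2r}=F_rL_r$ then gives the stated value $\dfrac{r}{\sqrt5\,F_{2r}}\ln\alpha$.

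There is no genuine obstacle here beyond bookkeeping; the only point that needs attention is the sign in $L_r^2+4=5F_r^2$, which requires $r$ odd. For $r$ even one would instead have $L_r^2-4=5F_r^2$, so $\sqrt{1+Q^2}$ would not simplify to $F_r\sqrt5/L_r$ and the reduction would produce a Lucas-type right-hand side rather than the Fibonacci expression above. All remaining steps are routine algebraic simplifications using identities already recorded in the excerpt.
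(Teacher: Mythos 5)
Your proposal is correct and follows the paper's own route: the paper likewise sets $Q=2/L_r$ in~\eqref{eq.tkzy7wr} and simplifies, and your use of $L_r^2+4=5F_r^2$ (valid for odd $r$), the Binet forms $L_r\pm F_r\sqrt5=2\alpha^r,\,2\beta^r$, the reduction $(\alpha^r-1)/(\beta^r+1)=\alpha^r$ via $\alpha^r\beta^r=-1$, and $F_{2r}=F_rL_r$ is exactly the "simplify" step left implicit there. Nothing further is needed.
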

\begin{proof}
Set $Q=2/L_r$ in~\eqref{eq.tkzy7wr} and simplify.
\end{proof}

\begin{corollary}
If $r$ is a positive odd integer, then
\begin{equation}\label{Fib_sin_id7}
\int_0^{\pi/2} \frac{\sin^3 x}{(4 + L_r^2 \sin^2 x)^2}\,dx = \frac{1}{10F_{2r}^2}\left (\frac{2r L_{2r}}{\sqrt{5} F_{2r}} \ln \alpha - 1 \right).
\end{equation}
\end{corollary}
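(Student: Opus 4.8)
The plan is to recover this as a one-parameter differentiation of the identity~\eqref{Fib_sin_id6}, in the spirit of the preceding corollaries. Replacing the integer $L_r$ by the Lucas function $l(t)$ of Lemma~\ref{lem.fibfunctions} and reading off the closed form behind~\eqref{Fib_sin_id6} from~\eqref{eq.tkzy7wr} with $Q=2/l(t)$, one has, for real $t$ in a neighbourhood of any positive odd integer (where $l(t)>0$),
\begin{equation*}
\int_0^{\pi/2}\frac{\sin x}{4+l(t)^2\sin^2 x}\,dx=\frac{1}{l(t)\sqrt{l(t)^2+4}}\ln\!\left(\frac{l(t)-2+\sqrt{l(t)^2+4}}{l(t)+2-\sqrt{l(t)^2+4}}\right).
\end{equation*}
Differentiating both sides in $t$: the left side produces $-2l(t)l'(t)\int_0^{\pi/2}\sin^3 x\,(4+l(t)^2\sin^2 x)^{-2}\,dx$, while on the right the derivative of the logarithm collapses (the relevant rational bracket simplifies to $1$) to $l'(t)/\sqrt{l(t)^2+4}$, leaving an expression in $l(t)$, $l'(t)$ and the logarithm.

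Next I would evaluate at $t=r$ with $r$ a positive odd integer. Here $l(r)=L_r$, and the odd-index identities $L_r^2+4=5F_r^2$, $L_r^2+2=L_{2r}$ and $L_rF_r=F_{2r}$ apply; moreover the logarithm's argument becomes $(L_r-2+F_r\sqrt 5)/(L_r+2-F_r\sqrt 5)=(\alpha^r-1)/(\beta^r+1)=\alpha^r$, since $\alpha^r(\beta^r+1)=(\alpha\beta)^r+\alpha^r=\alpha^r-1$ for odd $r$, so the logarithm equals $r\ln\alpha$. Because $\int_0^{\pi/2}\sin^3 x\,(4+L_r^2\sin^2 x)^{-2}\,dx$ is a real number, taking real parts and using $\Re l'(r)=F_r\sqrt 5\,\ln\alpha$ from~\eqref{eq.wyg8jr6} turns the differentiated identity into a single linear equation for that integral.

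Solving it and tidying with $L_r^2F_r^2=F_{2r}^2$, $L_r^3F_r^3=F_{2r}^3$ and $5F_r^2=L_{2r}+2$ gives the stated closed form. (Equivalently, one may sidestep Lemma~\ref{lem.fibfunctions}: write $\sin^3 x=L_r^{-2}\sin x\,[(4+L_r^2\sin^2 x)-4]$, so the target equals $L_r^{-2}$ times $\int_0^{\pi/2}\sin x\,(4+L_r^2\sin^2 x)^{-1}\,dx$ — given by~\eqref{Fib_sin_id6} — minus $4L_r^{-2}$ times $\int_0^{\pi/2}\sin x\,(4+L_r^2\sin^2 x)^{-2}\,dx$, and the latter is obtained by inserting a parameter $a$ in place of the constant $4$ in the closed form coming from~\eqref{eq.tkzy7wr} and differentiating in $a$ at $a=4$.) The main obstacle is purely organizational: the differentiation on the right side spawns several terms and a spurious factor $(\ln\alpha)^2$, and one must carry the odd-index Fibonacci--Lucas identities through carefully so that, after dividing by the $\ln\alpha$ contributed by $\Re l'(r)$, everything collapses to the claimed expression; there is no conceptual difficulty.
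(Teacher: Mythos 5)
Your proposal is correct and follows essentially the paper's route: the paper proves this corollary by differentiating the Fibonacci--Lucas function form of~\eqref{Fib_sin_id6} with respect to the parameter and taking the real part via~\eqref{eq.wyg8jr6}, which is exactly the mechanism you use, ending in the same linear equation for the target integral. The only cosmetic difference is that you differentiate the unsimplified closed form obtained from~\eqref{eq.tkzy7wr} with $Q=2/l(t)$ (so only $\Re\, l'(r)=\sqrt{5}F_r\ln\alpha$ is needed, and your simplification of the logarithm's derivative to $l'(t)/\sqrt{l(t)^2+4}$ does check out), whereas the paper differentiates the already simplified right-hand side $t\ln\alpha/(\sqrt{5}f(2t))$, which additionally uses $\Re\,\tfrac{d}{dt}f(2t)$; both collapse to~\eqref{Fib_sin_id7}.
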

\begin{proof}
Differentiate the Fibonacci and Lucas function forms of~\eqref{Fib_sin_id6} and take the real part, using~\eqref{eq.wyg8jr6}.
\end{proof}

\begin{remark}
Noting that 
\begin{equation*}
\int_0^{\pi/2} \frac{{\sin x}}{{\sin ^2 x + Q^2 }}\,dx = \frac{1}{\sqrt {1 + Q^2 }} \ln \left( \frac{{1 + \sqrt {1 + Q^2 } }}{Q} \right).
\end{equation*}
it is clear the results presented in this section can be stated in a slightly different form. 
\end{remark}

\section{Results associated with~\eqref{eq.sjcljni}}

We can write~\eqref{eq.sjcljni} as
\begin{equation}\label{eq.hjxqrgq}
\int_0^\pi  {x\arctan (Q\sin x)\,dx}  = \pi \Li_2(q) - \pi \Li_2(-q),
\end{equation}
where
\begin{equation}
Q=Q(q)=\frac{2q}{1 - q^2};
\end{equation}
so that
\begin{equation}
\frac{{dQ}}{{dq}} = \frac{{2(1 + q^2 )}}{{(1 - q^2 )^2 }} = \frac{1}{{(1 - q)^2 }} + \frac{1}{{(1 + q)^2 }}.
\end{equation}
Differentiating~\eqref{eq.hjxqrgq} with respect to $q$, we have
\begin{equation}
\int_0^\pi  {\frac{{x\sin x}}{{1 + Q^2 \sin ^2 x}}\,dx}  = \frac{{\pi \ln \left( {\left| {\dfrac{{1 + q}}{{1 - q}}} \right|} \right)}}{{q\dfrac{{dQ}}{{dq}}}};
\end{equation}
that is
\begin{equation}\label{eq.u6jqlay}
\int_0^\pi  {\frac{{x\sin x}}{{1 + \left( {\dfrac{{2q}}{{1 - q^2 }}} \right)^2 \sin ^2 x}}\,dx}  = \frac{\pi }{2}\frac{{\ln \left( {\left| {\dfrac{{1 + q}}{{1 - q}}} \right|} \right)}}{{\dfrac{q}{{1 - q^2 }}\dfrac{{1 + q^2 }}{{1 - q^2 }}}}.
\end{equation}
We now proceed to derive from~\eqref{eq.u6jqlay} a couple of identities involving Fibonacci and Lucas numbers.

\begin{theorem}
If $r$ is a non-zero integer, then
\begin{gather}
\int_0^\pi  {\frac{{x\sin x}}{{L_r^2  + 4\sin ^2 x}}\,dx}  = \frac{\pi }{{2F_r \sqrt 5 }}\ln \left( {\frac{{F_r \sqrt 5  + 2}}{{L_r }}} \right),\quad\text{$r$ odd};\label{eq.cpwmq60}\\
\int_0^\pi  {\frac{{x\sin x}}{{L_r^2  - 4\sin ^2 x}}\,dx}  = \frac{\pi }{{2L_r }}\ln \left( {\frac{F_r \sqrt 5}{{L_r -2 }}} \right),\quad\text{$r$ even}.\label{eq.rdjra1d}
\end{gather}
\end{theorem}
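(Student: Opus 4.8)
The plan is to specialize the master identity~\eqref{eq.u6jqlay}, whose right-hand side I first rewrite in the form $\dfrac{\pi(1-q^2)^2}{2q(1+q^2)}\ln\left|\dfrac{1+q}{1-q}\right|$, which is valid for every real $q$ with $q^2<1$. The odd-$r$ statement will come from the substitution $q=-\beta^r$ and the even-$r$ statement from $q=\beta^r$; in both cases $q\in(0,1)$ because $|\beta|<1$, so~\eqref{eq.u6jqlay} applies, $\left|\frac{1+q}{1-q}\right|=\frac{1+q}{1-q}$, and every radical that appears below is a positive real. Throughout one uses $q^2=\beta^{2r}$ together with~\eqref{eq.s2rjcqb} to evaluate $1-q^2$ and $1+q^2$ in the relevant parity.

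For $r$ odd one has $1-q^2=-\beta^rL_r$ and $1+q^2=-\beta^rF_r\sqrt5$ by~\eqref{eq.s2rjcqb}, hence $\frac{2q}{1-q^2}=\frac{2}{L_r}$ and $1+\left(\frac{2q}{1-q^2}\right)^2\sin^2x=\frac{L_r^2+4\sin^2x}{L_r^2}$, so that the left side of~\eqref{eq.u6jqlay} equals $L_r^2\int_0^\pi\frac{x\sin x}{L_r^2+4\sin^2x}\,dx$. The rational prefactor collapses to $\frac{(1-q^2)^2}{q(1+q^2)}=\frac{L_r^2}{F_r\sqrt5}$, and the logarithm is handled via $\frac{1+q}{1-q}=\frac{1-\beta^r}{1+\beta^r}=\frac{(1-\beta^r)^2}{1-\beta^{2r}}=\frac{(1+\beta^{2r})-2\beta^r}{1-\beta^{2r}}=\frac{F_r\sqrt5+2}{L_r}$; dividing through by $L_r^2$ gives~\eqref{eq.cpwmq60}. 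For $r$ even, $q=\beta^r$ gives $1-q^2=\beta^rF_r\sqrt5$ and $1+q^2=\beta^rL_r$, so $\frac{2q}{1-q^2}=\frac{2}{F_r\sqrt5}$ and $1+\left(\frac{2q}{1-q^2}\right)^2\sin^2x=\frac{5F_r^2+4\sin^2x}{5F_r^2}=\frac{L_r^2-4\cos^2x}{5F_r^2}$ (using $L_r^2-5F_r^2=4$ for even $r$), whence the left side of~\eqref{eq.u6jqlay} equals $5F_r^2$ times the integral in~\eqref{eq.rdjra1d}. Here the prefactor is $\frac{(1-q^2)^2}{q(1+q^2)}=\frac{5F_r^2}{L_r}$ and $\frac{1+q}{1-q}=\frac{1+\beta^r}{1-\beta^r}=\frac{(1+\beta^{2r})+2\beta^r}{1-\beta^{2r}}=\frac{L_r+2}{F_r\sqrt5}$; dividing by $5F_r^2$ and using $5F_r^2=(L_r-2)(L_r+2)$ to rewrite $\ln\frac{L_r+2}{F_r\sqrt5}=\ln\frac{F_r\sqrt5}{L_r-2}$ yields~\eqref{eq.rdjra1d}.

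The mathematics is entirely elementary once the substitution is fixed, so I expect the only obstacle to be clerical: one must choose the sign of $q$ so that $q\in(0,1)$ (otherwise $\left|\frac{1+q}{1-q}\right|$ need not equal $\frac{1+q}{1-q}$ and the radicals need not be positive), and then feed the Binet-form identities — $1\pm\beta^{2r}$ from~\eqref{eq.s2rjcqb}, $L_r^2-5F_r^2=(-1)^r4$, and $1-\beta^{2r}=(1-\beta^r)(1+\beta^r)$ — in the correct parity into both the rational prefactor and the logarithm so that they collapse to Fibonacci/Lucas form. No analytic continuation or convergence issue arises, since $|\beta^r|<1$ for every $r\ge1$.
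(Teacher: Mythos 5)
Your route is the paper's own: substitute $q=\pm\beta^r$ into \eqref{eq.u6jqlay} and reduce with \eqref{eq.s2rjcqb} (the paper simply writes $q=\beta^r$ for both parities, which is equivalent because both sides of \eqref{eq.u6jqlay} are even in $q$; your sign convention merely removes the absolute value), and your algebra is correct in both cases. One caveat: as you yourself compute, the even-$r$ substitution produces the denominator $5F_r^2+4\sin^2 x=L_r^2-4\cos^2 x$, not $L_r^2-4\sin^2 x$, so what you have actually proved is $\int_0^\pi x\sin x\,\bigl(L_r^2-4\cos^2 x\bigr)^{-1}dx=\frac{\pi}{2L_r}\ln\bigl(\frac{F_r\sqrt5}{L_r-2}\bigr)$, and your phrase ``the integral in \eqref{eq.rdjra1d}'' is literally inaccurate. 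The $\sin^2 x$ in the printed \eqref{eq.rdjra1d} is a typo in the paper: the corollary that follows it (obtained by differentiating it) carries $L_r^2-4\cos^2 x$ in the even case, and Theorem~\ref{thm.fmk6krx} shows that the genuine $\sin^2$-integral equals $\frac{\pi}{2F_r\sqrt5}\arctan\bigl(\frac{2}{F_r\sqrt5}\bigr)$ for even $r$, a different value. So your derivation establishes the intended (corrected) statement; you should flag the $\cos^2$ versus $\sin^2$ discrepancy explicitly rather than silently identifying the two integrals.
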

\begin{proof}
Set $q=\beta^r$ in~\eqref{eq.u6jqlay} and use~\eqref{eq.s2rjcqb}.
\end{proof}

\begin{corollary}
If $r$ is a non-zero integer, then
\begin{gather}
\int_0^\pi  {\frac{{x\sin x}}{{\left( {L_r^2  + 4\sin ^2 x} \right)^2 }}\,dx}  = \frac{\pi}{{20\sqrt 5 F_r^3 }}\ln \left( {\frac{{F_r \sqrt 5  + 2}}{{L_r }}} \right) + \frac{\pi }{{10F_{2r}^2 }},\quad\text{$r$ odd};\\
\int_0^\pi  {\frac{{x\sin x}}{{\left( {L_r^2  - 4\cos ^2 x} \right)^2 }}\,dx}  = \frac{\pi}{{4 L_r^3 }}\ln \left( {\frac{{F_r \sqrt 5}}{{L_r - 2}}} \right) + \frac{\pi }{{10F_{2r}^2 }},\quad\text{$r$ even}.
\end{gather}
\end{corollary}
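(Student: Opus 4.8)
The plan is to repeat, one derivative deeper, the argument that produced \eqref{Fib_sin_id7} from \eqref{Fib_sin_id6}: pass to the Fibonacci and Lucas \emph{function} forms of \eqref{eq.cpwmq60} and \eqref{eq.rdjra1d} (replace $L_r$ by $l(t)$ and $F_r$ by $f(t)$, cf.\ \eqref{eq.r280nsg}), differentiate with respect to $t$, set $t=r$, and take real parts via Lemma~\ref{lem.fibfunctions}. For the odd case one starts from
\[
\int_0^\pi \frac{x\sin x}{l(t)^2 + 4\sin^2 x}\,dx = \frac{\pi}{2\sqrt 5\, f(t)}\ln\!\left(\frac{\sqrt 5\, f(t) + 2}{l(t)}\right),
\]
valid near odd integers, and for the even case from the corresponding function form of \eqref{eq.rdjra1d} (whose denominator is $l(t)^2-4\sin^2 x$).

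Differentiating in $t$, the relation $\frac{d}{dt}\bigl(l(t)^2+4\sin^2 x\bigr)^{-1}=-2\,l(t)l'(t)\bigl(l(t)^2+4\sin^2 x\bigr)^{-2}$ turns the left-hand side into $-2\,l(t)l'(t)\int_0^\pi x\sin x\,(l(t)^2+4\sin^2 x)^{-2}\,dx$, in which the integral at $t=r$ is exactly the one we want; the right-hand side differentiates to an elementary expression in $l(t),l'(t),f(t),f'(t)$ and $\ln\!\bigl((\sqrt 5 f(t)+2)/l(t)\bigr)$. Now put $t=r$ and take real parts. By \eqref{eq.nfgepkq} and \eqref{eq.wyg8jr6}, $l(r)=L_r$, $f(r)=F_r$, $\Re l'(r)=\sqrt 5\,F_r\ln\alpha$, $\Re f'(r)=(L_r/\sqrt 5)\ln\alpha$; since the target integral is real and every surviving term on the right carries exactly one derivative factor, the overall factor $\ln\alpha$ cancels. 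Using $F_{2r}=F_rL_r$ and solving, one arrives at
\[
\int_0^\pi \frac{x\sin x}{(L_r^2+4\sin^2 x)^2}\,dx = \frac{\pi}{20\sqrt 5\, F_r^3}\ln\!\left(\frac{\sqrt 5\, F_r + 2}{L_r}\right) + R_r ,
\]
with $R_r$ an explicit rational multiple of $\pi$, and similarly for the even case.

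The main obstacle is to check that $R_r=\pi/(10F_{2r}^2)$. In the odd case this reduces to the elementary identity
\[
-\frac{1}{F_r(\sqrt 5\, F_r + 2)} + \frac{\sqrt 5}{L_r^2} = \frac{2}{F_rL_r^2},
\]
which is immediate from $L_r^2=5F_r^2-4=(\sqrt 5\, F_r-2)(\sqrt 5\, F_r+2)$; in the even case one clears the factor $L_r-2$ inherited from \eqref{eq.rdjra1d} using $L_r(L_r-2)=5F_r^2+4-2L_r=L_r^2-2L_r$. Both instances are the Catalan-type relation $L_n^2=5F_n^2+(-1)^n4$ at work, and everything else is bookkeeping.

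Finally, I note a self-contained alternative that bypasses Lemma~\ref{lem.fibfunctions}: since $\sin x$ and the denominators are symmetric under $x\mapsto\pi-x$, each integral equals $\pi\int_0^{\pi/2}\sin x\,(\cdot)^{-2}\,dx$, and the substitution $u=\cos x$ turns this into $\int_0^1(L_r^2\pm4\mp4u^2)^{-2}\,du$, evaluated by partial fractions and simplified with the same Fibonacci--Lucas identities. The function-form route is shorter and keeps the presentation uniform with the preceding corollaries, which is why I would use it.
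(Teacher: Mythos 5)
Your route is the paper's route: pass to the Fibonacci--Lucas function forms of \eqref{eq.cpwmq60} and \eqref{eq.rdjra1d}, differentiate in $t$, set $t=r$, and take real parts via \eqref{eq.nfgepkq} and \eqref{eq.wyg8jr6}. The odd case as you present it is correct: the $\ln\alpha$ factors cancel as you say, and your elementary identity (equivalently $L_r^2-\sqrt5F_r(\sqrt5F_r+2)=-2(\sqrt5F_r+2)$, from $L_r^2=5F_r^2-4$ for odd $r$) does produce the remainder $\pi/(10F_r^2L_r^2)=\pi/(10F_{2r}^2)$.

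There is, however, a concrete gap in the even case. You propose to start from the function form of \eqref{eq.rdjra1d} ``whose denominator is $l(t)^2-4\sin^2x$,'' but the integral to be proved has $(L_r^2-4\cos^2x)^2$ in the denominator, and differentiating the $\sin^2$ form can only yield $\int_0^\pi x\sin x\,(L_r^2-4\sin^2x)^{-2}dx$ --- a genuinely different integral, whose closed form involves an $\arctan$ rather than a logarithm (see Theorem~\ref{thm.fmk6krx} and its corollary; at $r=2$ the $\sin^2$ version equals $\pi\bigl(\tfrac{1}{90}+\tfrac{1}{20\sqrt5}\arctan\tfrac{2}{\sqrt5}\bigr)$, not the stated value). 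The explanation is that \eqref{eq.rdjra1d} as printed carries a $\sin^2$/$\cos^2$ slip: putting $q=\beta^r$ ($r$ even) in \eqref{eq.u6jqlay} gives the denominator $5F_r^2+4\sin^2x=L_r^2-4\cos^2x$, so the correct even-$r$ seed identity is
\[
\int_0^\pi\frac{x\sin x}{L_r^2-4\cos^2x}\,dx=\frac{\pi}{2L_r}\ln\!\left(\frac{F_r\sqrt5}{L_r-2}\right),
\]
and it is the function form of this identity that must be differentiated. With that correction your scheme goes through verbatim. Note also that the even-case ``clearing'' identity you quote, $L_r(L_r-2)=5F_r^2+4-2L_r=L_r^2-2L_r$, is only a restatement of $L_r^2=5F_r^2+4$ and does not exhibit the cancellation; what is actually needed is $L_r(L_r-2)-5F_r^2=-2(L_r-2)$, the even-index analogue of your odd-case identity, which cancels the factor $L_r-2$ and gives the remainder $\pi/(10F_r^2L_r^2)=\pi/(10F_{2r}^2)$.
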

\begin{proof}
Differentiate the Fibonacci and Lucas function forms of~\eqref{eq.cpwmq60} and~\eqref{eq.rdjra1d} and take the real part, 
using~\eqref{eq.wyg8jr6}.
\end{proof}

\begin{theorem}
If $r$ is a non-negative integer, then
\begin{equation}\label{eq.rljj8to}
\int_0^\pi  {\frac{{x\sin x}}{{4 + 5F_{2r}^2 \sin ^2 x}}\,dx}  = \frac{{2\pi r\sqrt 5 }}{{5F_{4r} }}\ln \alpha.
\end{equation}
\end{theorem}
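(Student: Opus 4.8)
The plan is to specialize the master identity~\eqref{eq.u6jqlay} to a well-chosen algebraic value of $q$, exactly in the spirit of the derivations of~\eqref{eq.cpwmq60} and~\eqref{eq.rdjra1d}. Since the left side of~\eqref{eq.u6jqlay} carries the factor $Q^2=\bigl(2q/(1-q^2)\bigr)^2$ in front of $\sin^2 x$, whereas~\eqref{eq.rljj8to}, after clearing the constant $4$, needs the coefficient $5F_{2r}^2$, I would aim for $2q/(1-q^2)=\sqrt5\,F_{2r}/2$. Solving this quadratic in $q$ and invoking $L_n^2=5F_n^2+(-1)^n4$ at $n=2r$ (so that $L_{2r}^2-4=5F_{2r}^2$), the right choice turns out to be
\[
q=\frac{L_{2r}-2}{\sqrt5\,F_{2r}}=\frac{\sqrt5\,F_{2r}}{L_{2r}+2},
\]
the two expressions agreeing because $(L_{2r}-2)(L_{2r}+2)=5F_{2r}^2$. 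For $r\ne0$ one has $0<|q|<1$, so~\eqref{eq.u6jqlay} applies.

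From $(L_{2r}-2)(L_{2r}+2)=5F_{2r}^2$ I would next record the elementary identities $1-q^2=4(L_{2r}-2)/(5F_{2r}^2)$ and $1+q^2=2L_{2r}(L_{2r}-2)/(5F_{2r}^2)$, whence
\[
\frac{2q}{1-q^2}=\frac{\sqrt5\,F_{2r}}{2},\qquad
\frac{q}{1-q^2}\cdot\frac{1+q^2}{1-q^2}=\frac{\sqrt5\,F_{2r}}{4}\cdot\frac{L_{2r}}{2}=\frac{\sqrt5\,F_{4r}}{8},
\]
the last equality using $F_{4r}=F_{2r}L_{2r}$. Thus the left side of~\eqref{eq.u6jqlay} equals $4\int_0^\pi x\sin x\,(4+5F_{2r}^2\sin^2 x)^{-1}\,dx$, while its right side equals $\frac{\pi}{2}\cdot\frac{8}{\sqrt5\,F_{4r}}\ln|(1+q)/(1-q)|$.

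The only step that is not pure bookkeeping is the logarithm. Using the Binet forms $\sqrt5\,F_{2r}=\alpha^{2r}-\beta^{2r}$ and $L_{2r}=\alpha^{2r}+\beta^{2r}$ gives $\sqrt5\,F_{2r}+L_{2r}-2=2(\alpha^{2r}-1)$ and $\sqrt5\,F_{2r}-L_{2r}+2=2(1-\beta^{2r})$, so that
\[
\frac{1+q}{1-q}=\frac{\alpha^{2r}-1}{1-\beta^{2r}}=\alpha^{2r},
\]
the last equality because $\beta=-1/\alpha$ forces $1-\beta^{2r}=(\alpha^{2r}-1)\alpha^{-2r}$. Hence $\ln|(1+q)/(1-q)|=2r\ln\alpha$, and assembling the pieces and dividing by $4$ yields
\[
\int_0^\pi\frac{x\sin x}{4+5F_{2r}^2\sin^2 x}\,dx=\frac{2\pi r\ln\alpha}{\sqrt5\,F_{4r}}=\frac{2\pi r\sqrt5}{5F_{4r}}\ln\alpha,
\]
which is~\eqref{eq.rljj8to}. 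I expect the main (and essentially only) obstacle to be spotting the value of $q$; once it is in hand, the rest is routine manipulation with the Binet forms together with $L_{2r}^2-4=5F_{2r}^2$. The instance $r=0$ is degenerate — the left side equals $\pi/4$ while the printed right side reads $0/0$ — and should be read via the Fibonacci-function convention of Lemma~\ref{lem.fibfunctions}, or simply excluded.
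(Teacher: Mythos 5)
Your proposal is correct and is essentially the paper's own proof: the paper likewise specializes~\eqref{eq.u6jqlay}, taking $q=\sqrt5\,F_r/L_r$, and your $q=(L_{2r}-2)/(\sqrt5\,F_{2r})$ coincides with that value for even $r$ and equals its reciprocal for odd $r$, a harmless difference since~\eqref{eq.u6jqlay} is invariant under $q\mapsto 1/q$. Your closing observation that the case $r=0$ degenerates (the left side is $\pi/4$ while the stated right side is indeterminate because $F_0=0$) is a fair caveat not addressed in the paper.
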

\begin{proof}
Set $q=F_r\sqrt 5/L_r$ in~\eqref{eq.u6jqlay}.
\end{proof}

\begin{corollary}
If $r$ is a non-negative integer, then
\begin{equation}
\int_0^\pi  {\frac{{x\sin ^3 x}}{{\left( {4 + 5F_{2r}^2 \sin ^2 x} \right)^2 }}\,dx}  =  - \frac{1}{{10}}\frac{\pi }{{F_{4r}^2 }} + \frac{{2\pi \sqrt 5 }}{25}\frac{{L_{4r} }}{{F_{4r}^3 }}r\ln \alpha .
\end{equation}
\end{corollary}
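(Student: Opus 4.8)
The plan is to apply the Fibonacci-function differentiation technique used repeatedly above (for instance in the derivation of~\eqref{Fib_sin_id7}). Carrying out the proof of~\eqref{eq.rljj8to} with the integer $r$ replaced by a real variable $t$ --- that is, substituting $q = f(t)\sqrt 5/l(t)$ into~\eqref{eq.u6jqlay} and simplifying with $f(t)l(t)=f(2t)$, $l(t)^2+5f(t)^2=2l(2t)$ and $f(2t)l(2t)=f(4t)$ --- yields the function form
\begin{equation*}
\int_0^\pi \frac{x\sin x}{4 + 5 f(2t)^2\sin^2 x}\,dx = \frac{2\pi t\sqrt 5}{5\,f(4t)}\,\ln\alpha ,
\end{equation*}
which I will use as an identity of functions of the real variable $t$ near each integer $r$ in the given range, with $f,l$ the complex Fibonacci and Lucas functions of Lemma~\ref{lem.fibfunctions}.

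Next I would differentiate both sides in $t$. On the left, differentiating under the integral sign --- admissible because $4+5f(2t)^2\sin^2 x$ stays bounded away from $0$ uniformly in $x\in[0,\pi]$ for $t$ near $r$ --- gives
\begin{equation*}
-\,5\,\frac{d}{dt}\!\bigl(f(2t)^2\bigr)\int_0^\pi \frac{x\sin^3 x}{\bigl(4+5f(2t)^2\sin^2 x\bigr)^2}\,dx ,
\end{equation*}
while the right side becomes $\dfrac{2\pi\sqrt 5}{5}\,\ln\alpha\cdot\dfrac{f(4t)-t\,\frac{d}{dt}f(4t)}{f(4t)^2}$.

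Then I would set $t=r$, use~\eqref{eq.nfgepkq} to replace $f(2r),f(4r)$ by $F_{2r},F_{4r}$, and take real parts. The target integral is real, so only the real parts of the derivatives of $f$ survive; the chain rule together with~\eqref{eq.wyg8jr6} gives $\Re\frac{d}{dt}\bigl(f(2t)^2\bigr)\big|_{t=r}=\frac{4F_{2r}L_{2r}}{\sqrt 5}\ln\alpha$ and $\Re\frac{d}{dt}f(4t)\big|_{t=r}=\frac{4L_{4r}}{\sqrt 5}\ln\alpha$, while the imaginary parts coming from~\eqref{eq.pagcd4g} drop out. The resulting relation is
\begin{equation*}
-\,\frac{20F_{2r}L_{2r}}{\sqrt 5}\,\ln\alpha\int_0^\pi\frac{x\sin^3 x}{(4+5F_{2r}^2\sin^2 x)^2}\,dx
=\frac{2\pi\sqrt 5}{5F_{4r}}\,\ln\alpha-\frac{8\pi rL_{4r}}{5F_{4r}^2}\,(\ln\alpha)^2 .
\end{equation*}
Cancelling one factor of $\ln\alpha$, using $F_{2r}L_{2r}=F_{4r}$, solving for the integral, and rationalising via $\frac{2\pi}{5\sqrt 5}=\frac{2\pi\sqrt 5}{25}$ then produces the asserted identity.

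The step needing care is the first one. For non-integer $t$ the displayed function form is not literally an analytic identity: $4+5f(2t)^2\sin^2 x$ is only one analytic function of $t$ reducing to $4+5F_{2r}^2\sin^2 x$ at integers (another is $4+(l(4t)-2)\sin^2 x$, since $5F_{2r}^2=L_{4r}-2$), and these choices differ by quantities of the shape $c\,(1-e^{2\pi ikt})\sin^2 x$ with $k\in\mathbb Z$ and $c$ real. Any such quantity vanishes at every integer and has a purely imaginary $t$-derivative there, so the real part of the differentiated relation at $t=r$ is independent of the choice of functionalization, and \eqref{eq.wyg8jr6}--\eqref{eq.pagcd4g} deliver it unambiguously; this is precisely the mechanism that makes the Fibonacci-function method legitimate here, as in the earlier corollaries. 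Everything downstream is the routine differentiation and Fibonacci-identity bookkeeping sketched above.
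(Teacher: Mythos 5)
Your proposal is correct and is essentially the paper's own proof: the paper's argument is precisely to differentiate the Fibonacci--Lucas function form of~\eqref{eq.rljj8to} with respect to the index and take the real part via~\eqref{eq.wyg8jr6}, and your computation (including the use of $F_{2r}L_{2r}=F_{4r}$) reproduces the stated result exactly. Your remark on why the choice of functionalization does not affect the real part at integer arguments is a sound elaboration of the mechanism the paper takes for granted.
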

\begin{proof}
Differentiate the Fibonacci-Lucas function form of~\eqref{eq.rljj8to} and take the real part, using~\eqref{eq.wyg8jr6}.
\end{proof}

Next write~\eqref{eq.sjcljni} as
\begin{equation}
\int_0^\pi  {x\arctan (Q\sin x)\,dx}  = \pi \Li_2 \left( {\frac{{\sqrt {1 + Q^2 }  - 1}}{Q}} \right) - \pi \Li_2 \left( {\frac{{ - \sqrt {1 + Q^2 }  + 1}}{Q}} \right),\quad Q\in\mathbb R,
\end{equation}
which by writing $iQ$ for $Q$ also implies
\begin{equation}\label{eq.slw99if}
\begin{split}
&\int_0^\pi  {x\ln \left( {\frac{{1 + Q\sin x}}{{1 - Q\sin x}}} \right)\,dx}\\
&\qquad  = 2i\pi \Li_2 \left( {i\frac{{\left( {\sqrt {1 - Q^2 }  - 1} \right)}}{Q}} \right) - 2i\pi \Li_2 \left( { - i\frac{{\left( {\sqrt {1 - Q^2 }  - 1} \right)}}{Q}} \right),\quad Q^2<1,
\end{split}
\end{equation}
and which upon differentiation gives
\begin{equation}\label{eq.gjneyfk}
\int_0^\pi  {\frac{{x\sin x}}{{1 + Q^2 \sin ^2 x}}\,dx}  = \frac{\pi }{{Q\sqrt {1 + Q^2 } }}\ln \left( Q + \sqrt{1 + Q^2} \right),\quad Q\in\mathbb R.
\end{equation}
\begin{remark}
By setting $Q=2/L_r$ and $Q=2/F_r\sqrt 5$, in turn, in~\eqref{eq.slw99if}, similar results to those in Theorem~\ref{thm.j7hzxma} can be derived.
\end{remark}

\begin{theorem}
If $r$ is a non-zero integer, then
\begin{gather}
\int_0^\pi  {\frac{{x\sin x}}{{2L_{2r}  - L_r^2 \cos ^2 x}}\,dx}  = \frac{{\pi \sqrt 2 }}{{2L_r \sqrt {L_{2r} } }}\ln \left( {\frac{{\beta ^r \sqrt 2  + \sqrt {L_{2r} } }}{{\alpha ^r \sqrt 2  - \sqrt {L_{2r} } }}} \right),\label{eq.sc3t62n}\\
\int_0^\pi  {\frac{{x\sin x}}{{2L_{2r}  - 5F_r^2 \cos ^2 x}}\,dx}  = \frac{{\pi \sqrt 5 \sqrt 2 }}{{10F_r \sqrt {L_{2r} } }}\ln \left( {\frac{{ - \beta ^r \sqrt 2  + \sqrt {L_{2r} } }}{{\alpha ^r \sqrt 2  - \sqrt {L_{2r} } }}} \right)\label{eq.qo33h5m}.
\end{gather}
\end{theorem}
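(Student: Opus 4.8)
The plan is to reduce both integrals to the already-established identity~\eqref{eq.gjneyfk}. First I would clear the $\cos^2 x$ by writing $\cos^2 x = 1-\sin^2 x$ and invoking $L_r^2+5F_r^2 = 2L_{2r}$, which turns the two denominators into
\[
2L_{2r}-L_r^2\cos^2 x = 5F_r^2 + L_r^2\sin^2 x,\qquad 2L_{2r}-5F_r^2\cos^2 x = L_r^2 + 5F_r^2\sin^2 x.
\]
Hence for~\eqref{eq.sc3t62n} I factor out $5F_r^2$ and apply~\eqref{eq.gjneyfk} with $Q = L_r/(F_r\sqrt 5)$, and for~\eqref{eq.qo33h5m} I factor out $L_r^2$ and use $Q = F_r\sqrt 5/L_r$. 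In both cases $1+Q^2$ equals $2L_{2r}/(5F_r^2)$ (resp.\ $2L_{2r}/L_r^2$), so the radicals on the right of~\eqref{eq.gjneyfk} reduce to $\sqrt{2L_{2r}}$ divided by $|F_r|\sqrt 5$ (resp.\ $|L_r|$), and the outside factor collapses: $\frac{1}{5F_r^2}\cdot\frac{\pi}{Q\sqrt{1+Q^2}} = \frac{\pi}{L_r\sqrt{2L_{2r}}} = \frac{\pi\sqrt 2}{2L_r\sqrt{L_{2r}}}$ in the first case, and $\frac{1}{L_r^2}\cdot\frac{\pi}{Q\sqrt{1+Q^2}} = \frac{\pi}{F_r\sqrt{10L_{2r}}} = \frac{\pi\sqrt 5\sqrt 2}{10F_r\sqrt{L_{2r}}}$ in the second, matching the asserted prefactors.

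It then remains to recast $\ln\bigl(Q+\sqrt{1+Q^2}\bigr)$ into the stated closed form. After substitution this argument is $(L_r+\sqrt{2L_{2r}})/(F_r\sqrt 5)$ for~\eqref{eq.sc3t62n} and $(F_r\sqrt 5+\sqrt{2L_{2r}})/L_r$ for~\eqref{eq.qo33h5m}. I would verify these coincide with the target expressions by rationalizing: multiplying numerator and denominator of $(\beta^r\sqrt 2+\sqrt{L_{2r}})/(\alpha^r\sqrt 2-\sqrt{L_{2r}})$ by $\alpha^r\sqrt 2+\sqrt{L_{2r}}$ converts the denominator into $2\alpha^{2r}-L_{2r} = \alpha^{2r}-\beta^{2r} = F_{2r}\sqrt 5$ and, using $L_{2r}+2(-1)^r = L_r^2$, the numerator into $L_r(L_r+\sqrt{2L_{2r}})$; since $F_{2r}=F_rL_r$ this is exactly $(L_r+\sqrt{2L_{2r}})/(F_r\sqrt 5)$. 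The analogous manipulation applied to $(-\beta^r\sqrt 2+\sqrt{L_{2r}})/(\alpha^r\sqrt 2-\sqrt{L_{2r}})$, now invoking $L_{2r}-2(-1)^r = 5F_r^2$, settles~\eqref{eq.qo33h5m}.

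The step I expect to demand the most care is the sign bookkeeping around the square roots. One must keep $\sqrt{5F_r^2} = |F_r|\sqrt 5$ rather than $F_r\sqrt 5$, and track the signs of $Q$ and of $Q+\sqrt{1+Q^2}$ so that the factor $1/(Q\sqrt{1+Q^2})$ and the logarithm combine to the advertised positive prefactor; this is also where one uses $L_r^2-5F_r^2 = 4(-1)^r$ to confirm that $\alpha^r\sqrt 2-\sqrt{L_{2r}}$ and its companions are nonzero with the expected sign, so that the logarithms are well defined for the stated $r$. Everything else is routine algebra with the Binet forms $\alpha^r\pm\beta^r$ and $\alpha\beta=-1$.
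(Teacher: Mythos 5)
Your proposal is correct and follows essentially the same route as the paper: both rewrite the denominators via $L_r^2+5F_r^2=2L_{2r}$ and apply~\eqref{eq.gjneyfk} with $Q=L_r/(F_r\sqrt 5)$ for~\eqref{eq.sc3t62n} and $Q=F_r\sqrt 5/L_r$ for~\eqref{eq.qo33h5m}. Your rationalization of the logarithm's argument using $2\alpha^{2r}-L_{2r}=F_{2r}\sqrt 5$, $L_{2r}\pm 2(-1)^r$ and $F_{2r}=F_rL_r$ supplies exactly the simplification the paper leaves implicit.
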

\begin{proof}
Set $Q=L_r/(F_r\sqrt 5)$ in~\eqref{eq.gjneyfk} to obtain~\eqref{eq.sc3t62n} and $Q=F_r\sqrt 5/L_r$ to obtain~\eqref{eq.qo33h5m}.
\end{proof}
Writing $iQ$ for $Q$ in~\eqref{eq.gjneyfk}, we have
\begin{equation}\label{eq.pglrqhp}
\int_0^\pi  {\frac{{x\sin x}}{{1 - Q^2 \sin ^2 x}}\,dx}  =   \frac{\pi }{{Q\sqrt {1 - Q^2 } }}\arctan \left( {\frac{{Q}}{{\sqrt {1 - Q^2 }}}} \right),\quad Q^2<1.
\end{equation}
\begin{theorem}\label{thm.fmk6krx}
If $r$ is a non-zero even integer, then
\begin{equation}
\int_0^\pi  {\frac{{x\sin x}}{{L_r^2  - 4\sin ^2 x}}\,dx}  = \frac{\pi }{2}\frac{1}{{F_r \sqrt 5 }}\arctan \left( {\frac{2}{{F_r \sqrt 5 }}} \right),
\end{equation}
while if $r$ is an odd integer, then
\begin{equation}
\int_0^\pi  {\frac{{x\sin x}}{{5F_r^2  - 4\sin ^2 x}}\,dx}  = \frac{\pi }{2}\frac{1}{{L_r }}\arctan \left( {\frac{2}{{L_r }}} \right).
\end{equation}
\end{theorem}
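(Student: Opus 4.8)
The plan is to derive both identities by specializing the free parameter $Q$ in~\eqref{eq.pglrqhp} to the appropriate ratio of Fibonacci and Lucas numbers, and then clearing the resulting denominator with the help of the identity $L_n^2 = 5F_n^2 + 4(-1)^n$ already used repeatedly in the paper.

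For the even case I would set $Q = 2/L_r$. Since $r$ is a non-zero even integer, $L_r = L_{|r|}\ge 3$, so $Q^2 = 4/L_r^2 < 1$ and~\eqref{eq.pglrqhp} applies. The left-hand denominator becomes
\begin{equation*}
1 - \frac{4}{L_r^2}\sin^2 x = \frac{L_r^2 - 4\sin^2 x}{L_r^2},
\end{equation*}
so a factor $L_r^2$ is pulled out of the integral. On the right, $1-Q^2 = (L_r^2-4)/L_r^2 = 5F_r^2/L_r^2$ by the stated identity (with $n=r$ even), hence $\sqrt{1-Q^2} = \sqrt 5\,|F_r|/L_r$, and a one-line computation gives $Q\sqrt{1-Q^2} = 2F_r\sqrt 5/L_r^2$ and $Q/\sqrt{1-Q^2} = 2/(F_r\sqrt 5)$, up to the sign of $F_r$ (relevant only for $r<0$ even), which is absorbed by the oddness of $\arctan$. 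Dividing through by $L_r^2$ yields the first identity; note that $\frac{1}{F_r\sqrt5}\arctan(2/(F_r\sqrt5))$ is even in $F_r$, so the stated right-hand side is unambiguous.

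For the odd case I would instead put $Q = 2/(F_r\sqrt 5)$. For $r$ odd one has $F_r\neq 0$ and $5F_r^2\ge 5 > 4$, so again $Q^2 < 1$. The left-hand denominator becomes $1 - \tfrac{4}{5F_r^2}\sin^2 x = (5F_r^2 - 4\sin^2 x)/(5F_r^2)$, pulling out a factor $5F_r^2$, while on the right $1-Q^2 = (5F_r^2-4)/(5F_r^2) = L_r^2/(5F_r^2)$ (the same identity with $n=r$ odd), whence $Q\sqrt{1-Q^2} = 2|L_r|/(5F_r^2)$ and $Q/\sqrt{1-Q^2} = 2/|L_r|$. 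Using that $\frac{1}{|L_r|}\arctan(2/|L_r|) = \frac{1}{L_r}\arctan(2/L_r)$ and dividing by $5F_r^2$ produces the second identity.

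The argument is essentially a mechanical substitution, so I do not anticipate a real obstacle; the only points requiring a little attention are verifying the admissibility hypothesis $Q^2<1$ in each parity case (done above) and keeping track of the sign of $\sqrt{1-Q^2}$ for negative subscripts, which is harmless because in both formulas the prefactor and the argument of the arctangent flip together and $\arctan$ is odd.
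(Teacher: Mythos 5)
Your proposal is correct and follows exactly the paper's route: the paper proves this theorem by setting $Q=2/L_r$ (for $r$ even) and $Q=2/(F_r\sqrt 5)$ (for $r$ odd) in~\eqref{eq.pglrqhp} and simplifying via $L_n^2=5F_n^2+(-1)^n4$. Your additional checks of the admissibility condition $Q^2<1$ and of the signs for negative subscripts are sound and only make the argument more careful than the paper's one-line proof.
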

\begin{proof}
Set $Q=2/L_r$ and $Q=2/F_r \sqrt5$, in turn, in~\eqref{eq.pglrqhp} .
\end{proof}
In particular,
\begin{equation}
\int_0^\pi  {\frac{{x\sin x}}{{5 - 4\sin ^2 x}}\,dx}  = \frac{\pi }{2}\arctan 2.
\end{equation}
\begin{corollary}
If $r$ is a non-zero even integer, then
\begin{equation}
\int_0^\pi  {\frac{{x\sin x}}{{\left( {L_r^2 - 4\sin ^2 x} \right)^2 }}\,dx} = \frac{\pi }{{20\sqrt 5 F_r^3 }}\arctan \left( {\frac{2}{{F_r \sqrt 5 }}} \right) + \frac{\pi }{{10F_{2r}^2 }},
\end{equation}
while if $r$ is an odd integer, then
\begin{equation}
\int_0^\pi  {\frac{{x\sin x}}{{\left( {5F_r^2 - 4\sin ^2 x} \right)^2 }}\,dx} = \frac{\pi }{{4L_r^3 }}\arctan \left( {\frac{2}{{L_r }}} \right) + \frac{\pi }{{10F_{2r}^2 }}.
\end{equation}
\end{corollary}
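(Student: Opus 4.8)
The plan is to reuse the device already applied to the squared-denominator corollaries of this section: pass to the Fibonacci/Lucas function form of Theorem~\ref{thm.fmk6krx}, differentiate in the index, and read off the integral with the squared denominator.

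First I would record the function form. By Lemma~\ref{lem.fibfunctions} we replace $L_r$ by $l(t)$ and $F_r$ by $f(t)$; feeding $Q=2/l(t)$ into~\eqref{eq.pglrqhp} — legitimate for $t$ near a non-zero even integer $r$, since then $|l(r)|=L_r\ge 3>2$ — produces
\[
\int_0^\pi \frac{x\sin x}{l(t)^2-4\sin^2 x}\,dx=\frac{\pi}{2\sqrt{l(t)^2-4}}\,\arctan\!\Bigl(\frac{2}{\sqrt{l(t)^2-4}}\Bigr),
\]
with the branch of the square root that is positive at $t=r$; likewise, with $Q=2/(f(t)\sqrt5)$ and $r$ odd (so $5f(r)^2=5F_r^2\ge 5>4$),
\[
\int_0^\pi \frac{x\sin x}{5f(t)^2-4\sin^2 x}\,dx=\frac{\pi}{2\sqrt{5f(t)^2-4}}\,\arctan\!\Bigl(\frac{2}{\sqrt{5f(t)^2-4}}\Bigr).
\]
It is crucial here to keep the genuine radical and \emph{not} replace $\sqrt{l(t)^2-4}$ by $\sqrt5\,f(t)$ or $\sqrt{5f(t)^2-4}$ by $l(t)$: the Catalan-type relations $L_r^2-4=5F_r^2$ and $5F_r^2-4=L_r^2$ hold only at even, respectively odd, integers, whereas these displays must hold on a full neighbourhood of $r$.

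Next I would differentiate both displays in $t$; this is permissible under the integral sign because $l(t)^2-4\sin^2 x$ (resp.\ $5f(t)^2-4\sin^2 x$) stays bounded away from $0$ uniformly for $x\in[0,\pi]$ and $t$ near $r$. Writing $u=\sqrt{l(t)^2-4}$ and $g(u)=\frac{\pi}{2u}\arctan(2/u)$, so that $g'(u)=-\frac{\pi}{2u^2}\arctan(2/u)-\frac{\pi}{u(u^2+4)}$ and $du/dt=l(t)l'(t)/u$, the chain rule gives
\[
-2\,l(t)l'(t)\int_0^\pi\frac{x\sin x}{(l(t)^2-4\sin^2 x)^2}\,dx=g'(u)\,\frac{l(t)l'(t)}{u}.
\]
The factor $l(t)l'(t)$ is non-zero near $t=r$ — indeed $l(r)=L_r\ne 0$ and $\Im l'(r)=\pi\beta^r\ne 0$ by~\eqref{eq.pagcd4g}, so one may simply divide it out (equivalently, take real parts and invoke~\eqref{eq.wyg8jr6}) — yielding
\[
\int_0^\pi\frac{x\sin x}{(l(t)^2-4\sin^2 x)^2}\,dx=\frac{\pi}{4u^3}\arctan\!\Bigl(\frac{2}{u}\Bigr)+\frac{\pi}{2u^2(u^2+4)},
\]
and the same steps with $5f(t)^2$ in place of $l(t)^2$ (the common factor now being $f(t)f'(t)$) give the identical right-hand side with $u=\sqrt{5f(t)^2-4}$.

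Finally I would set $t=r$. For even $r\ne 0$, $u=\sqrt{L_r^2-4}=F_r\sqrt5$ and $u^2+4=L_r^2$, so the right-hand side collapses to $\frac{\pi}{20\sqrt5\,F_r^3}\arctan(2/(F_r\sqrt5))+\frac{\pi}{10\,F_r^2L_r^2}$, and $F_r^2L_r^2=F_{2r}^2$ since $F_{2r}=F_rL_r$; this is the first identity. For odd $r$, $u=\sqrt{5F_r^2-4}=L_r$ and $u^2+4=5F_r^2$, giving $\frac{\pi}{4L_r^3}\arctan(2/L_r)+\frac{\pi}{2\cdot 5F_r^2L_r^2}=\frac{\pi}{4L_r^3}\arctan(2/L_r)+\frac{\pi}{10\,F_{2r}^2}$, the second identity. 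The only genuine obstacle is getting the function form right in the first step — keeping the true radical rather than using an index-parity-specific simplification, and checking the denominator never vanishes so that differentiation under the integral is justified; everything after that is mechanical. (For $r<0$ the sign changes in $\arctan(2/(F_r\sqrt5))$ and in $F_r^{-3}$, or in $\arctan(2/L_r)$ and in $L_r^{-3}$, cancel, so the stated formulas remain correct.)
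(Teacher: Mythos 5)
Your proof is correct, and at its core it is the paper's own route: write Theorem~\ref{thm.fmk6krx} in Fibonacci--Lucas function form and differentiate with respect to the index, evaluating at $t=r$ via Lemma~\ref{lem.fibfunctions}. The difference is in execution. The paper's one-line proof means the formal procedure used throughout: substitute $l(t)$, $f(t)$ into the parity-specific statements (so the right-hand side carries $\sqrt5 f(t)$, resp.\ $l(t)$), differentiate, and take real parts via~\eqref{eq.wyg8jr6}. You instead keep the genuine radical $\sqrt{l(t)^2-4}$ (resp.\ $\sqrt{5f(t)^2-4}$), which makes the function form a true identity in a full neighbourhood of $t=r$, so the differentiation is honestly justified; you then only need to divide out the nonzero complex factor $l(t)l'(t)$ (no real-part extraction at all, since both remaining sides are real at $t=r$), and apply $L_r^2-4=5F_r^2$ or $5F_r^2-4=L_r^2$ only at the end. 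Your caution is well taken -- the naive substitution yields a relation valid only on a discrete set, so differentiating it is not justified per se -- although the paper's formal route does land on the same formulas, because the discrepancy $l(t)^2-4-5f(t)^2=4\left(e^{i\pi t}-1\right)$ vanishes at even integers with purely imaginary derivative $4\pi i$ there, and is therefore filtered out by the real-part step of~\eqref{eq.wyg8jr6}. Your chain-rule computation, the evaluation $u=\sqrt5 F_r$, $u^2+4=L_r^2$ (and $u=L_r$, $u^2+4=5F_r^2$ for odd $r$), the use of $F_{2r}=F_rL_r$, and the sign check for negative $r$ are all correct.
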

\begin{proof}
Differentiate the Fibonacci-Lucas function forms of the identities in Theorem~\ref{thm.fmk6krx}.
\end{proof}

\begin{remark}
More identities can be derived through the following identities, valid for $Q^2<1$, obtained from the addition and subtraction of~\eqref{eq.gjneyfk} and~\eqref{eq.pglrqhp}:
\begin{gather}
\int_0^\pi  {\frac{{x\sin x}}{{1 - Q^4 \sin ^4 x}}\,dx}  = \frac{\pi }{{2Q\sqrt {1 - Q^2 } }}\arctan \left( {\frac{Q}{{\sqrt {1 - Q^2 } }}} \right) + \frac{\pi }{{2Q\sqrt {1 + Q^2 } }}\ln \left( {Q + \sqrt {1 + Q^2 } } \right),\\
\int_0^\pi  {\frac{{x\sin ^3 x}}{{1 - Q^4 \sin ^4 x}}\,dx}  = \frac{\pi }{{2Q^3 \sqrt {1 - Q^2 } }}\arctan \left( {\frac{Q}{{\sqrt {1 - Q^2 } }}} \right) - \frac{\pi }{{2Q^3 \sqrt {1 + Q^2 } }}\ln \left( {Q + \sqrt {1 + Q^2 } } \right).
\end{gather}
\end{remark}

\begin{remark}
Replacing $Q$ with $1/Q$ in \eqref{eq.tkzy7wr} yields
\begin{equation}
\int_0^{\pi/2} \frac{{\sin x}}{{1 + Q^2\sin^2 x}}\,dx = 
\frac{1}{Q\sqrt{1 + Q^2}}\ln \left( Q + \sqrt{1 + Q^2} \right),
\end{equation}
which upon comparison with \eqref{eq.gjneyfk} proves the following relation valid for all $Q$
\begin{equation}\label{eq.fm2dodr}
\int_0^{\pi/2} \frac{{\sin x}}{{1 + Q^2\sin^2 x}}\,dx = \frac{1}{\pi} \int_0^{\pi} \frac{{x \sin x}}{{1 + Q^2\sin^2 x}}\,dx.
\end{equation}
In fact,~\eqref{eq.fm2dodr} implies that
\begin{equation}
\int_0^{\pi/2} \frac{{\sin^{2m - 1} x}}{{\left( {1 + Q^2 \sin^2 x} \right)^m }}\,dx 
= \frac{1}{\pi} \int_0^\pi \frac{{x\sin^{2m - 1} x}}{{\left( {1 + Q^2 \sin ^2 x} \right)^m }}\,dx,\qquad m\in\mathbb{N},\, Q\in\mathbb C.
\end{equation}
\end{remark}

\section{Results associated with \eqref{eq_eleven}}

\begin{theorem}
If $r$ is a non-zero integer, then
\begin{align}
\int_0^{\pi/2} \frac{x^2}{L_r + 2\cos(2x)}\,dx = \frac{1}{\sqrt{5} F_r} \left( \frac{\pi^3}{24} + \frac{\pi}{2} \Li_2(\beta^r) \right),\quad\text{$r$ even}; \label{eq.id1_from_eleven} \\
\int_0^{\pi/2} \frac{x^2}{\sqrt{5}F_r - 2\cos(2x)}\,dx = \frac{1}{L_r} \left( \frac{\pi^3}{24} + \frac{\pi}{2} \Li_2(\beta^r) \right),\quad\text{$r$ odd}. \label{eq.id2_from_eleven}
\end{align}
In particular,
\begin{equation}
\int_0^{\pi/2} \frac{x^2}{3 + 2\cos(2x)}\,dx = \frac{1}{\sqrt{5}}\left ( \frac{3\pi^3}{40} - \frac{\pi}{2}\ln^2 \alpha \right )
\end{equation}
and
\begin{equation}
\int_0^{\pi/2} \frac{x^2}{\sqrt{5} - 2\cos(2x)}\,dx = \frac{\pi^3}{120} + \frac{\pi}{4}\ln^2 \alpha.
\end{equation}
\end{theorem}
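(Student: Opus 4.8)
The plan is to specialize Lewin's identity~\eqref{eq_eleven} at $q=-\beta^r$ and then simplify the rational coefficients that appear by means of~\eqref{eq.s2rjcqb}. First I would observe that, for $r\ge 1$, the choice $q=-\beta^r$ satisfies $q^2=\beta^{2r}<1$, so~\eqref{eq_eleven} applies, and that $-q=\beta^r$ is exactly the argument occurring in the dilogarithm on the right-hand side of the claimed formulas (the remaining nonzero $r$ is analogous). With this $q$ the parameter in~\eqref{eq_eleven} is
\[
Q=\frac{2q}{1+q^2}=\frac{-2\beta^r}{1+\beta^{2r}}.
\]

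Next I would split according to the parity of $r$ and invoke~\eqref{eq.s2rjcqb}. For $r$ even, $1+\beta^{2r}=\beta^rL_r$ and $1-\beta^{2r}=\beta^rF_r\sqrt5$, whence $Q=-2/L_r$ and $(1+q^2)/(1-q^2)=L_r/(F_r\sqrt5)$; since then $1-Q\cos(2x)=(L_r+2\cos(2x))/L_r$, the left side of~\eqref{eq_eleven} equals $L_r\int_0^{\pi/2}x^2/(L_r+2\cos 2x)\,dx$ while its right side is $(L_r/(F_r\sqrt5))(\pi^3/24+(\pi/2)\Li_2(\beta^r))$, and cancelling the common factor $L_r$ yields~\eqref{eq.id1_from_eleven}. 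For $r$ odd, $1+\beta^{2r}=-\beta^rF_r\sqrt5$ and $1-\beta^{2r}=-\beta^rL_r$, so $Q=2/(F_r\sqrt5)$ and $(1+q^2)/(1-q^2)=F_r\sqrt5/L_r$; now $1-Q\cos(2x)=(F_r\sqrt5-2\cos(2x))/(F_r\sqrt5)$, and cancelling $F_r\sqrt5$ gives~\eqref{eq.id2_from_eleven}. Convergence of the left-hand integrals is not an issue, since $L_r\ge 3$ for even $r\ne 0$ and $F_r\sqrt5\ge\sqrt5$ for odd $r$, so the denominators stay positive on $[0,\pi/2]$.

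For the two displayed special cases I would take $r=2$ (so $F_2=1$, $L_2=3$, $\beta^2=(3-\sqrt5)/2$) and $r=1$ (so $F_1=L_1=1$, $\beta=(1-\sqrt5)/2$) and substitute Landen's closed forms $\Li_2(\beta^2)=\pi^2/15-\ln^2\alpha$ and $\Li_2(\beta)=\tfrac{1}{2}\ln^2\alpha-\pi^2/15$; the elementary reductions $1/24+1/30=3/40$ and $1/24-1/30=1/120$ then produce the stated values. There is no real obstacle here — the whole argument is a single substitution followed by Binet-type bookkeeping — so the only points demanding care are getting the dilogarithm argument to collapse to $\beta^r$ (rather than $-\beta^r$ or $\beta^{2r}$) and verifying that the rational prefactor reduces exactly to $1/(\sqrt5\,F_r)$, resp.\ $1/L_r$, after the cancellation; both are settled by~\eqref{eq.s2rjcqb}. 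The one genuinely external ingredient is the pair of special dilogarithm values needed for the ``in particular'' statements.
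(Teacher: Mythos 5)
Your proposal is correct and follows essentially the same route as the paper: substitute $q=-\beta^r$ into Lewin's identity~\eqref{eq_eleven}, reduce $Q$ and the prefactor $(1+q^2)/(1-q^2)$ via~\eqref{eq.s2rjcqb} according to the parity of $r$, and obtain the special cases from the dilogarithm values in~\eqref{eq.zpmxg81}. The only cosmetic difference is that you spell out the parity bookkeeping and the convergence remark that the paper leaves implicit.
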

\begin{proof}
Set $q=-\beta^r$ in~\eqref{eq_eleven} and use~\eqref{eq.s2rjcqb}. The special cases follow from the evaluations
\begin{equation}\label{eq.zpmxg81}
\Li_2(\beta^2) = \frac{\pi^2}{15}-\ln^2 \alpha \quad\mbox{and}\quad \Li_2(\beta) = -\frac{\pi^2}{15}+\frac{\ln^2 \alpha}{2}.
\end{equation}
\end{proof}

\begin{theorem}
If $r$ is a non-zero integer, then
\begin{align}
\int_0^{\pi/2} \frac{x^2}{(L_r + 2\cos(2x))^2}\,dx = \frac{L_r}{(\sqrt{5} F_r)^3} \left( \frac{\pi^3}{24} + \frac{\pi}{2} \Li_2(\beta^r) \right) - \frac{\pi}{2} \frac{\ln(1-\beta^r)}{5F_r^2},\quad\text{$r$ even}; \label{eq.id3_from_eleven} \\
\int_0^{\pi/2} \frac{x^2}{(\sqrt{5}F_r - 2\cos(2x))^2}\,dx = \frac{\sqrt{5}F_r}{L_r^3} \left( \frac{\pi^3}{24} + \frac{\pi}{2} \Li_2(\beta^r) \right) - \frac{\pi}{2}\frac{\ln(1-\beta^r)}{L_r^2},\quad\text{$r$ odd}. \label{eq.id4_from_eleven}
\end{align}
In particular,
\begin{equation}
\int_0^{\pi/2} \frac{x^2}{(3 + 2\cos(2x))^2}\,dx = 
\frac{3}{5\sqrt{5}}\left ( \frac{3\pi^3}{40} - \frac{\pi}{2}\ln^2 \alpha \right ) + \frac{\pi}{10}\ln \alpha
\end{equation}
and
\begin{equation}
\int_0^{\pi/2} \frac{x^2}{(\sqrt{5} - 2\cos(2x))^2}\,dx = 
\sqrt{5}\left ( \frac{\pi^3}{120} + \frac{\pi}{4}\ln^2 \alpha \right ) - \frac{\pi}{2}\ln \alpha.
\end{equation}
\end{theorem}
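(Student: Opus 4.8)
The plan is to get the squared-denominator integral by differentiating Lewin's identity \eqref{eq_eleven} once with respect to $q$, and then to substitute $q=-\beta^r$ exactly as in the proof of \eqref{eq.id1_from_eleven}--\eqref{eq.id2_from_eleven}. Throughout write $Q=Q(q)=2q/(1+q^2)$, so that $dQ/dq=2(1-q^2)/(1+q^2)^2$, and let $I(q)$ denote either side of \eqref{eq_eleven}. Differentiating $I(q)$ under the integral sign (legitimate since for $q^2<1$ one has $|Q|<1$, hence $1-Q\cos(2x)$ is bounded away from $0$ uniformly on compact $q$-intervals) produces $dQ/dq$ times $\int_0^{\pi/2}x^2\cos(2x)(1-Q\cos(2x))^{-2}\,dx$. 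The first key step is the elementary identity
\[
\frac{\cos(2x)}{(1-Q\cos(2x))^2}=\frac1Q\left(\frac{1}{(1-Q\cos(2x))^2}-\frac{1}{1-Q\cos(2x)}\right),
\]
which lets one solve for the wanted integral:
\[
\int_0^{\pi/2}\frac{x^2\,dx}{(1-Q\cos(2x))^2}=I(q)+\frac{Q}{dQ/dq}\,\frac{dI}{dq}=I(q)+\frac{q(1+q^2)}{1-q^2}\,\frac{dI}{dq}.
\]

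Second, I would compute $dI/dq$ from the closed form on the right of \eqref{eq_eleven}, using $\dfrac{d}{dq}\dfrac{1+q^2}{1-q^2}=\dfrac{4q}{(1-q^2)^2}$ and $\dfrac{d}{dq}\Li_2(-q)=-\dfrac{\ln(1+q)}{q}$. Inserting this into the previous display and collecting the term multiplying $\tfrac{\pi^3}{24}+\tfrac{\pi}{2}\Li_2(-q)$, a short computation based on the algebraic identity $(1+q^2)(1-q^2)^2+4q^2(1+q^2)=(1+q^2)^3$ collapses its coefficient to $\bigl((1+q^2)/(1-q^2)\bigr)^3$ and yields the master formula
\[
\int_0^{\pi/2}\frac{x^2\,dx}{(1-Q\cos(2x))^2}=\left(\frac{1+q^2}{1-q^2}\right)^{3}\!\left(\frac{\pi^3}{24}+\frac{\pi}{2}\Li_2(-q)\right)-\left(\frac{1+q^2}{1-q^2}\right)^{2}\frac{\pi}{2}\ln(1+q).
\]

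Third, I would set $q=-\beta^r$ and apply \eqref{eq.s2rjcqb}. For $r$ even, $1+q^2=1+\beta^{2r}=\beta^rL_r$ and $1-q^2=1-\beta^{2r}=\beta^r\sqrt5\,F_r$, so $(1+q^2)/(1-q^2)=L_r/(\sqrt5\,F_r)$ and $Q=-2/L_r$, whence $1-Q\cos(2x)=(L_r+2\cos(2x))/L_r$; dividing through by $L_r^2$ and noting $-q=\beta^r$, $1+q=1-\beta^r$, gives \eqref{eq.id3_from_eleven}. For $r$ odd, $1+q^2=-\beta^r\sqrt5\,F_r$ and $1-q^2=-\beta^rL_r$, so $(1+q^2)/(1-q^2)=\sqrt5\,F_r/L_r$, $Q=2/(\sqrt5\,F_r)$, and $1-Q\cos(2x)=(\sqrt5\,F_r-2\cos(2x))/(\sqrt5\,F_r)$; dividing through by $5F_r^2$ gives \eqref{eq.id4_from_eleven}. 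Finally, the two ``in particular'' evaluations come from taking $r=2$ and $r=1$ in these formulas, using the closed forms \eqref{eq.zpmxg81} for $\Li_2(\beta^2)$ and $\Li_2(\beta)$, the elementary facts $1-\beta=\alpha$ and $1-\beta^2=1/\alpha$ (so $\ln(1-\beta)=\ln\alpha$ and $\ln(1-\beta^2)=-\ln\alpha$), and the arithmetic $\tfrac1{24}+\tfrac1{30}=\tfrac3{40}$, $\tfrac1{24}-\tfrac1{30}=\tfrac1{120}$.

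The only genuine obstacle is bookkeeping in the middle step: one must track the chain rule through $Q(q)$, carry through the cancellation that produces the cube $(1+q^2)^3$, and keep the even/odd sign conventions of \eqref{eq.s2rjcqb} straight; everything else reduces to \eqref{eq_eleven}, \eqref{eq.s2rjcqb} and elementary algebra. An equivalent route is to differentiate the Fibonacci--Lucas function forms of \eqref{eq.id1_from_eleven}--\eqref{eq.id2_from_eleven} with respect to $r$ and take real parts via \eqref{eq.wyg8jr6}, using $\Re\frac{d}{dt}\Li_2(\beta^t)\big|_{t=r}=\ln(1-\beta^r)\ln\alpha$ (which follows from $\frac{d}{dt}\beta^t=\beta^t(i\pi+\ln(-\beta))$ as in the proof of Lemma~\ref{lem.fibfunctions}, together with $\Li_2'(z)=-\ln(1-z)/z$ and $-\beta=1/\alpha$); this leads to the same formulas but needs slightly more care with the complex logarithm, so I would present the $q$-differentiation above.
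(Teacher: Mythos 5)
Your proposal is correct, but it takes a genuinely different route from the paper. The paper's proof is a one-liner in its own framework: it differentiates the Fibonacci--Lucas \emph{function} forms of \eqref{eq.id1_from_eleven}--\eqref{eq.id2_from_eleven} with respect to $r$ and takes real parts via \eqref{eq.wyg8jr6}, i.e.\ exactly the alternative you sketch at the end (and your auxiliary fact $\Re\,\frac{d}{dt}\Li_2(\beta^t)\big|_{t=r}=\ln(1-\beta^r)\ln\alpha$ is the right ingredient for that route). You instead differentiate Lewin's identity \eqref{eq_eleven} with respect to $q$, use $\cos(2x)=\frac1Q\bigl(1-(1-Q\cos(2x))\bigr)$ to solve for the squared-denominator integral, and obtain the master formula
\begin{equation*}
\int_0^{\pi/2}\frac{x^2\,dx}{(1-Q\cos(2x))^2}=\left(\frac{1+q^2}{1-q^2}\right)^{3}\left(\frac{\pi^3}{24}+\frac{\pi}{2}\Li_2(-q)\right)-\left(\frac{1+q^2}{1-q^2}\right)^{2}\frac{\pi}{2}\ln(1+q),\qquad q^2<1,
\end{equation*}
before specializing $q=-\beta^r$ via \eqref{eq.s2rjcqb}; I checked the coefficient collapse $(1+q^2)(1-q^2)^2+4q^2(1+q^2)=(1+q^2)^3$, both substitutions, and the two particular values, and they all come out right. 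What each approach buys: yours stays entirely within real-variable calculus, avoids the complex-valued Fibonacci functions and the principal-branch bookkeeping of Lemma~\ref{lem.fibfunctions}, and yields a reusable parametric identity from which further Fibonacci--Lucas specializations (e.g.\ $q=\pm\sqrt5 F_r/L_r$, $q=\pm 2/L_r$) drop out; the paper's route is shorter once Lemma~\ref{lem.fibfunctions} is in place and is consistent with the differentiation-in-$r$ machinery used throughout the paper. One shared caveat, not specific to your argument: for negative $r$ the substitution $q=-\beta^r$ leaves the region $q^2<1$ where \eqref{eq_eleven} holds, so the ``non-zero integer'' hypothesis really functions as $r\ge1$ in both your proof and the paper's (the $r<0$ cases then follow from $F_{-r}=(-1)^{r-1}F_r$, $L_{-r}=(-1)^rL_r$ when the parities match).
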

\begin{proof}
Differentiate the Fibonacci-Lucas function forms of~\eqref{eq.id1_from_eleven} and ~\eqref{eq.id2_from_eleven}, 
and take the real part, using~\eqref{eq.wyg8jr6}.
\end{proof}

\begin{theorem}
If $r\geq 2$ is an even integer, then
\begin{equation}
\int_0^{\pi/2} \frac{x^2}{L_{2r} + \sqrt{5}F_{2r} \cos(2x)}\,dx 
= \frac{\pi^3}{48} + \frac{\pi}{4} \Li_2\left (\frac{\sqrt{5} F_r}{L_r}\right ). \label{eq.id5_from_eleven}
\end{equation}
In particular,
\begin{equation}
\int_0^{\pi/2} \frac{x^2}{7 + 3\sqrt{5} \cos(2x)}\,dx = \frac{\pi^3}{48} + \frac{\pi}{4} \Li_2\left (\frac{\sqrt{5}}{3}\right ).
\end{equation}
\end{theorem}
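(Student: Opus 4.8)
The plan is to derive \eqref{eq.id5_from_eleven} directly from Lewin's identity \eqref{eq_eleven} by a single well-chosen substitution, in the same spirit as the neighbouring theorems. Writing the denominator of the integrand as $L_{2r}\bigl(1+\tfrac{\sqrt5\,F_{2r}}{L_{2r}}\cos(2x)\bigr)$ and comparing with the Lewin denominator $1-Q\cos(2x)$, I want $Q=-\sqrt5\,F_{2r}/L_{2r}$. The correct value of the parameter, guessed from the dilogarithm on the right of \eqref{eq.id5_from_eleven}, is $q=-\sqrt5\,F_r/L_r$: using $L_r^2+5F_r^2=2L_{2r}$ one gets $1+q^2=2L_{2r}/L_r^2$, and then $F_{2r}=F_rL_r$ gives $2q/(1+q^2)=-\sqrt5\,F_rL_r/L_{2r}=-\sqrt5\,F_{2r}/L_{2r}=Q$, exactly as required.

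Before invoking \eqref{eq_eleven} I must check its hypothesis $q^2<1$, and this is where the parity of $r$ is used. Since $r$ is even, $L_r^2-5F_r^2=4(-1)^r=4$, so $q^2=5F_r^2/L_r^2=1-4/L_r^2<1$, and the same computation gives $1-q^2=4/L_r^2$; for odd $r$ one would have $q^2>1$, which is why the statement is restricted to even $r\ge 2$. Substituting $q=-\sqrt5\,F_r/L_r$ into \eqref{eq_eleven}, the left-hand side becomes $\int_0^{\pi/2} x^2\,dx/(1-Q\cos(2x))=L_{2r}\int_0^{\pi/2} x^2\,dx/(L_{2r}+\sqrt5\,F_{2r}\cos(2x))$, while on the right $-q=\sqrt5\,F_r/L_r$ and the prefactor collapses, via the two quantities just computed, to $(1+q^2)/(1-q^2)=(2L_{2r}/L_r^2)/(4/L_r^2)=L_{2r}/2$.

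Combining these and dividing through by $L_{2r}$ (which is nonzero for $r\ge 1$) gives
\[
\int_0^{\pi/2}\frac{x^2\,dx}{L_{2r}+\sqrt5\,F_{2r}\cos(2x)}=\frac12\left(\frac{\pi^3}{24}+\frac{\pi}{2}\Li_2\left(\frac{\sqrt5\,F_r}{L_r}\right)\right)=\frac{\pi^3}{48}+\frac{\pi}{4}\Li_2\left(\frac{\sqrt5\,F_r}{L_r}\right),
\]
which is \eqref{eq.id5_from_eleven}; the displayed special case is the instance $r=2$, where $F_2=1$, $L_2=3$, $F_4=3$, $L_4=7$. The only real obstacle is spotting that the right substitution is $q=-\sqrt5\,F_r/L_r$ — a ``doubled'' version of the $q=-\beta^r$ used in the adjacent theorems — and recognising that the standard identities $L_r^2+5F_r^2=2L_{2r}$, $F_{2r}=F_rL_r$ and $L_r^2-5F_r^2=4(-1)^r$ conspire to make both the denominator matching and the prefactor simplify cleanly; everything after that is bookkeeping.
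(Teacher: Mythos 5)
Your proof is correct and follows the same route as the paper, which likewise proves the identity by substituting $q=-\sqrt{5}F_r/L_r$ into Lewin's formula \eqref{eq_eleven} and noting that $-1<q<0$ precisely when $r$ is even (while $q<-1$ for $r$ odd). Your verification of $Q=-\sqrt5 F_{2r}/L_{2r}$, the prefactor $(1+q^2)/(1-q^2)=L_{2r}/2$ via $L_r^2+5F_r^2=2L_{2r}$, $F_{2r}=F_rL_r$, $L_r^2-5F_r^2=4(-1)^r$, and the $r=2$ special case simply fills in the bookkeeping the paper leaves implicit.
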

\begin{proof}
Set $q=-\sqrt{5}F_r/L_r$ in~\eqref{eq_eleven} and keep in mind that $q < -1$ for $r$ being odd and $-1<q<0$ for $r$ being even.
\end{proof}

\begin{theorem}
If $r\geq 2$ is an even integer, then
\begin{equation}
\int_0^{\pi/2} \frac{x^2(\sqrt{5}F_{2r} + L_{2r}\cos(2x))}{(L_{2r} + \sqrt{5}F_{2r} \cos(2x))^2}\,dx 
= \frac{\pi}{2\sqrt{5}F_{2r}}\ln \left (1-\frac{\sqrt{5} F_r}{L_r}\right ). \label{eq.id6_from_eleven}
\end{equation}
In particular,
\begin{equation}
\int_0^{\pi/2} \frac{x^2(3\sqrt{5} + 7\cos(2x))}{(7 + 3\sqrt{5} \cos(2x))^2}\,dx =\frac{\pi}{6\sqrt{5}}\ln \left (\frac{2}{3\alpha^2}\right ).
\end{equation}
\end{theorem}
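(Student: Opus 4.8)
The plan is to obtain~\eqref{eq.id6_from_eleven} by differentiating the Fibonacci-Lucas function form of~\eqref{eq.id5_from_eleven} with respect to its continuous parameter, then evaluating at $t=r$ and taking real parts via Lemma~\ref{lem.fibfunctions}; this is the same mechanism that produced the squared-denominator companions of~\eqref{eq.id1_from_eleven} and~\eqref{eq.id2_from_eleven}. (The shape of the numerator $\sqrt5 F_{2r}+L_{2r}\cos(2x)$ on the left of~\eqref{eq.id6_from_eleven} is precisely what one gets from $\tfrac{d}{dr}L_{2r}$ and $\tfrac{d}{dr}(\sqrt5 F_{2r})$ up to the factor $2\ln\alpha$, which is a good sanity check.)

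First I would rewrite~\eqref{eq.id5_from_eleven} as
\[
\int_0^{\pi/2}\frac{x^2\,dx}{l(2t)+\sqrt5\,f(2t)\cos(2x)}=\frac{\pi^3}{48}+\frac{\pi}{4}\Li_2\!\left(\frac{\sqrt5\,f(t)}{l(t)}\right),
\]
which holds for real $t$ in a neighbourhood of an even integer $r\ge2$; there $l(2t)^2-5f(2t)^2\to4$, so the denominator stays bounded away from $0$ uniformly in $x$ and differentiation under the integral sign is legitimate. Differentiating both sides in $t$ gives, on the left,
\[
-\int_0^{\pi/2}\frac{x^2\bigl(2l'(2t)+2\sqrt5\,f'(2t)\cos(2x)\bigr)}{\bigl(l(2t)+\sqrt5\,f(2t)\cos(2x)\bigr)^2}\,dx,
\]
and on the right, using $\Li_2'(z)=-\ln(1-z)/z$ and the quotient rule on $\sqrt5 f(t)/l(t)$,
\[
\frac{\pi}{4}\left(-\frac{\ln\bigl(1-\sqrt5 f(t)/l(t)\bigr)}{\sqrt5 f(t)/l(t)}\right)\sqrt5\,\frac{f'(t)l(t)-f(t)l'(t)}{l(t)^2}.
\]

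Then I would set $t=r$ and take real parts. By~\eqref{eq.wyg8jr6} and~\eqref{eq.nfgepkq} one has $\Re f'(2r)=\tfrac{L_{2r}}{\sqrt5}\ln\alpha$, $\Re l'(2r)=\sqrt5\,F_{2r}\ln\alpha$, $\Re f'(r)=\tfrac{L_r}{\sqrt5}\ln\alpha$, $\Re l'(r)=\sqrt5\,F_r\ln\alpha$, while $f(r)=F_r$ and $l(r)=L_r$ are real; so the left side becomes $-2\ln\alpha\int_0^{\pi/2}\tfrac{x^2(\sqrt5 F_{2r}+L_{2r}\cos(2x))}{(L_{2r}+\sqrt5 F_{2r}\cos(2x))^2}\,dx$. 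On the right, $\Re\bigl[\sqrt5\,(f'(t)l(t)-f(t)l'(t))/l(t)^2\bigr]_{t=r}=\tfrac{\sqrt5}{L_r^2}\bigl(\tfrac{L_r^2}{\sqrt5}-\sqrt5 F_r^2\bigr)\ln\alpha=\tfrac{\ln\alpha}{L_r^2}(L_r^2-5F_r^2)=\tfrac{4\ln\alpha}{L_r^2}$, using $L_r^2-5F_r^2=4$ for even $r$; and because $0<\sqrt5 F_r/L_r<1$ for even $r\ge2$, the factor $-\ln(1-\sqrt5 F_r/L_r)/(\sqrt5 F_r/L_r)$ is real and the real part passes through it. Using $F_{2r}=F_rL_r$, the right side collapses to $-\dfrac{\pi\ln\alpha}{\sqrt5\,F_{2r}}\ln\!\left(1-\frac{\sqrt5 F_r}{L_r}\right)$, and cancelling the common nonzero factor $-2\ln\alpha$ delivers~\eqref{eq.id6_from_eleven}. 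I expect the only real care needed to be this bookkeeping and the positivity check $1-\sqrt5 F_r/L_r>0$ that makes the logarithm real and legitimizes pulling the real part through it; there is no analytic difficulty.

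Finally, for the displayed special case I would take $r=2$, so $F_{2r}=F_4=3$, $L_{2r}=L_4=7$, $F_r=F_2=1$, $L_r=L_2=3$, whence~\eqref{eq.id6_from_eleven} reads $\tfrac{\pi}{6\sqrt5}\ln\bigl(1-\tfrac{\sqrt5}{3}\bigr)=\tfrac{\pi}{6\sqrt5}\ln\bigl(\tfrac{3-\sqrt5}{3}\bigr)$; and since $3-\sqrt5=2\beta^2=2/\alpha^2$ (as $\alpha^2\beta^2=1$), this is $\tfrac{\pi}{6\sqrt5}\ln\bigl(\tfrac{2}{3\alpha^2}\bigr)$.
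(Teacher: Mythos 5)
Your proposal is correct and follows essentially the same route as the paper: differentiate the Fibonacci--Lucas function form of~\eqref{eq.id5_from_eleven} in the parameter, evaluate at $t=r$, and take real parts via Lemma~\ref{lem.fibfunctions}, using $L_r^2-5F_r^2=4$ for even $r$ and $F_{2r}=F_rL_r$ to simplify. Your added checks (realness of $\Li_2'$ at the real point $\sqrt5F_r/L_r\in(0,1)$, justification of differentiation under the integral, and the $r=2$ evaluation via $3-\sqrt5=2/\alpha^2$) are all sound details the paper leaves implicit.
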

\begin{proof}
Differentiate the Fibonacci-Lucas function form of~\eqref{eq.id5_from_eleven} and take the real part, using~\eqref{eq.wyg8jr6}.
\end{proof}

\begin{theorem}
If $r\geq 2$ is an integer, then
\begin{equation}
\int_0^{\pi/2} \frac{x^2}{L_{r}^2 + 4 + 4L_{r} \cos(2x)}\,dx 
= \frac{1}{L_r^2-4}\left ( \frac{\pi^3}{24} + \frac{\pi}{2} \Li_2\left (\frac{2}{L_r}\right )\right ). \label{eq.id7_from_eleven}
\end{equation}
In particular,
\begin{equation}
\int_0^{\pi/2} \frac{x^2}{5 + 4 \cos(2x)}\,dx = \frac{\pi^3}{36} - \frac{\pi}{12}\ln^2 2.
\end{equation}
\end{theorem}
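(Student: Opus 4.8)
The plan is to derive~\eqref{eq.id7_from_eleven} by specializing Lewin's identity~\eqref{eq_eleven} to the parameter value $q=-2/L_r$. The first step is to check admissibility: for $r\ge 2$ one has $L_r\ge 3$, so $q^2=4/L_r^2<1$, which is exactly the hypothesis needed in~\eqref{eq_eleven}; this is also why the case $r=1$ must be excluded, since $L_1=1$ would give $q^2=4>1$. Moreover $0<2/L_r<1$, so the dilogarithm $\Li_2(2/L_r)=\Li_2(-q)$ that will appear on the right is the ordinary convergent series and no analytic continuation is needed.

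Next I would compute the auxiliary quantity $Q=2q/(1+q^2)$. With $q=-2/L_r$ one gets, after multiplying numerator and denominator by $L_r^2$,
\begin{equation*}
Q=\frac{2q}{1+q^2}=-\frac{4L_r}{L_r^2+4},
\end{equation*}
so that $1-Q\cos(2x)=\bigl(L_r^2+4+4L_r\cos(2x)\bigr)/(L_r^2+4)$. Hence the integral on the left of~\eqref{eq_eleven} equals $(L_r^2+4)$ times the integral in~\eqref{eq.id7_from_eleven}. On the right of~\eqref{eq_eleven} one simplifies
\begin{equation*}
\frac{1+q^2}{1-q^2}=\frac{1+4/L_r^2}{1-4/L_r^2}=\frac{L_r^2+4}{L_r^2-4},
\end{equation*}
where $L_r^2-4\neq 0$ once more because $r\ge 2$. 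Equating the two sides of~\eqref{eq_eleven} and cancelling the common factor $L_r^2+4$ gives precisely~\eqref{eq.id7_from_eleven}.

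Since the whole argument is one substitution followed by routine algebraic simplification, I do not anticipate any real obstacle; the only point requiring care is the admissibility range discussed above. For the displayed special case I would \emph{not} route through $L_r$ (which would force the excluded value $r=1$), but instead set $q=-1/2$ directly in~\eqref{eq_eleven}: then $Q=-4/5$, $1-Q\cos(2x)=(5+4\cos(2x))/5$, and $(1+q^2)/(1-q^2)=5/3$, so that with $\Li_2(1/2)=\pi^2/12-\tfrac12\ln^2 2$ the right-hand side collapses to $\pi^3/36-\tfrac{\pi}{12}\ln^2 2$.
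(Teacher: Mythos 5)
Your proposal is correct and follows exactly the paper's route: substitute $q=-2/L_r$ into Lewin's identity~\eqref{eq_eleven} and use $\Li_2(1/2)=\pi^2/12-\tfrac12\ln^2 2$ for the special case. One tiny remark: your direct choice $q=-1/2$ for the special case is in fact the same as taking $r=3$ (since $L_3=4$ and $L_3^2+4+4L_3\cos(2x)=4\,(5+4\cos(2x))$), so the numerical example is not outside the theorem's range as you suggested, though your computation of it is correct either way.
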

\begin{proof}
Set $q=-2/L_r$ in~\eqref{eq_eleven}. The special case follows from the evaluation 
\begin{equation*}
\Li_2\left (\frac{1}{2}\right ) = \frac{\pi^2}{12} - \frac{1}{2}\ln^2 2.
\end{equation*}
\end{proof}

\begin{theorem}
If $r\geq 2$ is an integer, then
\begin{equation}
\int_0^{\pi/2} \frac{x^2(L_r + 2\cos(2x))}{(L_{r}^2 + 4 + 4L_{r} \cos(2x))^2}\,dx 
= \frac{L_r}{(L_r^2-4)^2}\left ( \frac{\pi^3}{24} + \frac{\pi}{2} \Li_2\left (\frac{2}{L_r}\right )\right )
- \frac{\pi}{4} \frac{1}{L_r(L_r^2-4)}\ln\left (1-\frac{2}{L_r}\right ). \label{eq.id8_from_eleven}
\end{equation}
In particular,
\begin{equation}
\int_0^{\pi/2} \frac{x^2(2+\cos(2x))}{(5 + 4 \cos(2x))^2}\,dx = \frac{\pi^3}{54} - \frac{\pi}{18}\ln^2 2 + \frac{\pi}{24} \ln 2.
\end{equation}
\end{theorem}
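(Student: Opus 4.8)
The plan is to obtain~\eqref{eq.id8_from_eleven} by differentiating~\eqref{eq.id7_from_eleven} with respect to $L_r$, exactly as the corollaries of the previous theorems in this section were obtained from their parents. First I would note that the proof of~\eqref{eq.id7_from_eleven} goes through verbatim with $L_r$ replaced by an arbitrary real $a>2$: putting $q=-2/a$ in~\eqref{eq_eleven} (so that $q^2=4/a^2<1$ and $Q=2q/(1+q^2)=-4a/(a^2+4)$) gives, after dividing through by $a^2+4$,
\begin{equation*}
\int_0^{\pi/2}\frac{x^2\,dx}{a^2+4+4a\cos(2x)}=\frac{1}{a^2-4}\left(\frac{\pi^3}{24}+\frac{\pi}{2}\Li_2\!\left(\frac{2}{a}\right)\right),\qquad a>2 .
\end{equation*}
Equivalently, this is the Fibonacci--Lucas function form of~\eqref{eq.id7_from_eleven}, with $l(r)$ in place of $L_r$; differentiating in $r$ and taking real parts through $\Re\frac{d}{dr}l(r)=F_r\sqrt5\,\ln\alpha$ of~\eqref{eq.wyg8jr6} is the same thing, since at integer $r$ that common factor drops out.

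Next I would differentiate both sides of the displayed $a$-identity with respect to $a$. Differentiation under the integral sign is legitimate on every compact subinterval of $(2,\infty)$ because $a^2+4+4a\cos(2x)\ge (a-2)^2>0$, so the integrand and its $a$-derivative are jointly continuous and bounded. Since $\frac{\partial}{\partial a}\big(a^2+4+4a\cos(2x)\big)=2\big(a+2\cos(2x)\big)$, the left-hand side yields
\begin{equation*}
-2\int_0^{\pi/2}\frac{x^2\big(a+2\cos(2x)\big)}{\big(a^2+4+4a\cos(2x)\big)^2}\,dx ,
\end{equation*}
which, up to the factor $-2$ and after $a=L_r$, is precisely the integral we want.

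For the right-hand side the one point requiring care is the derivative of the dilogarithmic term: using $\Li_2'(z)=-\ln(1-z)/z$ together with $\frac{d}{da}(2/a)=-2/a^2$ one gets $\frac{d}{da}\Li_2(2/a)=\frac{1}{a}\ln\!\left(1-\frac{2}{a}\right)$, whence the product rule gives
\begin{equation*}
\frac{d}{da}\left[\frac{1}{a^2-4}\left(\frac{\pi^3}{24}+\frac{\pi}{2}\Li_2\!\left(\frac{2}{a}\right)\right)\right]
=-\frac{2a}{(a^2-4)^2}\left(\frac{\pi^3}{24}+\frac{\pi}{2}\Li_2\!\left(\frac{2}{a}\right)\right)+\frac{\pi}{2}\,\frac{\ln(1-2/a)}{a(a^2-4)} .
\end{equation*}
Equating the two derivatives, dividing by $-2$, and then setting $a=L_r$ (admissible because $L_r\ge 3>2$ for $r\ge 2$, so $\Li_2(2/L_r)$ and $\ln(1-2/L_r)$ are real and in range) is exactly~\eqref{eq.id8_from_eleven}.

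The stated special case is the instance $r=3$: here $L_3=4$, $L_3^2-4=12$, $2/L_3=1/2$ and $\ln(1-1/2)=-\ln 2$; inserting $\Li_2(1/2)=\pi^2/12-\tfrac12\ln^2 2$ into~\eqref{eq.id8_from_eleven}, observing that $L_3^2+4+4L_3\cos(2x)=4\big(5+4\cos(2x)\big)$ and $L_3+2\cos(2x)=2\big(2+\cos(2x)\big)$, and clearing the resulting constant factor $8$ produces $\tfrac{\pi^3}{54}-\tfrac{\pi}{18}\ln^2 2+\tfrac{\pi}{24}\ln 2$. The only real obstacle is the bookkeeping in the derivative of the right-hand side and recording the justification for differentiating under the integral sign; there is no substantive difficulty beyond that.
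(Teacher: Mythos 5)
Your argument is correct, and it is a mild but genuine variant of the paper's route. The paper proves \eqref{eq.id8_from_eleven} by differentiating the Fibonacci--Lucas \emph{function} form of \eqref{eq.id7_from_eleven} with respect to $r$ (i.e., with $l(r)$ in place of $L_r$) and taking real parts via \eqref{eq.wyg8jr6} of Lemma~\ref{lem.fibfunctions}, the common factor $F_r\sqrt5\,\ln\alpha$ cancelling from both sides. You instead observe that the parent identity holds with $L_r$ replaced by a real parameter $a>2$ (it is just \eqref{eq_eleven} with $q=-2/a$) and differentiate in $a$, setting $a=L_r$ at the end; as you note yourself, at integer $r$ the two procedures differ only by the cancelling factor $\Re\,\tfrac{d}{dr}l(r)$, so the manipulative content --- $\tfrac{d}{da}\Li_2(2/a)=\tfrac1a\ln(1-2/a)$, the product rule on $\tfrac{1}{a^2-4}$, and the factor $-2(a+2\cos(2x))$ under the integral --- is identical. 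What your version buys is that it bypasses the complex-valued Lucas function entirely (no appeal to Lemma~\ref{lem.fibfunctions} or to taking real parts), and it records the justification for differentiating under the integral sign, which the paper leaves implicit; what it gives up is only the uniformity with the paper's general scheme, where the same one-line recipe produces all the squared-denominator corollaries. Your treatment of the special case is also the intended one: it is $r=3$, $L_3=4$, with the factors $L_3^2+4+4L_3\cos(2x)=4\bigl(5+4\cos(2x)\bigr)$ and $L_3+2\cos(2x)=2\bigl(2+\cos(2x)\bigr)$ cleared and $\Li_2(1/2)=\pi^2/12-\tfrac12\ln^2 2$ inserted, exactly as in the paper's special case of \eqref{eq.id7_from_eleven}.
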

\begin{proof}
Differentiate the Fibonacci-Lucas function form of~\eqref{eq.id7_from_eleven} and take the real part, using~\eqref{eq.wyg8jr6}.
\end{proof}

\section{Results associated with~\eqref{eq.n71rwsq}}

\begin{theorem}
If $r$ is a positive integer, then
\begin{gather}\label{eq.aekfmpm}
\frac{1}{{F_{2r} \sqrt 5 }}\left( {\frac{{\pi ^3 }}{3} + \pi \Li_2 (\beta ^{2r} )} \right) =  \begin{cases}
 \int_0^\pi  {\frac{{x^2 }}{{L_r^2  - 4\cos ^2 x}}\,dx},&\text{if $r$ is even};  \\
 \int_0^\pi  {\frac{{x^2 }}{{L_r^2  + 4\sin ^2 x}}\,dx},&\text{if $r$ is odd}.  \\
 \end{cases}
\end{gather}
In particular,
\begin{equation}
\int_0^\pi  {\frac{{x^2 }}{{1 + 4\sin ^2 x}}\,dx}  = \frac{{2\pi ^3 }}{{5\sqrt 5 }} - \frac{{\pi \ln ^2 \alpha }}{{\sqrt 5 }}.
\end{equation}
\end{theorem}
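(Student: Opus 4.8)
The plan is to specialize Lewin's identity~\eqref{eq.n71rwsq} at $q=\beta^{2r}$. First I would check admissibility: since $\beta^2=(3-\sqrt5)/2\in(0,1)$, we have $0<\beta^{2r}<1$ for every positive integer $r$, so the hypothesis $q<1$ in~\eqref{eq.n71rwsq} is satisfied, and moreover the resulting $Q=4q/(1+q)^2$ is easily seen to stay below $1$ so that the integrand has no singularity on $[0,\pi]$. The work then reduces to rewriting the factor $(1+q)/(1-q)$ and the quantity $Q$ in closed form, and this is exactly where Lemma with~\eqref{eq.s2rjcqb} enters: it supplies $1-\beta^{2r}=\beta^rF_r\sqrt5$ and $1+\beta^{2r}=\beta^rL_r$ when $r$ is even, and $1-\beta^{2r}=-\beta^rL_r$ and $1+\beta^{2r}=-\beta^rF_r\sqrt5$ when $r$ is odd.

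Next I would split according to the parity of $r$. For $r$ even, $(1+\beta^{2r})^2=\beta^{2r}L_r^2$, hence $Q=4/L_r^2$ and $1-Q\cos^2x=(L_r^2-4\cos^2x)/L_r^2$, while $(1+q)/(1-q)=L_r/(F_r\sqrt5)$; substituting into~\eqref{eq.n71rwsq}, dividing through by $L_r^2$, and using $F_{2r}=F_rL_r$ gives the first branch. For $r$ odd, $(1+\beta^{2r})^2=5F_r^2\beta^{2r}$, so $Q=4/(5F_r^2)$ and $(1+q)/(1-q)=F_r\sqrt5/L_r$; the one extra manoeuvre here is to recast the denominator $5F_r^2-4\cos^2x$ using the Lucas--Fibonacci relation $L_n^2=5F_n^2+(-1)^n4$, which for odd $r$ reads $5F_r^2-4=L_r^2$, so that $5F_r^2-4\cos^2x=L_r^2+4\sin^2x$. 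Dividing~\eqref{eq.n71rwsq} by $5F_r^2$ and simplifying with $\sqrt5/5=1/\sqrt5$ and $F_{2r}=F_rL_r$ then yields the second branch.

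The particular case is simply $r=1$, which is odd and has $L_1=1$ and $F_2=1$, so the left-hand integral becomes $\int_0^\pi x^2/(1+4\sin^2x)\,dx$ and the right-hand side becomes $\tfrac{1}{\sqrt5}(\pi^3/3+\pi\Li_2(\beta^2))$; inserting $\Li_2(\beta^2)=\pi^2/15-\ln^2\alpha$ from~\eqref{eq.zpmxg81} and collecting terms produces $2\pi^3/(5\sqrt5)-\pi\ln^2\alpha/\sqrt5$. I do not expect any real obstacle in this argument; the only delicate points are keeping track of the signs of $\beta^r$ in the two parity cases when dividing the closed forms from~\eqref{eq.s2rjcqb}, and, in the odd case, spotting the cosine-to-sine conversion of the denominator via $L_r^2-5F_r^2=(-1)^r4$.
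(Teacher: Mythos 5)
Your argument is correct and is exactly the paper's route: substitute $q=\beta^{2r}$ into Lewin's identity~\eqref{eq.n71rwsq}, simplify $Q$ and $(1+q)/(1-q)$ via~\eqref{eq.s2rjcqb}, use $L_r^2-5F_r^2=(-1)^r4$ to convert the odd-case denominator to $L_r^2+4\sin^2x$, and finish with $F_{2r}=F_rL_r$ and the evaluation~\eqref{eq.zpmxg81} for the special case $r=1$. The paper states this tersely; your write-up just fills in the same computations in more detail.
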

\begin{proof}
Set $q=\beta^{2r}$ in~\eqref{eq.n71rwsq} and use~\eqref{eq.s2rjcqb}. Note the use of the Fibonacci-Lucas fundamental identity 
$L_r^2 - 5F_r^2=(-1)^r 4$.
\end{proof}

\begin{corollary}\label{cor.m86skx9}
If $r$ is a positive integer, then
\begin{gather}
\int_0^\pi {\frac{{x^2 }}{{\left( {L_r^2 - 4\cos ^2 x} \right)^2 }}\,dx} = \frac{{\pi ^3 \sqrt 5 }}{{75}}\frac{{L_{2r} }}{{F_{2r}^3 }} + \frac{{\pi \sqrt 5 }}{{25}}\frac{{L_{2r} }}{{F_{2r}^3 }}\Li_2 (\beta ^{2r} ) - \frac{\pi }{{5F_{2r}^2 }}\ln \left( {\beta ^r F_r \sqrt 5 } \right),\text{ $r$ even},\\
\int_0^\pi {\frac{{x^2 }}{{\left( {L_r^2  + 4\sin ^2 x} \right)^2 }}\,dx} = \frac{{\pi ^3 \sqrt 5 }}{{75}}\frac{{L_{2r} }}{{F_{2r}^3 }} + \frac{{\pi \sqrt 5 }}{{25}}\frac{{L_{2r} }}{{F_{2r}^3 }}\Li_2 (\beta ^{2r} ) - \frac{\pi }{{5F_{2r}^2 }}\ln \left( {-\beta ^r L_r} \right),\text{ $r$ odd}.
\end{gather}
\end{corollary}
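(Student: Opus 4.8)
The strategy is the one used repeatedly in this section: upgrade \eqref{eq.aekfmpm} to an identity between the Fibonacci and Lucas \emph{functions}, differentiate it in the continuous variable, and then evaluate at $t=r$ taking real parts via Lemma~\ref{lem.fibfunctions}. For even $r$, replacing $L_r$ by $l(t)$, $F_{2r}$ by $f(2t)$, and $\beta^{2r}$ by $\beta^{2t}=(\beta^2)^t$ in~\eqref{eq.aekfmpm}, one works with the natural function form
\begin{equation*}
\int_0^\pi \frac{x^2}{l(t)^2 - 4\cos^2 x}\,dx = \frac{1}{f(2t)\sqrt5}\left(\frac{\pi^3}{3}+\pi\Li_2(\beta^{2t})\right),
\end{equation*}
valid for real $t$ in a neighbourhood of $r$ (it follows from Lewin's~\eqref{eq.n71rwsq} exactly as the theorem did, with the integer power $\beta^{2r}$ replaced by the real-analytic $(\beta^2)^t$, and $l(t)^2-4\cos^2 x$ stays bounded away from $0$ on $[0,\pi]$ for $t$ close to $r\ge2$). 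For odd $r$ one uses the companion form with $l(t)^2+4\sin^2 x$ in the denominator, which is automatically positive.

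Next, I would differentiate both sides with respect to $t$, passing the derivative under the integral sign on the left — legitimate because the denominator is bounded away from $0$ uniformly in $x$ on a small real neighbourhood of $r$. The left-hand side becomes $-2\,l(t)\,l'(t)\int_0^\pi x^2\,(l(t)^2-4\cos^2 x)^{-2}\,dx$, while on the right I would use $\frac{d}{dt}f(2t)=2f'(2t)$ together with $\Li_2'(z)=-\ln(1-z)/z$ and $\frac{d}{dt}\beta^{2t}=-2\beta^{2t}\ln\alpha$, which give $\frac{d}{dt}\Li_2(\beta^{2t})=2\ln\alpha\,\ln(1-\beta^{2t})$.

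Now set $t=r$ and take real parts. Since $l(r)=L_r$, $f(2r)=F_{2r}$, and $\beta^{2r}$, $\Li_2(\beta^{2r})$, $\ln(1-\beta^{2r})$ are all real, the only complex quantities are $l'(r)$ and $f'(2r)$, whose real parts equal $F_r\sqrt5\,\ln\alpha$ and $L_{2r}\ln\alpha/\sqrt5$ by~\eqref{eq.wyg8jr6}. Cancelling the common factor $2\ln\alpha$ and using $F_rL_r=F_{2r}$ yields
\begin{equation*}
\int_0^\pi \frac{x^2}{(L_r^2-4\cos^2 x)^2}\,dx = \frac{L_{2r}}{5\sqrt5\,F_{2r}^3}\left(\frac{\pi^3}{3}+\pi\Li_2(\beta^{2r})\right)-\frac{\pi}{5F_{2r}^2}\ln(1-\beta^{2r}),
\end{equation*}
and the two displayed formulas follow after rewriting $1/(5\sqrt5)=\sqrt5/25$ and $1/(15\sqrt5)=\sqrt5/75$ and substituting $1-\beta^{2r}=\beta^r F_r\sqrt5$ for even $r$ and $1-\beta^{2r}=-\beta^r L_r$ for odd $r$ from~\eqref{eq.s2rjcqb}; in the odd case the same computation applied to the $l(t)^2+4\sin^2 x$ form gives the stated integrand. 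I expect the only genuinely delicate point to be making the function form precise and justifying differentiation under the integral; once that is in place, the remainder is the routine bookkeeping of real parts dictated by Lemma~\ref{lem.fibfunctions}.
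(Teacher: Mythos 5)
Your proposal is correct and is essentially the paper's own proof: the paper likewise differentiates the Fibonacci--Lucas function form of~\eqref{eq.aekfmpm} with respect to the continuous parameter and takes real parts via~\eqref{eq.wyg8jr6}, then substitutes $1-\beta^{2r}$ from~\eqref{eq.s2rjcqb}; your constants and final simplification all check out. The extra care you add (justifying differentiation under the integral and the real-analytic reading of $\beta^{2t}=(\beta^2)^t$) only strengthens the argument without changing its route.
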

In particular,
\begin{equation}
\int_0^\pi  {\frac{{x^2 }}{{\left( {1 + 4\sin ^2 x} \right)^2 }}\,dx}  = \frac{{6\pi ^3 \sqrt 5 }}{{125}} - \frac{{3\pi \sqrt 5 }}{{25}}\ln ^2 \alpha  + \frac{\pi }{5}\ln \alpha.
\end{equation}
\begin{proof}
Differentiate the Fibonacci-Lucas function forms of~\eqref{eq.aekfmpm} with respect to $r$ and use~\eqref{eq.wyg8jr6}.
\end{proof}

Identity~\eqref{eq.n71rwsq} can also be written as
\begin{equation}\label{eq.wjz19s4}
\int_0^\pi  {\frac{{x^2 }}{{1 - Q\cos ^2 x}}\,dx} = \frac{1}{{\sqrt {1 - Q} }}\left( {\frac{{\pi ^3 }}{3} + \pi \Li_2 \left( {\frac{{2 - Q - 2\sqrt {1 - Q} }}{Q}} \right)} \right),Q < 1;
\end{equation}
from which we can obtain more results.

\begin{theorem}
If $r$ is a non-zero integer, then
\begin{equation}\label{eq.wbnqdef}
\int_0^\pi  {\frac{{x^2 }}{{L_r^2  - 4( - 1)^r \cos ^2 x}}\,dx}  = \frac{1}{{F_{2r} \sqrt 5 }}\left( {\frac{{\pi ^3 }}{3} + \pi \Li_2 \left( {( - 1)^r \beta ^{2r} } \right)} \right).
\end{equation}
\end{theorem}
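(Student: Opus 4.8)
The plan is to specialize the general identity~\eqref{eq.wjz19s4} at the parameter value $Q = 4(-1)^r/L_r^2$. First I would check admissibility: for odd $r$ one has $Q = -4/L_r^2 < 0 < 1$, while for even $r\neq 0$ one has $|L_r|\geq 3$ and hence $Q = 4/L_r^2 \leq 4/9 < 1$; thus $Q<1$ for every non-zero integer $r$, so~\eqref{eq.wjz19s4} applies. Pulling the constant out of the integrand via $L_r^2 - 4(-1)^r\cos^2 x = L_r^2\bigl(1 - Q\cos^2 x\bigr)$ gives $\int_0^\pi x^2/(L_r^2 - 4(-1)^r\cos^2 x)\,dx = L_r^{-2}\int_0^\pi x^2/(1 - Q\cos^2 x)\,dx$.

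Next I would evaluate $\sqrt{1-Q}$ using the Fibonacci--Lucas fundamental identity $L_r^2 - 5F_r^2 = (-1)^r 4$, which yields $1-Q = (L_r^2 - 4(-1)^r)/L_r^2 = 5F_r^2/L_r^2$ and therefore $\sqrt{1-Q} = F_r\sqrt5/L_r$ (positive root). Combined with the previous step and with $F_{2r} = F_r L_r$, the factor $1/\sqrt{1-Q}$ from~\eqref{eq.wjz19s4}, once divided by $L_r^2$, becomes exactly $1/(F_{2r}\sqrt5)$, matching the prefactor on the claimed right-hand side.

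It then remains to identify the dilogarithm argument. Writing $u=\sqrt{1-Q}$, so that $Q=(1-u)(1+u)$, one has $2-Q-2\sqrt{1-Q} = 1+u^2-2u = (1-u)^2$, hence $(2-Q-2\sqrt{1-Q})/Q = (1-u)/(1+u) = (1 - F_r\sqrt5/L_r)/(1 + F_r\sqrt5/L_r) = (L_r - F_r\sqrt5)/(L_r + F_r\sqrt5)$. By the Binet forms $L_r + F_r\sqrt5 = 2\alpha^r$ and $L_r - F_r\sqrt5 = 2\beta^r$, so the argument equals $(\beta/\alpha)^r = (-\beta^2)^r = (-1)^r\beta^{2r}$, where I used $\alpha\beta=-1$. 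Substituting these evaluations back into~\eqref{eq.wjz19s4} produces~\eqref{eq.wbnqdef}.

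There is no deep difficulty in this argument; the one point that requires care is the branch of the square root $\sqrt{1-Q}$ used simultaneously in the prefactor $1/(L_r^2\sqrt{1-Q})$ and in the dilogarithm argument $(1-\sqrt{1-Q})/(1+\sqrt{1-Q})$. For $r>0$ both $F_r$ and $L_r$ are positive, so $\sqrt{1-Q} = F_r\sqrt5/L_r$ is unambiguous and the computation above goes through verbatim; for $r<0$ one should instead take the positive root $\sqrt{1-Q} = |F_r|\sqrt5/|L_r|$ and track the sign of $F_{2r}$ accordingly so that the final identity is preserved.
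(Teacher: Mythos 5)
Your proposal is correct and follows essentially the paper's route: both arguments specialize~\eqref{eq.wjz19s4} at the value $Q=4(-1)^r/L_r^2$ and land on the prefactor $1/(F_{2r}\sqrt5)$ and the dilogarithm argument $(-1)^r\beta^{2r}$. The only cosmetic difference is that the paper reaches this through the substitution $Q=\sin^2 z$ with $z=2\arctan(\beta^r/i^r)$, reading off $\cos z$ and $\sin z$ from~\eqref{eq.j5m1ag9}, whereas you simplify $\sqrt{1-Q}$ and $(2-Q-2\sqrt{1-Q})/Q$ directly from $L_r^2-5F_r^2=(-1)^r4$, $F_{2r}=F_rL_r$ and the Binet forms --- the same computation in different clothing.
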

\begin{proof}
Setting $Q=\sin^2z$ in~\eqref{eq.wjz19s4} gives
\begin{equation}\label{eq.ed6ieo7}
\int_0^\pi  {\frac{{x^2 }}{{1 - \sin ^2 z\cos ^2 x}}\,dx}  = \frac{1}{{\cos z}}\left( {\frac{{\pi ^3 }}{3} + \pi \Li_2 \left( {\frac{{(1 - \cos z)^2 }}{{\sin ^2 z}}} \right)} \right),
\end{equation}
from which~\eqref{eq.wbnqdef} follows upon use of~\eqref{eq.j5m1ag9}.
\end{proof}

\begin{corollary}
If $r$ is a non-zero integer, then
\begin{equation}
\begin{split}
\int_0^\pi  {\frac{{x^2 }}{{\left( {L_r^2  - 4( - 1)^r \cos ^2 x} \right)^2 }}\,dx}  &= \frac{{\pi ^3 \sqrt 5 }}{{75}}\frac{{L_{2r} }}{{F_{2r}^3 }} + \frac{{\pi \sqrt 5 }}{{25}}\frac{{L_{2r} }}{{F_{2r}^3 }}\Li_2 \left( {( - 1)^r \beta ^{2r} } \right)\\
&\qquad - \frac{\pi }{{5F_{2r}^2 }}\ln \left( {1 - ( - 1)^r \beta ^{2r} } \right).
\end{split}
\end{equation}
\end{corollary}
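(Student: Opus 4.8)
The plan is to obtain this integral as the $r$-derivative of \eqref{eq.wbnqdef}, exactly as Corollary~\ref{cor.m86skx9} was obtained from \eqref{eq.aekfmpm}: pass to the Fibonacci--Lucas functions of Lemma~\ref{lem.fibfunctions}, differentiate in the index, evaluate at an integer, and take real parts.

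First I would write down the Fibonacci--Lucas function form of \eqref{eq.wbnqdef}. The only nonroutine point is the factor $(-1)^r$: since $\alpha\beta=-1$, the natural analytic interpolation (principal branch, $m=0$ in Lemma~\ref{lem.fibfunctions}) is $(-1)^t=\alpha^t\beta^t$, and then $(-1)^r\beta^{2r}$ is interpolated by $\alpha^t\beta^{3t}$. Using also $l(t)f(t)=f(2t)$, identity \eqref{eq.wbnqdef} becomes
\[
\int_0^\pi \frac{x^2\,dx}{l(t)^2-4\alpha^t\beta^t\cos^2 x}=\frac{1}{f(2t)\sqrt5}\left(\frac{\pi^3}{3}+\pi\Li_2\bigl(\alpha^t\beta^{3t}\bigr)\right),
\]
an identity between functions analytic near each integer $t=r$; the denominator on the left stays bounded away from $0$ for $t$ near $r$, uniformly in $x\in[0,\pi]$, because $L_r^2-4(-1)^r\cos^2 x>0$ for every nonzero integer $r$.

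Next I would differentiate both sides with respect to $t$ --- under the integral sign on the left (legitimate by the uniform bound just mentioned and continuity of the integrand and its $t$-derivative), and on the right by the chain rule together with $\frac{d}{dz}\Li_2(z)=-\ln(1-z)/z$ and $\frac{d}{dt}f(2t)=2f'(2t)$. Then I would set $t=r$ and take real parts, using \eqref{eq.nfgepkq} and \eqref{eq.wyg8jr6}. The key evaluations are: (i) $\frac{d}{dt}(\alpha^t\beta^t)=i\pi\,\alpha^t\beta^t$ is purely imaginary at integers, so the $\cos^2x$ contribution to $\frac{d}{dt}(\text{denominator})$ has zero real part, and $\Re\frac{d}{dt}(\text{denominator})\big|_{t=r}=2L_r\bigl(F_r\sqrt5\ln\alpha\bigr)=2F_{2r}\sqrt5\ln\alpha$ (using $F_{2r}=F_rL_r$); (ii) from $\frac{d}{dt}\beta^{mt}=m\beta^{mt}\bigl(i\pi+\ln(-\beta)\bigr)$ with $m=0$ and $\ln(-\beta)=-\ln\alpha$ one gets $\Re\frac{d}{dt}(\alpha^t\beta^{3t})\big|_{t=r}=-2(-1)^r\beta^{2r}\ln\alpha$, hence $\Re\frac{d}{dt}\Li_2(\alpha^t\beta^{3t})\big|_{t=r}=2\ln\alpha\,\ln\bigl(1-(-1)^r\beta^{2r}\bigr)$; and (iii) $\Re\frac{d}{dt}\frac{1}{f(2t)}\big|_{t=r}=-2L_{2r}\ln\alpha/(\sqrt5\,F_{2r}^2)$. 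Because $l(r)$, $f(2r)$ and $\Li_2\bigl((-1)^r\beta^{2r}\bigr)$ are all real, the real part distributes through every product (and through the $\Li_2$ chain rule). Assembling the pieces, dividing through by the common nonzero factor $-2\sqrt5\ln\alpha$ and then by $F_{2r}$, and simplifying with $1/(15\sqrt5)=\sqrt5/75$ and $1/(5\sqrt5)=\sqrt5/25$, yields precisely the stated formula.

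The main obstacle is the bookkeeping around $(-1)^r$: selecting the branch consistently throughout, recognizing that $\frac{d}{dt}(-1)^t$ is purely imaginary at integers so that the $r$-dependence hidden in $(-1)^r$ does not survive into the real part, and keeping track of which quantities are real at $t=r$ so that $\Re$ really does pass through each product. Everything else --- differentiation under the integral sign, the product and chain rules, and the final algebraic cleanup --- runs in parallel with the proof of Corollary~\ref{cor.m86skx9}.
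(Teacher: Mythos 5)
Your proposal is correct and follows the paper's own route: the paper proves this corollary precisely by differentiating the Fibonacci--Lucas function form of \eqref{eq.wbnqdef} and taking real parts via Lemma~\ref{lem.fibfunctions} and \eqref{eq.wyg8jr6}, which is exactly what you do, with the interpolation $(-1)^t=\alpha^t\beta^t$ handling the sign factor. Your key evaluations (the purely imaginary derivative of $(-1)^t$ at integers, $\Re\frac{d}{dt}(\alpha^t\beta^{3t})\big|_{t=r}=-2(-1)^r\beta^{2r}\ln\alpha$, and the final division by $-2\sqrt5\,F_{2r}\ln\alpha$) check out and reproduce the stated constants.
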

\begin{proof}
Differentiate the Fibonacci-Lucas function form of~\eqref{eq.wbnqdef}, using~\eqref{eq.wyg8jr6}.
\end{proof}

\begin{theorem}
If $r$ is a non-zero integer, then
\begin{equation}\label{eq.ef4nkhy}
\begin{split}
\int_0^\pi  {\frac{{x^2 \cos ^2 x}}{{\left( {L_r^2  - 4( - 1)^r \cos ^2 x} \right)^2 }}\,dx} & = \left( { - \frac{1}{{10}}\frac{\pi }{{F_r F_{2r} \beta ^r }} + \frac{{\pi \sqrt 5 }}{{20}}\frac{{( - 1)^r }}{{F_{2r} }}} \right)\ln \left( {1 - ( - 1)^r \beta ^{2r} } \right)\\
&\qquad + \frac{{\pi ^3 \sqrt 5 }}{{150F_{2r} F_r^2 }} + \frac{{\pi \sqrt 5 }}{{50F_{2r} F_r^2 }}\Li_2 \left( {( - 1)^r \beta ^{2r} } \right).
\end{split}
\end{equation}
\end{theorem}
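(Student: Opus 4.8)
The plan is to write the integrand as a combination of the two integrands whose integrals over $[0,\pi]$ are already in hand: $x^2/(L_r^2-4(-1)^r\cos^2 x)$ from~\eqref{eq.wbnqdef}, and $x^2/(L_r^2-4(-1)^r\cos^2 x)^2$ from the corollary immediately preceding this theorem. Put $D=L_r^2-4(-1)^r\cos^2 x$. Since $4(-1)^r\cos^2 x=L_r^2-D$ and $1/(-1)^r=(-1)^r$, one has the elementary identity
\[
\frac{\cos^2 x}{D^2}=\frac{(-1)^r}{4}\left(\frac{L_r^2}{D^2}-\frac{1}{D}\right).
\]
Multiplying by $x^2$, integrating, and writing $c=(-1)^r\beta^{2r}$ for brevity, this yields
\[
\int_0^\pi\frac{x^2\cos^2 x}{D^2}\,dx=\frac{(-1)^r}{4}\left[L_r^2\left(\frac{\pi^3\sqrt5}{75}\frac{L_{2r}}{F_{2r}^3}+\frac{\pi\sqrt5}{25}\frac{L_{2r}}{F_{2r}^3}\Li_2(c)-\frac{\pi}{5F_{2r}^2}\ln(1-c)\right)-\frac{1}{F_{2r}\sqrt5}\left(\frac{\pi^3}{3}+\pi\Li_2(c)\right)\right].
\]

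It then remains only to simplify the right-hand side into the stated form. Using the standard identity $F_{2r}=F_rL_r$ one gets $L_r^2L_{2r}/F_{2r}^3=L_{2r}/(F_r^2F_{2r})$ and $L_r^2/F_{2r}^2=1/F_r^2$; combining these with $L_{2r}-5F_r^2=2(-1)^r$ (which follows from $L_{2r}=L_r^2-2(-1)^r$ and the fundamental identity $L_r^2-5F_r^2=(-1)^r4$) collapses the coefficients of $\pi^3$ and of $\pi\Li_2(c)$ to $\sqrt5/(150F_{2r}F_r^2)$ and $\sqrt5/(50F_{2r}F_r^2)$, exactly as in~\eqref{eq.ef4nkhy}. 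The coefficient of $\ln(1-c)$ reduces to $-(-1)^r\pi/(20F_r^2)$, and using $F_{2r}=F_rL_r$, $1/\beta^r=(-1)^r\alpha^r$, and the Binet forms $\sqrt5F_r=\alpha^r-\beta^r$, $L_r=\alpha^r+\beta^r$, one checks that
\[
-\frac{(-1)^r\pi}{20F_r^2}=-\frac{1}{10}\frac{\pi}{F_rF_{2r}\beta^r}+\frac{\pi\sqrt5}{20}\frac{(-1)^r}{F_{2r}},
\]
which is precisely the coefficient displayed in the statement.

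I expect the only slightly delicate point to be this last identity: recognizing that the compact coefficient $-(-1)^r\pi/(20F_r^2)$ that drops out of the computation coincides with the asymmetric two-term expression in which the theorem is phrased; everything else is routine bookkeeping with the Binet forms. As an alternative --- closer in spirit to the preceding corollaries --- one may instead differentiate the Fibonacci-Lucas function form of~\eqref{eq.wbnqdef} with respect to $r$ (handling the factor $(-1)^r$ in $D$ as $\alpha^r\beta^r=e^{i\pi r}$, whose derivative is $i\pi e^{i\pi r}$), evaluate at $r\in\mathbb Z$, and take the imaginary part via~\eqref{eq.pagcd4g} and~\eqref{eq.wyg8jr6}; this produces a linear relation between $\int_0^\pi x^2D^{-2}\,dx$ and $\int_0^\pi x^2\cos^2 x\,D^{-2}\,dx$, into which one substitutes the value of the former supplied by the preceding corollary.
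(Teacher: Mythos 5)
Your argument is correct, and it reaches \eqref{eq.ef4nkhy} by a genuinely different route than the paper. The paper's proof differentiates the parametric identity \eqref{eq.ed6ieo7} with respect to $z$ (this is what produces the $\cos^2 x$ in the numerator, from $\tfrac{d}{dz}(1-\sin^2 z\cos^2 x)$) and then evaluates at the complex angle $z=2\arctan(\beta^r/i^r)$ via \eqref{eq.j5m1ag9}; the paper's remark after the theorem offers a second differentiation-based route through \eqref{eq.n71rwsq}. You instead do no further calculus at all: writing $D=L_r^2-4(-1)^r\cos^2 x$ and using $4(-1)^r\cos^2 x=L_r^2-D$, the integrand is the fixed linear combination $\frac{(-1)^r}{4}\bigl(L_r^2 D^{-2}-D^{-1}\bigr)x^2$, and both $\int_0^\pi x^2 D^{-1}dx$ and $\int_0^\pi x^2 D^{-2}dx$ are already available, from \eqref{eq.wbnqdef} and the corollary immediately preceding the theorem. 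I checked your bookkeeping: with $F_{2r}=F_rL_r$ and $L_{2r}-5F_r^2=2(-1)^r$ the $\pi^3$ and $\Li_2$ coefficients collapse to $\frac{\sqrt5}{150F_{2r}F_r^2}$ and $\frac{\sqrt5}{50F_{2r}F_r^2}$, and the $\ln$ coefficient $-\frac{(-1)^r\pi L_r^2}{20F_{2r}^2}=-\frac{(-1)^r\pi}{20F_r^2}$ indeed equals the paper's two-term expression, since $-\frac{\pi}{10F_rF_{2r}\beta^r}+\frac{\pi\sqrt5(-1)^r}{20F_{2r}}=\frac{(-1)^r\pi}{20F_rF_{2r}}\bigl(\sqrt5F_r-2\alpha^r\bigr)=-\frac{(-1)^r\pi L_r}{20F_rF_{2r}}$. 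There is no circularity, because the corollary you invoke is proved by differentiating the Fibonacci--Lucas function form of \eqref{eq.wbnqdef} in $r$, independently of the present theorem. What each approach buys: the paper's differentiation in $z$ is self-contained given \eqref{eq.ed6ieo7} and generates the $\cos^2 x$ numerator organically, whereas your decomposition is more elementary once the earlier evaluations are in hand and, as a bonus, exhibits the logarithm's coefficient in the compact form $-(-1)^r\pi/(20F_r^2)$, which simplifies the statement. Your closing alternative (differentiating in $r$ and taking imaginary parts) is only a sketch, but it is not needed since the main argument is complete.
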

\begin{proof}
Differentiate~\eqref{eq.ed6ieo7} with respect to $z$ and use~\eqref{eq.j5m1ag9}.
\end{proof}

\begin{remark}
Equivalent/similar results to~\eqref{eq.ef4nkhy} can be obtained directly by substituting $q=\beta^{2r}$ in the following identity obtained by differentiating~\eqref{eq.n71rwsq} with respect to $q$:
\begin{equation*}
\int_0^\pi \frac{{x^2\cos^2x }}{{(1 - Q\cos ^2 x)^2 }}\,dx
= - \frac{\pi }{4}\frac{{(1 + q)^4 }}{{(1 - q)^2 }}\frac{{\ln (1 - q)}}{q} + \frac{1}{2}\left( {\frac{{1 + q}}{{1 - q}}} \right)^3 \left( {\frac{{\pi ^3 }}{3} + \pi \Li_2 (q)} \right).
\end{equation*}
\end{remark}

\section{Results associated with~\eqref{eq.lh0gp48}}

\begin{lemma}\label{lem.vinbzq4}
Let $q<1$ and let
\begin{equation}\label{eq.wlciqf5}
Q=\frac{2q}{1 + q^2}.
\end{equation}
Then
\begin{equation}
\int_0^\pi  {\frac{{x^2 \,dx}}{{1 \pm Q\cos (2x)}}}  = \frac{{1 + q^2 }}{{1 - q^2 }}\left( {\frac{{\pi ^3 }}{3} + \pi \Li_2 ( \pm q)} \right)\label{eq.k4m3n7l}.
\end{equation}
\end{lemma}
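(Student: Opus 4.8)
The plan is to recognise Lemma~\ref{lem.vinbzq4} as a symmetrised restatement of Lewin's identity~\eqref{eq.lh0gp48}, so that essentially nothing new has to be computed. With $Q=2q/(1+q^2)$, the lower-sign version of~\eqref{eq.k4m3n7l} is \emph{verbatim} identity~\eqref{eq.lh0gp48}, which takes care of the ``$-$'' case at once.

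For the ``$+$'' case I would substitute $q\mapsto-q$ in~\eqref{eq.lh0gp48}. The point to notice is that $q\mapsto Q(q)=2q/(1+q^2)$ is an odd function, so $Q(-q)=-Q(q)$; hence the denominator $1-Q\cos(2x)$ becomes $1+Q\cos(2x)$, while on the right-hand side the exterior factor $(1+q^2)/(1-q^2)$ is even in $q$ and therefore unchanged, and $\Li_2(q)$ is replaced by $\Li_2(-q)$. This is exactly the upper-sign statement of~\eqref{eq.k4m3n7l}. (Alternatively, one can reach both signs from~\eqref{eq_eleven}: writing $\int_0^\pi=\int_0^{\pi/2}+\int_{\pi/2}^\pi$ and sending $x\mapsto x+\pi/2$ on the shifted range flips the sign of $\cos(2x)$ and kills the linear-in-$x$ cross term by parity, so that the half-period integral of~\eqref{eq_eleven} together with the elementary evaluation $\int_0^{\pi/2}dx/(1\pm Q\cos 2x)=\pi(1+q^2)/\bigl(2(1-q^2)\bigr)$ rebuilds the full-period integral.)

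The only delicate point — and the nearest thing to an obstacle — is the admissible range of $q$. Identity~\eqref{eq.lh0gp48} is quoted for $q<1$, the substitution $q\mapsto-q$ delivers the ``$+$'' identity for $q>-1$, and the integrals in~\eqref{eq.k4m3n7l} converge precisely when $|Q|<1$, i.e.\ $q^2\neq1$; together these force the hypothesis to be read as $q^2<1$ (equivalently $-1<q<1$), under which all the quantities involved, in particular $\Li_2(\pm q)$, are well defined and both identities hold. Beyond this bookkeeping there is nothing more to do: the lemma is recorded only to have the two sign choices packaged in the single uniform form~\eqref{eq.k4m3n7l} that the subsequent Fibonacci/Lucas specialisations will use.
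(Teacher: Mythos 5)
Your proof is correct and coincides with what the paper does implicitly: Lemma~\ref{lem.vinbzq4} is stated without proof precisely because the ``$-$'' case is Lewin's identity~\eqref{eq.lh0gp48} verbatim and the ``$+$'' case follows from the substitution $q\mapsto -q$, using that $Q$ is odd in $q$ while $(1+q^2)/(1-q^2)$ is even. Your side remark that the hypothesis should really be read as $q^2<1$ (rather than merely $q<1$) is a sensible sharpening, since for $q\le -1$ the right-hand side of~\eqref{eq.k4m3n7l} corresponds to the other preimage $1/q$ of $Q$.
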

\begin{theorem}
If $r$ is an integer, then
\begin{equation}\label{eq.g8ugny7}
\frac{{\pi ^3 }}{3} + \pi \Li_2 \left( { \pm \beta ^r } \right) =  \begin{cases}
 F_r \sqrt 5 \int_0^\pi  {\dfrac{{x^2 \,dx}}{{L_r  \mp 2\cos (2x)}}},&\text{if $r$ is even};  \\ 
 L_r \int_0^\pi  {\dfrac{{x^2 \,dx}}{{F_r \sqrt 5  \pm 2\cos (2x)}}},&\text{if $r$ is odd}.  \\ 
 \end{cases} 
\end{equation}
\end{theorem}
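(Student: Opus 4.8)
The approach I would take is to specialize Lemma~\ref{lem.vinbzq4}, i.e.\ identity~\eqref{eq.k4m3n7l}, to the values $q=\pm\beta^r$; these are exactly the parameter values for which the quantity $Q$ defined in~\eqref{eq.wlciqf5} collapses to a rational expression in Fibonacci and Lucas numbers. I would treat first a positive even integer $r$, so that $0<\beta^r<1$ and the hypothesis $q<1$ of the lemma holds. Putting $q=\beta^r$ and invoking~\eqref{eq.s2rjcqb} in the form $1-\beta^{2r}=\beta^rF_r\sqrt5$ and $1+\beta^{2r}=\beta^rL_r$, one obtains
\[
Q=\frac{2\beta^r}{1+\beta^{2r}}=\frac{2}{L_r},\qquad\frac{1+\beta^{2r}}{1-\beta^{2r}}=\frac{L_r}{F_r\sqrt5}.
\]
Substituting these into~\eqref{eq.k4m3n7l}, rewriting the integrand via $1\mp\frac{2}{L_r}\cos(2x)=\bigl(L_r\mp2\cos(2x)\bigr)/L_r$, and rearranging then gives precisely the first line of~\eqref{eq.g8ugny7}.

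For a positive odd integer $r$ the plan is structurally identical: now $\beta^r<0$, and~\eqref{eq.s2rjcqb} reads $1-\beta^{2r}=-\beta^rL_r$ and $1+\beta^{2r}=-\beta^rF_r\sqrt5$, so that $Q=-2/(F_r\sqrt5)$ and $(1+\beta^{2r})/(1-\beta^{2r})=F_r\sqrt5/L_r$; clearing the factor $F_r\sqrt5$ from the denominator of the integrand in~\eqref{eq.k4m3n7l} and rearranging yields the second line. A negative integer $r$ is reduced to these cases through the reflection identities $F_{-n}=(-1)^{n-1}F_n$ and $L_{-n}=(-1)^nL_n$, while $r=0$ must be excluded since there the integrals diverge.

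The one step that I expect to demand care is the sign bookkeeping, rather than any piece of analysis. Because $\beta^r$ alternates in sign with the parity of $r$, the auxiliary parameter $Q$ is positive when $r$ is even but negative when $r$ is odd, so one must follow carefully how the $\pm$ sitting in $\Li_2(\pm\beta^r)$ on the left of~\eqref{eq.k4m3n7l} is transported onto the $\mp2\cos(2x)$ (respectively $\pm2\cos(2x)$) in the two denominators appearing in~\eqref{eq.g8ugny7}; the two-way case split in~\eqref{eq.s2rjcqb} is exactly the device that makes this matching unambiguous. In the course of this one should also confirm that the orientation of the $\pm$ in Lemma~\ref{lem.vinbzq4} is the one inherited from~\eqref{eq.lh0gp48} under the substitution $q\mapsto-q$. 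Beyond that, the proof is a single substitution with no further subtleties. As a consistency check, setting $r=2$ in~\eqref{eq.g8ugny7} (using $\Li_2(\beta^2)=\pi^2/15-\ln^2\alpha$) should reproduce the already-recorded evaluation of $\int_0^\pi x^2/(1+4\sin^2x)\,dx$, since $1+4\sin^2x=3-2\cos(2x)$.
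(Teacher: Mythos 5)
Your proposal is correct and is essentially the paper's own proof, which likewise just sets $q=\beta^r$ in~\eqref{eq.k4m3n7l} and simplifies with~\eqref{eq.s2rjcqb}; your extra care with the sign orientation (checking Lemma~\ref{lem.vinbzq4} against~\eqref{eq.lh0gp48} under $q\mapsto-q$, and noting that $Q=2/L_r>0$ for even $r$ while $Q=-2/(F_r\sqrt5)<0$ for odd $r$) is exactly what makes the $\mp$/$\pm$ pattern in~\eqref{eq.g8ugny7} come out right. The only blemish is the aside on negative $r$: the reflection identities do not reduce that case to positive $r$, since $\beta^{r}$ then has modulus greater than $1$, so $\Li_2(\pm\beta^{r})$ and the hypothesis $q<1$ of the lemma become problematic — but the paper's one-line proof is silent on that case as well, so this does not affect the substance of your argument.
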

\begin{proof}
Set $q=\beta^ r$ in~\eqref{eq.k4m3n7l} and use~\eqref{eq.s2rjcqb}.
\end{proof}
In particular,
\begin{gather}
\int_0^\pi  {\frac{{x^2 \,dx}}{{\sqrt 5  + 2\cos (2x)}}}  = \frac{{4\pi ^3 }}{{15}} + \frac{\pi }{2}\ln ^2 \alpha,\\ 
\int_0^\pi  {\frac{{x^2 \,dx}}{{\sqrt 5  - 2\cos (2x)}}}  = \frac{{13\pi ^3 }}{{30}} - \pi \ln ^2 \alpha,\\ 
\int_0^\pi  {\frac{{x^2 \,dx}}{{3 - 2\cos (2x)}}}  = \frac{1}{{\sqrt 5 }}\left( {\frac{{2\pi ^3 }}{5} - \pi \ln ^2 \alpha } \right);
\end{gather}
where we used~\eqref{eq.zpmxg81} and also
\begin{equation}\label{eq.noimci0}
\Li_2 ( - \beta ) = \frac{{\pi ^2 }}{{10}} - \ln ^2 \alpha ,\quad \Li_2 (\beta ) =  - \frac{{\pi ^2 }}{{15}} + \frac{1}{2}\ln ^2 \alpha.
\end{equation}
\begin{theorem}
If $r$ is a positive even integer, then
\begin{equation}\label{eq.kniijoy}
\int_0^\pi  {\frac{{x^2 \,dx}}{{\left( {L_r  \mp 2\cos (2x)} \right)^2 }}}  =  - \frac{\pi }{{5F_r^2 }}\ln \left( {1 \mp \beta ^r } \right) + \frac{{L_r }}{{5F_r^3 \sqrt 5 }}\left( {\frac{{\pi ^3 }}{3} + \pi \Li_2 \left( { \pm \beta ^r } \right)} \right),
\end{equation}
while if $r$ is a positive odd integer, then
\begin{equation}\label{eq.m64m49c}
\int_0^\pi  {\frac{{x^2 \,dx}}{{\left( {F_r \sqrt 5  \pm 2\cos (2x)} \right)^2 }}}  =  - \frac{\pi }{{L_r^2 }}\ln \left( {1 \mp \beta ^r } \right) + \frac{{F_r \sqrt 5 }}{{L_r^3 }}\left( {\frac{{\pi ^3 }}{3} + \pi \Li_2 \left( { \pm \beta ^r } \right)} \right).
\end{equation}
\end{theorem}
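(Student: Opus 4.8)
The plan is to obtain both identities by differentiating the Fibonacci--Lucas function form of~\eqref{eq.g8ugny7} with respect to its continuous parameter and then taking real parts at an integer argument, exactly in the spirit of Lemma~\ref{lem.fibfunctions}.

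First I would rewrite the relevant branch of~\eqref{eq.g8ugny7}, for $r$ even, with $F_r$, $L_r$, $\beta^r$ replaced by $f(t)$, $l(t)$, $\beta^t$, so that
\[
\int_0^\pi \frac{x^2\,dx}{l(t) \mp 2\cos(2x)} = \frac{1}{f(t)\sqrt 5}\left(\frac{\pi^3}{3} + \pi\Li_2(\pm\beta^t)\right)
\]
holds for real $t$ near $r$; and similarly, for $r$ odd, with $f(t)\sqrt5 \pm 2\cos(2x)$ in the denominator on the left and $1/l(t)$ on the right. Differentiating in $t$, the left-hand side produces $-\,l'(t)\int_0^\pi x^2\,dx/(l(t)\mp 2\cos(2x))^2$ (respectively $-\sqrt5\,f'(t)\int_0^\pi x^2\,dx/(f(t)\sqrt5\pm2\cos(2x))^2$). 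For the right-hand side I would use $\Li_2'(z) = -\ln(1-z)/z$ together with the derivative $\frac{d}{dt}\beta^t = \beta^t(i\pi - \ln\alpha)$ from the proof of Lemma~\ref{lem.fibfunctions} (principal branch $m=0$, and $-\beta = 1/\alpha$); the factor $\beta^t$ then cancels, leaving $\frac{d}{dt}\Li_2(\pm\beta^t) = -\ln(1\mp\beta^t)(i\pi - \ln\alpha)$.

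Next I would evaluate at $t = r$ and take real parts, invoking~\eqref{eq.wyg8jr6} in the forms $\Re l'(r) = F_r\sqrt5\ln\alpha$ and $\Re f'(r) = L_r\ln\alpha/\sqrt5$, and noting that $|\beta^r|<1$ for $r\ge 1$ makes $\Li_2(\pm\beta^r)$ and $\ln(1\mp\beta^r)$ real, so that $\Re\frac{d}{dt}\Li_2(\pm\beta^t)\big|_{t=r} = \ln(1\mp\beta^r)\ln\alpha$, while $l(r)=L_r$ (respectively $f(r)\sqrt5 = F_r\sqrt5$) being real makes the surviving integral real. Dividing the resulting identity through by the common factor $-F_r\sqrt5\ln\alpha$ in the even case, and by $-L_r\ln\alpha$ in the odd case, should yield~\eqref{eq.kniijoy} and~\eqref{eq.m64m49c} respectively.

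The main obstacle I anticipate is the correct differentiation of $\beta^t$: because $\beta<0$ this is not simply $\beta^t\ln\beta$ but requires the branch choice of Lemma~\ref{lem.fibfunctions}, and the resulting imaginary term $-i\pi\ln(1\mp\beta^t)$ inside $\frac{d}{dt}\Li_2(\pm\beta^t)$ must be seen to drop out when real parts are taken at the integer $t=r$, where $1\mp\beta^r$ is a positive real and hence has real logarithm. Once that is settled, the remainder is routine algebra together with careful tracking of the $\pm/\mp$ sign conventions inherited from~\eqref{eq.g8ugny7}.
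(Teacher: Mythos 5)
Your plan is exactly the paper's proof: the authors also obtain \eqref{eq.kniijoy} and \eqref{eq.m64m49c} by differentiating the Fibonacci--Lucas function forms of \eqref{eq.g8ugny7} in the continuous parameter, using the branch derivative $\frac{d}{dt}\beta^t=\beta^t(i\pi-\ln\alpha)$ from Lemma~\ref{lem.fibfunctions} and taking real parts at $t=r$ via \eqref{eq.wyg8jr6}. Just be sure, when executing the ``routine algebra,'' to keep the quotient-rule term from the prefactor $1/(f(t)\sqrt5)$ (respectively $1/l(t)$), since that is precisely what produces the $\frac{L_r}{5F_r^3\sqrt5}\bigl(\frac{\pi^3}{3}+\pi\Li_2(\pm\beta^r)\bigr)$ and $\frac{F_r\sqrt5}{L_r^3}\bigl(\frac{\pi^3}{3}+\pi\Li_2(\pm\beta^r)\bigr)$ contributions in the stated results.
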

\begin{proof}
Differentiate the Fibonacci-Lucas function forms of~\eqref{eq.g8ugny7}.
\end{proof}
In particular,
\begin{gather}
\int_0^\pi  {\frac{{x^2 \,dx}}{{\left( {3 - 2\cos (2x)} \right)^2 }}}  = \frac{\pi }{5}\ln \alpha  + \frac{{6\pi ^3 }}{{25\sqrt 5 }} - \frac{{3\pi }}{{5\sqrt 5 }}\ln ^2 \alpha,\\
\int_0^\pi  {\frac{{x^2 \,dx}}{{\left( {\sqrt 5  + 2\cos (2x)} \right)^2 }}}  =  - \pi \ln \alpha  + \frac{{4\pi ^3 }}{{3\sqrt 5 }} + \frac{{\pi \sqrt 5 }}{2}\ln ^2 \alpha,\\
\int_0^\pi  {\frac{{x^2 \,dx}}{{\left( {\sqrt 5  - 2\cos (2x)} \right)^2 }}}  = 2\pi \ln \alpha  + \frac{{13\pi ^3 }}{{6\sqrt 5 }} - \pi \sqrt 5 \ln ^2 \alpha.
\end{gather}

\begin{corollary}
If $r$ is an even integer, then
\begin{gather}
\int_0^\pi  {\frac{{\left( {L_r^2 + 4\cos^2 (2x)} \right)x^2 }}{{\left( {L_r^2 - 4\cos ^2 (2x)} \right)^2 }}\,dx} = - \frac{\pi }{{10F_r^2 }}\ln \left( {\beta ^r F_r \sqrt 5 } \right) + \frac{{L_r }}{{20F_r^3 \sqrt 5 }}\left( {\frac{{4\pi ^3 }}{3} + \pi \Li_2 \left( {\beta ^{2r} } \right)} \right),\label{eq.frltbqe}\\
\int_0^\pi  {\frac{{x^2 \cos (2x)}}{{\left( {L_r^2 - 4\cos ^2 (2x)} \right)^2 }}\,dx} = \frac{\pi }{{40F_r^2 L_r }}\ln \left( {\frac{{1 + \beta ^r }}{{1 - \beta ^r }}} \right) + \frac{\pi }{{40F_r^3 \sqrt 5 }}\left( {\Li_2 \left( {\beta ^r } \right) - \Li_2 \left( { - \beta ^r } \right)} \right)\label{eq.s7jkwms}
\end{gather}
while if $r$ is an odd integer, then
\begin{gather}
\int_0^\pi  {\frac{{\left( {5F_r^2 + 4\cos^2 (2x)} \right)x^2 }}{{\left( {5F_r^2 - 4\cos ^2 (2x)} \right)^2 }}\,dx} = - \frac{\pi }{{2L_r^2 }}\ln \left( { - \beta ^r L_r } \right) + \frac{{F_r \sqrt 5 }}{{4L_r^3 }}\left( {\frac{{4\pi ^3 }}{3} + \pi \Li_2 \left( {\beta ^{2r} } \right)} \right)\label{eq.vxzm3gu},\\
\int_0^\pi {\frac{{x^2 \cos (2x)\,dx}}{{\left( {5F_r^2 - 4\cos ^2 (2x)} \right)^2 }}} = \frac{\pi }{{8L_r^2 F_r \sqrt 5 }}\ln \left( {\frac{{1 - \beta ^r }}{{1 + \beta ^r }}} \right) - \frac{\pi }{{8L_r^3 }}\left( {\Li_2 \left( {\beta ^r } \right) - \Li_2 \left( { - \beta ^r } \right)} \right)\label{eq.nt2noan}.
\end{gather}
\end{corollary}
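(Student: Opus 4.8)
The plan is to derive all four identities purely algebraically from the two second-order integrals already established, namely~\eqref{eq.kniijoy} for even $r$ and~\eqref{eq.m64m49c} for odd $r$, by exploiting the elementary decompositions
\[
\frac{1}{(A-B)^2}+\frac{1}{(A+B)^2}=\frac{2(A^2+B^2)}{(A^2-B^2)^2},
\qquad
\frac{1}{(A-B)^2}-\frac{1}{(A+B)^2}=\frac{4AB}{(A^2-B^2)^2}.
\]
No new integration is needed; the only additional ingredients are results stated earlier: the relations $1-\beta^{2r}=\beta^rF_r\sqrt5$ (for $r$ even) and $1-\beta^{2r}=-\beta^rL_r$ (for $r$ odd) from~\eqref{eq.s2rjcqb}, together with the dilogarithm duplication $\Li_2(z)+\Li_2(-z)=\tfrac12\Li_2(z^2)$, which holds here since $|\beta^r|<1$ and which is immediate from the power series (it specializes to~\eqref{eq.dudu2ha} of Lemma~\ref{lem.oszn90v} at $z=ix$).

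For $r$ even I would take $A=L_r$ and $B=2\cos(2x)$ in the decompositions above and add the two instances of~\eqref{eq.kniijoy} (upper sign and lower sign). The left side becomes $2\int_0^\pi\frac{(L_r^2+4\cos^2(2x))x^2}{(L_r^2-4\cos^2(2x))^2}\,dx$; on the right the logarithms combine to $\ln\big((1-\beta^r)(1+\beta^r)\big)=\ln(1-\beta^{2r})=\ln(\beta^rF_r\sqrt5)$ and the dilogarithms to $\Li_2(\beta^r)+\Li_2(-\beta^r)=\tfrac12\Li_2(\beta^{2r})$, so that dividing by $2$ reproduces exactly~\eqref{eq.frltbqe}. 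Subtracting the same two instances instead, the left side becomes $8L_r\int_0^\pi\frac{x^2\cos(2x)}{(L_r^2-4\cos^2(2x))^2}\,dx$, the logarithms give $\ln\frac{1+\beta^r}{1-\beta^r}$ and the dilogarithms $\Li_2(\beta^r)-\Li_2(-\beta^r)$, and dividing by $8L_r$ yields~\eqref{eq.s7jkwms}.

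For $r$ odd I would repeat this verbatim with $A=F_r\sqrt5$, so that $A^2-B^2=5F_r^2-4\cos^2(2x)$, now starting from~\eqref{eq.m64m49c} and using $1-\beta^{2r}=-\beta^rL_r$: addition and division by $2$ give~\eqref{eq.vxzm3gu}, while subtraction and division by $8F_r\sqrt5$ give~\eqref{eq.nt2noan}. The one place requiring care, and the only real obstacle, is sign bookkeeping: in~\eqref{eq.m64m49c} the ``$+$'' in the denominator is paired with the ``$-$'' in the logarithm (the opposite pairing from~\eqref{eq.kniijoy}), so when forming the difference of the two instances the coefficient of $\cos(2x)$ comes out as $-8F_r\sqrt5$ rather than $+8F_r\sqrt5$, and this sign is precisely what produces the overall minus sign on the right-hand side of~\eqref{eq.nt2noan}.
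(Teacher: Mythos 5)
Your proposal is correct and matches the paper's own proof, which likewise obtains \eqref{eq.frltbqe} and \eqref{eq.s7jkwms} by adding and subtracting the two identities in \eqref{eq.kniijoy}, and \eqref{eq.vxzm3gu} and \eqref{eq.nt2noan} from \eqref{eq.m64m49c} in the same way; your use of \eqref{eq.s2rjcqb} and of $\Li_2(z)+\Li_2(-z)=\tfrac12\Li_2(z^2)$ is exactly what the simplification requires. The sign bookkeeping you flag for the odd case (the opposite pairing of the $\pm$ signs in \eqref{eq.m64m49c}) is handled correctly and does produce the minus sign in \eqref{eq.nt2noan}.
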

\begin{proof}
Identities~\eqref{eq.frltbqe} and~\eqref{eq.s7jkwms} are obtained from the respective addition and subtraction of the two identities contained in~\eqref{eq.kniijoy} while~\eqref{eq.vxzm3gu} and~\eqref{eq.nt2noan} follow from~\eqref{eq.m64m49c}.
\end{proof}
In particular,
\begin{gather}
\int_0^\pi  {\frac{{\left( {5 + 4\cos^2 (2x)} \right)x^2 }}{{\left( {5 - 4\cos ^2 (2x)} \right)^2 }}\,dx} = \frac{\pi }{2}\ln \alpha + \frac{{7\pi ^3 }}{{4\sqrt 5 }} - \frac{{\pi \sqrt 5 }}{4}\ln^2\alpha,\\
\int_0^\pi  {\frac{{x^2 \cos (2x)}}{{\left( {5 - 4\cos ^2 (2x)} \right)^2 }}\,dx} = \frac{{3\pi }}{{8\sqrt 5 }}\ln \alpha + \frac{{\pi ^3 }}{{48}} - \frac{{3\pi }}{{16}}\ln ^2 \alpha.
\end{gather}

\begin{lemma}
If $0<q<1$, then
\begin{gather}
\int_0^\pi  {\frac{{x^2 \,dx}}{{1 - Q^2 \cos^2 (2x)}}} = \frac{{1 + q^2 }}{{1 - q^2 }}\left( {\frac{{\pi ^3 }}{3} + \frac{\pi }{4}\Li_2 (q^2 )} \right),\label{eq.aria0wt}\\
\int_0^\pi  {\frac{{x^2 \cos (2x)\,dx}}{{1 - Q^2 \cos^2 (2x)}}} = \frac{\pi }{{2Q}}\frac{{1 + q^2 }}{{1 - q^2 }}\left( {\Li_2 (q) - \Li_2 ( - q)} \right)\label{eq.ru2ayab},
\end{gather}
where $Q$ is as given in~\eqref{eq.wlciqf5}.
\end{lemma}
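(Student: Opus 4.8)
The plan is to reduce both integrals to the one already evaluated in Lemma~\ref{lem.vinbzq4} by factoring the quadratic in $\cos(2x)$ and splitting into partial fractions. Since $0<q<1$ forces $Q=2q/(1+q^2)\in(0,1)$, the denominators $1\pm Q\cos(2x)$ stay bounded away from zero on $[0,\pi]$ and every integral below converges; moreover $1 - Q^2\cos^2(2x) = (1 - Q\cos(2x))(1 + Q\cos(2x))$, so I would first record the elementary decompositions
\begin{equation*}
\frac{1}{1 - Q^2\cos^2(2x)} = \frac12\left(\frac{1}{1 - Q\cos(2x)} + \frac{1}{1 + Q\cos(2x)}\right)
\end{equation*}
and
\begin{equation*}
\frac{\cos(2x)}{1 - Q^2\cos^2(2x)} = \frac{1}{2Q}\left(\frac{1}{1 - Q\cos(2x)} - \frac{1}{1 + Q\cos(2x)}\right).
\end{equation*}

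Multiplying each identity by $x^2$, integrating over $[0,\pi]$, and applying~\eqref{eq.k4m3n7l} with the two sign choices — which gives
\begin{equation*}
\int_0^\pi \frac{x^2\,dx}{1 \pm Q\cos(2x)} = \frac{1+q^2}{1-q^2}\left(\frac{\pi^3}{3} + \pi\Li_2(\pm q)\right)
\end{equation*}
for the \emph{same} parameter $q$ in both cases — I would add the $+$ and $-$ versions and halve, obtaining
\begin{equation*}
\int_0^\pi \frac{x^2\,dx}{1 - Q^2\cos^2(2x)} = \frac{1+q^2}{1-q^2}\left(\frac{\pi^3}{3} + \frac{\pi}{2}\bigl(\Li_2(q) + \Li_2(-q)\bigr)\right),
\end{equation*}
while subtracting the two versions and dividing by $2Q$ produces~\eqref{eq.ru2ayab} directly.

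To close~\eqref{eq.aria0wt} I would invoke the dilogarithm duplication formula $\Li_2(q) + \Li_2(-q) = \tfrac12\Li_2(q^2)$, a one-line consequence of the defining power series (the odd powers cancel, and $\sum_{m\ge1} q^{2m}/(2m)^2 = \tfrac14\Li_2(q^2)$), which turns $\tfrac{\pi}{2}(\Li_2(q)+\Li_2(-q))$ into $\tfrac{\pi}{4}\Li_2(q^2)$ and yields the claimed formula. I do not expect any genuine obstacle here; the only points requiring care are the sign bookkeeping in the two partial-fraction splits and verifying that the prefactor $\tfrac{1+q^2}{1-q^2}$ from~\eqref{eq.k4m3n7l} is identical for both sign choices so that it can be factored out before combining.
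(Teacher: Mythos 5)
Your proposal is correct and follows essentially the same route as the paper: the paper derives both identities as an immediate consequence of the two sign choices in Lemma~\ref{lem.vinbzq4} (adding and subtracting, i.e., exactly your partial-fraction splits) together with the duplication formula $\Li_2(q)+\Li_2(-q)=\tfrac12\Li_2(q^2)$. Your write-up just makes the addition/subtraction step and the convergence remark explicit.
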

\begin{proof}
Immediate consequence of the identities in Lemma~\ref{lem.vinbzq4}. We also used
\begin{equation*}
\Li_2 (y) + \Li_2 ( - y) = \frac{1}{2}\Li_2 (y^2 ).
\end{equation*}
\end{proof}

\begin{theorem}
If $r$ is a positive integer, then
\begin{equation}\label{eq.jivtzpl}
\frac{1}{{F_{2r} \sqrt 5 }}\left( {\frac{{\pi ^3 }}{3} + \frac{\pi }{4}\Li_2 \left( {\beta ^{2r} } \right)} \right) =  \begin{cases}
 \int_0^\pi  {\dfrac{{x^2 \,dx}}{{L_r^2  - 4\cos^2 (2x)}}},&\text{if $r$ is even};  \\ 
 \int_0^\pi  {\dfrac{{x^2 \,dx}}{{L_r^2  + 4\sin^2 (2x)}}},&\text{if $r$ is odd};  \\ 
 \end{cases} 
\end{equation}
and
\begin{equation}\label{eq.isekz49}
\frac{\pi }{4}\left( {\Li_2 \left( {\beta ^r } \right) - \Li_2 \left( { - \beta ^r } \right)} \right) =  \begin{cases}
 F_r \sqrt 5 \int_0^\pi  {\dfrac{{x^2 \cos (2x)\,dx}}{{5F_r^2 + 4\sin^2 (2x)}}},&\text{if $r$ is even};  \\ 
  - L_r \int_0^\pi  {\dfrac{{x^2 \cos (2x)\,dx}}{{5F_r^2 - 4\cos^2 (2x)}}},&\text{if $r$ is odd}.  \\ 
 \end{cases} 
\end{equation}
\end{theorem}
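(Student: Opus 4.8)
The plan is to specialize the two identities of the immediately preceding lemma, \eqref{eq.aria0wt} and \eqref{eq.ru2ayab}, at $q=\beta^r$, and then translate the result into Fibonacci and Lucas numbers using \eqref{eq.s2rjcqb}, the doubling formula $F_{2r}=F_rL_r$, and the fundamental identity $L_r^2-5F_r^2=(-1)^r4$.

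First I would tabulate the effect of $q=\beta^r$ on the quantities appearing in those lemmas. Since $Q=2q/(1+q^2)$ by \eqref{eq.wlciqf5}, identity \eqref{eq.s2rjcqb} gives, for $r$ even, $1+\beta^{2r}=\beta^rL_r$ and $1-\beta^{2r}=\beta^rF_r\sqrt5$, hence $Q=2/L_r$ and $(1+q^2)/(1-q^2)=L_r/(F_r\sqrt5)$; for $r$ odd one has $1+\beta^{2r}=-\beta^rF_r\sqrt5$ and $1-\beta^{2r}=-\beta^rL_r$, so $Q=-2/(F_r\sqrt5)$ and $(1+q^2)/(1-q^2)=F_r\sqrt5/L_r$. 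Correspondingly $Q^2$ is $4/L_r^2$ (even) or $4/(5F_r^2)$ (odd).

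Next I would clear denominators. For \eqref{eq.jivtzpl} in the even case, $1-Q^2\cos^2(2x)=(L_r^2-4\cos^2(2x))/L_r^2$, so the left side of \eqref{eq.aria0wt} equals $L_r^2\int_0^\pi x^2\,dx/(L_r^2-4\cos^2(2x))$; equating with the right side $\tfrac{L_r}{F_r\sqrt5}\bigl(\tfrac{\pi^3}{3}+\tfrac{\pi}{4}\Li_2(\beta^{2r})\bigr)$, dividing by $L_r^2$, and writing $F_rL_r=F_{2r}$ produces the even branch. For $r$ odd, $Q^2=4/(5F_r^2)$ gives $1-Q^2\cos^2(2x)=(5F_r^2-4\cos^2(2x))/(5F_r^2)$, and $L_r^2=5F_r^2-4$ lets me rewrite $5F_r^2-4\cos^2(2x)=L_r^2+4\sin^2(2x)$; the same bookkeeping then yields the odd branch. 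Identity \eqref{eq.isekz49} is handled the same way, starting instead from \eqref{eq.ru2ayab}: the prefactor $\pi/(2Q)$ equals $\pi L_r/4$ when $r$ is even and $-\pi F_r\sqrt5/4$ when $r$ is odd, and after cancelling the common factor $L_r^2$ (even) or $5F_r^2$ (odd) one is left with $\tfrac{\pi}{4}\bigl(\Li_2(\beta^r)-\Li_2(-\beta^r)\bigr)$ on one side; for $r$ even one replaces $L_r^2-4\cos^2(2x)$ by $5F_r^2+4\sin^2(2x)$ via $L_r^2=5F_r^2+4$, and for $r$ odd the negative sign of $Q$ accounts for the leading $-L_r$ in the stated formula.

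The only thing requiring care — and it is a parity check rather than a genuine obstacle — is that the fundamental identity is used in opposite directions in the two parts: in \eqref{eq.jivtzpl} one converts a $\cos^2$ denominator into a $\sin^2$ denominator when $r$ is odd, while in \eqref{eq.isekz49} the same conversion is needed when $r$ is even, and the sign of $Q$ (negative for odd $r$) must be threaded correctly through the $\pi/(2Q)$ prefactor of \eqref{eq.ru2ayab}. Once the parities are aligned, the remainder is elementary algebra.
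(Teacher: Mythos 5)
Your proposal is correct and follows exactly the paper's route: the paper's proof is simply ``set $q=\beta^r$ in~\eqref{eq.aria0wt} and~\eqref{eq.ru2ayab},'' and your substitutions, the use of~\eqref{eq.s2rjcqb}, $F_{2r}=F_rL_r$, $L_r^2-5F_r^2=(-1)^r4$, and the sign bookkeeping for $Q$ when $r$ is odd are all the intended (and correct) details of that one-line argument.
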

\begin{proof}
Set $q=\beta^r$ in~\eqref{eq.aria0wt} and~\eqref{eq.ru2ayab}.
\end{proof}
In particular,
\begin{gather}
\int_0^\pi  {\frac{{x^2 \,dx}}{{1 + 4\sin^2 (2x)}}} = \frac{1}{{\sqrt 5 }}\left( {\frac{{7\pi ^3 }}{{20}} - \frac{\pi}{4}\ln ^2 \alpha } \right),\\
\int_0^\pi  {\frac{{x^2 \cos (2x)\,dx}}{{5 - 4\cos^2 (2x)}}} = \frac{{\pi ^3 }}{{24}} - \frac{{3\pi }}{8}\ln ^2 \alpha.
\end{gather}

\begin{corollary}
If $r$ is a positive even integer, then
\begin{gather}
\int_0^\pi  {\frac{{x^2 \cos ^2 x}}{{L_r^2 - 4\cos^2 (2x)}}\,dx}  = \frac{1}{{F_{2r} \sqrt 5 }}\left( {\frac{{\pi ^3 }}{6} + \frac{\pi }{8}\Li_2 \left( {\beta ^{2r} } \right)} \right) + \frac{\pi }{{8F_r \sqrt 5 }}\left( {\Li_2 \left( {\beta ^r } \right) - \Li_2 \left( { - \beta ^r } \right)} \right),\\
\int_0^\pi  {\frac{{x^2 \sin^2 x}}{{L_r^2 - 4\cos ^2 (2x)}}\,dx} = \frac{1}{{F_{2r} \sqrt 5 }}\left( {\frac{{\pi ^3 }}{6} + \frac{\pi }{8}\Li_2 \left( {\beta ^{2r} } \right)} \right) - \frac{\pi }{{8F_r \sqrt 5 }}\left( {\Li_2 \left( {\beta ^r } \right) - \Li_2 \left( { - \beta ^r } \right)} \right),
\end{gather}
while if $r$ is a positive odd number, then
\begin{gather}
\int_0^\pi  {\frac{{x^2 \sin^2 x}}{{L_r^2 + 4\sin ^2 (2x)}}\,dx} = \frac{1}{{F_{2r} \sqrt 5 }}\left( {\frac{{\pi ^3 }}{6} + \frac{\pi }{8}\Li_2 \left( {\beta ^{2r} } \right)} \right) + \frac{\pi }{{8L_r }}\left( {\Li_2 \left( {\beta ^r } \right) - \Li_2 \left( { - \beta ^r } \right)} \right),\\
\int_0^\pi  {\frac{{x^2 \cos^2 x}}{{L_r^2 + 4\sin ^2 (2x)}}\,dx} = \frac{1}{{F_{2r} \sqrt 5 }}\left( {\frac{{\pi ^3 }}{6} + \frac{\pi }{8}\Li_2 \left( {\beta ^{2r} } \right)} \right) - \frac{\pi }{{8L_r }}\left( {\Li_2 \left( {\beta ^r } \right) - \Li_2 \left( { - \beta ^r } \right)} \right).
\end{gather}
\end{corollary}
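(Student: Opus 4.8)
The plan is to reduce the four integrals to the two evaluations of the preceding theorem, \eqref{eq.jivtzpl} and \eqref{eq.isekz49}, by means of the half-angle identities $\cos^2 x=\tfrac12(1+\cos 2x)$ and $\sin^2 x=\tfrac12(1-\cos 2x)$. Writing $x^2\cos^2 x=\tfrac12 x^2+\tfrac12 x^2\cos(2x)$ and $x^2\sin^2 x=\tfrac12 x^2-\tfrac12 x^2\cos(2x)$ splits each of the four integrands into a term already appearing in \eqref{eq.jivtzpl} plus a ``$\cos(2x)$-weighted'' term appearing (essentially) in \eqref{eq.isekz49}.

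The one step that requires attention is that the denominators in \eqref{eq.jivtzpl} and \eqref{eq.isekz49} are written in two different forms, and to identify them one invokes the fundamental identity $L_r^2-5F_r^2=(-1)^r 4$. For $r$ even this gives $L_r^2-4\cos^2(2x)=(5F_r^2+4)-4\cos^2(2x)=5F_r^2+4\sin^2(2x)$, so the denominator in the first case of \eqref{eq.jivtzpl} coincides with that in the first case of \eqref{eq.isekz49}; for $r$ odd it gives $L_r^2+4\sin^2(2x)=5F_r^2-4\cos^2(2x)$, matching the second cases. Consequently, \eqref{eq.isekz49} yields, for $r$ even,
\[
\int_0^\pi \frac{x^2\cos(2x)}{L_r^2-4\cos^2(2x)}\,dx=\frac{\pi}{4F_r\sqrt5}\bigl(\Li_2(\beta^r)-\Li_2(-\beta^r)\bigr),
\]
and, for $r$ odd,
\[
\int_0^\pi \frac{x^2\cos(2x)}{L_r^2+4\sin^2(2x)}\,dx=-\frac{\pi}{4L_r}\bigl(\Li_2(\beta^r)-\Li_2(-\beta^r)\bigr).
\]

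Combining these with \eqref{eq.jivtzpl} then finishes the proof. For instance, for $r$ even,
\[
\int_0^\pi \frac{x^2\cos^2 x}{L_r^2-4\cos^2(2x)}\,dx=\tfrac12\int_0^\pi\frac{x^2\,dx}{L_r^2-4\cos^2(2x)}+\tfrac12\int_0^\pi\frac{x^2\cos(2x)\,dx}{L_r^2-4\cos^2(2x)},
\]
and inserting \eqref{eq.jivtzpl} together with the first displayed formula above gives the first identity of the corollary; replacing $\cos^2 x$ by $\sin^2 x$ merely reverses the sign of the second integral and produces the second identity. The two odd-$r$ identities follow in exactly the same way from the second cases of \eqref{eq.jivtzpl} and \eqref{eq.isekz49}.

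I do not anticipate a genuine obstacle: the only non-routine ingredient is the recognition, via $L_r^2-5F_r^2=(-1)^r 4$, that the two ways of writing the denominator agree, after which the argument is pure bookkeeping. It is worth one sentence to observe that for $r\ge 1$ the denominators never vanish on $[0,\pi]$, so all the integrals converge and the splitting of the integral is legitimate.
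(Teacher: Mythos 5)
Your proposal is correct and matches the paper's own (very terse) proof, which simply adds and subtracts the corresponding identities in \eqref{eq.jivtzpl} and \eqref{eq.isekz49} via $\cos^2 x=\tfrac12(1+\cos 2x)$, $\sin^2 x=\tfrac12(1-\cos 2x)$. Your extra observation that the differently written denominators coincide through $L_r^2-5F_r^2=(-1)^r4$ is exactly the identification the paper leaves implicit, and your sign bookkeeping is right.
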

\begin{proof}
Addition and subtraction of corresponding identities in~\eqref{eq.jivtzpl} and~\eqref{eq.isekz49}.
\end{proof}
In particular,
\begin{gather}
\int_0^\pi  {\frac{{x^2 \sin^2 x}}{{1 + 4\sin^2 (2x)}}\,dx} = \left( { - \frac{{\sqrt 5 }}{{40}} + \frac{3}{{16}}} \right)\pi \ln ^2 \alpha  + \left( {\frac{{7\sqrt 5 }}{{200}} - \frac{1}{{48}}} \right)\pi ^3,\\
\int_0^\pi  {\frac{{x^2 \cos^2 x}}{{1 + 4\sin^2 (2x)}}\,dx} = - \left( {\frac{{\sqrt 5 }}{{40}} + \frac{3}{{16}}} \right)\pi \ln ^2 \alpha  + \left( {\frac{{7\sqrt 5 }}{{200}} + \frac{1}{{48}}} \right)\pi ^3.
\end{gather}
\begin{theorem}
If $r$ is a positive integer, then
\begin{equation}\label{eq.pxi3hd5}
\begin{split}
&\frac{{L_{2r} }}{{5F_{2r}^3 \sqrt 5 }}\left( {\frac{{\pi ^3 }}{3} + \frac{\pi }{4}\Li_2 \left( { \beta ^{2r} } \right)} \right) - \frac{\pi }{{20F_{2r}^2 }}\ln \left( {1 - \beta ^{2r} } \right)\\
&\qquad =  \begin{cases}
 \int_0^\pi  {\dfrac{{x^2 \,dx}}{{\left( {L_r^2  - 4\cos^2 (2x)} \right)^2 }}},&\text{if $r$ is even};  \\ 
 \int_0^\pi  {\dfrac{{x^2 \,dx}}{{\left( {L_r^2  + 4\sin^2 (2x)} \right)^2 }}},&\text{if $r$ is odd} ; \\ 
 \end{cases} 
\end{split}
\end{equation}
and
\begin{equation}\label{eq.ezyw57r}
\begin{split}
&\frac{\pi }{{8F_{2r} \sqrt 5 }}\ln \left( {\frac{{1 + \beta ^r }}{{1 - \beta ^r }}} \right)\\
&\qquad =  \begin{cases}
  - \dfrac{\pi }{{40F_r^2 }}\left( {\Li_2 \left( {\beta ^r } \right) - \Li_2 \left( { - \beta ^r } \right)} \right) + F_r \sqrt 5 \int_0^\pi  {\dfrac{{x^2 \cos (2x)\,dx}}{{\left( {5F_r^2  + 4\sin ^2 (2x)} \right)^2 }}},&\text{$r$ even};  \\ 
  - \dfrac{\pi }{{8L_r^2 }}\left( {\Li_2 \left( {\beta ^r } \right) - \Li_2 \left( { - \beta ^r } \right)} \right) - L_r \int_0^\pi  {\dfrac{{x^2 \cos (2x)\,dx}}{{\left( {5F_r^2  - 4\cos ^2 (2x)} \right)^2 }}},&\text{$r$ odd}.  \\ 
 \end{cases} 
\end{split}
\end{equation}
\end{theorem}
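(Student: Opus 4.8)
The plan is to derive both displays from the preceding theorem's identities~\eqref{eq.jivtzpl} and~\eqref{eq.isekz49} by the same device already used to pass from~\eqref{eq.aekfmpm} to Corollary~\ref{cor.m86skx9} (and from~\eqref{eq.g8ugny7} to~\eqref{eq.kniijoy}): replace the integer $r$ by a real variable $t$, so that $F_{2r}\mapsto f(2t)$, $L_r\mapsto l(t)$, $F_r\mapsto f(t)$ and $\beta^r\mapsto\beta^t$ (the complex-valued function of Lemma~\ref{lem.fibfunctions}), then differentiate with respect to $t$ and take real parts at $t=r$ using~\eqref{eq.nfgepkq} and~\eqref{eq.wyg8jr6}. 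The resulting Fibonacci-Lucas function forms of~\eqref{eq.jivtzpl} and~\eqref{eq.isekz49} are legitimate analytic identities on an interval around each relevant integer, since those two identities are themselves obtained by specialising Lewin's formulas~\eqref{eq.aria0wt},~\eqref{eq.ru2ayab}, which hold for a whole range of the parameter; and for $r\ge 1$ the denominators $l(t)^2\mp 4\cos^2(2x)$, $l(t)^2+4\sin^2(2x)$ and $5f(t)^2\pm 4\cos^2(2x)$ stay bounded away from zero on $[0,\pi]$, so differentiating under the integral sign is permissible.

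For~\eqref{eq.pxi3hd5} I would differentiate the function form of~\eqref{eq.jivtzpl} (its even branch when $r$ is even, its odd branch when $r$ is odd). Differentiating the right-hand side gives $-2\,l(t)l'(t)$ times the squared-denominator integral that is the goal. For the left-hand side one uses $\frac{d}{dz}\Li_2(z)=-\ln(1-z)/z$ together with $\frac{d}{dt}\beta^t=\beta^t\big(\ln(-\beta)+i\pi\big)$ (the principal branch $m=0$, as in the proof of Lemma~\ref{lem.fibfunctions}); since $\ln(-\beta)=-\ln\alpha$, the $t$-derivative of $\Li_2(\beta^{2t})$ equals $-2\ln(1-\beta^{2t})\big(-\ln\alpha+i\pi\big)$, whose real part at $t=r$ is $2\ln\alpha\,\ln(1-\beta^{2r})$ (the argument $1-\beta^{2r}$ being positive for every integer $r$). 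Taking real parts, substituting $\Re f'(2r)=\frac{L_{2r}}{\sqrt5}\ln\alpha$, $\Re l'(r)=F_r\sqrt5\,\ln\alpha$ and $F_rL_r=F_{2r}$, one sees that every term carries a factor $\ln\alpha$; cancelling it and solving the resulting linear equation for the integral yields~\eqref{eq.pxi3hd5}, with $\ln(1-\beta^{2r})$ left as written and the two parity cases producing the same right-hand side.

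For~\eqref{eq.ezyw57r} I would run the same procedure on~\eqref{eq.isekz49}. Now the $t$-derivative of $\Li_2(\beta^t)-\Li_2(-\beta^t)$ is $-\big(-\ln\alpha+i\pi\big)\ln\!\big((1-\beta^t)/(1+\beta^t)\big)$, with real part at $t=r$ equal to $\ln\alpha\,\ln\!\big((1-\beta^r)/(1+\beta^r)\big)$ (again the argument is a positive real for every integer $r$); differentiating the right-hand side of~\eqref{eq.isekz49} produces, besides a multiple of the original integral (which is eliminated by~\eqref{eq.isekz49} itself), a term proportional to the squared-denominator integral whose coefficient involves $f(t)^2f'(t)$ in the even branch and $l(t)f(t)f'(t)$ in the odd branch (each with numerical factor $10$, from $\frac{d}{dt}\big(5f(t)^2\big)=10f(t)f'(t)$). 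Taking real parts at $t=r$, using $\Re f'(r)=\frac{L_r}{\sqrt5}\ln\alpha$, $\Re l'(r)=F_r\sqrt5\,\ln\alpha$ and $F_rL_r=F_{2r}$, cancelling $\ln\alpha$, and solving for the integral gives~\eqref{eq.ezyw57r} in both the even and the odd case.

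The conceptual input is thus minimal; the real work---and the only place an error is likely to slip in---is the bookkeeping of constants: one must retain the chain-rule factor $2$ coming from $f(2t)$ in the~\eqref{eq.jivtzpl} branch, the factor $10f(t)f'(t)$ coming from $5f(t)^2$ in the~\eqref{eq.isekz49} branch, and the correct matching of the even/odd cases of the source identities to the even/odd cases of the target, so that the coefficients $\frac{L_{2r}}{5F_{2r}^3\sqrt5}$, $\frac{\pi}{20F_{2r}^2}$, $\frac{\pi}{8F_{2r}\sqrt5}$, $\frac{\pi}{40F_r^2}$ and $\frac{\pi}{8L_r^2}$ all come out as claimed.
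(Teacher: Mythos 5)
Your proposal is correct and follows essentially the same route as the paper, whose proof is exactly to differentiate the Fibonacci--Lucas function forms of~\eqref{eq.jivtzpl} and~\eqref{eq.isekz49} with respect to the real parameter and take real parts via Lemma~\ref{lem.fibfunctions}. Your bookkeeping (the chain-rule factor from $f(2t)$, the $10f(t)f'(t)$ term, the real part of $\tfrac{d}{dt}\Li_2(\pm\beta^{t})$, and the use of $F_rL_r=F_{2r}$ before cancelling $\ln\alpha$) reproduces the stated coefficients, so nothing further is needed.
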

\begin{proof}
Differentiate the Fibonacci-Lucas function form of~\eqref{eq.jivtzpl} to obtain~\eqref{eq.pxi3hd5}; differentiate the Fibonacci-Lucas function form of~\eqref{eq.isekz49} to obtain~\eqref{eq.ezyw57r}.
\end{proof}
In particular,
\begin{gather}
\int_0^\pi  {\frac{{x^2 \,dx}}{{\left( {1 + 4\sin^2 (2x)} \right)^2 }}} = \frac{{21}}{{100}}\frac{{\pi ^3 }}{{\sqrt 5 }} - \frac{{3\pi }}{{20\sqrt 5 }}\ln ^2 \alpha  + \frac{\pi }{{20}}\ln \alpha, \\
\int_0^\pi  {\frac{{x^2 \cos (2x)\,dx}}{{\left( {5 - 4\cos^2 (2x)} \right)^2 }}} = \frac{{3\pi }}{{8\sqrt 5 }}\ln \alpha  + \frac{{\pi ^3 }}{{48}} - \frac{{3\pi }}{{16}}\ln ^2 \alpha. 
\end{gather}

\begin{lemma}\label{lem.e9mcx5b}
Let $0<q<1$. Let
\begin{equation*}
R=\frac{2q}{1 - q^2}.
\end{equation*}
Then
\begin{equation}\label{eq.xitqgr6}
\int_0^\pi  {\frac{{x^2 \,dx}}{{1 + R^2 \cos^2 (2x)}}} = \frac{{1 - q^2 }}{{1 + q^2 }}\left( {\frac{{\pi ^3 }}{3} + \frac{\pi }{4}\Li_2 \left( { - q^2 } \right)} \right)
\end{equation}
and
\begin{equation}\label{eq.d64v4ze}
\begin{split}
&\int_0^\pi  {\frac{{x^2\cos(2x) \,dx}}{{1 + R^2 \cos^2 (2x)}}} \\
&\qquad = \frac{\pi }{R}\frac{{1 - q^2 }}{{1 + q^2 }}\left( {\arctan q\ln q + \frac{1}{2}\Cl_2 \left( {2\arctan q} \right) + \frac{1}{2}\Cl_2 \left( {\pi - 2\arctan q} \right)} \right).
\end{split}
\end{equation}
\end{lemma}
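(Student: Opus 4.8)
The plan is to obtain both identities from the preceding lemma by the complexification $q\mapsto iq$, the same device used repeatedly in this paper. The point is that the fractional-linear map $q\mapsto 2q/(1+q^2)$ carries $iq$ to $2iq/(1-q^2)=iR$; hence replacing $q$ by $iq$ in \eqref{eq.aria0wt}--\eqref{eq.ru2ayab} turns $Q$ into $iR$, so that $1-Q^2\cos^2(2x)$ becomes $1+R^2\cos^2(2x)$, $\tfrac{1+q^2}{1-q^2}$ becomes $\tfrac{1-q^2}{1+q^2}$, and $\Li_2(q^2)$ becomes $\Li_2(-q^2)$.

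First I would justify this substitution. Since $R$ is real, $1+R^2\cos^2(2x)=|1+iR\cos(2x)|^2\ge 1$ on $[0,\pi]$, so the integrands stay bounded and the integrals converge after the substitution. To see that the identities persist, join $q=q_0\in(0,1)$ to $iq_0$ by the arc $q_0e^{i\theta}$, $\theta\in[0,\tfrac{\pi}{2}]$; a direct check shows that $Q(q)^2=4q^2/(1+q^2)^2$ is real only at the two endpoints of this arc, where it is $<1$, so $1-Q(q)^2\cos^2(2x)$ has no zero for $x\in[0,\pi]$ and $q$ on the arc. Consequently both sides of \eqref{eq.aria0wt} and of \eqref{eq.ru2ayab} are analytic functions of $q$ along the arc (the arguments $q^2$ and $\pm q$ of $\Li_2$ remaining inside the unit disc, hence off the cut), and analytic continuation yields the two identities at $q=iq_0$, the dilogarithms being principal values.

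Granting this, \eqref{eq.xitqgr6} is immediate from \eqref{eq.aria0wt}. Applying the substitution to \eqref{eq.ru2ayab} gives
\[
\int_0^\pi\frac{x^2\cos(2x)\,dx}{1+R^2\cos^2(2x)}=\frac{\pi}{2iR}\,\frac{1-q^2}{1+q^2}\bigl(\Li_2(iq)-\Li_2(-iq)\bigr),
\]
and for $0<q<1$ one has $\Li_2(-iq)=\overline{\Li_2(iq)}$, so $\Li_2(iq)-\Li_2(-iq)=2i\,\Im\Li_2(iq)$; the factor $2i$ cancels the $2i$ in the denominator, leaving $\tfrac{\pi}{R}\,\tfrac{1-q^2}{1+q^2}\,\Im\Li_2(iq)$. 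Substituting the value of $\Im\Li_2(iq)$ supplied by \eqref{eq.j0jplsn} produces exactly the right-hand side of \eqref{eq.d64v4ze}.

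The one genuinely delicate point is the justification of the complexification; the rest is a single line of algebra. If one wishes to avoid complex arguments for the first identity, \eqref{eq.xitqgr6} can also be obtained from the double-angle identity $1+R^2\cos^2(2x)=(1+\tfrac{1}{2}R^2)\bigl(1+\tfrac{2q^2}{1+q^4}\cos(4x)\bigr)$, the substitution $u=2x$, and \eqref{eq.k4m3n7l} at the real parameter $q^2\in(0,1)$, after dealing with the linear and constant terms that appear on folding $\int_0^{2\pi}$ back to $\int_0^\pi$; but since \eqref{eq.d64v4ze} is handled most cleanly by the complexification, I would carry out both parts in that way.
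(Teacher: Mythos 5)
Your proposal is correct and follows essentially the same route as the paper: the paper writes $iq$ for $q$ directly in~\eqref{eq.lh0gp48} and separates real and imaginary parts, invoking Lemma~\ref{lem.oszn90v}, whereas you perform the same complexification on the equivalent sum/difference forms~\eqref{eq.aria0wt}--\eqref{eq.ru2ayab} and use the conjugate symmetry $\Li_2(-iq)=\overline{\Li_2(iq)}$ together with~\eqref{eq.j0jplsn}. Your explicit justification of the analytic continuation (the paper leaves this implicit) is sound, and the remaining algebra matches the stated identities.
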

\begin{proof}
Write $iq$ for $q$ in~\eqref{eq.lh0gp48} and take real and imaginary parts to obtain
\begin{gather*}
\int_0^\pi  {\frac{{x^2 \,dx}}{{1 + R^2 \cos^2 (2x)}}} = \frac{{1 - q^2 }}{{1 + q^2 }}\left( {\frac{{\pi ^3 }}{3} + \pi \Re \Li_2 \left( {iq} \right)} \right),\\
\int_0^\pi  {\frac{{x^2 \cos (2x)\,dx}}{{1 + R^2 \cos^2 (2x)}}} = \frac{\pi }{R}\frac{{1 - q^2 }}{{1 + q^2 }}\Im \Li_2 (iq),
\end{gather*}
from which~\eqref{eq.xitqgr6} and~\eqref{eq.d64v4ze} follow upon using Lemma~\ref{lem.oszn90v}.
\end{proof}
\begin{theorem}
If $r$ is a positive integer, then
\begin{equation}\label{eq.eunbs0s}
\frac{1}{{F_{2r} \sqrt 5 }}\left( {\frac{{\pi ^3 }}{3} + \frac{\pi }{4}\Li_2 \left( { - \beta ^{2r} } \right)} \right) =  \begin{cases}
 \int_0^\pi  {\dfrac{{x^2 \,dx}}{{5F_r^2 + 4\cos^2 (2x)}}},&\text{if $r$ is even};  \\ 
 \int_0^\pi  {\dfrac{{x^2 \,dx}}{{L_r^2 + 4\cos^2 (2x)}}},&\text{if $r$ is odd}.  \\ 
 \end{cases} 
\end{equation}

\end{theorem}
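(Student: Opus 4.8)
The plan is to specialise identity~\eqref{eq.xitqgr6} of Lemma~\ref{lem.e9mcx5b},
\[
\int_0^\pi \frac{x^2\,dx}{1 + R^2\cos^2(2x)} = \frac{1-q^2}{1+q^2}\left(\frac{\pi^3}{3} + \frac{\pi}{4}\Li_2(-q^2)\right),\qquad R = \frac{2q}{1-q^2},\quad 0<q<1,
\]
to a value of $q$ with $q^2 = \beta^{2r}$, so that the dilogarithm on the right is automatically $\Li_2(-\beta^{2r})$ as required. Since the lemma needs $0<q<1$, I would take $q=\beta^r$ when $r$ is even (then $q\in(0,1)$) and $q=-\beta^r$ when $r$ is odd (then $q\in(0,1)$ as well, since $|\beta|<1$ and $\beta^r<0$). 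In each case the prefactor $\frac{1-q^2}{1+q^2}$ and the parameter $R$ are then evaluated from~\eqref{eq.s2rjcqb}.

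For even $r$, substituting $1-\beta^{2r}=\beta^rF_r\sqrt5$ and $1+\beta^{2r}=\beta^rL_r$ gives $\frac{1-q^2}{1+q^2}=\frac{F_r\sqrt5}{L_r}$ and $R=\frac{2}{F_r\sqrt5}$, hence $R^2=\frac{4}{5F_r^2}$. Multiplying numerator and denominator of the integrand by $5F_r^2$ turns the left side into $5F_r^2\int_0^\pi \frac{x^2\,dx}{5F_r^2+4\cos^2(2x)}$, and dividing through by $5F_r^2$ while using $F_rL_r=F_{2r}$ collapses the right side to $\frac{1}{\sqrt5\,F_{2r}}\bigl(\frac{\pi^3}{3}+\frac{\pi}{4}\Li_2(-\beta^{2r})\bigr)$, which is the even case of~\eqref{eq.eunbs0s}. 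For odd $r$, the same steps with $1-\beta^{2r}=-\beta^rL_r$ and $1+\beta^{2r}=-\beta^rF_r\sqrt5$ yield $\frac{1-q^2}{1+q^2}=\frac{L_r}{F_r\sqrt5}$ and $R=\frac{2}{L_r}$, so $R^2=\frac{4}{L_r^2}$; clearing the factor $L_r^2$ from the integrand and again invoking $F_rL_r=F_{2r}$ produces the odd case.

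There is no genuine obstacle here beyond bookkeeping. The only delicate point is the choice of the sign of $q$: one must pick $q=\beta^r$ or $q=-\beta^r$ so as to remain in the admissible range $0<q<1$ of Lemma~\ref{lem.e9mcx5b} while preserving $q^2=\beta^{2r}$, and then verify that the two parity-dependent algebraic reductions really do coalesce into the single closed form on the left of~\eqref{eq.eunbs0s}.
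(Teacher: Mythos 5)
Your proposal is correct and is essentially the paper's own proof: the paper likewise specialises~\eqref{eq.xitqgr6} at $q=\beta^r$ and simplifies with~\eqref{eq.s2rjcqb} and $F_{2r}=F_rL_r$. Your only refinement is choosing $q=-\beta^r$ for odd $r$ to respect the range $0<q<1$ (harmless anyway, since~\eqref{eq.xitqgr6} is even in $q$), and your parity-by-parity computations check out.
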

\begin{proof}
Set $q=\beta ^r$ in~\eqref{eq.xitqgr6} and use~\eqref{eq.s2rjcqb}.
\end{proof}
\begin{theorem}
If $r$ is a positive integer, then
\begin{equation}
\begin{split}
&\frac{{L_{2r} }}{{5F_{2r}^3 \sqrt 5 }}\left( {\frac{{\pi ^3 }}{3} + \frac{\pi }{4}\Li_2 \left( { - \beta ^{2r} } \right)} \right) - \frac{\pi }{{20F_{2r}^2 }}\ln \left( {1 + \beta ^{2r} } \right)\\
&\qquad =  \begin{cases}
 \int_0^\pi {\dfrac{{x^2 \,dx}}{{\left( {5F_r^2 + 4\cos^2 (2x)} \right)^2 }}},&\text{if $r$ is even};  \\ 
 \int_0^\pi {\dfrac{{x^2 \,dx}}{{\left( {L_r^2 + 4\cos^2 (2x)} \right)^2 }}},&\text{if $r$ is odd} . \\ 
 \end{cases} 
\end{split}
\end{equation}
\end{theorem}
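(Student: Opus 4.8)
The plan is to apply the differentiation device used for the preceding corollaries: pass from \eqref{eq.eunbs0s} to its Fibonacci--Lucas function form, differentiate once with respect to the (now continuous) parameter $r$, evaluate at an integer, and take real parts by means of \eqref{eq.wyg8jr6}.

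First I would record the function form of \eqref{eq.eunbs0s}. With $f,l$ as in Lemma~\ref{lem.fibfunctions} and $\beta^{2r}$ understood via the $m=0$ determination used in the proof of Lemma~\ref{lem.fibfunctions}, this reads
\[
\frac{1}{f(2r)\sqrt5}\left(\frac{\pi^3}{3}+\frac{\pi}{4}\Li_2(-\beta^{2r})\right)=\int_0^\pi\frac{x^2\,dx}{5f(r)^2+4\cos^2(2x)}\qquad(r\text{ even}),
\]
while the same left-hand side equals $\int_0^\pi x^2\,dx/(l(r)^2+4\cos^2(2x))$ for $r$ odd. (One could equally write $l(r)^2-4\sin^2(2x)$ in the even case and $5f(r)^2-4\sin^2(2x)$ in the odd case; by \eqref{eq.wyg8jr6} together with $F_rL_r=F_{2r}$ these choices have the same real part of derivative, so the final identity does not distinguish them.) Now I differentiate in $r$. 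On the right, $d/dr$ passes under the integral and hits the denominator, producing $-10\,f(r)f'(r)\int_0^\pi x^2\,dx/(5F_r^2+4\cos^2(2x))^2$ in the even case and $-2\,l(r)l'(r)\int_0^\pi x^2\,dx/(L_r^2+4\cos^2(2x))^2$ in the odd case. On the left, the quotient rule gives one term carrying $\frac{d}{dr}f(2r)=2f'(2r)$ and one carrying $\frac{d}{dr}\Li_2(-\beta^{2r})=-\frac{\ln(1+\beta^{2r})}{\beta^{2r}}\,\frac{d}{dr}\beta^{2r}$, where $\frac{d}{dr}\beta^{2r}=2\beta^{2r}\bigl(\ln(-\beta)+i\pi\bigr)=2\beta^{2r}(i\pi-\ln\alpha)$ exactly as in the proof of Lemma~\ref{lem.fibfunctions}.

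Then I evaluate at an integer $r$ and take real parts. By \eqref{eq.s2rjcqb} we have $\beta^{2r}\in(0,1)$ and $1+\beta^{2r}>0$, so $f(2r)=F_{2r}$, $\Li_2(-\beta^{2r})$ and $\ln(1+\beta^{2r})$ are real, hence $\Re\frac{d}{dr}\Li_2(-\beta^{2r})=2\ln(1+\beta^{2r})\ln\alpha$; and \eqref{eq.wyg8jr6} gives $\Re f'(2r)=L_{2r}\ln\alpha/\sqrt5$, $\Re f'(r)=L_r\ln\alpha/\sqrt5$, $\Re l'(r)=F_r\sqrt5\,\ln\alpha$. Collecting terms, the real part of the differentiated identity becomes
\[
-\frac{2L_{2r}\ln\alpha}{5F_{2r}^2}\left(\frac{\pi^3}{3}+\frac{\pi}{4}\Li_2(-\beta^{2r})\right)+\frac{\pi\ln\alpha\,\ln(1+\beta^{2r})}{2F_{2r}\sqrt5}=-2\sqrt5\,F_{2r}\ln\alpha\,I,
\]
where $I$ is the squared-denominator integral and I have used $F_rL_r=F_{2r}$ (the same identity, via $\Re l'(r)=F_r\sqrt5\,\ln\alpha$ and $10F_r/\sqrt5=2\sqrt5\,F_r$, collapses the odd-$r$ computation to this very equation). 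Cancelling the common factor $\ln\alpha$ and dividing by $-2\sqrt5\,F_{2r}$ yields precisely the asserted formula. The one point that needs care is the real/imaginary bookkeeping---in particular using the correct entry of \eqref{eq.wyg8jr6} for $\Re f'$ against $\Re l'$, which is exactly what forces $\ln\alpha$ to drop out and the two parity cases to coincide; the remaining manipulations are routine.
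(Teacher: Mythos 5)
Your proposal is correct and is essentially the paper's own argument: the paper's proof is precisely ``differentiate the Fibonacci--Lucas function form of~\eqref{eq.eunbs0s}'' with respect to $r$ and take real parts via~\eqref{eq.wyg8jr6}, which is what you carry out, and your bookkeeping (the factor $\Re\frac{d}{dr}\Li_2(-\beta^{2r})=2\ln(1+\beta^{2r})\ln\alpha$, the use of $F_rL_r=F_{2r}$ to merge the two parity cases, and the cancellation of $\ln\alpha$) checks out against the stated identity.
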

\begin{proof}
Differentiate the Fibonacci-Lucas function form of~\eqref{eq.eunbs0s}.
\end{proof}

\begin{theorem}
If $r$ is a positive even integer, then
\begin{equation}
\begin{split}
\int_0^\pi  {\frac{{x^2 \cos (2x)\,dx}}{{5F_r^2 + 4\cos^2 (2x)}}}  &= \frac{\pi }{{4L_r }}\left( {\Cl_2 \left( {\arctan \left( {\frac{2}{{F_r \sqrt 5 }}} \right)} \right) + \Cl_2 \left( {\pi - \arctan \left( {\frac{2}{{F_r \sqrt 5 }}} \right)} \right)} \right)\\
&\qquad - \frac{{\pi r}}{{4L_r }}\arctan \left( {\frac{2}{{F_r \sqrt 5 }}} \right)\ln \alpha .
\end{split}
\end{equation}
while, if $r$ is a positive odd integer, then
\begin{equation}
\begin{split}
\int_0^\pi  {\frac{{x^2 \cos (2x)\,dx}}{{L_r^2 + 4\cos^2 (2x)}}}  &= \frac{\pi }{{4F_r\sqrt 5 }}\left( {\Cl_2 \left( {\arctan \left( {\frac{2}{{L_r }}} \right)} \right) + \Cl_2 \left( {\pi - \arctan \left( {\frac{2}{{L_r}}} \right)} \right)} \right)\\
&\qquad - \frac{{\pi r}}{{4F_r\sqrt 5 }}\arctan \left( {\frac{2}{{L_r}}} \right)\ln \alpha .
\end{split}
\end{equation}
\end{theorem}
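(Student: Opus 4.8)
The plan is to specialize identity~\eqref{eq.d64v4ze} of Lemma~\ref{lem.e9mcx5b} to $q=\beta^r$ when $r$ is even and to $q=-\beta^r$ when $r$ is odd, in exact parallel with the derivation of~\eqref{eq.eunbs0s} from~\eqref{eq.xitqgr6}. In both cases the chosen value lies in $(0,1)$: since $\beta^2\in(0,1)$ we have $\beta^{2r}\in(0,1)$, while $\beta^r>0$ for even $r$ and $-\beta^r>0$ for odd $r$, so Lemma~\ref{lem.e9mcx5b} applies.

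First, for even $r$, I would set $q=\beta^r$. By~\eqref{eq.s2rjcqb}, $1-q^2=1-\beta^{2r}=\beta^rF_r\sqrt5$ and $1+q^2=1+\beta^{2r}=\beta^rL_r$, whence $(1-q^2)/(1+q^2)=F_r\sqrt5/L_r$ and $R=2\beta^r/(\beta^rF_r\sqrt5)=2/(F_r\sqrt5)$. Thus $1+R^2\cos^2(2x)=(5F_r^2+4\cos^2(2x))/(5F_r^2)$ and $\pi/R=\pi F_r\sqrt5/2$, so~\eqref{eq.d64v4ze} collapses to
\[
\int_0^\pi \frac{x^2\cos(2x)\,dx}{5F_r^2+4\cos^2(2x)}=\frac{\pi}{2L_r}\Bigl(\arctan(\beta^r)\ln(\beta^r)+\tfrac12\Cl_2(2\arctan\beta^r)+\tfrac12\Cl_2(\pi-2\arctan\beta^r)\Bigr).
\]
Next I would invoke Lemma~\ref{lem.z4tjirr}: for even $r$, $2\arctan(\beta^r)=\arctan(2/(F_r\sqrt5))$. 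Since $\alpha\beta=-1$ forces $|\beta|=1/\alpha$ and $\beta^r>0$, we have $\ln(\beta^r)=-r\ln\alpha$, so $\arctan(\beta^r)\ln(\beta^r)=-\tfrac r2\arctan(2/(F_r\sqrt5))\ln\alpha$; collecting terms yields the stated even-$r$ formula.

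For odd $r$, I would instead set $q=-\beta^r$, so $q^2=\beta^{2r}$ again. Now~\eqref{eq.s2rjcqb} gives $1-q^2=-\beta^rL_r>0$ and $1+q^2=-\beta^rF_r\sqrt5>0$, with $(1-q^2)/(1+q^2)=L_r/(F_r\sqrt5)$ and $R=2/L_r$. Then $1+R^2\cos^2(2x)=(L_r^2+4\cos^2(2x))/L_r^2$ and $\pi/R=\pi L_r/2$, so~\eqref{eq.d64v4ze} becomes the analogous identity with $5F_r^2$ replaced by $L_r^2$ and the prefactor $1/(2L_r)$ replaced by $1/(2F_r\sqrt5)$. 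The odd-$r$ branch of Lemma~\ref{lem.z4tjirr} then gives $2\arctan(-\beta^r)=\arctan(2/L_r)$, and $\ln(-\beta^r)=-r\ln\alpha$; rearranging produces the odd-$r$ formula.

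The computation is mechanical. The only point requiring genuine care is the sign and positivity bookkeeping: because~\eqref{eq.d64v4ze} is valid only for $0<q<1$, and because the single power of $R$ in the factor $\pi/R$ is sign-sensitive (unlike the $R^2$ appearing in~\eqref{eq.xitqgr6}), one must choose $q=\beta^r$ or $q=-\beta^r$ according to the parity of $r$, and correspondingly read $\ln q$ as $\ln|\beta|^r=-r\ln\alpha$ in both cases. Once this is done, matching the Clausen-function arguments on the right-hand side is forced by the double-angle relations~\eqref{eq.v5lli77} and~\eqref{eq.ka7hwfm}.
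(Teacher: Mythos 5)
Your proposal is correct and is exactly the paper's own argument: substitute $q=\pm\beta^r$ (choosing the sign by parity so that $0<q<1$) into~\eqref{eq.d64v4ze}, simplify with~\eqref{eq.s2rjcqb} to get $R=2/(F_r\sqrt5)$ or $R=2/L_r$, and convert $2\arctan(\pm\beta^r)$ and $\ln(\pm\beta^r)=-r\ln\alpha$ via Lemma~\ref{lem.z4tjirr}. Your detailed bookkeeping of the factor $\pi/R$ and the prefactors reproduces the stated constants correctly in both parity cases.
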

\begin{proof}
Set $q=\pm\beta^r$ in~\eqref{eq.d64v4ze}; use~\eqref{eq.s2rjcqb} and Lemma~\ref{lem.z4tjirr}.
\end{proof}

\section{Results associated with~\eqref{eq.b1e7nal}}

\begin{theorem}\label{thm.b52bbpx}
If $r$ is a positive integer, then
\begin{gather}
\int_0^\pi  {\frac{{x^2 \cos x}}{{L_r - 2\cos (2x)}}\,dx} = \frac{{\pi \sqrt {\beta ^r } }}{{1 - \beta ^r }}\left( {\Li_2 \left( { - \sqrt {\beta ^r } } \right) - \Li_2 \left( {\sqrt {\beta ^r } } \right)} \right),\quad\text{if $r$ is even},\label{eq.ri9nkko}\\
\int_0^\pi  {\frac{{x^2 \cos x}}{{F_r \sqrt 5 - 2\cos (2x)}}\,dx} = \frac{{\pi \sqrt {-\beta ^r } }}{{1 + \beta ^r }}\left( {\Li_2 \left( { - \sqrt {-\beta ^r } } \right) - \Li_2 \left( {\sqrt {-\beta ^r } } \right)} \right),\quad\text{if $r$ is odd}\label{eq.ujhmyef}.
\end{gather}
\end{theorem}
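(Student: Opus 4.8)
The plan is to specialize Lewin's identity~\eqref{eq.b1e7nal} in the same spirit as the earlier sections: first absorb the auxiliary parameter $Q$ so that a $\cos(2x)$-difference appears in the denominator, then substitute $q=\beta^r$ when $r$ is even and $q=-\beta^r$ when $r$ is odd, and finally tidy up with the even/odd splitting~\eqref{eq.s2rjcqb}.

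First I would rewrite~\eqref{eq.b1e7nal}. Since $Q=2q/(1+q^2)$ gives
\[
1-Q\cos(2x)=\frac{1+q^2-2q\cos(2x)}{1+q^2},
\]
identity~\eqref{eq.b1e7nal} is equivalent to
\[
\int_0^\pi\frac{x^2\cos x}{1+q^2-2q\cos(2x)}\,dx=\frac{\pi}{1-q}\,\frac{\Li_2\bigl(-\sqrt{q}\,\bigr)-\Li_2\bigl(\sqrt{q}\,\bigr)}{\sqrt{q}},\qquad 0<q<1.
\]
The denominator is harmless, since $1+q^2-2q\cos(2x)\ge(1-q)^2>0$; and because $r\ge 1$ we have $0<|\beta|^r<1$, so the substitutions below are legitimate and all dilogarithm arguments lie in $(-1,1)$.

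Next, for $r$ even put $q=\beta^r>0$. By~\eqref{eq.s2rjcqb} we have $1+\beta^{2r}=\beta^r L_r$, hence
\[
1+q^2-2q\cos(2x)=\beta^r\bigl(L_r-2\cos(2x)\bigr),
\]
and dividing the displayed identity through by $\beta^r$, then using $\beta^r/\sqrt{\beta^r}=\sqrt{\beta^r}$, yields~\eqref{eq.ri9nkko}. For $r$ odd put $q=-\beta^r>0$; this time~\eqref{eq.s2rjcqb} gives $1+\beta^{2r}=-\beta^r F_r\sqrt{5}$, so
\[
1+q^2-2q\cos(2x)=-\beta^r\bigl(F_r\sqrt{5}-2\cos(2x)\bigr),
\]
and the identical manipulation, now with $-\beta^r/\sqrt{-\beta^r}=\sqrt{-\beta^r}$, produces~\eqref{eq.ujhmyef}.

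There is no genuine obstacle here; the only delicate points are bookkeeping. The first is the sign of $q$: Lewin's formula involves $\sqrt{q}$, so $q$ must be taken positive, which is precisely what forces the even/odd dichotomy and makes~\eqref{eq.s2rjcqb} the right tool. The second is simply confirming that $0<q<1$ (guaranteed by $r\ge 1$) so that~\eqref{eq.b1e7nal} applies verbatim. Unlike the neighbouring theorems this one needs no differentiation of a Fibonacci--Lucas function form; should one want the companion identities with squared denominators, those would follow by differentiating the Fibonacci--Lucas function forms of~\eqref{eq.ri9nkko} and~\eqref{eq.ujhmyef} and taking real parts via~\eqref{eq.wyg8jr6}, exactly as in the preceding sections.
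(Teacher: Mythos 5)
Your proposal is correct and follows essentially the same route as the paper: substitute $q=\beta^r$ (for $r$ even) and $q=-\beta^r$ (for $r$ odd) into Lewin's identity~\eqref{eq.b1e7nal} and simplify via~\eqref{eq.s2rjcqb}. Your version merely spells out the algebra (clearing the factor $1+q^2$ and checking $0<q<1$) that the paper's one-line proof leaves implicit.
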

In particular,
\begin{equation}
\int_0^\pi  {\frac{{x^2 \cos x}}{{3 - 2\cos (2x)}}\,dx} = - \frac{{\pi ^3 }}{6} + \frac{{3\pi }}{2}\ln ^2 \alpha.
\end{equation}
\begin{proof}
Set $q=\beta^r$ in~\eqref{eq.b1e7nal} to obtain~\eqref{eq.ri9nkko} and $q=-\beta^r$ to obtain~\eqref{eq.ujhmyef}. Use~\eqref{eq.s2rjcqb}.
\end{proof}

\begin{theorem}
If $r$ is an even positive integer, then
\begin{equation}
\begin{split}
&\int_0^\pi  {\frac{{x^2 \cos x}}{{\left( {L_r  - 2\cos (2x)} \right)^2 }}\,dx}\\
&\qquad  =  - \frac{\pi }{{F_r \sqrt 5 }}\,\frac{{\sqrt {\beta ^r } }}{{1 - \beta ^r }}\left( {\frac{1}{2} + \frac{{\beta ^r }}{{1 - \beta ^r }}} \right)\left( {\Li_2 \left( {\sqrt {\beta ^r } } \right) - \Li_2 \left( { - \sqrt {\beta ^r } } \right)} \right)\\
&\qquad\qquad - \frac{\pi }{{2F_r \sqrt 5 }}\,\frac{{\sqrt {\beta ^r } }}{{1 - \beta ^r }}\ln \left( {\frac{{1 + \sqrt {\beta ^r } }}{{1 - \sqrt {\beta ^r } }}} \right),
\end{split}
\end{equation}
while if $r$ is an odd positive integer, then
\begin{equation}
\begin{split}
&\int_0^\pi  {\frac{{x^2 \cos x}}{{\left( {F_r \sqrt 5 - 2\cos (2x)} \right)^2 }}\,dx}\\
  &\qquad=  - \frac{\pi }{{L_r }}\,\frac{{\sqrt { - \beta ^r } }}{{1 + \beta ^r }}\left( {\frac{1}{2} - \frac{{\beta ^r }}{{1 + \beta ^r }}} \right)\left( {\Li_2 \left( { \sqrt { - \beta ^r } } \right) - \Li_2 \left( { - \sqrt { - \beta ^r } } \right)} \right)\\
&\qquad\qquad - \frac{\pi }{{2L_r }}\,\frac{{\sqrt { - \beta ^r } }}{{1 + \beta ^r }}\ln \left( {\frac{{1 + \sqrt { - \beta ^r } }}{{1 - \sqrt { - \beta ^r } }}} \right).
\end{split}
\end{equation}
\end{theorem}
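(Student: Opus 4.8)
The plan is to apply, once more, the differentiation-of-Fibonacci--Lucas-function-forms method: I would differentiate the identities~\eqref{eq.ri9nkko} and~\eqref{eq.ujhmyef} of Theorem~\ref{thm.b52bbpx} with respect to $r$ and then take real parts via~\eqref{eq.wyg8jr6}. Take first $r$ even. Replacing $L_r$ by $l(r)$ in~\eqref{eq.ri9nkko} and regarding both sides as functions of the real variable $r$ gives
\begin{equation*}
\int_0^\pi \frac{x^2\cos x\,dx}{l(r)-2\cos(2x)} = \frac{\pi\sqrt{\beta^r}}{1-\beta^r}\left(\Li_2\!\left(-\sqrt{\beta^r}\right) - \Li_2\!\left(\sqrt{\beta^r}\right)\right);
\end{equation*}
since $l(r)\ge 3$ near even integers $r\ge 2$, the denominator stays bounded away from $0$ and differentiation under the integral sign is justified. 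Differentiating the left side produces $-l'(r)\int_0^\pi x^2\cos x\,(l(r)-2\cos(2x))^{-2}\,dx$; evaluating at the integer $r$ and taking real parts, \eqref{eq.wyg8jr6} supplies the factor $\Re l'(r)=F_r\sqrt5\,\ln\alpha$ multiplying the target integral, call it $I_r$.

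For the right side, set $H(w)=\pi w(1-w^2)^{-1}\left(\Li_2(-w)-\Li_2(w)\right)$, so that the right side is $H(\sqrt{\beta^r})$. A direct computation gives
\begin{equation*}
H'(w)=\pi\,\frac{1+w^2}{(1-w^2)^2}\left(\Li_2(-w)-\Li_2(w)\right)+\frac{\pi}{1-w^2}\ln\frac{1-w}{1+w},
\end{equation*}
and the algebraic identity $(1+w^2)(1-w^2)^{-2}=2(1-w^2)^{-1}\left(\tfrac{1}{2}+\tfrac{w^2}{1-w^2}\right)$ reproduces exactly the coefficient occurring in the statement. By the chain rule the derivative of the right side is $H'(\sqrt{\beta^r})\,\frac{d}{dr}\sqrt{\beta^r}$, and from the proof of Lemma~\ref{lem.fibfunctions} (principal branch, $m=0$) one has $\frac{d}{dr}\beta^{r/2}=\tfrac{1}{2}\beta^{r/2}(-\ln\alpha+i\pi)$, hence $\Re\frac{d}{dr}\sqrt{\beta^r}=-\tfrac{1}{2}\sqrt{\beta^r}\,\ln\alpha$ at integer $r$ (where $\beta^r\in(0,1)$, so that $H'(\sqrt{\beta^r})$ is real and unaffected by taking real parts). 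Equating real parts, the common factor $\ln\alpha$ cancels and division by $-F_r\sqrt5$ yields $I_r=\tfrac{1}{2F_r\sqrt5}\,\sqrt{\beta^r}\,H'(\sqrt{\beta^r})$, which, after writing $w^2=\beta^r$ and $1\pm w^2=1\pm\beta^r$, is precisely the claimed expression.

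The case $r$ odd runs in parallel, starting from~\eqref{eq.ujhmyef}: replace $F_r\sqrt5$ by $f(r)\sqrt5$, and note that with $w=\sqrt{-\beta^r}$ one has $\beta^r=-w^2$, so $1+\beta^r=1-w^2$ and the right side is again $H(w)$ for the same $H$. Differentiation, evaluation at the integer $r$ and taking real parts now bring in $\Re(\sqrt5\,f'(r))=L_r\ln\alpha$ on the left and $\Re\frac{d}{dr}\sqrt{-\beta^r}=-\tfrac{1}{2}\sqrt{-\beta^r}\,\ln\alpha$ on the right; substituting $1-w^2=1+\beta^r$ and $1+w^2=1-\beta^r$ turns the coefficient in $H'$ into $\tfrac{1}{2}-\tfrac{\beta^r}{1+\beta^r}$, and one recovers the stated identity. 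The one point requiring care is the branch of $\sqrt{\pm\beta^r}$; but since $w\mapsto -w$ fixes both $w\left(\Li_2(w)-\Li_2(-w)\right)$ and $w\ln\frac{1+w}{1-w}$, each side of each identity is invariant under $\sqrt{\pm\beta^r}\mapsto-\sqrt{\pm\beta^r}$, so using the $m=0$ branch throughout causes no ambiguity. I expect this bookkeeping — keeping the chain rule, the complex derivative of the half-power, and the even/odd case split straight — to be the only real obstacle; there is no conceptual difficulty beyond what Theorem~\ref{thm.b52bbpx} already required.
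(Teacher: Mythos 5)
Your proposal is correct and follows exactly the paper's route: differentiate the Fibonacci--Lucas function forms of \eqref{eq.ri9nkko} and \eqref{eq.ujhmyef} with respect to $r$, take real parts via \eqref{eq.wyg8jr6}, and cancel the common factor $\ln\alpha$. Your explicit computation of $H'(w)$, the real part $\Re\frac{d}{dr}\sqrt{\pm\beta^r}=-\tfrac12\sqrt{\pm\beta^r}\ln\alpha$, and the branch-invariance remark all check out and simply supply the details the paper leaves implicit.
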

In particular,
\begin{equation}
\int_0^\pi  {\frac{{x^2 \cos x}}{{\left( {3 - 2\cos (2x)} \right)^2 }}} = - \frac{\pi }{4}\left( {\frac{{\pi ^2 }}{3} - 3\ln ^2 \alpha } \right) - \frac{{3\pi }}{{2\sqrt 5 }}\ln \alpha,
\end{equation}
where we used~\eqref{eq.noimci0}.
\begin{proof}
Differentiate the Fibonacci-Lucas function forms of the results in Theorem~\ref{thm.b52bbpx} and take the real part.
\end{proof}

The next results involve the Clausen function.
\begin{theorem}
If $r$ is a positive even integer, then
\begin{equation}\label{eq.pi7i3yl}
\begin{split}
&\int_0^\pi  {\frac{{x^2 \cos x}}{{L_r + 2\cos (2x)}}\,dx}\\
 &\qquad =  - \frac{{\pi \sqrt {\beta ^r } }}{{1 + \beta ^r }}\left( {2\arctan \left( {\sqrt {\beta ^r } } \right)\ln \left( {\sqrt {\beta ^r } } \right)} \right)\\
&\qquad\qquad - \frac{{\pi \sqrt {\beta ^r } }}{{1 + \beta ^r }}\left( {\Cl_2 \left( {2\arctan \left( {\sqrt {\beta ^r } } \right)} \right) + \Cl_2 \left( {\pi  - 2\arctan \left( {\sqrt {\beta ^r } } \right)} \right)} \right)
\end{split}
\end{equation}
while if $r$ is a positive odd integer, then
\begin{equation}\label{eq.a40fggg}
\begin{split}
&\int_0^\pi  {\frac{{x^2 \cos x}}{{F_r \sqrt 5 + 2\cos (2x)}}\,dx}\\
 &\qquad =  - \frac{{\pi \sqrt {-\beta ^r } }}{{1 -\beta ^r }}\left( {2\arctan \left( {\sqrt {-\beta ^r } } \right)\ln \left( {\sqrt {-\beta ^r } } \right)} \right)\\
&\qquad\qquad - \frac{{\pi \sqrt {-\beta ^r } }}{{1 -\beta ^r }}\left( {\Cl_2 \left( {2\arctan \left( {\sqrt {-\beta ^r } } \right)} \right) + \Cl_2 \left( {\pi  - 2\arctan \left( {\sqrt {-\beta ^r } } \right)} \right)} \right)
\end{split}
\end{equation}
\end{theorem}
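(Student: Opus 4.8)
The plan is to specialise Lewin's identity~\eqref{eq.b1e7nal} at a \emph{negative} value of $q$, mirroring the proof of Theorem~\ref{thm.b52bbpx} but with the signs in the denominator reversed. First I would set $q=-\beta^r$ when $r$ is even and $q=\beta^r$ when $r$ is odd; in either case $|q|=|\beta|^r<1$, so the left-hand integral of~\eqref{eq.b1e7nal} converges (for even $r\ge 2$, $L_r+2\cos(2x)\ge L_r-2>0$; for odd $r\ge 1$, $F_r\sqrt5+2\cos(2x)\ge F_r\sqrt5-2>0$) and the substitution is admissible. Using~\eqref{eq.s2rjcqb} to evaluate $1+q^2$ then identifies the coefficient $Q=2q/(1+q^2)$: for $r$ even, $1+\beta^{2r}=\beta^r L_r$, so $Q=-2/L_r$ and $1-Q\cos(2x)=(L_r+2\cos(2x))/L_r$, whence the left side of~\eqref{eq.b1e7nal} equals $L_r\int_0^\pi x^2\cos x\,(L_r+2\cos(2x))^{-1}\,dx$; for $r$ odd, $1+\beta^{2r}=-\beta^r F_r\sqrt5$, so $Q=-2/(F_r\sqrt5)$ and the left side equals $F_r\sqrt5\int_0^\pi x^2\cos x\,(F_r\sqrt5+2\cos(2x))^{-1}\,dx$.

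The delicate point is the right side of~\eqref{eq.b1e7nal} at a negative $q$, where $\sqrt q$ is imaginary. I would note that $\Li_2(w)-\Li_2(-w)=2\sum_{k\ge 0}w^{2k+1}/(2k+1)^2$, so the combination $\left(\Li_2(\sqrt q)-\Li_2(-\sqrt q)\right)/\sqrt q=2\sum_{k\ge 0}q^k/(2k+1)^2$ is a genuine single-valued analytic function of $q$ on $|q|<1$; since both sides of~\eqref{eq.b1e7nal} are analytic in $q$ on $(-1,1)$, the identity remains valid for $-1<q<0$, and there, writing $\sqrt q=i\sqrt{-q}$ and using $\Li_2(-iy)=\overline{\Li_2(iy)}$ for $y>0$, one has $\left(\Li_2(\sqrt q)-\Li_2(-\sqrt q)\right)/\sqrt q=2\,\Im\Li_2\!\left(i\sqrt{-q}\right)/\sqrt{-q}$. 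Substituting $q=-\beta^r$ (even) or $q=\beta^r$ (odd), simplifying $(1+q^2)/\sqrt{-q}$ via~\eqref{eq.s2rjcqb} (so that $\beta^r/\sqrt{\beta^r}=\sqrt{\beta^r}$, resp. $-\beta^r/\sqrt{-\beta^r}=\sqrt{-\beta^r}$, cancels), and dividing out the $L_r$ (resp. $F_r\sqrt5$) factor from the left side, the right side of~\eqref{eq.b1e7nal} collapses to $-\,2\pi\sqrt{\beta^r}\,(1+\beta^r)^{-1}\Im\Li_2\!\left(i\sqrt{\beta^r}\right)$ for $r$ even and to $-\,2\pi\sqrt{-\beta^r}\,(1-\beta^r)^{-1}\Im\Li_2\!\left(i\sqrt{-\beta^r}\right)$ for $r$ odd.

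To finish I would apply~\eqref{eq.j0jplsn} of Lemma~\ref{lem.oszn90v} with $x=\sqrt{\beta^r}$ (resp. $x=\sqrt{-\beta^r}$), legitimate since $\sqrt{\beta^r}=|\beta|^{r/2}<1$, to rewrite $\Im\Li_2(ix)=\arctan x\,\ln x+\tfrac12\Cl_2(2\arctan x)+\tfrac12\Cl_2(\pi-2\arctan x)$. Distributing the prefactor $-2\pi\sqrt{\beta^r}/(1+\beta^r)$ over these three terms turns the first into $-\pi\sqrt{\beta^r}(1+\beta^r)^{-1}\bigl(2\arctan(\sqrt{\beta^r})\ln(\sqrt{\beta^r})\bigr)$ — the factor $2$ coming from the $-2$ in the prefactor acting on the un-halved logarithmic term — and turns each Clausen term into a single copy $-\pi\sqrt{\beta^r}(1+\beta^r)^{-1}\Cl_2(\cdot)$, which is exactly the grouping displayed in~\eqref{eq.pi7i3yl}; the odd-$r$ case yields~\eqref{eq.a40fggg} in the same way, with $\sqrt{\beta^r}$, $1+\beta^r$ replaced by $\sqrt{-\beta^r}$, $1-\beta^r$.

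I expect the only real obstacle to be the branch/analytic-continuation step in the second paragraph; once one accepts that the right side of~\eqref{eq.b1e7nal} is the analytic continuation described there, the remainder is routine bookkeeping with~\eqref{eq.s2rjcqb} and~\eqref{eq.j0jplsn}, entirely parallel to the proof of Theorem~\ref{thm.b52bbpx}.
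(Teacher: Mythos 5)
Your proposal is correct and follows essentially the paper's own route: substitute $q=-\beta^r$ (for even $r$) and $q=\beta^r$ (for odd $r$) into~\eqref{eq.b1e7nal}, identify $Q=-2/L_r$ resp.\ $-2/(F_r\sqrt5)$ via~\eqref{eq.s2rjcqb}, rewrite the right side as an imaginary part of $\Li_2$ at a purely imaginary argument, and finish with~\eqref{eq.j0jplsn}. The only addition is your explicit justification of the validity of~\eqref{eq.b1e7nal} for negative $q$ through the single-valued series $\bigl(\Li_2(\sqrt q)-\Li_2(-\sqrt q)\bigr)/\sqrt q$, which the paper leaves implicit; this is a sound refinement, not a different method.
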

In particular,
\begin{equation}
\begin{split}
\int_0^\pi  {\frac{{x^2 \cos x}}{{3 + 2\cos (2x)}}\,dx}  &= \frac{\pi }{{\sqrt 5 }}\,\arctan 2\ln \alpha \\
&\qquad - \frac{\pi }{{\sqrt 5 }}\,\left( {\Cl_2 (\arctan 2) + \Cl_2 (\pi  - \arctan 2)} \right),
\end{split}
\end{equation}
where we used
\begin{equation}
\arctan(-\beta)=\frac12\,\arctan 2.
\end{equation}
\begin{proof}
Use of~$q=-\beta^r$ in~\eqref{eq.b1e7nal} produces
\begin{equation}\label{eq.r2cop4c}
\int_0^\pi  {\frac{{x^2 \cos x}}{{L_r + 2\cos (2x)}}\,dx} = \frac{{\pi i\sqrt { \beta ^r } }}{{1 + \beta ^r }}\left( {\Li_2 \left( {i\sqrt { \beta ^r } } \right) - \Li_2 \left( { - i\sqrt {\beta ^r } } \right)} \right),\quad\text{$r$ even};
\end{equation}
and hence~\eqref{eq.pi7i3yl} in view of~\eqref{eq.j0jplsn}.
Setting $q=\beta^r$ in~\eqref{eq.b1e7nal} gives
\begin{equation}\label{eq.gycwhuh}
\int_0^\pi  {\frac{{x^2 \cos x}}{{F_r \sqrt 5 + 2\cos (2x)}}\,dx} = \frac{{\pi \sqrt {\beta ^r } }}{{1 - \beta ^r }}\left( {\Li_2 \left( {\sqrt {\beta ^r } } \right) - \Li_2 \left( { - \sqrt {\beta ^r } } \right)} \right),\quad\text{$r$ odd},
\end{equation}
which, since $\sqrt{\beta ^r}=i\sqrt{-\beta ^r}$ for odd $r$, can also be written as
\begin{equation}
\int_0^\pi  {\frac{{x^2 \cos x}}{{F_r \sqrt 5 + 2\cos (2x)}}\,dx} = \frac{{\pi i\sqrt { - \beta ^r } }}{{1 - \beta ^r }}\left( {\Li_2 \left( {i\sqrt { - \beta ^r } } \right) - \Li_2 \left( { - i\sqrt { - \beta ^r } } \right)} \right);
\end{equation}
from which~\eqref{eq.a40fggg} follows on account of~\eqref{eq.j0jplsn}.
\end{proof}

\begin{theorem}
If $r$ is an even positive integer, then
\begin{equation}
\begin{split}
&\int_0^\pi  {\frac{{x^2 \cos x}}{{\left( {L_r + 2\cos (2x)} \right)^2 }}\,dx} \\
&\qquad =  - \frac{{2\pi }}{{F_r \sqrt 5 }}\frac{{\sqrt {\beta ^r } }}{{1 + \beta ^r }}\left( {\frac{1}{2} - \frac{{\beta ^r }}{{1 + \beta ^r }}} \right)\arctan \left( {\sqrt {\beta ^r } } \right)\ln \left( {\sqrt {\beta ^r } } \right)\\
&\qquad\quad - \frac{\pi }{{F_r \sqrt 5 }}\frac{{\sqrt {\beta ^r } }}{{1 + \beta ^r }}\left( {\frac{1}{2} - \frac{{\beta ^r }}{{1 + \beta ^r }}} \right)\left( {\Cl_2 \left( {2\arctan \left( {\sqrt {\beta ^r } } \right)} \right) - \Cl_2 \left( {\pi  - 2\arctan \left( {\sqrt {\beta ^r } } \right)} \right)} \right)\\
&\qquad\qquad - \frac{\pi }{{2F_r \sqrt 5 }}\frac{{\sqrt {\beta ^r } }}{{1 + \beta ^r }}\arctan \left( {\frac{{2\sqrt {\beta ^r } }}{{1 - \beta ^r }}} \right),
\end{split}
\end{equation}
while if $r$ is an odd positive integer, then
\begin{equation}
\begin{split}
&\int_0^\pi  {\frac{{x^2 \cos x}}{{\left( {F_r\sqrt 5 + 2\cos (2x)} \right)^2 }}\,dx} \\
&\qquad =  - \frac{{2\pi }}{L_r}\frac{{\sqrt {-\beta ^r } }}{{1 -\beta ^r }}\left( {\frac{1}{2} + \frac{{\beta ^r }}{{1 -\beta ^r }}} \right)\arctan \left( {\sqrt {-\beta ^r } } \right)\ln \left( {\sqrt {-\beta ^r } } \right)\\
&\qquad\quad - \frac{\pi }{L_r}\frac{{\sqrt {-\beta ^r } }}{{1 -\beta ^r }}\left( {\frac{1}{2} + \frac{{\beta ^r }}{{1 -\beta ^r }}} \right)\left( {\Cl_2 \left( {2\arctan \left( {\sqrt {-\beta ^r } } \right)} \right) - \Cl_2 \left( {\pi  - 2\arctan \left( {\sqrt {-\beta ^r } } \right)} \right)} \right)\\
&\qquad\qquad - \frac{\pi }{{2L_r }}\frac{{\sqrt {-\beta ^r } }}{{1 -\beta ^r }}\arctan \left( {\frac{{2\sqrt {-\beta ^r } }}{{1 + \beta ^r }}} \right).
\end{split}
\end{equation}
\end{theorem}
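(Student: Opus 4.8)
The plan is to differentiate, with respect to the index, the Fibonacci-Lucas function forms of~\eqref{eq.pi7i3yl} and~\eqref{eq.a40fggg}, exactly as was done for the squared-denominator companions elsewhere in the paper. Consider the even case. In~\eqref{eq.pi7i3yl} replace $L_r$ in the integrand by $l(t)$ and $\beta^r$ on the right-hand side by $\beta^t$ (equivalently, start from the $\Li_2$-form~\eqref{eq.r2cop4c}), with $t$ a real variable near the even integer $r$. Put $v=v(t)=\sqrt{\beta^t}$ and
\[
\psi(v)=\arctan v\,\ln v+\tfrac12\Cl_2(2\arctan v)+\tfrac12\Cl_2(\pi-2\arctan v)=\Im\Li_2(iv);
\]
then the identity reads $\int_0^\pi\frac{x^2\cos x}{l(t)+2\cos 2x}\,dx=-2\pi v\,\psi(v)/(1+v^2)$.

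Differentiating in $t$, the left-hand side becomes $-l'(t)\int_0^\pi\frac{x^2\cos x}{(l(t)+2\cos 2x)^2}\,dx$ and the right-hand side becomes $-2\pi\bigl(\frac{(1-v^2)\psi(v)}{(1+v^2)^2}+\frac{v\,\psi'(v)}{1+v^2}\bigr)v'(t)$. The crucial simplification is that $\psi'(v)=\arctan v/v$: with $\Cl_2'(y)=-\ln|2\sin(y/2)|$, $\sin(\arctan v)=v/\sqrt{1+v^2}$ and $\cos(\arctan v)=1/\sqrt{1+v^2}$, the logarithmic parts of the two Clausen derivatives cancel the term $\ln v/(1+v^2)$ produced by $\frac{d}{dv}(\arctan v\,\ln v)$. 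Hence the right-hand side collapses to $-2\pi\bigl(\frac{(1-v^2)\psi(v)}{(1+v^2)^2}+\frac{\arctan v}{1+v^2}\bigr)v'(t)$, whose bracketed factor is real at $t=r$.

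Now evaluate at $t=r$. By Lemma~\ref{lem.fibfunctions}, $v'(t)=\tfrac12\bigl(\ln(-\beta)+i\pi\bigr)v=\tfrac12(-\ln\alpha+i\pi)v$, and $\Re l'(r)=F_r\sqrt5\,\ln\alpha$ by~\eqref{eq.wyg8jr6}. Taking real parts of both sides removes the common factor $\ln\alpha$ and yields
\[
\int_0^\pi\frac{x^2\cos x}{(L_r+2\cos 2x)^2}\,dx=-\frac{\pi v}{F_r\sqrt5}\left(\frac{(1-v^2)\psi(v)}{(1+v^2)^2}+\frac{\arctan v}{1+v^2}\right),\qquad v=\sqrt{\beta^r}.
\]
Expanding $\psi(v)$, writing $\tfrac12-\tfrac{v^2}{1+v^2}=\tfrac{1-v^2}{2(1+v^2)}$, and rewriting the last summand via the double-angle formula $\arctan\bigl(\tfrac{2v}{1-v^2}\bigr)=2\arctan v$, one obtains (with $v^2=\beta^r$) the first stated identity. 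The odd case runs identically, starting from~\eqref{eq.a40fggg}: now $F_r\sqrt5$ in the integrand is promoted to $f(t)\sqrt5$, one uses $\Re\bigl(f'(r)\sqrt5\bigr)=L_r\ln\alpha$ from~\eqref{eq.wyg8jr6}, and the relevant variable is $v=\sqrt{-\beta^r}$, so that $1+v^2=1-\beta^r$ and the signs $1\pm\beta^r$ appear automatically.

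I expect the only real obstacle to be the bookkeeping in the second paragraph: carrying out the differentiation of the Clausen/dilogarithm block correctly (the cancellation giving $\psi'(v)=\arctan v/v$), maintaining a consistent branch for $\sqrt{\beta^t}$ as $t\to r$, and being disciplined about retaining only real parts. The differentiation of the integral and the closing algebraic repackaging are routine.
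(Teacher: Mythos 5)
Your route is exactly the paper's: its proof consists of differentiating the Fibonacci--Lucas function forms of \eqref{eq.r2cop4c} and \eqref{eq.gycwhuh} (equivalently of \eqref{eq.pi7i3yl} and \eqref{eq.a40fggg}) and taking real parts, and your execution of that plan is sound --- the cancellation giving $\psi'(v)=\arctan v/v$ (i.e. $\tfrac{d}{dv}\Im\Li_2(iv)=\arctan v/v$) is correct, as is the use of Lemma~\ref{lem.fibfunctions} and \eqref{eq.wyg8jr6} to strip the common factor $\ln\alpha$, leading to $\int_0^\pi x^2\cos x\,(L_r+2\cos 2x)^{-2}dx=-\tfrac{\pi v}{F_r\sqrt5}\bigl(\tfrac{(1-v^2)\psi(v)}{(1+v^2)^2}+\tfrac{\arctan v}{1+v^2}\bigr)$ with $v=\sqrt{\beta^r}$.

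The gap is in your last sentence: expanding $\psi(v)$ does \emph{not} give the statement as printed. Your bracket contains the \emph{sum} $\Cl_2\bigl(2\arctan\sqrt{\beta^r}\bigr)+\Cl_2\bigl(\pi-2\arctan\sqrt{\beta^r}\bigr)$ (the same combination as in \eqref{eq.j0jplsn}), whereas the theorem has the \emph{difference} $\Cl_2\bigl(2\arctan\sqrt{\beta^r}\bigr)-\Cl_2\bigl(\pi-2\arctan\sqrt{\beta^r}\bigr)$, and these are genuinely different (by \eqref{eq.rqfmoa7} the printed difference equals $\tfrac12\Cl_2\bigl(4\arctan\sqrt{\beta^r}\bigr)$). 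The first and third terms do match after the double-angle step $\arctan\bigl(\tfrac{2v}{1-v^2}\bigr)=2\arctan v$, so the two sides differ exactly by the sign of the $\Cl_2(\pi-2\arctan(\cdot))$ contribution; the same issue occurs in the odd case. A numerical check at $r=2$ settles which is right: the integral $\int_0^\pi x^2\cos x\,(3+2\cos 2x)^{-2}dx\approx-0.515$, your derived expression gives $\approx-0.5150$, while the right-hand side as printed evaluates to $\approx-0.316$. So your derivation is the correct one and the printed theorem evidently carries a sign misprint, but a proof of the statement ``as stated'' cannot end with ``one obtains the first stated identity''; you must either correct the sign in the statement or point out the discrepancy explicitly. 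One further small point: for $r\equiv2\pmod4$ the principal value of $\sqrt{\beta^t}$ at $t=r$ is $-\sqrt{\beta^r}$, so you should fix the branch of the square root so that $v(r)=+\sqrt{\beta^r}$ before taking real parts, as you anticipated.
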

\begin{proof}
Differentiate the Fibonacci-Lucas function forms of~\eqref{eq.r2cop4c} and~\eqref{eq.gycwhuh} and take the real part in each case.
\end{proof}

\begin{lemma}
If $r$ is a positive integer, then
\begin{equation}\label{eq.qvb6jur}
\begin{split}
&\frac{1}{{2F_r \sqrt 5 }}\int_0^\pi  {\frac{{x^2 \cos x}}{{F_r \sqrt 5  - 2\cos (2x)}}\,dx} + \frac{1}{{2F_r \sqrt 5 }}\int_0^\pi {\frac{{x^2 \cos x}}{{F_r \sqrt 5 + 2\cos (2x)}}\,dx} \\
&\qquad = \int_0^\pi  {\frac{{x^2 \cos x}}{{5F_r^2 - 4\cos^2 (2x)}}\,dx} 
\end{split}
\end{equation}
and
\begin{equation}\label{eq.a40qd9a}
\begin{split}
&\frac{1}{2}\int_0^\pi  {\frac{{x^2 \cos x}}{{F_r \sqrt 5 - 2\cos (2x)}}\,dx} + \frac{1}{2}\int_0^\pi  {\frac{{x^2 \cos x}}{{F_r \sqrt 5 + 2\cos (2x)}}\,dx} \\
&\qquad= \int_0^\pi  {\frac{{x^2 \cos x}}{{5F_r^2 - 4\cos^2 (2x)}}\,dx} + \int_0^\pi {\frac{{x^2 \cos (3x)}}{{5F_r^2 - 4\cos^2 (2x)}}\,dx} .
\end{split}
\end{equation}
\end{lemma}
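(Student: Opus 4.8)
\emph{Proof idea.} Neither identity requires evaluating an integral; both follow from partial-fraction algebra on the integrands — together with one elementary trigonometric identity in the second case — and termwise integration over $[0,\pi]$. So the plan is simply to rewrite the integrands and integrate.

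For \eqref{eq.qvb6jur}: since $\bigl(F_r\sqrt5-2\cos(2x)\bigr)\bigl(F_r\sqrt5+2\cos(2x)\bigr)=5F_r^2-4\cos^2(2x)$, adding the two fractions on the left gives
\[
\frac{1}{F_r\sqrt5-2\cos(2x)}+\frac{1}{F_r\sqrt5+2\cos(2x)}=\frac{2F_r\sqrt5}{5F_r^2-4\cos^2(2x)}.
\]
Multiplying by $x^2\cos x$, integrating over $[0,\pi]$, and dividing by $2F_r\sqrt5$, the constant cancels and the left side of \eqref{eq.qvb6jur} becomes $\int_0^\pi\frac{x^2\cos x}{5F_r^2-4\cos^2(2x)}\,dx$, which is the right side.

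For \eqref{eq.a40qd9a}: I would work from the right-hand side. First combine its two integrals using $\cos x+\cos(3x)=2\cos(2x)\cos x$ to obtain $\int_0^\pi\frac{2x^2\cos(2x)\cos x}{5F_r^2-4\cos^2(2x)}\,dx$, and then split the rational factor
\[
\frac{2\cos(2x)}{5F_r^2-4\cos^2(2x)}=\frac12\!\left(\frac{1}{F_r\sqrt5-2\cos(2x)}-\frac{1}{F_r\sqrt5+2\cos(2x)}\right)
\]
into partial fractions; this recasts the right-hand side as the combination of $\int_0^\pi\frac{x^2\cos x}{F_r\sqrt5-2\cos(2x)}\,dx$ and $\int_0^\pi\frac{x^2\cos x}{F_r\sqrt5+2\cos(2x)}\,dx$ that stands on the left, finishing the proof.

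There is no genuine obstacle: once the right decomposition is written down, each part is a one-line computation. The only point demanding a little care is the sign bookkeeping — noting that it is the \emph{sum} of the two partial fractions that carries the $2F_r\sqrt5$ numerator behind \eqref{eq.qvb6jur}, while their \emph{difference} carries the $4\cos(2x)$ numerator needed to meet the $\cos(3x)$ term in \eqref{eq.a40qd9a} through $\cos x+\cos(3x)=2\cos(2x)\cos x$; keeping those two straight is essentially all the proof involves.
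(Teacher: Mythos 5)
Your treatment of the first identity is correct and is exactly the intended argument: adding the two partial fractions gives $\frac{2F_r\sqrt5}{5F_r^2-4\cos^2(2x)}$, multiplying by $x^2\cos x$ and integrating termwise finishes \eqref{eq.qvb6jur}.

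For the second identity there is a genuine problem, and it is precisely the sign bookkeeping you mention at the end but do not carry through. Combining the right-hand side via $\cos x+\cos(3x)=2\cos(2x)\cos x$ and then inserting your decomposition $\frac{2\cos(2x)}{5F_r^2-4\cos^2(2x)}=\frac12\bigl(\frac{1}{F_r\sqrt5-2\cos(2x)}-\frac{1}{F_r\sqrt5+2\cos(2x)}\bigr)$ turns the right-hand side of \eqref{eq.a40qd9a} into $\frac12\int_0^\pi\frac{x^2\cos x}{F_r\sqrt5-2\cos(2x)}\,dx-\frac12\int_0^\pi\frac{x^2\cos x}{F_r\sqrt5+2\cos(2x)}\,dx$, i.e., the \emph{difference} of the two integrals, whereas the left-hand side of the stated identity is their \emph{sum}. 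These do not coincide, since neither integral vanishes (folding $[0,\pi]$ about $\pi/2$ shows both are negative, and the paper's later closed forms confirm this). So your assertion that the decomposition ``recasts the right-hand side as the combination that stands on the left'' is false as written; what your computation actually establishes is the identity with a minus sign between the two left-hand integrals. That minus-sign version is in fact what the paper uses afterwards: in the derivation following the lemma the coefficient of $\int_0^\pi x^2\cos x/(F_r\sqrt5+2\cos(2x))\,dx$ is $-\tfrac12\bigl(1+\tfrac{1}{F_r\sqrt5}\bigr)$, which follows from the corrected identity together with \eqref{eq.qvb6jur}, not from \eqref{eq.a40qd9a} as printed. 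Your write-up should therefore either prove the corrected (minus-sign) identity explicitly and flag the typo in the statement, or explain the discrepancy, rather than claiming a match that the algebra does not give.
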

\begin{proof}
Identity~\eqref{eq.qvb6jur} is obvious while~\eqref{eq.a40qd9a} becomes clear once the elementary trigonometric identity 
$2\cos x\cos{(2x)}=\cos{(3x)} + \cos x$ is employed.
\end{proof}

\begin{theorem}
If $r$ is a positive odd integer, then
\begin{equation}\label{eq.y4dqfp7}
\begin{split}
&\int_0^\pi  {\frac{{x^2 \cos x}}{{5F_r^2 - 4\cos^2 (2x)}}\,dx} \\
&\qquad=  - \frac{\pi }{{2F_r \sqrt 5 }}\frac{{\sqrt { - \beta ^r } }}{{1 + \beta ^r }}\left( {\Li_2 \left( {\sqrt { - \beta ^r } } \right) - \Li_2 \left( { - \sqrt { - \beta ^r } } \right)} \right)\\
&\qquad\quad - \frac{\pi }{{2F_r \sqrt 5 }}\frac{{\sqrt { - \beta ^r } }}{{1 - \beta ^r }}\left( {\Cl_2 \left( {2\arctan \left( {\sqrt { - \beta ^r } } \right)} \right) + \Cl_2 \left( {\pi  - 2\arctan \left( {\sqrt { - \beta ^r } } \right)} \right)} \right)\\
&\qquad\qquad - \frac{\pi }{{F_r \sqrt 5 }}\frac{{\sqrt { - \beta ^r } }}{{1 - \beta ^r }}\arctan \left( {\sqrt { - \beta ^r } } \right)\ln \left( {\sqrt { - \beta ^r } } \right)
\end{split}
\end{equation}
and
\begin{equation}\label{eq.gg6a1vx}
\begin{split}
&\int_0^\pi  {\frac{{x^2 \cos (3x)}}{{5F_r^2 - 4\cos^2 (2x)}}\,dx} \\
&\qquad = \left( {\frac{1}{{F_r \sqrt 5 }} - 1} \right)\frac{\pi }{2}\frac{{\sqrt { - \beta ^r } }}{{1 + \beta ^r }}\left( {\Li_2 \left( {\sqrt { - \beta ^r } } \right) - \Li_2 \left( { - \sqrt { - \beta ^r } } \right)} \right)\\
&\qquad\quad+ \left( {\frac{1}{{F_r \sqrt 5 }} + 1} \right)\frac{\pi }{2}\frac{{\sqrt { - \beta ^r } }}{{1 - \beta ^r }}\left( {\Cl_2 \left( {2\arctan \left( {\sqrt { - \beta ^r } } \right)} \right) + \Cl_2 \left( {\pi  - 2\arctan \left( {\sqrt { - \beta ^r } } \right)} \right)} \right)\\
&\qquad\qquad + \left( {\frac{1}{{F_r \sqrt 5 }} + 1} \right)\frac{{\pi \sqrt { - \beta ^r } }}{{1 - \beta ^r }}\arctan \left( {\sqrt { - \beta ^r } } \right)\ln \left( {\sqrt { - \beta ^r } } \right).
\end{split}
\end{equation}
\end{theorem}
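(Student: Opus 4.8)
The plan is to reduce both integrals to the three ``building blocks'' $A:=\int_0^\pi \tfrac{x^2\cos x}{F_r\sqrt5-2\cos(2x)}\,dx$, $B:=\int_0^\pi \tfrac{x^2\cos x}{F_r\sqrt5+2\cos(2x)}\,dx$ and $C:=\int_0^\pi \tfrac{x^2\cos x}{5F_r^2-4\cos^2(2x)}\,dx$, all of which are already available: $A$ is \eqref{eq.ujhmyef}, the closed form of $B$ is \eqref{eq.a40fggg} (equivalently the dilogarithmic form \eqref{eq.gycwhuh}), and $C$ is tied to $A$ and $B$ through the partial fraction $\tfrac1{F_r\sqrt5-2\cos(2x)}+\tfrac1{F_r\sqrt5+2\cos(2x)}=\tfrac{2F_r\sqrt5}{5F_r^2-4\cos^2(2x)}$, i.e.\ $C=\tfrac1{2F_r\sqrt5}(A+B)$, which is \eqref{eq.qvb6jur}.

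For \eqref{eq.y4dqfp7} I would just insert the closed forms of $A$ and $B$ into $C=\tfrac1{2F_r\sqrt5}(A+B)$ and sort the result by type. The $\Li_2$ contribution comes only from $A$ and carries $\tfrac1{2F_r\sqrt5}\cdot\tfrac{\pi\sqrt{-\beta^r}}{1+\beta^r}$; the $\Cl_2$ and $\arctan\cdot\ln$ contributions come only from $B$ and carry $\tfrac1{2F_r\sqrt5}\cdot\tfrac{\pi\sqrt{-\beta^r}}{1-\beta^r}$ (the $\arctan\cdot\ln$ one doubled, because of the ``$2$'' in \eqref{eq.a40fggg}). This reproduces \eqref{eq.y4dqfp7} verbatim.

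For \eqref{eq.gg6a1vx} I would use the elementary identity $2\cos x\cos(2x)=\cos(3x)+\cos x$ together with $\tfrac1{F_r\sqrt5-2\cos(2x)}-\tfrac1{F_r\sqrt5+2\cos(2x)}=\tfrac{4\cos(2x)}{5F_r^2-4\cos^2(2x)}$ (this is the mechanism behind \eqref{eq.a40qd9a}) to obtain $\int_0^\pi\tfrac{x^2\cos(3x)}{5F_r^2-4\cos^2(2x)}\,dx=\tfrac12A-\tfrac12B-C$. Substituting the closed forms of $A$, $B$ and the already-proved \eqref{eq.y4dqfp7} for $C$, the coefficient of $A$ becomes $\tfrac12-\tfrac1{2F_r\sqrt5}$ and that of $B$ becomes $-\tfrac12-\tfrac1{2F_r\sqrt5}$; since $A$ and $B$ themselves carry the prefactors $\tfrac{\pi\sqrt{-\beta^r}}{1+\beta^r}$ and $-\tfrac{2\pi\sqrt{-\beta^r}}{1-\beta^r}$ respectively, collecting terms produces the \emph{mixed} prefactors $\tfrac1{F_r\sqrt5}-1$ on the $\Li_2$ piece and $\tfrac1{F_r\sqrt5}+1$ on the $\Cl_2$ and $\arctan\cdot\ln$ pieces, which is exactly \eqref{eq.gg6a1vx}.

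The one step that needs care is the branch bookkeeping. Since $r$ is odd, $\beta^r<0$, so $\sqrt{\beta^r}=i\sqrt{-\beta^r}$ with $\sqrt{-\beta^r}>0$ real; hence the dilogarithm difference in \eqref{eq.gycwhuh} must be rewritten as $\Li_2(\sqrt{\beta^r})-\Li_2(-\sqrt{\beta^r})=\Li_2(i\sqrt{-\beta^r})-\Li_2(-i\sqrt{-\beta^r})=2i\,\Im\Li_2(i\sqrt{-\beta^r})$, and then evaluated by \eqref{eq.dudu2ha}--\eqref{eq.j0jplsn} as $2i\bigl(\arctan\sqrt{-\beta^r}\ln\sqrt{-\beta^r}+\tfrac12\Cl_2(2\arctan\sqrt{-\beta^r})+\tfrac12\Cl_2(\pi-2\arctan\sqrt{-\beta^r})\bigr)$; the leading factor $\sqrt{\beta^r}=i\sqrt{-\beta^r}$ then absorbs one $i$ and the whole expression becomes real. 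This is precisely the passage from \eqref{eq.gycwhuh} to \eqref{eq.a40fggg}. Once this conversion is made, everything else is routine coefficient collection with no further analytic input; the only genuine risk is sign errors among the $i$'s, which is why I would perform this rewriting first and keep $A$, $B$, $C$ all expressed through the real quantity $\sqrt{-\beta^r}$ throughout.
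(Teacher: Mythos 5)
Your proposal is correct and takes essentially the same route as the paper: both reduce the two integrals to \eqref{eq.ujhmyef} and \eqref{eq.a40fggg} via the partial-fraction relations of the lemma containing \eqref{eq.qvb6jur} and \eqref{eq.a40qd9a}, then collect coefficients. Your explicit use of the difference form $\tfrac12 A-\tfrac12 B=C+D$ (in your notation) is exactly what the paper's proof actually employs — its displayed coefficients $\bigl(1\mp\tfrac{1}{F_r\sqrt5}\bigr)\tfrac12$ arise from that version, even though \eqref{eq.a40qd9a} as printed shows a sum on the left — so your bookkeeping, including the branch conversion $\sqrt{\beta^r}=i\sqrt{-\beta^r}$, matches the paper's argument.
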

\begin{proof}
Use~\eqref{eq.ujhmyef} and~\eqref{eq.a40fggg} in~\eqref{eq.qvb6jur} to obtain~\eqref{eq.y4dqfp7}.
From~\eqref{eq.qvb6jur} and~\eqref{eq.a40qd9a} we have
\begin{equation*}
\begin{split}
&\int_0^\pi  {\frac{{x^2 \cos (3x)}}{{5F_r^2 - 4\cos^2 (2x)}}\,dx} \\
&\qquad = \left( {1 - \frac{1}{{F_r \sqrt 5 }}} \right)\frac{1}{2}\int_0^\pi  {\frac{{x^2 \cos x}}{{F_r \sqrt 5 - 2\cos (2x)}}\,dx}\\
&\qquad\qquad  - \left( {1 + \frac{1}{{F_r \sqrt 5 }}} \right)\frac{1}{2}\int_0^\pi  {\frac{{x^2 \cos x}}{{F_r \sqrt 5 + 2\cos (2x)}}\,dx} ,
\end{split}
\end{equation*}
and hence~\eqref{eq.gg6a1vx} upon using~\eqref{eq.ujhmyef} and~\eqref{eq.a40fggg}.
\end{proof}

\begin{lemma}
If $r$ is a positive integer, then
\begin{equation}\label{eq.bywzh6u}
\begin{split}
&\frac{1}{{2L_r }}\int_0^\pi {\frac{{x^2 \cos x}}{{L_r - 2\cos (2x)}}\,dx} + \frac{1}{{2L_r }}\int_0^\pi {\frac{{x^2 \cos x}}{{L_r + 2\cos (2x)}}\,dx} \\
&\qquad = \int_0^\pi  {\frac{{x^2 \cos x}}{{L_r^2 - 4\cos^2 (2x)}}\,dx} 
\end{split}
\end{equation}
and
\begin{equation}\label{eq.gaa3i0x}
\begin{split}
&\frac{1}{2}\int_0^\pi {\frac{{x^2 \cos x}}{{L_r - 2\cos (2x)}}\,dx} + \frac{1}{2}\int_0^\pi {\frac{{x^2 \cos x}}{{L_r + 2\cos (2x)}}\,dx} \\
&\qquad= \int_0^\pi {\frac{{x^2 \cos x}}{{L_r^2 - 4\cos ^2 2x}}\,dx} + \int_0^\pi {\frac{{x^2 \cos (3x)}}{{L_r^2 - 4\cos^2 (2x)}}\,dx}.
\end{split}
\end{equation}
\end{lemma}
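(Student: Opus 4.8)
The plan is to reduce each of the two claimed identities to an elementary manipulation at the level of the integrands, the common observation being that $L_r^2-4\cos^2(2x)=(L_r-2\cos(2x))(L_r+2\cos(2x))$. This is the very same device already used to establish the companion lemma, identities~\eqref{eq.qvb6jur} and~\eqref{eq.a40qd9a}, so the two proofs run in parallel.

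First I would settle~\eqref{eq.bywzh6u}. Adding the two conjugate fractions gives
\[
\frac{1}{L_r-2\cos(2x)}+\frac{1}{L_r+2\cos(2x)}=\frac{2L_r}{L_r^2-4\cos^2(2x)},
\]
so that, after multiplication by $x^2\cos x/(2L_r)$ and integration over $[0,\pi]$, the left-hand side of~\eqref{eq.bywzh6u} turns verbatim into $\int_0^\pi x^2\cos x\,(L_r^2-4\cos^2(2x))^{-1}\,dx$. Nothing beyond this algebraic identity is needed; this half is ``obvious'' in the sense the paper uses for the analogous~\eqref{eq.qvb6jur}, and the only point worth a word is that for $r\ge 2$ one has $L_r>2$, so the denominators never vanish on $[0,\pi]$ and all the integrals are honest.

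For~\eqref{eq.gaa3i0x} I would again bring the left-hand integrand over the common denominator $L_r^2-4\cos^2(2x)$, but now rationalize each summand by its own conjugate, replacing $x^2\cos x/(L_r\mp 2\cos(2x))$ by $x^2\cos x\,(L_r\pm 2\cos(2x))$ divided by $L_r^2-4\cos^2(2x)$. Besides the diagonal $L_r x^2\cos x$ contributions this produces the cross terms $\pm\,2\,x^2\cos x\cos(2x)$, and the single essential ingredient is then the product-to-sum identity
\[
2\cos x\cos(2x)=\cos(3x)+\cos x,
\]
which rewrites those cross terms as $x^2\cos(3x)$ and $x^2\cos x$ pieces over $L_r^2-4\cos^2(2x)$; collecting like terms and integrating reproduces the two integrals on the right-hand side of~\eqref{eq.gaa3i0x}. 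I do not expect any genuine obstacle here: there is no convergence or principal-value issue for $r\ge 2$, and the whole content is the bookkeeping of signs when the two conjugated numerators are recombined — which is exactly the step the trigonometric identity is designed to handle. As a built-in consistency check, \eqref{eq.bywzh6u} and \eqref{eq.gaa3i0x} together encode precisely the linear relation between the integrals $\int_0^\pi x^2\cos x\,(L_r\mp 2\cos(2x))^{-1}\,dx$ and the two ``$L_r^2-4\cos^2$'' integrals that will be needed to pass from the evaluations in Theorem~\ref{thm.b52bbpx} to the associated $\cos(3x)$ integrals, mirroring the odd-$r$ treatment in~\eqref{eq.y4dqfp7}--\eqref{eq.gg6a1vx}.
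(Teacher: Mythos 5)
Your treatment of \eqref{eq.bywzh6u} is correct and coincides with the paper's (essentially one-line) argument: adding the conjugate fractions gives $2L_r/(L_r^2-4\cos^2(2x))$, and for $r\ge 2$ the denominators never vanish, so there is nothing more to say.

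The second half, however, contains a genuine error. When you bring the two summands of \eqref{eq.gaa3i0x} over the common denominator, the cross terms do \emph{not} survive: rationalizing the first summand produces $+2x^2\cos x\cos(2x)$ and the second produces $-2x^2\cos x\cos(2x)$, so in the \emph{sum} they cancel and you are left with exactly $L_r\,x^2\cos x/(L_r^2-4\cos^2(2x))$, i.e.\ $L_r$ times the right-hand side of \eqref{eq.bywzh6u}. Your assertion that ``collecting like terms and integrating reproduces the two integrals on the right-hand side'' therefore fails; the product-to-sum identity $2\cos x\cos(2x)=\cos(3x)+\cos x$ only enters when the two integrals are \emph{subtracted}. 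In fact \eqref{eq.gaa3i0x} as printed is false: together with \eqref{eq.bywzh6u} it would force $\int_0^\pi x^2\cos x/(L_r+2\cos(2x))\,dx=0$, contradicting \eqref{eq.pi7i3yl}. The statement carries a sign typo, the correct identity being
\begin{equation*}
\frac12\int_0^\pi\frac{x^2\cos x}{L_r-2\cos(2x)}\,dx-\frac12\int_0^\pi\frac{x^2\cos x}{L_r+2\cos(2x)}\,dx
=\int_0^\pi\frac{x^2\cos x}{L_r^2-4\cos^2(2x)}\,dx+\int_0^\pi\frac{x^2\cos(3x)}{L_r^2-4\cos^2(2x)}\,dx,
\end{equation*}
which is precisely what your rationalization plus the trigonometric identity proves verbatim, and which is also the version the paper actually uses afterwards: writing $A,B$ for the two integrals with denominators $L_r\mp2\cos(2x)$ and $C,D$ for the $\cos x$ and $\cos(3x)$ integrals, the corrected lemma gives $D=\tfrac12(1-1/L_r)A-\tfrac12(1+1/L_r)B$, which is where the coefficients $(1/L_r-1)$ and $(1/L_r+1)$ in \eqref{eq.padu4yo} (and the analogous display preceding \eqref{eq.gg6a1vx}) come from. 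So your method is the right one and mirrors the paper's proof of \eqref{eq.qvb6jur}--\eqref{eq.a40qd9a}, but as written your proof of \eqref{eq.gaa3i0x} claims a cancellation pattern that does not occur; carried out carefully it proves the minus-sign identity and disproves the one you set out to show.
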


\begin{theorem}
If $r$ is a positive even integer, then
\begin{equation}\label{eq.v11u6jr}
\begin{split}
&\int_0^\pi  {\frac{{x^2 \cos x}}{{L_r^2 - 4\cos^2 (2x)}}\,dx} \\
&\qquad=  - \frac{\pi }{{2L_r }}\frac{{\sqrt { \beta ^r } }}{{1 - \beta ^r }}\left( {\Li_2 \left( {\sqrt { \beta ^r } } \right) - \Li_2 \left( { - \sqrt { \beta ^r } } \right)} \right)\\
&\qquad\quad - \frac{\pi }{{2L_r }}\frac{{\sqrt { \beta ^r } }}{{1 + \beta ^r }}\left( {\Cl_2 \left( {2\arctan \left( {\sqrt { \beta ^r } } \right)} \right) + \Cl_2 \left( {\pi  - 2\arctan \left( {\sqrt { \beta ^r } } \right)} \right)} \right)\\
&\qquad\qquad - \frac{\pi }{L_r}\frac{{\sqrt { \beta ^r } }}{{1 + \beta ^r }}\arctan \left( {\sqrt { \beta ^r } } \right)\ln \left( {\sqrt { \beta ^r } } \right)
\end{split}
\end{equation}
and
\begin{equation}\label{eq.padu4yo}
\begin{split}
&\int_0^\pi  {\frac{{x^2 \cos (3x)}}{{L_r^2 - 4\cos^2 (2x)}}\,dx} \\
&\qquad = \left( {\frac{1}{L_r} - 1} \right)\frac{\pi }{2}\frac{{\sqrt { \beta ^r } }}{{1 - \beta ^r }}\left( {\Li_2 \left( {\sqrt { \beta ^r } } \right) - \Li_2 \left( { - \sqrt { \beta ^r } } \right)} \right)\\
&\qquad\quad+ \left( {\frac{1}{L_r} + 1} \right)\frac{\pi }{2}\frac{{\sqrt { \beta ^r } }}{{1 + \beta ^r }}\left( {\Cl_2 \left( {2\arctan \left( {\sqrt { \beta ^r } } \right)} \right) + \Cl_2 \left( {\pi  - 2\arctan \left( {\sqrt { \beta ^r } } \right)} \right)} \right)\\
&\qquad\qquad + \left( {\frac{1}{L_r} + 1} \right)\frac{{\pi \sqrt { \beta ^r } }}{{1 + \beta ^r }}\arctan \left( {\sqrt { \beta ^r } } \right)\ln \left( {\sqrt { \beta ^r } } \right).
\end{split}
\end{equation}
\end{theorem}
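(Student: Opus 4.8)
\noindent The plan is to treat the even-$r$ case by exactly the same route already used for the odd-$r$ case (denominator $5F_r^2-4\cos^2(2x)$): reduce both target integrals to the two ``atomic'' integrals
\[
A_r=\int_0^\pi\frac{x^2\cos x}{L_r-2\cos(2x)}\,dx,\qquad B_r=\int_0^\pi\frac{x^2\cos x}{L_r+2\cos(2x)}\,dx,
\]
whose closed forms for even $r$ are already in hand: $A_r$ from~\eqref{eq.ri9nkko} and $B_r$ from~\eqref{eq.pi7i3yl}. The key structural observation is that $A_r$ contributes \emph{only} a $\Li_2$-difference carrying the denominator $1-\beta^r$, whereas $B_r$ contributes \emph{only} the two Clausen terms and the $\arctan\cdot\ln$ term, all carrying the denominator $1+\beta^r$; this separation is precisely what forces the two distinct coefficients $\tfrac1{L_r}\mp1$ appearing in~\eqref{eq.padu4yo}.

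For~\eqref{eq.v11u6jr} I would invoke Lemma~\eqref{eq.bywzh6u}, i.e.\ $\int_0^\pi\frac{x^2\cos x}{L_r^2-4\cos^2(2x)}\,dx=\frac1{2L_r}(A_r+B_r)$, and substitute~\eqref{eq.ri9nkko} and~\eqref{eq.pi7i3yl}. Multiplying through by $\tfrac1{2L_r}$ and grouping the $\Li_2$ block (coming from $A_r$) separately from the $\Cl_2$/$\arctan\cdot\ln$ block (coming from $B_r$) reproduces~\eqref{eq.v11u6jr} verbatim; nothing beyond those three identities is needed.

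For~\eqref{eq.padu4yo} I would use the elementary identity $2\cos x\cos(2x)=\cos(3x)+\cos x$ as packaged in Lemma~\eqref{eq.gaa3i0x}, and combine it with Lemma~\eqref{eq.bywzh6u} to eliminate $\int_0^\pi\frac{x^2\cos x}{L_r^2-4\cos^2(2x)}\,dx$. Solving the two resulting linear relations for the $\cos(3x)$-integral gives, exactly parallel to the display in the odd-$r$ proof,
\[
\int_0^\pi\frac{x^2\cos(3x)}{L_r^2-4\cos^2(2x)}\,dx=\Bigl(1-\tfrac1{L_r}\Bigr)\tfrac12A_r-\Bigl(1+\tfrac1{L_r}\Bigr)\tfrac12B_r.
\]
Substituting~\eqref{eq.ri9nkko} for $A_r$ (its overall minus sign against the factor $\tfrac12(1-\tfrac1{L_r})$ turns the coefficient of the $\Li_2$-line into $\tfrac1{L_r}-1$) and~\eqref{eq.pi7i3yl} for $B_r$ (the two minus signs in $B_r$ against the factor $-\tfrac12(1+\tfrac1{L_r})$ turn the coefficients of the $\Cl_2$-line and of the $\arctan\cdot\ln$-line into $\tfrac1{L_r}+1$) then yields~\eqref{eq.padu4yo}.

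The main obstacle is purely bookkeeping: keeping straight the various $\sqrt{\beta^r}$ prefactors and the $1\pm\beta^r$ denominators, and tracking the sign changes so that the $\Li_2$ block ends with coefficient $\tfrac1{L_r}-1$ while the Clausen and $\arctan\cdot\ln$ blocks end with $\tfrac1{L_r}+1$. A minor point to verify is that the hypotheses ``$r$ even, $r>0$'' required by~\eqref{eq.ri9nkko} and~\eqref{eq.pi7i3yl} coincide with those of the present theorem, which they do.
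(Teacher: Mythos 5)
Your proposal is correct and follows the paper's intended route exactly: for even $r$ it combines \eqref{eq.ri9nkko} and \eqref{eq.pi7i3yl} through the decompositions \eqref{eq.bywzh6u} and \eqref{eq.gaa3i0x}, precisely mirroring the paper's odd-$r$ argument based on \eqref{eq.qvb6jur} and \eqref{eq.a40qd9a}. One small remark: your displayed formula for the $\cos(3x)$-integral, $\bigl(1-\tfrac1{L_r}\bigr)\tfrac12A_r-\bigl(1+\tfrac1{L_r}\bigr)\tfrac12B_r$, tacitly uses the corrected (difference) form of \eqref{eq.gaa3i0x}, i.e.\ $\tfrac12A_r-\tfrac12B_r$ on the left-hand side, which is what $2\cos x\cos(2x)=\cos(3x)+\cos x$ actually gives and what the paper itself uses in its odd-$r$ display; the plus sign printed in \eqref{eq.a40qd9a}/\eqref{eq.gaa3i0x} is a typo, and your final identities \eqref{eq.v11u6jr} and \eqref{eq.padu4yo} check out.
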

In particular,
\begin{equation*}
\begin{split}
\int_0^\pi  {\frac{{x^2 \cos x}}{{9 - 4\cos^2 (2x)}}\,dx} &= - \frac{\pi }{6}\left( {\frac{{\pi ^2 }}{6} - \frac{3}{2}\ln ^2 \alpha } \right) + \frac{\pi }{{6\sqrt 5 }}\arctan 2\ln \alpha \\
&\qquad - \frac{\pi }{{6\sqrt 5 }}\left( {\Cl_2 \left( {\arctan 2} \right) + \Cl_2 \left( {\pi  - \arctan 2} \right)} \right).
\end{split}
\end{equation*}

\end{document}